\pgfplotsset{compat=1.11}
\newcommand{\hgline}[4]{
\pgfmathsetmacro{\thetaone}{#1}
\pgfmathsetmacro{\thetatwo}{#2}
\pgfmathsetmacro{\theta}{(\thetaone+\thetatwo)/2}
\pgfmathsetmacro{\phi}{abs(\thetaone-\thetatwo)/2}
\pgfmathsetmacro{\close}{less(abs(\phi-90),0.0001)}
\ifdim \close pt = 1pt
    \draw[thick, #3,name path = #4] (\theta+180:1) -- (\theta:1);
\else
    \pgfmathsetmacro{\R}{tan(\phi)}
    \pgfmathsetmacro{\distance}{sqrt(1+\R^2)}
    \draw[thick, #3,name path = #4] (\theta:\distance) circle (\R);
\fi
}
\let\oldFootnote\footnote
\newcommand\nextToken\relax
\renewcommand\footnote[1]{%
    \oldFootnote{#1}\futurelet\nextToken\isFootnote}
\newcommand\isFootnote{%
    \ifx\footnote\nextToken\textsuperscript{,}\fi}
\let\epsilon\varepsilon
\newcommand{\abs}[1]{\left|#1\right|}
\newcommand{\R}{{\mathbb{R}}}
\newcommand{\N}{\mathbb{N}}
\newcommand{\T}{\mathsf{T}} % for tangent space
\newcommand{\identity}{\mathsf{id}}
\newcommand{\eps}{\varepsilon}
\newcommand{\st}{\,\colon\,} % Such that for sets
\newcommand{\trace}{\mathsf{tr}}
\newcommand{\PSL}[1]{\mathsf{PSL}(#1,\R)}
\newcommand{\SL}[1]{\mathsf{SL}(#1,\R)}
\newcommand{\Hom}{\mathsf{Hom}}
\newcommand{\flags}{\mathscr F}
\newcommand{\Hm}{\mathsf{H}} % for Hamiltonian vector field
\newcommand{\Htwo}{\mathbb{H}^2}
\newcommand{\figeight}{\delta}
\newcommand{\Teich}{\mathcal T}
\newcommand{\CharVar}{\mathscr X}
\newcommand{\dev}{\mathsf{dev}}
\newcommand{\hol}{\mathsf{hol}}
\newcommand{\Cr}{\mathsf{cr}}
\newcommand{\Vvector}[3]{\begin{bmatrix}
	#1\\#2\\#3
\end{bmatrix}}
\newcommand{\Lline}[3]{\begin{bmatrix}
	#1:#2:#3
\end{bmatrix}}
\newcommand{\triple}{\mathsf{T}}
\newcommand{\triangulation}{\mathscr T}
\newcommand{\edge}{\mathsf E}
\newcommand{\vertex}{v}
\newcommand{\quiver}{\mathsf{Q}}
\newcommand{\Rad}{\mathsf{Rad}}
\newcommand{\vf}[1]{\Gamma(\T#1)}
\newcommand{\ringCasimir}{\mathscr C}
\newcommand{\SymLeaf}{\mathcal Q}
\newcommand{\HmFlow}{\widehat{\Phi}}
\newcommand{\SympFoliation}{\mathscr L}
\newcommand{\rechol}{\vartheta}
\newcommand{\pants}{P}
\newcommand{\eruption}{\mathcal{I}}
\newcommand{\hexagon}{\mathcal{E}}
\newcommand{\unipotentLocus}{\mathfrak{U}}
\newcommand{\teich}{\mathcal T}
\newcommand{\thweb}{m_\Theta}
\newcommand{\conjclass}{\mathcal C}
\newcommand{\RPTwo}{\mathbb{RP}^2}
\newcommand{\DefSpace}{\mathfrak{C}}
\newcommand{\casimir}{\ell}
\newcommand{\framedRepsConv}{\widehat{\mathscr{X}}^+_3(S)}
\newcommand{\RepsConv}{\mathscr X_3^+(S)}
\newcommand{\framedChar}{\widehat{\mathscr X}_3(S)}
\newcommand{\del}[1]{\frac{\partial}{\partial #1}}
\tikzset{
    set arrow inside/.code={\pgfqkeys{/tikz/arrow inside}{#1}},
    set arrow inside={end/.initial=>, opt/.initial=},
    /pgf/decoration/Mark/.style={
        mark/.expanded=at position #1 with
        {
            \noexpand\arrow[\pgfkeysvalueof{/tikz/arrow inside/opt}]{\pgfkeysvalueof{/tikz/arrow inside/end}}
        }
    },
    arrow inside/.style 2 args={
        set arrow inside={#1},
        postaction={
            decorate,decoration={
                markings,Mark/.list={#2}
            }
        }
    },
}
\newtheorem*{theorem*}{Theorem}
\newtheorem{theorem}{Theorem}[section]
\newtheorem{corollary}[theorem]{Corollary}
\newtheorem{proposition}[theorem]{Proposition}
\newtheorem{lemma}[theorem]{Lemma}
\newtheorem{thmA}{Theorem}
\newtheorem{corA}[thmA]{Corollary}
\newtheorem{propA}[thmA]{Proposition}
\theoremstyle{definition}
\newtheorem{definition}[theorem]{Definition}
\newtheorem{remark}[theorem]{Remark}
\newtheorem{fact}{Fact}
\newtheorem{question}[theorem]{Question}
\renewenvironment{proof}{{\bfseries Proof. }}{\qed\\}
\theoremstyle{remark}
\title[Self--intersecting curves and periodic orbits]{Self--intersecting curves on a pair of pants and periodic orbits of Hamiltonian flows}
\author[F. Camacho--Cadena]{Fernando Camacho--Cadena}
\address{Max Planck Institute for Mathematics in the Sciences, Inselstr. 22, 04103 Leipzig, Germany\vspace{.2cm}}
\email{fernando.camacho@mis.mpg.de}
\thanks{This project received funding from the Deutsche Forschungsgemeinschaft (DFG, German Research Foundation) – Project-ID 281071066 – TRR 191, as well as from the European Research Council (ERC) under the European Union’s Horizon 2020 research and innovation programme (grant agreement No 101018839). The author acknowledges the support of the Institut Henri Poincaré (UAR 839 CNRS-Sorbonne Université), and LabEx CARMIN (ANR-10-LABX-59-01)}
\begin{document}
\maketitle
\begin{abstract}
The character variety $\CharVar(S,G)$ associated to an oriented compact surface $S$ with boundary and a real reductive Lie group $G$ admits a Poisson structure and is foliated by symplectic leaves. When $G$ is a matrix group, any closed curve $c\in\pi_1(S)$ induces a trace function $\trace_c\colon[\rho]\mapsto \trace(\rho(c))$ on $\CharVar(S,G)$. In this article, we study the Hamiltonian flows of trace functions associated to self--intersecting curves. We prove that when $G=\PSL 3$ and $S$ is the pair of pants, every orbit of the Hamiltonian flow of the trace of a figure eight curve on $S$ is periodic and has a unique fixed point. The proof uses explicit computations in Fock--Goncharov coordinates. As an application, we prove a similar statement for the trace of the $\Theta$--web. Finally, we focus on the symplectic leaf corresponding to the unipotent locus, and derive similar results for two more self--intersecting curves: the commutator, and a curve going $k$ times around a boundary component.
\end{abstract}

\section{Introduction}\label{intro : sec : periodic Hamiltonian flows}
Let $S$ be an oriented compact surface of genus $g$ with $n\geq 1$ boundary components, of negative Euler characteristic and with fundamental group $\pi_1(S)$. Let $G$ be a connected real reductive Lie group. Associated to $S$ and $G$ is what we will call the \emph{character variety}
\[
\CharVar(S,G)\coloneqq \Hom^{\mathsf{red}}(\pi_1(S),G)/ G,
\]
which is the quotient of the space of reductive representations by the conjugation action by $G$. The smooth part of the character variety has a natural Poisson structure, see for example \cite{PoissonStructure_Audin,AlekseevMalkin_PoissonStructure}. The Poisson structure is originally due to Atiyah and Bott \cite{AtiyahBott}, where they deal with the case when $G$ is a compact Lie group, and $S$ is a closed surface (in which case the smooth part of the character variety is a symplectic manifold).\\

The smooth part of the character variety is then foliated by symplectic leaves, which are related to relative character varieties, see for example \cite[Theorem 2.2.1]{PoissonStructure_Audin}, \cite{ParabolicCohomology_GHJW,Poisson_BiswasJeffrey}. Given a tuple of conjugacy classes $\conjclass = (C_1,\dots,C_n)$ in $G$, the \emph{relative character variety associated to $\conjclass$} is 
\[
\CharVar_\conjclass(S,G) \coloneqq \left\{\rho\in\Hom^{\mathsf{red}}(\pi_1(S),G) \st \rho(c_i)\in C_i, i = 1,\dots,n\right\}/ G,
\]
where the $c_i\in\pi_1(S)$ are chosen generators for each boundary component of $S$.\\

Perhaps the best known example of this is the Teichm\"uller space of $S$, i.e. the space of hyperbolic structures on $S$ up to isotopy. The holonomy of a hyperbolic structure gives a map from the Teichm\"uller space to the character variety $\CharVar(S,\PSL 2)$. The relative character varieties are given by prescribing the length of boundary components and the symplectic structure on them is given by the Weil--Petersson symplectic form \cite{Wolpert_Symplectic}. \\

 When $G$ is a matrix group, an important class of functions on character varieties are trace functions associated to closed curves on $S$. Namely, any closed curve $c\in\pi_1(S)$ defines a \emph{trace function}
 \begin{align*}
 \trace_c\colon \CharVar(S,G)&\to\R\\
 [\rho]&\mapsto \trace(\rho(c)).
 \end{align*}
 By restricting a trace function $\trace_c$ to a symplectic leaf $\SymLeaf\subset \CharVar(S,G)$, we obtain its associated \emph{Hamiltonian flow}. This is the flow associated to the \emph{Hamiltonian vector field} $\Hm \trace_c$ given by $\omega_\SymLeaf(\cdot,\Hm\trace_c)=d\,\trace_c(\cdot)$, where $\omega_\SymLeaf$ is the symplectic form of $\SymLeaf$. In the case of Teichm\"uller space, one can consider the function $\abs{\trace_c}\colon[\rho]\mapsto \abs{\trace(\widetilde\rho(c))}$, where $\widetilde\rho$ is any lift of $\rho$ to $\SL 2$. This function is directly related to the length function $\ell_c$ (see for example \cite[Equation (2.10)]{InvFct_Goldman}), which measures the length of the unique geodesic in the free homotopy class of $c$ in a given hyperbolic structure. When $c$ is a simple closed curve, Wolpert \cite{Wolpert_TheFNDef} found that the Hamiltonian flows of $\ell_c$ correspond to twist flows.\\
 
In more generality, when $G$ is any real reductive matrix Lie group and $c$ is a \emph{simple} closed curve, Goldman fully described the Hamiltonian flow of the function $\trace_c$ \cite{InvFct_Goldman} (see also \cite{ErgodicityMCG_GoldmanXia} for surfaces with boundary). However, both in Teichm\"uller space and on more general character varieties, the study of Hamiltonian flows of trace functions associated to self--intersecting curves is much more limited. In the case when $S$ is a closed surface, Farre and Wienhard together with the author introduced invariant multi--functions in \cite{InvMultiFunctions_CCFW}, generalizing \cite{InvFct_Goldman}. In that article, the authors found a geometric and qualitative description of Hamiltonian flows associated to trace functions coming from self--intersecting curves. Namely, if the curve fills a subsurface $S_0$ of $S$, then the Hamiltonian flow only deforms $S_0$ and leaves the complement unchanged. Even though the result gave insight into the Hamiltonian flow, its behavior on the surface $S_0$ remained unknown outside some simple cases.\\

 In this article, we continue the study of Hamiltonian flows associated to trace functions of self--intersecting curves. To do this, we turn to the framework of Fock and Goncharov \cite{FockGoncharov}. A \emph{framed representation} is a representation $\rho\colon\pi_1(S)\to\SL d$ together with the data of $n$ full flags $F_1,\dots, F_n$ in $\R^d$ associated to each boundary component such that $\rho(c_i)\cdot F_i = F_i$ for each $i = 1,\dots,n$. Fock and Goncharov define a set of coordinates on the space of framed representations depending on a triangulation of $S$ (see Section \ref{sec : Big FG coords}). The positivity of the coordinates does not depend on the triangulation and therefore defines the set of \emph{positive framed representations}, which we denote by $\widehat\CharVar_d^+(S)$. Moreover, the coordinates give $\widehat\CharVar_d^+(S)$ the structure of a smooth manifold.\\
 
 The space of positive framed representations comes equipped with a Poisson structure \cite{PoissonStructure_FockRosly,FockGoncharov}. Its symplectic leaves are related to relative character varieties through the projection $\widehat\CharVar_d^+(S)\to \CharVar_d(S)$ which forgets the framing, and where we abbreviate $\CharVar(S,\SL d)$ by $\CharVar_d(S)$. The symplectic structure on the symplectic leaves coincides with the symplectic structure on the relative character varieties up to a constant \cite[Theorem 1.1]{SwapAlg_Sun}. Fock and Goncharov describe the Poisson structure explicitly in coordinates (see Section \ref{sec : general Poisson structure}), and the reconstruction of the representation from the coordinates is also explicit. This allows us to work with explicit computations to deduce results on Hamiltonian flows of trace functions associated to self--intersecting curves.\\
 
 Here we take the case $G = \PSL 3$, which is in particular a matrix group since $G \cong \SL 3$. The space of positive framed representations coincides with the space of \emph{framed real convex projective structures} on $S$ \cite{FockGoncharov_ConvexProjective}. These are triples $[\dev,\rho,\nu]$, where $\rho\colon\pi_1(S)\to\PSL 3$ is a representation, known as the \emph{holonomy}, $\dev\colon\widetilde S\to\RPTwo$ is a $\rho$--equivariant diffeomorphism onto a properly convex domain in $\RPTwo$,  known as the \emph{developing map}, and $\nu$ is a framing of $\rho$ (see Section \ref{sec : conv proj surfaces Prelim}). We denote the space of framed real convex projective structures on $S$ by $\widehat\DefSpace(S)$.  \\

In this article, we focus on the pair of pants $\pants$. The symplectic leaves in $\widehat\DefSpace(\pants)$ are two dimensional \cite{ConvexRealProj_Goldman}, and hence using Fock--Goncharov coordinates, we can plot level sets of trace functions to understand and guess the behavior of their associated Hamiltonian flows. We pick a presentation of the fundamental group $\pi_1(P) = \langle\alpha,\beta,\gamma\,|\,\alpha\beta\gamma = 1\rangle$ with $\alpha,\beta,\gamma$ corresponding to the peripheral loops as in Figure \ref{fig : intro pair of pants}.\\

Let $\figeight = \alpha\gamma^{-1}$ be a \emph{figure eight curve} in $P$, as shown in Figure \ref{fig : intro pair of pants}. Picking a symplectic leaf (see Section \ref{sec : Casimir functions and symplectic leaves} for more details), Figure \ref{fig : Intro level sets} shows the level sets of $\trace_\figeight$ using coordinates $\sigma_1$ and $\tau_1$ (see Section \ref{sec : alpha^kgamma^-1} for a more detailed description of which symplectic leaf the plot corresponds to).\\

% Fock and Goncharov give a parameterization of $\widehat\DefSpace(\pants)$ in \cite{FockGoncharov,FockGoncharov_ConvexProjective} defining an isomorphism $\widehat{\DefSpace}(\pants) \cong\R^8_{>0}$, which we recall in Section \ref{sec : Fock Goncharov coordinates}. Moreover, they describe the map $\rechol\colon\widehat\DefSpace(\pants)\cong\R^8_{>0}\to\CharVar_3(\pants)$ explicitly, allowing for computations of trace functions in coordinates (see Section \ref{sec : reconstructing the represenatation}).\\

%The project grew out of the following numerical experiment. It is well--known that the symplectic leaves are two--dimensional \cite{ConvexRealProj_Goldman}, and correspond to relative character varieties (see Definition \ref{def : relative character variety}). The level sets of $\trace_\alpha$ restricted to a symplectic leaf are therefore either points or one--dimensional submanifolds. Since Hamiltonian flows remain on level sets of their defining functions, plots of the level sets will help us understand the behavior of the Hamiltonian flows. Let $\figeight=\alpha\gamma^{-1}$ be the figure eight curve on $\pants$ as in Figure \ref{fig : intro pair of pants}. 

Our main result is the following, which confirms the heuristic picture in Figure \ref{fig : Intro level sets}.

\begin{thmA}[Theorem \ref{thm : periodicity of figure 8 in general}]\label{thm Intro : main theorem}
	Let $\figeight = \alpha\gamma^{-1}$ be a figure eight curve on a pair of pants $\pants$ and let $\SymLeaf$ be any symplectic leaf in $\widehat\DefSpace(\pants)$. Then the restriction of the trace function $\trace_\figeight\big|_{\SymLeaf}\colon\SymLeaf\to\R$ attains a unique minimum. Moreover, every orbit of the Hamiltonian flow of $\trace_\figeight\big|_{\SymLeaf}$ is periodic and there is a unique fixed point.
\end{thmA}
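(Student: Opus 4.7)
The plan is to work entirely in Fock--Goncharov coordinates on $\widehat\DefSpace(P)$ and carry out an explicit analysis of the trace function on each symplectic leaf. Fix an ideal triangulation of $P$ with two triangles sharing three edges, one edge associated to each boundary loop $\alpha,\beta,\gamma$. This yields three pairs of edge coordinates together with two triple--ratio coordinates, for a total of eight positive parameters. The Casimirs of the Fock--Goncharov Poisson structure are monomials in these coordinates associated to the boundary holonomies, and fixing them cuts the coordinate domain $(0,\infty)^8$ down to the two--dimensional symplectic leaf $\SymLeaf$, which can be parametrized by two coordinates $(\sigma_1,\tau_1)\in(0,\infty)^2$, matching the picture in Figure \ref{fig : Intro level sets}.

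Using the reconstruction of the holonomy from Fock--Goncharov coordinates recalled in Section \ref{sec : Big FG coords}, I would first write $\rho(\alpha)$ and $\rho(\gamma)$ as explicit $3\times 3$ matrices depending on $(\sigma_1,\tau_1)$ and on the fixed Casimirs, and then compute $\trace_\figeight = \trace\brac{\rho(\alpha)\rho(\gamma)^{-1}}$ as a Laurent expression $f(\sigma_1,\tau_1)$ in the positive coordinates. The critical points of $f$ on $(0,\infty)^2$ are then found by solving $\partial f/\partial\sigma_1 = \partial f/\partial\tau_1 = 0$, and the aim is to show that there is exactly one such critical point for every choice of Casimirs, and that the Hessian of $f$ there is positive definite, so that this point is a non--degenerate global minimum.

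Next, I would show that $f$ is proper on $\SymLeaf$: as $(\sigma_1,\tau_1)$ approaches the topological boundary $\boundary\brac{(0,\infty)^2}$, the dominant monomials in the Laurent expression force $f\to\infty$ along every degenerating direction. Combined with the uniqueness of the critical point, this forces every non--minimal level set of $f$ on $\SymLeaf\cong\R^2$ to be an embedded topological circle. Because $\SymLeaf$ is two--dimensional and symplectic, the Hamiltonian vector field $\Hm \trace_\figeight$ is tangent to these level sets and is nonvanishing exactly where $df\neq 0$. A nowhere--vanishing vector field on a topological circle generates a periodic flow, so every orbit of $\Hm \trace_\figeight$ on a non--minimal level set is periodic; at the unique minimum, $\Hm\trace_\figeight = 0$, producing the unique fixed point of the flow.

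The principal obstacle will be the critical--point analysis: verifying uniqueness and non-degeneracy of the critical point uniformly across the family of Casimir parameters amounts to solving a polynomial system whose coefficients depend on six parameters. The strategy is to exploit the monomial structure of the Fock--Goncharov reconstruction so that, after a well--chosen rescaling absorbing the Casimirs, the critical--point equations factor transparently and their unique positive solution can be exhibited directly. The properness step is then expected to be a more mechanical consequence of the same monomial structure.
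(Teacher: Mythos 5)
Your overall skeleton agrees with the paper's: parametrize the leaf by $(\sigma_1,\tau_1)$ via Lemma \ref{cor : parametrization of symplectic leaf as level sets}, write $\trace_\figeight$ as an explicit positive Laurent--type expression, prove properness by inspecting dominant monomials (this is exactly Proposition \ref{prop : trace of figure 8 is proper}), and then use that a proper function on a two--dimensional leaf has compact regular level sets, each component a circle on which the nonvanishing Hamiltonian vector field forces a periodic orbit, with the unique fixed point at the minimum. Up to that point your argument is fine (note only that a non--minimal level set need not be connected; each component is a circle, which suffices).

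The genuine gap is in how you obtain uniqueness of the critical point. You propose to solve $\partial f/\partial\sigma_1=\partial f/\partial\tau_1=0$ directly and to exhibit a unique positive solution "after a well--chosen rescaling absorbing the Casimirs," but you give no argument that such a rescaling exists, and the structure of the problem strongly suggests it does not: the two partial derivatives are large polynomial expressions with terms of \emph{both} signs whose coefficients involve the six leaf parameters in an essential way (they are written out in Section \ref{sec : numerical sol to Ham flow fig 8}), the critical point genuinely moves with the leaf (for $\trace_\figeight$ it is \emph{not} the Fuchsian point, and no closed form for it is known even on the unipotent locus), so there is no transparent factorization to appeal to. The paper circumvents precisely this obstruction: instead of solving the critical--point system, it proves that $t\mapsto\trace_\figeight(\Psi^t_a(q))$ is strictly convex along every mixed flow $\Psi^t_a$ built from the eruption and hexagon flows (Theorem \ref{thm : convexity of the trace function along mixed flows}), where the second derivative is a sum of manifestly positive terms because the parameters $a$ and $t$ only enter through squares and exponentials; combined with Lemma \ref{lemma : any two points are connected by a mixed flow}, which connects any two points of the leaf by such a flow, this yields uniqueness of the critical point with no need to locate it. Without either this convexity input or an actual uniform analysis of your six--parameter polynomial system, your step asserting "exactly one critical point with positive definite Hessian" is unsupported, and it is the crux of both the uniqueness of the minimum and the uniqueness of the fixed point.
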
 
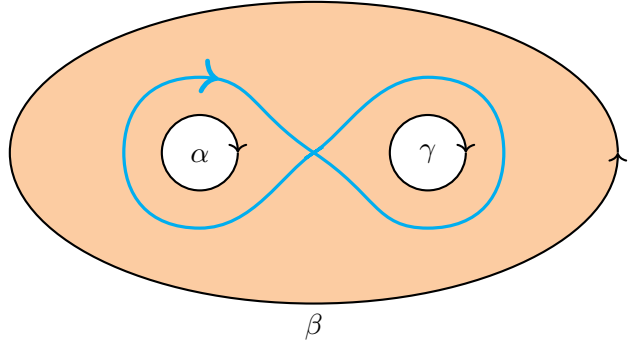
\begin{figure}[h]
	\centering
	\begin{tikzpicture}
    \fill[Apricot!70] (0,0) ellipse (4cm and 2cm);
    
    % Draw two small circles at the foci of the ellipse
    \fill[white] (1.5,0) circle (0.5cm);
    \fill[white] (-1.5,0) circle (0.5cm);
    
    %%%Arrows around circles
    \draw[->,thick] (2,0) arc[start angle = 360, end angle = 0, radius = 0.5];
    \draw[->,thick] (-1,0) arc[start angle = 360, end angle = 0, radius = 0.5];
    \draw[->,thick] (4,0) arc [start angle = 0,end angle = 360, x radius = 4cm, y radius = 2cm];
    
    % Naming the loops
    \node[label = $\gamma$] at (1.5,-0.4) {};
    \node[label = $\alpha$] at (-1.5,-0.4) {};
    \node[label = $\beta$] at (0,-2.75) {};
        %% Figure 8 curve
    \draw[very thick,cyan,] plot [smooth cycle,tension = 1] coordinates {(0,0) (-1.5,-1) (-2.5,0) (-1.5,1) (0,0) (1.5,-1) (2.5,0) (1.5,1) (0,0)}  [arrow inside={opt={scale=1.5}}{0.25}];
	\end{tikzpicture}
\caption{A pair of pants, the generators for its fundamental group, and a figure eight curve.}
\label{fig : intro pair of pants}
\end{figure}

\begin{figure}[h]
	\centering
	\includegraphics[scale=0.5]{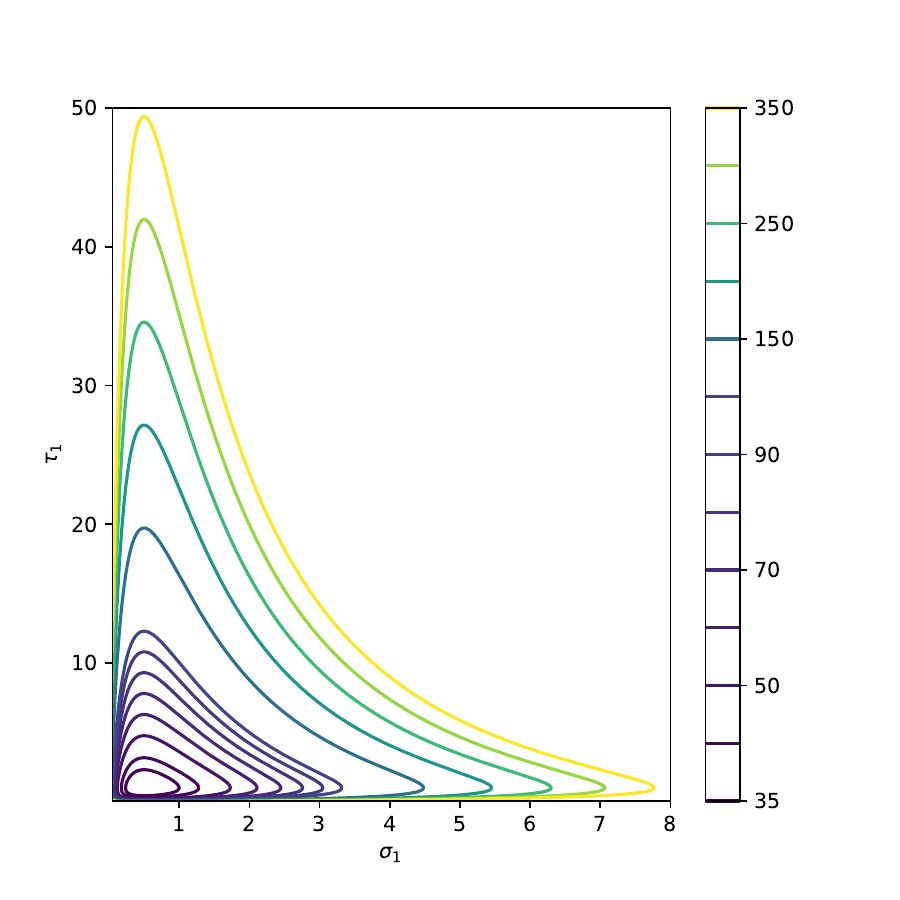}
	\caption{Level sets of $\trace_\delta$ on the unipotent locus.}
	\label{fig : Intro level sets}
\end{figure}

Theorem \ref{thm Intro : main theorem} has the following application. Sikora in \cite{Sikora:graphs} (see also \cite{DKS:webs}) introduced the notion of $3$--webs, which are $3$--regular bipartite graphs on a surface. Similar to the case of closed curves, a web $m$ defines a trace function $\trace_m\colon\widehat\DefSpace(\pants)\to\R$. For the case of the \emph{$\Theta$--web}, shown in Figure \ref{fig : theta web}, its trace, when restricted to a symplectic leaf, is given by $C-\trace_\delta$ for a constant $C$ depending on the symplectic leaf (see Section \ref{sec : trace of the theta web}). Thus by Theorem \ref{thm Intro : main theorem}, we immediately obtain the following.
\begin{corA}[Corollary \ref{cor : trace of the theta web is periodic}]\label{cor Intro : web periodicity}
	Let $m_\Theta$ be the $\Theta$--web and $\SymLeaf$ be any symplectic leaf in $\widehat\DefSpace(\pants)$. Then the restriction of the trace function $\trace_{m_\Theta}\big|_{\SymLeaf}\colon\SymLeaf\to\R$ attains a unique maximum. Moreover, every orbit of the Hamiltonian flow of $\trace_{m_\Theta}\big|_{\SymLeaf}$ is periodic and there is a unique fixed point.
\end{corA}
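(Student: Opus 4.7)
The plan is to reduce Corollary B directly to Theorem A by exploiting the identity $\trace_{m_\Theta}\big|_\SymLeaf = C_\SymLeaf - \trace_\figeight\big|_\SymLeaf$, where $C_\SymLeaf$ is a constant depending on the symplectic leaf. The derivation of this identity is the content of Section \ref{sec : trace of the theta web} (which I would develop first, using that the Sikora web invariant can be evaluated on a framed representation in terms of explicit determinants of flag components, and then rewriting the resulting polynomial in terms of $\trace(\rho(\alpha\gamma^{-1}))$ together with the fixed boundary traces that parametrize $\SymLeaf$). Once that identity is in hand, the rest of the argument is formal.

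First I would note that $C_\SymLeaf$ is locally constant on $\SymLeaf$ since the boundary conjugacy classes are fixed on a symplectic leaf, and hence the Hamiltonian vector field of $\trace_{m_\Theta}\big|_\SymLeaf$ equals $-\Hm\trace_\figeight\big|_\SymLeaf$. This means the Hamiltonian flow of $\trace_{m_\Theta}$ on $\SymLeaf$ is precisely the time-reversed flow of $\trace_\figeight$. Time reversal preserves orbits as sets, preserves periodicity and minimal periods, and preserves the fixed point set; so from Theorem A the flow of $\trace_{m_\Theta}\big|_\SymLeaf$ has every orbit periodic and a unique fixed point.

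Finally, for the extremum statement, the identity $\trace_{m_\Theta}\big|_\SymLeaf = C_\SymLeaf - \trace_\figeight\big|_\SymLeaf$ shows that the critical points of the two functions on $\SymLeaf$ coincide and that minima of $\trace_\figeight\big|_\SymLeaf$ correspond to maxima of $\trace_{m_\Theta}\big|_\SymLeaf$. Since Theorem A gives a unique minimum of $\trace_\figeight\big|_\SymLeaf$, this translates to a unique maximum of $\trace_{m_\Theta}\big|_\SymLeaf$, completing the corollary.

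The only genuinely non-trivial step here is establishing the relation $\trace_{m_\Theta}\big|_\SymLeaf = C_\SymLeaf - \trace_\figeight\big|_\SymLeaf$; I expect this to be the main obstacle and would carry it out via a direct computation in Fock--Goncharov coordinates, expanding the web invariant as a signed sum over matchings and identifying the terms with $\trace(\rho(\alpha\gamma^{-1}))$ modulo a combination of $\trace(\rho(\alpha^{\pm 1}))$, $\trace(\rho(\beta^{\pm 1}))$, $\trace(\rho(\gamma^{\pm 1}))$, which are Casimirs of the Poisson structure.
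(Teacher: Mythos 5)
Your proposal is correct and follows essentially the same route as the paper: reduce to Theorem A via the identity $\trace_{m_\Theta}\big|_{\SymLeaf}=C_{\SymLeaf}-\trace_\figeight\big|_{\SymLeaf}$, noting the constant term is fixed on a leaf and that replacing a Hamiltonian by a constant minus it reverses the flow, preserving orbits, periodicity, and the fixed point. The only difference is that the paper does not recompute the web invariant in Fock--Goncharov coordinates as you propose, but simply quotes the formula $\trace_{m_\Theta}(\rho)=\trace(\rho(\alpha))\trace(\rho(\gamma))-\trace(\rho(\alpha\gamma^{-1}))$ from the cited work of Douglas--Kenyon--Sikora and observes that $\trace(\rho(\alpha))\trace(\rho(\gamma))$ is constant on the leaf because the boundary conjugacy classes are fixed there.
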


One may also consider the function $\trace_\figeight+\trace_{\figeight^{-1}}$. This function may be more natural in the sense that the map $c\mapsto \trace_c+\trace_{c^{-1}}$ is invariant under taking inverses of the curve. In particular, the function only depends on the curve and \emph{not} on its orientation. We note that in the case of the group $\SL 2$, the function $c\mapsto \trace_c$ is already invariant under taking inverses. For the symmetric function $\trace_\figeight+\trace_{\figeight^{-1}}$ in the $\PSL 3$ case, we obtain a similar result as Theorem \ref{thm Intro : main theorem} as well as more information about the fixed point. Recall that a real convex projective structure on $P$ is said to be \emph{hyperbolic} if its holonomy representation factors through an irreducible representation $\PSL 2\to\PSL 3$ (see Section \ref{sec : fuchsian locus}). 

\begin{thmA}\label{thm intro : symmetrized trace}
	Let $\figeight = \alpha\gamma^{-1}$ be a figure eight curve on a pair of pants $P$ and let $\SymLeaf$ be any symplectic leaf in $\widehat\DefSpace(\pants)$. Then the following statements hold.
	\begin{enumerate}
\item \label{thm intro : periodicity for symmetrized trace} [Theorem \ref{thm : app symmetric trace}] The restriction of the function $\trace_{\figeight}+\trace_{\figeight^{-1}}\big|_{\SymLeaf}\colon\SymLeaf\to\R$ attains a unique minimum. Moreover, every orbit of the Hamiltonian flow of  $\trace_{\figeight}+\trace_{\figeight^{-1}}\big|_{\SymLeaf}$ is periodic, and there is a unique fixed point.
\item \label{thm intro : symmetrized trace fixed point} [Theorem \ref{thm : fixed points of symmetric trace}] Let $\SymLeaf$ be a symplectic leaf containing a hyperbolic structure. Then the fixed point of $\trace_{\figeight}+\trace_{\figeight^{-1}}\big|_{\SymLeaf}$ is the unique hyperbolic structure in $\SymLeaf$.
	\end{enumerate}
\end{thmA}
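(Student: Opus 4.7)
The plan is to treat the two statements separately. Part (a) will mimic the strategy used for Theorem \ref{thm Intro : main theorem}, while part (b) is a symmetry argument using the contragredient involution of $\PSL 3$.

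For part (a), I would work in Fock--Goncharov coordinates on the two--dimensional symplectic leaf $\SymLeaf\subset\widehat\DefSpace(\pants)$. The key preliminary step is to derive an explicit formula for $\trace_{\figeight^{-1}}$ in these coordinates, parallel to the formula for $\trace_\figeight$ used in the proof of Theorem \ref{thm Intro : main theorem}. This yields a closed--form expression for the restriction of $g\coloneqq\trace_\figeight+\trace_{\figeight^{-1}}$ to $\SymLeaf$. I would then verify that $g|_\SymLeaf$ is proper and bounded below, by tracking its behavior as the coordinates tend to $0$ or $\infty$, and show that its critical--point equation admits a unique solution. Since $\SymLeaf$ is two--dimensional, properness together with uniqueness of the critical point forces the non--minimal level sets to be compact embedded circles enclosing the minimum; each such circle is then a periodic orbit of the Hamiltonian flow, and the minimum is the unique fixed point.

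For part (b), introduce the \emph{contragredient involution} $\iota$ of $\PSL 3$, $\iota(A)=(A^{-1})^\intercal$, which descends to an involution of $\CharVar_3(\pants)$ via $[\rho]\mapsto [\iota\circ\rho]$. I would establish three facts. First, $\iota$ preserves any symplectic leaf $\SymLeaf$ containing a hyperbolic structure, since the boundary conjugacy classes of a hyperbolic holonomy lie in the image of the irreducible $\PSL 2\to\PSL 3$ and are therefore $\iota$--invariant (e.g.\ $\diag(e^{2t},1,e^{-2t})$ is conjugate to its inverse transpose). Second, $\iota^{*}\trace_c=\trace_{c^{-1}}$ for every $c\in\pi_1(\pants)$, since $\trace((A^{-1})^\intercal)=\trace(A^{-1})$, so $g$ is $\iota$--invariant. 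Third, the fixed locus of $\iota$ inside $\widehat\DefSpace(\pants)$ is precisely the Fuchsian locus, and it meets $\SymLeaf$ in the single point $\rho_\hyp$, since Teichm\"uller space of $\pants$ with prescribed boundary lengths is a point. Combining these, $\rho_\hyp$ is an isolated fixed point of $\iota|_\SymLeaf$, so $d\iota_{\rho_\hyp}$ has no $+1$--eigenvectors and, being an involution, acts as $-\identity$ on $T_{\rho_\hyp}\SymLeaf$. Differentiating $g\circ\iota=g$ at $\rho_\hyp$ gives $dg_{\rho_\hyp}\circ d\iota_{\rho_\hyp}=dg_{\rho_\hyp}$, hence $-dg_{\rho_\hyp}=dg_{\rho_\hyp}$, and therefore $dg_{\rho_\hyp}=0$. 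Thus $\rho_\hyp$ is a critical point of $g|_\SymLeaf$, and by part (a) it is the unique such critical point.

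The main obstacle is the critical--point analysis in part (a): the sum $\trace_\figeight+\trace_{\figeight^{-1}}$ is considerably more involved than $\trace_\figeight$ alone, and the resulting gradient equations will likely require a careful simplification or a suitable change of variables before uniqueness of the minimum becomes tractable. In part (b), the only delicate point is confirming that the $\iota$--fixed locus inside $\widehat\DefSpace(\pants)$ is exactly the Fuchsian locus, ruling out additional self--dual convex projective structures that could supply extra fixed points of $\iota|_\SymLeaf$.
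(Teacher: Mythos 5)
Your part (a) reproduces the paper's properness step but omits its key mechanism for uniqueness of the critical point. In the paper, uniqueness of the minimum of $\trace_\figeight+\trace_{\figeight^{-1}}\big|_{\SymLeaf_L}$ is \emph{not} obtained by solving the critical--point equations; it follows from strict convexity of the symmetrized trace along every mixed flow $\Psi^t_a$ (the eruption/hexagon analogue of Kerckhoff's convexity along earthquakes), together with Lemma \ref{lemma : any two points are connected by a mixed flow}, which joins any two points of the leaf by a mixed flow. The convexity is checked by computing $\tfrac{\partial^2}{\partial t^2}\bigl(\trace_{\figeight^{-1}}(\Psi^t_a(q))\bigr)$ explicitly and observing that every monomial is positive because $a$ and $t$ occur only inside squares and exponentials and all Fock--Goncharov coordinates and length parameters are positive (Appendix \ref{app : proof of periodicity for symmetrixed trace}). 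Your replacement step --- ``show that its critical--point equation admits a unique solution'' after ``a careful simplification or a suitable change of variables'' --- is exactly the part you leave unproved, and there is no evidence it is tractable: the paper could not even solve the associated Hamiltonian ODEs symbolically. So as written, part (a) of your proposal has a genuine gap at the uniqueness step (properness alone gives existence of a minimum and periodicity of regular level--set components, but not uniqueness of the minimum or of the fixed point).

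For part (b) you take a genuinely different route. The paper's proof of Theorem \ref{thm : fixed points of symmetric trace} is a direct computation: it writes the Hamiltonian vector field of $\trace_\figeight+\trace_{\figeight^{-1}}$ in the $(\sigma_1,\tau_1)$ coordinates of $\SymLeaf_{L(F)}$ and checks that it vanishes at $(\sigma_1,\tau_1)=(\sqrt{\ell_\alpha\ell_\gamma/\ell_\beta},1)$, which is the hyperbolic structure by \eqref{eq : coordinates for fuchsian locus}, then invokes uniqueness from part (a). Your contragredient--involution argument is more conceptual, and the formal step (an isolated fixed point of an involution forces $d\iota=-\identity$ on the tangent space of the leaf, hence $dg=0$ there) is correct. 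But it rests on two claims you flag without proving: (i) that $\iota$, acting on framed structures via dual flags, preserves the leaf $\SymLeaf_{L(F)}$ --- since the Casimirs are eigenvalue ratios, which $\iota$ inverts and permutes, this is plausible precisely because $L(F)$ has the symmetric shape $(\ell,1/\ell,\dots)$, but it must be verified at the level of framings and leaves; and (ii) that the $\iota$--fixed locus in $\widehat\DefSpace(\pants)$ is exactly the Fuchsian locus, which requires, e.g., irreducibility of positive framed representations so that self--duality yields an invariant nondegenerate (hence, in odd dimension, symmetric) bilinear form of signature $(2,1)$. If you supply (i) and (ii), your argument gives an attractive computation--free alternative to the paper's proof of part (b); as it stands it is incomplete, and in any case it still relies on part (a), where the main gap lies.
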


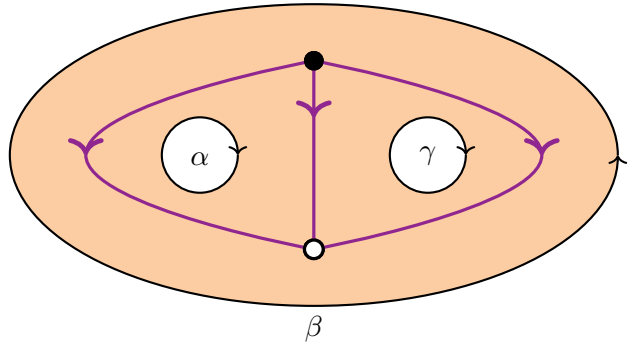
\begin{figure}[htb]
	\centering
	\begin{tikzpicture}
    \fill[Apricot!70] (0,0) ellipse (4cm and 2cm);
    
    % Draw two small circles at the foci of the ellipse
    \fill[white] (1.5,0) circle (0.5cm);
    \fill[white] (-1.5,0) circle (0.5cm);
    
    %%%Arrows around circles
    \draw[->,thick] (2,0) arc[start angle = 360, end angle = 0, radius = 0.5];
    \draw[->,thick] (-1,0) arc[start angle = 360, end angle = 0, radius = 0.5];
    \draw[->,thick] (4,0) arc [start angle = 0,end angle = 360, x radius = 4cm, y radius = 2cm];
    
    % Naming the loops
    \node[label = $\gamma$] at (1.5,-0.4) {};
    \node[label = $\alpha$] at (-1.5,-0.4) {};
    \node[label = $\beta$] at (0,-2.75) {};
    
    %%% Theta web
    \node[circle,draw=black,fill=black,inner sep=0pt,minimum size=7] (b) at (0,1.25) {};
    \node[very thick,circle,draw=black,fill=white,inner sep=0pt,minimum size=7] (w) at (0,-1.25) {};
    \draw[very thick,Plum] plot [smooth,tension = 1] coordinates {(b) (-3,0)(w)}[arrow inside={opt={scale=1.5}}{0.5}];
    \draw[very thick,Plum,] plot [smooth,tension = 1] coordinates {(b) (3,0)(w)}  [arrow inside={opt={scale=1.5}}{0.5}];
    \draw[very thick,Plum,] plot [smooth,tension = 1] coordinates {(b) (w)}  [arrow inside={opt={scale=1.5}}{0.3}];
    \node[circle,draw=black,fill=black,inner sep=0pt,minimum size=7] (b) at (0,1.25) {};
    \node[very thick,circle,draw=black,fill=white,inner sep=0pt,minimum size=7] (w) at (0,-1.25) {};
	\end{tikzpicture}
\caption{The $\Theta$--web on a pair of pants shown in purple.}
\label{fig : theta web}
\end{figure}

\subsection{Idea of the proof of Theorem \ref{thm Intro : main theorem}}
The proof of Theorem \ref{thm Intro : main theorem} relies on the following observation, which is a direct consequence of Kerckhoff's work on the Nielsen realization problem in \cite{Kerkhoff_NielsenRealization}. To state the observation we first make some definitions. Let $S = S_{g,n,b}$ be a surface of genus $g$ with $n\geq 0$ boundary components and $b\geq 0$ punctures such that $2g-2+n+b>0$, meaning that $S$ admits hyperbolic metrics. For a hyperbolic metric $m$ with geodesic boundary, let $\ell_c^m$ be the hyperbolic length of the geodesic representative of a closed curve $c$ on $S$. Let $c_1,\dots,c_n$ denote the boundary curves of $S$. Fix a \emph{length vector} $L=(\ell_1,\dots,\ell_n)\in\R^n_{>0}$ and let
\[
\teich_L(S)\coloneqq \{m \textnormal{ a hyperbolic metric on }S\st \ell_{c_i}^m = \ell_i\textnormal{ for all }i = 1,\dots,n\}/\mathsf{Diff_0(S)},
\]
called the \emph{Teichm\"uller space} of $S$. The dimension of $\teich_L(S)$ is $6g-6+2n+2b$.\\

The Teichm\"uller space $\teich_L(S)$ carries the well--known Weil--Petersson symplectic structure. Moreover, any closed curve $c$ in $S$ defines a length function 
\[
\ell_c\colon\teich_L(S)\to\R_{>0}.
\]
Using the Weil--Petersson symplectic structure, any length function gives rise to a Hamiltonian flow.\\

 A curve $c$ in $S$ is said to be \emph{filling} if its complement in $S$ is a disjoint union of disks, once punctured disks, and annuli which are homotopic to the boundary of $S$.

\begin{thmA}[Consequence of \cite{Kerkhoff_NielsenRealization}]\label{thm : periodicity for teichmueller}
	\sloppy Let $S$ be one of the surfaces $S_{1,0,1}, S_{1,1,0}, S_{0,0,4}, S_{0,1,3}, S_{0,2,2}, S_{0,3,1}, S_{0,4,0}$. Let $c$ be a closed filling curve in $S$ and $L$ a length vector. Then every orbit of the Hamiltonian flow of the function $\ell_c$ restricted to the Teichm\"uller space $\Teich_L(S)$ is periodic and there is a unique fixed point.
\end{thmA}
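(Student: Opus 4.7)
The plan is to reduce the statement to a combination of Kerckhoff's theorem on convexity of length functions with a general fact about Hamiltonian flows on $2$--dimensional symplectic manifolds. The first step is a dimension count: for each of the seven surfaces $S=S_{g,n,b}$ listed one computes $\dim\Teich_L(S)=6g-6+2n+2b=2$. Since relative Teichm\"uller spaces of surfaces of finite type are contractible, each $\Teich_L(S)$ is diffeomorphic to $\R^2$, and the Weil--Petersson form makes it a $2$--dimensional symplectic plane. The question therefore reduces to understanding the Hamiltonian flow of a smooth function on a symplectic plane.

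The key input is Kerckhoff's work. For a filling curve $c$, the length function $\ell_c\colon\Teich_L(S)\to\R_{>0}$ is proper and strictly convex along earthquake paths. Properness follows from the filling condition, since a filling curve must meet every essential simple closed curve and so $\ell_c$ blows up at the Thurston boundary of $\Teich_L(S)$. Thurston's earthquake theorem ensures any two points of $\Teich_L(S)$ are joined by an earthquake path; combined with strict convexity, this forces $\ell_c$ to have at most one critical point. Properness together with continuity on a manifold diffeomorphic to $\R^2$ then yields a unique global minimum.

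From here the argument becomes topological. On $\R^2$, a smooth proper function whose only critical point is a minimum has regular level sets that are connected embedded circles, forming a foliation of $\R^2$ minus the minimum; this is elementary $2$--dimensional Morse theory. The Hamiltonian vector field $\Hm\ell_c$ vanishes precisely at critical points, and is everywhere tangent to level sets because
\[
d\ell_c(\Hm\ell_c)=\omega_{\textnormal{WP}}(\Hm\ell_c,\Hm\ell_c)=0.
\]
Restricted to a regular level circle, $\Hm\ell_c$ is therefore a nowhere vanishing vector field on $S^1$, whose flow is necessarily periodic. The unique minimum is the unique fixed point.

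The main point to check carefully is that Kerckhoff's convexity result applies in the stated setting: for surfaces with punctures and for fixed boundary lengths, rather than in the closed case in which the original argument is phrased. The required extension is standard — one restricts attention to earthquakes supported on interior simple closed curves, verifies that these directions span the tangent space to the relative Teichm\"uller space $\Teich_L(S)$, and notes that the earthquake convexity calculation is local and insensitive to the presence of cusps or fixed-length boundary geodesics. This verification, together with confirming properness in the relative setting, is where I expect the only real (though light) technical work to lie; everything else is a formal consequence of Kerckhoff's theorem and the two-dimensionality of $\Teich_L(S)$.
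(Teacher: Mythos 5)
Your proposal is correct and follows essentially the same route as the paper: two--dimensionality of $\Teich_L(S)$, Kerckhoff's properness and strict convexity along earthquakes combined with Thurston's earthquake theorem to obtain a unique critical point (the minimum), and then compactness of regular level sets plus a nowhere--vanishing Hamiltonian vector field on them to conclude periodicity with the minimum as the unique fixed point. Your additional observations (contractibility of $\Teich_L(S)$ and connectedness of regular level sets) are harmless refinements the paper does not need, since it only uses that a regular level set is a compact one--dimensional manifold preserved by the flow.
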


This observation is analogous to Theorem \ref{thm Intro : main theorem} since the figure eight curve $\figeight$ is a filling curve in the pair of pants $\pants$. The proof uses the notion of an \emph{earthquake}, which is a type of deformation in Teichm\"uller space. We do not define it here, as we will only need to know some facts about it. For any $t\in\R$, we will write $E_t\colon\teich_L(S)\to\teich_L(S)$ for an earthquake path in Teichm\"uller space.\\

The proof of the theorem hinges on the following four facts:

\begin{fact}\label{fact : teich of some surfaces is 2 dim}
	\sloppy The Teichm\"uller space of the surfaces $S_{1,0,1}$,  $S_{1,1,0}$, $S_{0,0,4}$, $S_{0,1,3}$, $S_{0,2,2}$, $S_{0,3,1}$, $S_{0,4,0}$ is two--dimensional. Moreover, these are the only surfaces whose Teichm\"uller space is two--dimensional. These surfaces are the one--holed torus with either a cusp or a boundary component, and the rest are four--holed spheres with different combinations of boundary components or cusps.
\end{fact}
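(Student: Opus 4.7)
The plan is to reduce the statement to a Diophantine enumeration using the standard dimension formula for Teichm\"uller spaces of surfaces with boundary and cusps. The formula $\dim \teich_L(S_{g,n,b}) = 6g-6+2n+2b$ is stated in the excerpt (and is classical: it arises from Fenchel--Nielsen coordinates, which on a pair of pants decomposition contribute one length and one twist parameter per interior curve, and where fixing the $n$ boundary lengths eliminates the $n$ boundary length parameters that would otherwise appear in the full Teichm\"uller space). So the problem becomes purely combinatorial.

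The first step is to translate the condition $\dim \teich_L(S_{g,n,b}) = 2$ into the equation
\[
3g + n + b = 4,
\]
while simultaneously requiring the hyperbolicity condition $2g-2+n+b>0$ and the integrality constraints $g,n,b \in \Z_{\geq 0}$. I would then split into cases according to $g$. The case $g \geq 2$ is immediately ruled out since $3g \geq 6 > 4$. The case $g=1$ forces $n+b=1$, giving exactly the two solutions $(1,1,0)$ and $(1,0,1)$, and hyperbolicity is automatic since $2g-2+n+b = 1 > 0$. The case $g=0$ forces $n+b=4$, giving the five solutions $(0,4,0),(0,3,1),(0,2,2),(0,1,3),(0,0,4)$, and again hyperbolicity $2g-2+n+b = 2 > 0$ is satisfied by each.

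Collecting the cases, one obtains exactly the seven surfaces listed in the fact, and conversely the dimension formula evaluates to $2$ on each of these, as a direct substitution verifies. The geometric descriptions in the last sentence of the fact follow by reading off what $(g,n,b)$ means: $(1,1,0)$ and $(1,0,1)$ are the one--holed torus with a boundary and with a cusp respectively, while the five genus zero cases are four--holed spheres in which every ``hole'' is either a boundary circle or a cusp. There is no real obstacle to the proof; the only thing to be careful about is not to forget the hyperbolicity inequality, which could in principle admit spurious non--hyperbolic solutions, but in the present case the inequality is automatic for every $(g,n,b)$ satisfying the dimension equation.
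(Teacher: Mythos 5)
Your proposal is correct. The paper states this as a known Fact without proof, so there is no paper argument to compare against; the dimension formula $\dim\teich_L(S_{g,n,b})=6g-6+2n+2b$ is given immediately before, and your enumeration — reducing to $3g+n+b=4$ subject to $2g-2+n+b>0$ and casing on $g\in\{0,1\}$ — is the straightforward and complete verification.
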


\begin{fact}[Properness, Lemma 3.1 in \cite{Kerkhoff_NielsenRealization}]\label{fact 1 : properness}
	If $c$ is a filling curve in $S$, then the function $\ell_c\colon\teich_L(S)\to\R_{>0}$ is a proper function.
\end{fact}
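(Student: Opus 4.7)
The plan is to argue by contrapositive: I show that if a sequence $[m_n]\in \teich_L(S)$ escapes every compact subset of $\teich_L(S)$, then $\ell_c^{m_n}\to\infty$. Concretely, I would combine three standard ingredients: Mumford's compactness criterion for Teichmüller space with fixed boundary lengths and cusps, the intersection property of a filling curve, and the collar lemma.

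First, recall the version of Mumford's compactness theorem valid for $\teich_L(S)$: a sequence $[m_n]$ leaves every compact set if and only if there exists an essential simple closed curve $\gamma_n$ on $S$ that is \emph{not} homotopic to any boundary component or puncture loop such that the hyperbolic length $\ell^{m_n}_{\gamma_n}\to 0$. This is classical and can be extracted, for instance, from the thick–thin decomposition together with the fact that the boundary lengths are pinned to the constants $\ell_1,\dots,\ell_n$, so pinching can only occur along non-peripheral simple curves.

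Second, use the filling hypothesis. Since the components of $S\setminus c$ are disks, once--punctured disks, or annuli parallel to the boundary, every essential simple closed curve $\gamma$ that is not homotopic to a boundary or puncture loop must have positive geometric intersection with $c$, i.e.\ $i(c,\gamma)\geq 1$; otherwise $\gamma$ could be homotoped entirely into $S\setminus c$, which contains no such essential curves.

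Third, invoke the collar lemma: a simple closed geodesic $\gamma$ of length $\ell$ in a hyperbolic surface admits an embedded annular collar of width $w(\ell)=\operatorname{arcsinh}(1/\sinh(\ell/2))$, and $w(\ell)\to\infty$ as $\ell\to 0$. The geodesic representative of $c$ must cross this collar at least $i(c,\gamma)$ times transversally, so
\[
\ell^{m_n}_c \;\geq\; 2\,i(c,\gamma_n)\,w\bigl(\ell^{m_n}_{\gamma_n}\bigr)\;\geq\;2\,w\bigl(\ell^{m_n}_{\gamma_n}\bigr).
\]
As $\ell^{m_n}_{\gamma_n}\to 0$, the right hand side diverges, proving $\ell^{m_n}_c\to\infty$. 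Equivalently, $\ell_c^{-1}(K)$ is compact for any compact $K\subset \R_{>0}$.

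The only genuinely delicate point is the first step: one must ensure that Mumford compactness is stated in the correct form for surfaces with both boundary and cusps and with prescribed boundary lengths, so that the pinching curve $\gamma_n$ can be chosen essential and non-peripheral. Once this is in place, the remaining argument is a short inequality; no new ideas beyond the collar lemma and the definition of filling are needed.
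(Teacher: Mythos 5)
Your collar-lemma estimate and the observation that a filling curve must cross every essential non-peripheral simple closed curve are both fine, but the compactness criterion you invoke in the first step is the Mumford criterion for \emph{moduli} space, not for Teichm\"uller space, and this is a genuine gap. In $\teich_L(S)$ a sequence can leave every compact set while the non-peripheral systole stays bounded away from zero. For example, on the once-punctured torus fix a non-peripheral simple closed curve $\gamma$, take Fenchel--Nielsen coordinates $(\ell_\gamma,\tw_\gamma)$, and let $m_n$ be the structure with $\ell_\gamma^{m_n}=1$ and $\tw_\gamma^{m_n}=n$: this is $m_0$ Dehn-twisted $n$ times about $\gamma$, so it escapes every compact set, yet every simple closed geodesic on $m_n$ has length bounded below uniformly in $n$. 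Your dichotomy never produces a pinching curve $\gamma_n$ for such a sequence, so the argument does not conclude --- even though $\ell_c^{m_n}\to\infty$ is in fact true.

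To repair the proof you must also treat divergent sequences that stay in the thick part. One route is via the Thurston compactification: a divergent sequence subconverges, after rescaling by some $t_n\to\infty$, to a nonzero projective measured lamination $[\mu]$, in the sense that $\ell_\gamma^{m_n}/t_n\to i(\mu,\gamma)$ for simple closed curves $\gamma$; extending this to the (non-simple) curve $c$ --- for instance via Bonahon's geodesic-currents framework --- and using that $i(\mu,c)>0$ because $c$ fills, gives $\ell_c^{m_n}\to\infty$. Alternatively, keep your dichotomy but add the thick case: if the systole is bounded below, the image of $m_n$ in moduli space subconverges, so $m_n=\phi_n\cdot m_n'$ with $m_n'\to m_\infty$ and mapping classes $\phi_n$ eventually leaving every finite subset of the mapping class group; then $\ell_c^{m_n}=\ell_{\phi_n^{-1}(c)}^{\,m_n'}$, and since a filling curve has finite stabilizer, the classes $\phi_n^{-1}(c)$ are eventually pairwise distinct, while on a compact neighbourhood of $m_\infty$ only finitely many free homotopy classes have length at most any given $R$. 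Either patch, combined with your collar-lemma step in the thin case, completes the proof.
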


\begin{fact}[Earthquake Theorem, \cite{ThurstonEarthquakes}]\label{fact 2 : earthquake theorem}
	Any two points in $\teich_L(S)$ can be connected via an earthquake path.
\end{fact}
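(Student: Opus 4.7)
My plan is to prove the Earthquake Theorem by constructing the left earthquake flow along an arbitrary measured geodesic lamination and then showing that, for any basepoint $X\in\teich_L(S)$, the resulting \emph{earthquake map}
\[
\Phi_X\colon \mathcal{ML}(S)\to\teich_L(S),\qquad \lambda\mapsto E_1^\lambda(X),
\]
is a homeomorphism. Surjectivity of $\Phi_X$ is exactly the existence of an earthquake path from $X$ to any prescribed $Y\in\teich_L(S)$, and injectivity gives its uniqueness.

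The first step is to \textbf{define the earthquake along a weighted multicurve} $\lambda=\sum_i w_i\gamma_i$: cut $X$ along the simple closed geodesics $\gamma_i$ and reglue with a left shear of magnitude $w_i$ at each. Since weighted multicurves are dense in $\mathcal{ML}(S)$, the earthquake flow can then be \textbf{extended to general measured laminations} by continuity, provided one has uniform estimates on the shears induced on any transversal of bounded length. These estimates come from compactness properties of measured laminations with mass bounded in terms of a fixed filling system of curves.

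With $\Phi_X$ defined, I would show it is a global homeomorphism in three steps. First, continuity is automatic from the construction. Second, $\Phi_X$ is proper: shearing along a lamination of large mass forces the length of some transverse simple closed curve in $\Phi_X(\lambda)$ to grow without bound, so $\Phi_X(\lambda)$ leaves every compact subset of $\teich_L(S)$ as $\|\lambda\|\to\infty$. Third, $\Phi_X$ is a local homeomorphism: combined with properness and the fact that $\mathcal{ML}(S)$ and $\teich_L(S)$ are both topological cells of dimension $6g-6+2n$, invariance of domain upgrades this to a global homeomorphism. The local injectivity step is where I would invoke Kerckhoff's strict convexity of length functions along earthquake paths \cite{Kerkhoff_NielsenRealization}: for any closed curve $c$, the function $t\mapsto \ell_c^{E_t^\lambda(X)}$ is strictly convex, so a well-chosen finite family of length functions separates nearby earthquake orbits and rules out a nontrivial one-parameter family of laminations with constant image.

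The \textbf{main obstacle} is the analytic input: rigorously constructing the earthquake flow along an arbitrary measured lamination, and proving strict convexity of length functions along earthquake paths. Both require a delicate infinitesimal analysis of how the hyperbolic structure changes under shears supported on a transverse measure, and they are the content of the deeper parts of Thurston's original argument together with Kerckhoff's convexity theory. Once these ingredients are in place, the homeomorphism argument outlined above gives both existence and uniqueness of the earthquake path connecting any two points of $\teich_L(S)$.
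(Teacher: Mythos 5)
First, note what you are up against: the paper does not prove this statement at all. It is stated as a ``Fact'' with a citation to Thurston (\cite{ThurstonEarthquakes}), and is used as a black box; so there is no internal proof to compare with. What you have written is an outline of a proof of Thurston's Earthquake Theorem itself, and it follows the standard Thurston--Kerckhoff strategy (define twists along weighted multicurves, extend to $\mathcal{ML}(S)$ by density and continuity, then get surjectivity of $\lambda\mapsto E^\lambda_1(X)$ from properness plus a local-homeomorphism/invariance-of-domain argument). That is the right family of ideas, but as a proof it has genuine gaps rather than being a complete alternative route.

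Concretely: (i) the extension of the earthquake flow from weighted multicurves to arbitrary measured laminations, and the continuity of $\Phi_X$, are the analytic heart of the theorem; calling continuity ``automatic from the construction'' and deferring the uniform shear estimates is exactly where the proof lives, so nothing has actually been established. (ii) The local-homeomorphism step is asserted, not proved: ``a well-chosen finite family of length functions separates nearby earthquake orbits'' is not an argument, and strict convexity of $t\mapsto\ell_c(E^{t\lambda}X)$ (Kerckhoff) by itself only controls behavior along a single earthquake ray, not injectivity of $\lambda\mapsto E^\lambda_1(X)$ in a neighborhood of a lamination; uniqueness in Thurston's theorem is genuinely harder than this. (iii) In the relative setting $\teich_L(S)$ with fixed boundary lengths there is a specific defect in your setup: a measured lamination may contain boundary-parallel (or boundary) leaves, and earthquaking along such a leaf acts trivially on $\teich_L(S)$ because the corresponding twist is isotopic to the identity; so $\Phi_X$ defined on all of $\mathcal{ML}(S)$ is neither injective nor of the right dimension (your dimension count $6g-6+2n$ also omits the puncture contribution $2b$ that the paper records for $\teich_L(S)$). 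One must restrict to laminations without boundary-parallel leaves for the cell dimensions to match and for injectivity to have a chance. Since the paper's intent is simply to quote the Earthquake Theorem, the efficient course is to cite Thurston (or Kerckhoff's account) as the paper does, rather than to reprove it; if you do want a self-contained argument, the missing ingredients above are precisely what must be supplied.
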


\begin{fact}[Strict convexity, Theorem 2 in  \cite{Kerkhoff_NielsenRealization}]\label{fact : convexity}
	Let $E_t$ be an earthquake path. Then if $c$ is a filling curve, the function $t\mapsto \ell_c(E_t(m))$ is a strictly convex function for any $m\in\teich_L(S)$. 
\end{fact}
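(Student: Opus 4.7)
The plan is to combine the four listed facts in a clean chain: properness gives existence of a minimum, strict convexity along earthquakes rules out two minima or any other critical points, and two-dimensionality of the Teichmüller space turns regular level sets into circles on which the Hamiltonian vector field must be non-vanishing.

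First I would establish the existence of a minimum. Fix any $m_0 \in \teich_L(S)$. By Fact \ref{fact 1 : properness}, the sublevel set $\{m : \ell_c(m)\le \ell_c(m_0)\}$ is compact, and $\ell_c$ is continuous and bounded below by $0$, so a minimum $m_\ast$ of $\ell_c$ on $\teich_L(S)$ exists. For uniqueness, suppose $m_\ast'$ were another minimum. By Fact \ref{fact 2 : earthquake theorem} there is an earthquake path $E_t$ with $E_0(m_\ast) = m_\ast$ and $E_{t_1}(m_\ast) = m_\ast'$. The function $f(t) = \ell_c(E_t(m_\ast))$ is strictly convex by Fact \ref{fact : convexity}, yet $f(0) = f(t_1)$; evaluating $f$ at the midpoint $t_1/2$ immediately contradicts strict convexity. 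Hence $m_\ast$ is unique.

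Next I would show that $m_\ast$ is the only critical point of $\ell_c$. Let $p$ be any critical point and, using Fact \ref{fact 2 : earthquake theorem}, choose an earthquake path with $E_0(m_\ast) = m_\ast$ and $E_{t_0}(m_\ast) = p$. The earthquake tangent vector at $p$ is a tangent vector to $\teich_L(S)$, so $f'(t_0) = d\ell_c|_p(\dot E_{t_0}) = 0$ since $p$ is critical. Strict convexity (Fact \ref{fact : convexity}) then forces $t_0$ to be the unique minimum of $f$, so $f(t_0) \le f(0)$, i.e.\ $\ell_c(p) \le \ell_c(m_\ast)$. Combined with $m_\ast$ being the global minimum this gives equality, and uniqueness of the minimizer from the previous paragraph forces $p = m_\ast$. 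So the unique critical point of $\ell_c$ is the unique minimum, which is then the unique fixed point of the Hamiltonian flow of $\ell_c$.

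Finally I would conclude periodicity from two-dimensionality. By Fact \ref{fact : teich of some surfaces is 2 dim} the symplectic manifold $\teich_L(S)$ is two-dimensional, and the Hamiltonian flow of $\ell_c$ preserves $\ell_c$, so every orbit lies in a level set. Any level set above the minimum value avoids the unique critical point, so by the implicit function theorem and Fact \ref{fact 1 : properness} it is a compact $1$-submanifold of $\teich_L(S)$, hence a disjoint union of circles. On such a regular level set, $H_{\ell_c}$ is tangent to the level set and nonzero (else $d\ell_c$ would vanish there), so on each component circle the flow is a non-vanishing vector field on a compact $1$-manifold and is therefore periodic. The one remaining orbit is the fixed point $m_\ast$, which is trivially periodic.

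The main obstacle is really step three, ensuring that the unique minimum is the only critical point; but with the Earthquake Theorem and strict convexity of length along earthquake paths both available as black boxes, the argument reduces to the elementary observation that a strictly convex function of one variable has at most one point where the derivative vanishes. All subsequent conclusions are essentially formal consequences of working in a two-dimensional symplectic manifold with a proper Hamiltonian having a single critical point.
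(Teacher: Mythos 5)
There is a genuine gap, and it is a gap of target rather than of detail: the statement you were asked to prove is Fact \ref{fact : convexity} itself, namely Kerckhoff's theorem that $t\mapsto \ell_c(E_t(m))$ is strictly convex along every earthquake path when $c$ is filling. Your proposal never proves this. Instead it invokes Fact \ref{fact : convexity} as one of the available black boxes and uses it, together with Facts \ref{fact : teich of some surfaces is 2 dim}, \ref{fact 1 : properness} and \ref{fact 2 : earthquake theorem}, to derive the downstream result (Theorem \ref{thm : periodicity for teichmueller} on unique fixed points and periodic orbits). As an argument for the stated fact this is circular: strict convexity appears as a hypothesis in your second and third paragraphs, so nothing in the proposal bears on why length functions of filling curves are convex along earthquakes in the first place. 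In the paper this fact is not proved at all; it is quoted from \cite{Kerkhoff_NielsenRealization} (Theorem 2), and an actual proof requires the geometric input there: Kerckhoff's/Wolpert's derivative formula expressing $\frac{d}{dt}\ell_c(E_t(m))$ as a sum (or integral against the transverse measure) of cosines of the angles at which the geodesic representative of $c$ crosses the fault lamination, together with the monotonicity statement that each such angle strictly decreases along the earthquake; strictness uses precisely that a filling curve must cross the lamination. None of this analysis appears in your argument, so the statement remains unproved.

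As a secondary remark, viewed as a proof of Theorem \ref{thm : periodicity for teichmueller} your chain of reasoning is correct and essentially identical to the paper's proof in the introduction (properness gives a minimum, convexity along earthquakes plus the Earthquake Theorem gives uniqueness, two--dimensionality plus compactness of regular level sets gives periodicity); your extra paragraph showing that any critical point must coincide with the minimum is a slightly more explicit version of the step the paper passes over quickly. But that theorem is not the statement under review.
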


With these three results, we can prove Theorem \ref{thm : periodicity for teichmueller}.\\

\begin{proof}[of Theorem \ref{thm : periodicity for teichmueller}]
	To start, we make the same observation made in \cite[pp. 236]{Kerkhoff_NielsenRealization}, which we restate here and which holds for any surface $S$ of negative Euler characteristic. Since the function $\ell_c$ is proper by Fact \ref{fact 1 : properness} and $\ell_c$ is a positive function, it realizes a minimum. By Fact \ref{fact : convexity}, $\ell_c$ is strictly convex along any earthquake path. Since any pair of points in $\teich_L(S)$ can be connected by an earthquake path by Fact \ref{fact 2 : earthquake theorem}, $\ell_c$ attains a unique minimum. In particular, the function $\ell_c$ has a unique critical point. Therefore, the Hamiltonian flow of $\ell_c$ has a unique fixed point, corresponding to the minimum.\\
	
	Now we show that every orbit is periodic, where we need $S$ to be one of the surfaces in the statement of the theorem. Let $M$ be a non--empty level set of $\ell_c$ that does not correspond to the fixed point. In particular, $M$ is a regular level set and therefore a smooth codimension one submanifold of $\Teich_L(S)$. The Hamiltonian flow of $\ell_c$ preserves the level sets of $\ell_c$ (see for example \cite[pp. 99]{IntroSymplTop_McDuffSalamon}).  Properness of $\ell_c$ implies that $M$ is compact. By the fact that $\teich_L(S)$ is two--dimensional by Fact \ref{fact : teich of some surfaces is 2 dim}, $M$ is a compact one--dimensional manifold. Therefore $M$ is a topological circle. Once again, since $M$ does not contain any fixed points, the Hamiltonian vector field restricted to $M$ is bounded away from zero. This implies that the orbit is the whole level set $M$ and is therefore periodic.
\end{proof}

The strategy to prove Theorem \ref{thm Intro : main theorem} is to prove analogous results to Facts \ref{fact : teich of some surfaces is 2 dim}, \ref{fact 1 : properness}, \ref{fact 2 : earthquake theorem} and \ref{fact : convexity} for the symplectic leaves of $\widehat\DefSpace(\pants)$, where $P$ is the pair of pants. As mentioned above, a symplectic leaf $\SymLeaf$ of $\widehat\DefSpace(\pants)$ is a two--dimensional manifold (see Lemma \ref{cor : parametrization of symplectic leaf as level sets} for their parameterization). Hence, we already have an analogue of Fact \ref{fact : teich of some surfaces is 2 dim}. For properness (Fact \ref{fact 1 : properness}), we prove the following.

\begin{propA}[Proposition \ref{prop : trace of figure 8 is proper}]\label{prop : Intro properness figure 8}
		Let $\figeight = \alpha\gamma^{-1}$ be the figure eight curve and let $\SymLeaf\subset \widehat\DefSpace(\pants)$ be a symplectic leaf. Then the function $\trace_\figeight\big|_{\SymLeaf}\colon\SymLeaf\to\R$ is proper and positive. In particular, it realizes a minimum in $\SymLeaf$.
\end{propA}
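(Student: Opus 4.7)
The plan is to use the explicit Fock--Goncharov description of $\widehat\DefSpace(\pants)$ to express $\trace_\figeight$ as a Laurent polynomial with positive coefficients in the coordinates, and then to derive both positivity and properness by a direct monomial analysis on the symplectic leaf.

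First, I would fix an ideal triangulation of $\pants$ with one marked vertex on each boundary component, producing two ideal triangles glued along three internal edges. In these coordinates, $\widehat\DefSpace(\pants)$ is identified with $\R^8_{>0}$, with two triple ratios (one per triangle) and six edge parameters (two per internal edge); see Section \ref{sec : Big FG coords}. By Lemma \ref{cor : parametrization of symplectic leaf as level sets}, the symplectic leaf $\SymLeaf$ is parameterized by two coordinates, which I would take to be $(\sigma_1,\tau_1)\in\R_{>0}^2$, obtained by solving the six Casimir equations that fix the conjugacy classes of the boundary holonomies $\rho(\alpha), \rho(\beta), \rho(\gamma)$.

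Next, using the reconstruction of the representation from the Fock--Goncharov coordinates, I would express $\rho(\alpha)$ and $\rho(\gamma)^{-1}$ as explicit products of elementary edge and triangle matrices, each of whose nonzero entries is a Laurent monomial with positive coefficient in the eight coordinates. Multiplying out $\rho(\alpha\gamma^{-1})$ and taking the trace produces a Laurent polynomial with positive coefficients in the eight coordinates. Substituting the Casimir parameterization yields a function $F(\sigma_1,\tau_1)$ on $\R_{>0}^2$ of the same form, where the coefficients now depend positively on the fixed boundary data. Positivity of $\trace_\figeight\big|_{\SymLeaf}$ is then immediate, since $F$ is a sum of positive monomials evaluated at positive numbers.

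For properness, I must show that $F\to\infty$ along every sequence leaving every compact subset of $\R_{>0}^2$. The key observation is that, because all coefficients of $F$ are strictly positive, no cancellations can occur, and it suffices to exhibit, for each direction of escape, a single monomial whose value tends to $\infty$. Concretely, this amounts to verifying that the Newton polygon of $F$ in $\Z^2$ contains the origin in its interior, so that each of the four escape directions $\sigma_1\to 0, \infty$ and $\tau_1\to 0, \infty$ activates at least one monomial of $F$. I would certify this by identifying a handful of specific monomials in the Laurent expansion of $F$ arising from the principal ``snakes'' in the Fock--Goncharov trace formula for $\alpha\gamma^{-1}$, whose exponents span the four quadrants.

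The main obstacle is the explicit trace computation together with the Newton polygon bookkeeping: while the qualitative outcome is strongly suggested by the level-set picture in Figure \ref{fig : Intro level sets}, one must carry out the matrix product carefully and, crucially, verify that the Casimir reduction preserves positivity of all coefficients, so that the monomial argument survives the passage from $\widehat\DefSpace(\pants)$ to the symplectic leaf and does not rely on cancellations introduced by the substitution.
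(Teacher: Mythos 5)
Your overall strategy coincides with the paper's: parameterize the leaf by $(\sigma_1,\tau_1)$ via Lemma \ref{cor : parametrization of symplectic leaf as level sets}, write $\trace_\figeight\big|_{\SymLeaf}$ as an explicit Laurent expression in these coordinates all of whose monomials carry positive coefficients (this is Lemma \ref{lemma : trace figure eight in coordinates}), and then get positivity for free and properness by exhibiting, for each way of escaping compact sets of $\R^2_{>0}$, a single monomial that diverges (Proposition \ref{prop : trace of figure 8 is proper} does exactly this by a case analysis; your Newton--polygon phrasing is an equivalent repackaging of the same monomial bookkeeping).

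There is, however, one step whose justification fails as stated. The Fock--Goncharov elementary matrices do \emph{not} have entries that are Laurent monomials with positive coefficients: in the paper's conventions $T(x)$ and $E(z,w)$ each contain entries equal to $-1$ (and $T(x)$ also $-1-x$ up to the prefactor), and correspondingly the holonomies $\rho(\beta)$ and $\rho(\gamma)$ computed in Section \ref{sec : holonomies pair of pants description} have negative entries. So positivity of the coefficients of $\trace_{\alpha\gamma^{-1}}$ cannot be read off entrywise from the factorization; it must be verified after expanding the product and taking the trace, which is precisely what the paper's explicit (Mathematica) computation does. Note also that your worry is located in slightly the wrong place: the Casimir reduction is harmless, since Lemma \ref{cor : parametrization of symplectic leaf as level sets} replaces each coordinate by a positive constant times a power of $\sigma_1$ or $\tau_1$, which manifestly preserves positivity of coefficients; the sign issue is upstream, in the matrix product itself. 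Finally, be careful that checking only the four axis limits $\sigma_1\to 0,\infty$ and $\tau_1\to 0,\infty$ does not by itself give properness (mixed regimes such as $\sigma_1\to\infty$, $\tau_1\to 0$ must be covered); what suffices, and what does hold here, is your stronger statement that the exponent set meets all four open quadrants --- for instance on the unipotent locus the monomials $\sigma_1^2\tau_1$, $\sigma_1^2\tau_1^{-1}$, $\sigma_1^{-1}\tau_1$ and $\sigma_1^{-1}\tau_1^{-1}$ all appear --- so the origin lies in the interior of the Newton polygon and every escaping sequence is handled, in agreement with the paper's case-by-case argument.
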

This is proved by explicit computations of the trace function in Fock--Goncharov coordinates. There is no proper analogue of the earthquake theorem, but we work with the \emph{eruption} and the \emph{hexagon} flow (in analogy with the flows defined in \cite{WienhardZhang_Deforming,FlowsPGLVHitchin_SWZ}), which we discuss in Section \ref{sec : Hamiltonian vector fields and flows}. These flows commute, and any two points in $\SymLeaf$ can be connected by a combination of these two flows, as stated in Lemma \ref{lemma : any two points are connected by a mixed flow}. This provides a working analogue of Fact \ref{fact 2 : earthquake theorem}.\\

 For an analogue of Fact \ref{fact : convexity} about the convexity of length functions along earthquake paths, we prove the following result, which seems to be of independent interest.
\begin{thmA}[Theorem \ref{thm : convexity of the trace function along mixed flows}]\label{thm : Intro convexity}
		Let $\figeight=\alpha\gamma^{-1}$ be a figure eight curve and let $\SymLeaf\subset \widehat\DefSpace(\pants)$ be a symplectic leaf. Then the trace function $\trace_\figeight\big|_\SymLeaf\colon\SymLeaf\to\R$ is strictly convex along the eruption and the hexagon flows.
\end{thmA}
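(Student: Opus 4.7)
The plan rests on two ingredients: an explicit positive Laurent expansion of $\trace_\figeight$ in Fock--Goncharov coordinates, and the fact that the eruption and hexagon flows act by additive translation on those coordinates. Since the two flows leave the Casimirs cutting out $\SymLeaf$ invariant, it suffices to prove strict convexity along each flow separately.

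First, I would invoke the explicit formula for $\trace_\figeight$ in Fock--Goncharov coordinates that is produced in the proof of Proposition \ref{prop : Intro properness figure 8}. Fix the ideal triangulation of $\pants$ used to set up the coordinates. The holonomy of a loop in a positive framed representation is computed as a product of elementary edge and triangle matrices, and for the positive locus this product, hence its trace, is a Laurent polynomial with positive coefficients in the exponentials of the coordinates. On $\widehat\DefSpace(\pants)$ one can therefore write
\[
\trace_\figeight \;=\; \sum_{i} c_i\, e^{L_i(x)},\qquad c_i>0,
\]
where $x$ denotes the tuple of Fock--Goncharov coordinates and the $L_i$ are integer linear forms determined by the combinatorial expansion.

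Next, by the description of the eruption flow $\eruption_t$ and the hexagon flow $\hexagon_t$ recalled in Section \ref{sec : Hamiltonian vector fields and flows}, each acts by an additive translation of specific coordinates and fixes the remaining ones. Substituting the flow into the expansion above, the restriction of $\trace_\figeight$ to any orbit takes the form
\[
t \;\longmapsto\; \sum_{i} \tilde c_i(x)\, e^{a_i t},
\]
with $\tilde c_i(x) > 0$ and integers $a_i\in\Z$ determined by how $L_i$ pairs with the translated coordinate. Differentiating twice yields $\sum_i \tilde c_i(x)\, a_i^2\, e^{a_i t}$, a non-negative sum of exponentials, and strict positivity holds precisely when some $a_i$ with $\tilde c_i(x)>0$ is nonzero.

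The main obstacle, and the step that requires genuine input from the figure eight, is the non-degeneracy check: along each of the two flows, at least one monomial in the positive expansion must have a nonzero exponent. I would verify this by direct inspection of the explicit formula for $\trace_\figeight$ already at hand, exploiting that $\figeight=\alpha\gamma^{-1}$ fills $\pants$ and that its word in $\pi_1(\pants)$ genuinely detects both the triangle coordinate translated by the eruption flow and the edge coordinate translated by the hexagon flow; consequently these coordinates cannot cancel out of the full expansion. Combined with the positivity of the coefficients $\tilde c_i(x)>0$, this yields strict convexity of $\trace_\figeight\big|_{\SymLeaf}$ along both the eruption and the hexagon flows, which is the statement of the theorem.
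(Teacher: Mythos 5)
Your proposal isolates exactly the mechanism that makes the paper's explicit Mathematica check work: $\trace_\figeight\big|_{\SymLeaf}$ is a Laurent polynomial in $(\sigma_1,\tau_1)$ with positive coefficients, and the two flows rescale $\sigma_1$ and $\tau_1$ multiplicatively, so along either flow the function becomes a finite positive combination of exponentials $\sum_i c_i e^{a_i t}$, whose second derivative $\sum_i c_i a_i^2 e^{a_i t}$ is nonnegative term by term. The paper arrives at the same conclusion by substituting the flow into the formula of Lemma \ref{lemma : trace figure eight in coordinates}, differentiating twice, and observing by inspection that every monomial in the output has a nonnegative coefficient (squares, positive lengths, exponentials of $t$). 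Your phrasing is more conceptual, but the content coincides.

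Two caveats. First, the assertion that the trace ``hence'' has a positive Laurent expansion because it is built from products of the elementary matrices $T$ and $E$ is not a valid deduction: both $T(x)$ and $E(z,w)$ have negative entries, so a priori signs could survive in the product and in the trace. Positivity of the resulting Laurent polynomial must be \emph{verified}, which the paper does precisely by displaying the formula in Lemma \ref{lemma : trace figure eight in coordinates}. Since you plan to take that formula as input, the argument goes through, but you should not present the positivity as a formal consequence of the matrix form. Second, as stated your proof only handles the pure eruption and hexagon flows; the full statement the paper proves and later uses (Theorem \ref{thm : convexity of the trace function along mixed flows}) is convexity along \emph{every} mixed flow $\Psi^t_a$, and strict convexity along two coordinate directions does not imply strict convexity along linear combinations. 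Your framework does extend: along $\Psi^t_a$ the monomial $\sigma_1^{p}\tau_1^{q}$ contributes $e^{(p+aq)t}$, and since the expansion contains monomials with non-proportional exponent vectors $(p,q)$, one cannot have $p_i+aq_i=0$ for all $i$ simultaneously — but this non-degeneracy across all $a$ should be stated explicitly, not only the two axial cases.
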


The proof relies heavily on the positivity of the Fock--Goncharov coordinates.

\begin{remark}
	As shown in \cite{GhostPolygons}, certain notions of length functions on geodesic laminations are convex along Hamiltonian flows of length functions. Theorem \ref{thm : Intro convexity} gives another type of flow along which length functions of certain curves (here interpreted as a trace function) are convex.
\end{remark}

The proof of Theorem \ref{thm Intro : main theorem} is then exactly the same as that of Theorem \ref{thm : periodicity for teichmueller} using all of the above results. The individual proofs of Proposition \ref{prop : Intro properness figure 8} and Theorem \ref{thm : Intro convexity} go through explicit computations using Mathematica and the Fock--Goncharov coordinates.
\begin{remark}
	In Equation \eqref{eq : Hamiltonian vector field in symplectic leaf in coordinates sigma1 tau1} we also provide a formula for the Hamiltonian vector field of a function given in coordinates $\sigma_1,\tau_1$ of a symplectic leaf. The resulting expressions for the trace of the figure eight curve are explicit, but it is still hard to find a closed--form formula for the solution of the differential equation. However, we can use Mathematica (and Python) to numerically solve the equations. Throughout the paper, we show some of the numerical solutions in simple cases.
\end{remark}

\subsection{Beyond the figure eight curve}
Theorem \ref{thm : periodicity for teichmueller} seems to suggest that if $\gamma$ is any \emph{filling} curve in $\pants$ and $\SymLeaf\subset \widehat\DefSpace(\pants)$ is a symplectic leaf, then an analogue of Theorem \ref{thm Intro : main theorem} should hold. Namely, that every orbit of the Hamiltonian flow of $\trace_\gamma\big|_\SymLeaf$ is periodic and that there is a unique fixed point corresponding to the minimum of the function. As our methods rely on explicit computations, they do not allow to make a very general statement about any curve. However, to provide evidence for the periodicity of Hamiltonian flows associated to filling curves, we provide two more examples.\\

For this, we focus on the \emph{unipotent locus} $\unipotentLocus\subset\widehat\DefSpace(\pants)$ (see Definition \ref{def : unipotent locus}). These are the framed convex projective structures whose holonomies for the peripheral curves are \emph{unipotent}, by which we mean that all of the eigenvalues are equal to one. With the same methods as above, we prove 
\begin{thmA}[Theorem \ref{thm : commutator periodicity and unique fixed point} and Theorem \ref{thm : periodicity alpha^kgammainverse}]\label{thm : intro commutator and k intersections}
	Consider the curves $[\alpha,\gamma]$ (see Figure \ref{fig : commutator}) and $\alpha^k\gamma^{-1}$ for $k\in\N_{>0}$ (see Figure \ref{fig : k self intersections}). The restriction of the trace functions $\trace_{[\alpha,\gamma]}\big|_{\unipotentLocus}\colon\unipotentLocus\to\R$ and $\trace_{\alpha^k\gamma^{-1}}\big|_{\unipotentLocus}\colon\unipotentLocus\to\R$ attain a unique minimum. Moreover, every orbit of the  Hamiltonian flows of $\trace_{[\alpha,\gamma]}\big|_{\unipotentLocus}$ and $\trace_{\alpha^k\gamma^{-1}}\big|_{\unipotentLocus}$ is periodic and there is a unique fixed point.
\end{thmA}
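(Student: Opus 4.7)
The plan is to run the same four-step template used to deduce Theorem A from Facts \ref{fact : teich of some surfaces is 2 dim}--\ref{fact : convexity}, but now with the symplectic leaf $\SymLeaf$ replaced by the unipotent locus $\unipotentLocus$ and the curve $\figeight$ replaced successively by the commutator $[\alpha,\gamma]$ and by $\alpha^k\gamma^{-1}$. The first step is the analogue of Fact \ref{fact : teich of some surfaces is 2 dim}: I would verify that $\unipotentLocus \subset \widehat\DefSpace(\pants)$ is a smooth symplectic leaf of dimension two. Since $\unipotentLocus$ is cut out by fixing all three peripheral holonomies to be unipotent, it should appear as a distinguished level set of the Casimir functions, so the parametrization lemma analogous to Lemma \ref{cor : parametrization of symplectic leaf as level sets} will give global Fock--Goncharov coordinates $(\sigma_1,\tau_1)$ on $\unipotentLocus$ and an explicit formula for the symplectic form and the Hamiltonian vector field, just as in Equation \eqref{eq : Hamiltonian vector field in symplectic leaf in coordinates sigma1 tau1}.

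Next I would prove the analogue of Fact \ref{fact 1 : properness}: properness and positivity of $\trace_{[\alpha,\gamma]}\big|_\unipotentLocus$ and $\trace_{\alpha^k\gamma^{-1}}\big|_\unipotentLocus$ as functions on $\unipotentLocus$. The curves $[\alpha,\gamma]$ and $\alpha^k\gamma^{-1}$ are filling in $\pants$, and on the unipotent locus the peripheral matrices have an explicit form in Fock--Goncharov coordinates. By writing $\rho([\alpha,\gamma])$ and $\rho(\alpha^k\gamma^{-1})$ as words in the peripheral matrices and expanding, I would obtain a trace expression that is a Laurent polynomial with positive coefficients in the exponentiated coordinates, forcing the trace to blow up as $(\sigma_1,\tau_1)$ escapes every compact set of the leaf; this mimics the proof of Proposition \ref{prop : Intro properness figure 8}. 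Positivity of the trace together with properness then guarantees that a minimum is attained.

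The third and hardest step is the analogue of Fact \ref{fact : convexity}: strict convexity of each trace function along the eruption flow and along the hexagon flow. Just as in Theorem \ref{thm : Intro convexity}, this reduces to showing that certain explicit one--variable functions $t \mapsto \trace(\rho_t([\alpha,\gamma]))$ and $t \mapsto \trace(\rho_t(\alpha^k\gamma^{-1}))$ have positive second derivative for all $t$ along both the eruption and the hexagon trajectories. Using the closed--form action of the two flows on Fock--Goncharov coordinates, each trace becomes a sum of exponentials $\sum_i a_i e^{\lambda_i t}$ whose second derivative is $\sum_i a_i \lambda_i^2 e^{\lambda_i t}$. The hope is that positivity of the Fock--Goncharov coordinates on $\unipotentLocus$ forces all the coefficients $a_i$ to be non--negative with at least one strictly positive and at least one $\lambda_i \neq 0$, which yields strict convexity. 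I expect this to be the main obstacle: for $\alpha^k\gamma^{-1}$, the number of monomials contributing to the trace grows with $k$, so the task is to exhibit a uniform positivity pattern as $k$ varies, and for $[\alpha,\gamma]$ one must deal with potentially cancelling terms arising from the commutator. I would attack both cases with the same Mathematica--assisted expansion used in the proof of Theorem \ref{thm : Intro convexity}, and then package the combinatorial positivity into a single lemma covering both families.

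Once these three ingredients are in place, the final step is formally identical to the proof of Theorem \ref{thm : periodicity for teichmueller} transcribed in the text. An analogue of Fact \ref{fact 2 : earthquake theorem} is already available via Lemma \ref{lemma : any two points are connected by a mixed flow}, which states that any two points of the leaf are joined by a composition of eruption and hexagon paths; combined with strict convexity along each factor, this forces the minimum to be unique, hence the unique critical point of the trace on $\unipotentLocus$. Properness then makes every non--critical level set a compact smooth $1$--manifold in the $2$--dimensional leaf $\unipotentLocus$, i.e.\ a disjoint union of circles; since the Hamiltonian vector field preserves level sets and is nonvanishing off the fixed point, each such circle is a single periodic orbit. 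This yields periodicity of every orbit and uniqueness of the fixed point for both $\trace_{[\alpha,\gamma]}\big|_{\unipotentLocus}$ and $\trace_{\alpha^k\gamma^{-1}}\big|_{\unipotentLocus}$, proving Theorem \ref{thm : intro commutator and k intersections}.
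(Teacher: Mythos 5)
Your overall template is the one the paper follows: the unipotent locus is the symplectic leaf $\SymLeaf_{(1,1,1,1,1,1)}$ with global coordinates $(\sigma_1,\tau_1)$ from Lemma \ref{cor : parametrization of symplectic leaf as level sets}, the traces are computed explicitly on $\unipotentLocus$, properness follows by exhibiting divergent positive monomials along every sequence leaving compact sets, convexity is checked by a Mathematica expansion into exponentials with positive coefficients, and the endgame is verbatim the proof of Theorem \ref{thm : periodicity for teichmueller}. Your anticipated difficulties largely dissolve in practice: on $\unipotentLocus$ the trace of $\alpha^k\gamma^{-1}$ is the single rational expression
\[
\frac{k^2(\sigma_1+1)^3(\tau_1+1)^2+k(\sigma_1+1)^3(\tau_1+1)^2+6\sigma_1\tau_1}{2\sigma_1\tau_1},
\]
so the positivity pattern is uniform in $k$ for free, and the commutator trace likewise has only positive monomials, so no cancellation issue arises.

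There is, however, a genuine gap in your convexity-to-uniqueness step. You propose to prove strict convexity only along the eruption flow and along the hexagon flow, and then argue that, since any two points are joined by ``a composition of eruption and hexagon paths,'' convexity ``along each factor'' forces the minimum to be unique. That inference fails: strict convexity along the two coordinate flows separately does not imply a unique minimizer. In coordinates $(x,y)=(\log\sigma_1,\log\tau_1)$ the two flows are the axis translations, and a function such as $(x-y)^2$ is strictly convex along every horizontal and every vertical line yet has a whole line of minima; concatenating an eruption segment with a hexagon segment and using convexity on each piece gives no contradiction with two distinct minima. What the argument actually needs, and what the paper proves (Proposition \ref{prop : commutator strictly convex} and Proposition \ref{prop : a^kgamma^-1 convexity}), is strict convexity along \emph{every mixed flow} $\Psi^t_a=\HmFlow^{at}_\hexagon\circ\HmFlow^t_\eruption$ (and the companion family), i.e.\ along every line in $(x,y)$; Lemma \ref{lemma : any two points are connected by a mixed flow} then joins any two points by a \emph{single} such flow line, on which a strictly convex function cannot attain its minimum twice. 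So you must strengthen your step three to the full one-parameter family of mixed flows — the same Mathematica computation, but with the extra parameter $a$ carried through and the second derivative shown positive for all $a\in\R$ — after which the rest of your argument goes through exactly as written.
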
 

In Section \ref{sec : unipotent locus} we also find the fixed points of the Hamiltonian flows appearing in the above theorem.

\subsubsection{Conjugating matrices for eruption and hexagon flows} In the unipotent locus, the expressions for the holonomies of the matrices simplify significantly, allowing us to make even more computations. The symplectic leaves of $\widehat\DefSpace(\pants)$ correspond to relative character varieties (see Lemma \ref{lemma : symplectic leaves are exactly relative character varieties}), which themselves are subsets of $\CharVar_3(\pants)$ where the holonomies of the peripheral elements are in fixed chosen conjugacy classes. Since $\pi_1(\pants)$ is generated by the peripheral elements $\alpha,\beta$ and $\gamma$ and the flows remain in a symplectic leaf, the flows must be realized by a conjugation of the peripheral elements. In Theorem \ref{thm : conjugating matrices for eruption} we find matrices $\zeta^\alpha_t,\zeta^\beta_t,\zeta^\gamma_t$ in $\PSL 3$ such that the flow of representations
 \[
\rho_t = \begin{cases}
 	\alpha\mapsto \zeta_t^\alpha\rho(\alpha)(\zeta_t^\alpha)^{-1}\\
 	\beta\mapsto \zeta_t^\beta\rho(\beta)(\zeta_t^\beta)^{-1}\\
 	\gamma\mapsto \zeta_t^\gamma\rho(\gamma)(\zeta_t^\gamma)^{-1}
 \end{cases}
\]
covers the holonomies of the eruption flow on the unipotent locus $\unipotentLocus$. Since $\rho_t$ are representations of the fundamental group of the pair of pants, the above matrices are a solution to
\[
\zeta_t^\alpha\rho(\alpha)(\zeta_t^\alpha)^{-1}\zeta_t^\beta\rho(\beta)(\zeta_t^\beta)^{-1}\zeta_t^\gamma\rho(\gamma)(\zeta_t^\gamma)^{-1} = \identity.
\]
This is a particular instance of a solution to the \emph{Deligne--Simpson problem} \cite{Kostov_Deligne-SimpsonProblem} and is also solved in our particular case in \cite[Section 4.2]{EvProds_KenyonOvenhouse}. Similarly, we describe in Theorem \ref{thm : conjugating matrices for hexagon} the hexagon flow in terms of conjugations of the peripheral elements.

\subsection{Questions}
In this article we only address very specific self--intersecting curves and as mentioned above, our methods rely on explicit computations. We may therefore ask the following

\begin{question}
Let $c$ be a filling curve (equivalently a self--intersecting curve) in the pair of pants $P$. Is every orbit of the Hamiltonian flow of $\trace_c$ restricted to a symplectic leaf $\SymLeaf\subset\widehat\DefSpace(P)$ periodic with a unique fixed point?
\end{question}

An important fact used to prove Theorem \ref{thm Intro : main theorem} and Theorem \ref{thm : periodicity for teichmueller} is that the symplectic leaves all have dimension two. This is a consequence of the topology of the surfaces we work with. This then motivates the following
\begin{question}
Let $c$ be a filling curve in a surface $S$ with negative Euler characteristic. Do there exist periodic orbits of the Hamiltonian flow of $\trace_c$ restricted to a symplectic leaf $\SymLeaf\subset\widehat\DefSpace(S)$?
\end{question}

We may ask the same question when considering length functions on the Teichm\"uller space $\teich_L(S)$. In this case, the length function of any filling curve is proper and there is a unique minimum. The above question therefore only asks if there exist periodic orbits (outside of the minimum).
\subsection{Brief explanation for the Mathematica code}
\sloppy Most of the results in this article are aided by computations done with Mathematica, and plots made using Python. The code can be downloaded here \href{https://github.com/CamachoCadena/Periodic-orbits-of-Hamiltonian-flows.git}{https://github.com/CamachoCadena/Periodic-orbits-of-Hamiltonian-flows.git}.\\

In order to run the Mathematica code, all sections must be run in the order they appear. The code is adapted so that it can take in different inputs; therefore not all the equations in this article will appear as output of the code. Rather, the user must run the appropriate sections of the code, potentially giving inputs themselves, so that the desired output is shown. Throughout the article, we will explain which sections are necessary to run. The sections in the code are numbered, and we briefly explain their contents and where they are used.
\begin{enumerate}
	\item \textit{Fock--Goncharov's reconstruction of the holonomy through coordinates:} From the construction recalled in Section \ref{sec : reconstructing the represenatation}, this section computes the holonomies of the boundary curves as shown in Section \ref{sec : holonomies pair of pants description}.
	\item \textit{Casimir functions (ratios of eigenvalues):} Computes the ratios of eigenvalues of the boundary curves presented in Lemma \ref{lemma : eigenvalue ratios}, as well as the Jacobian matrix induced by the Casimir functions used in the proof of Lemma \ref{lemma : casimir functions pair of pants}.
	\item \textit{Parameterization of symplectic leaves:} This is the computation needed in Lemma \ref{cor : parametrization of symplectic leaf as level sets}.
	\item \textit{Functions on symplectic leaves:} Defines trace functions on the symplectic leaves. The functions are used throughout Sections \ref{sec : trace of figure eight curve} and \ref{sec : unipotent locus}.
	\item \textit{Hamiltonian vector field:} Computes the Hamiltonian vector field in a symplectic leaf. This is used in Sections \ref{sec : numerical sol to Ham flow fig 8}, \ref{sec : commutator unipotent locus} and \ref{sec : alpha^kgamma^-1}.
	\item \textit{Convexity:} Computes second derivatives along different flows and is used in the proof of Theorem \ref{thm : convexity of the trace function along mixed flows}, Proposition \ref{prop : commutator strictly convex}.
	\item \textit{Computations for $\alpha^k \gamma^{-1}$ in the unipotent locus:} Computes second derivatives along different flows used in the proof of Proposition \ref{prop : a^kgamma^-1 convexity}, as well as the Hamiltonian vector field in Section \ref{sec : alpha^kgamma^-1}.
	\item \textit{Conjugating matrices for the eruption flow:} Verifies Theorem \ref{thm : conjugating matrices for eruption}.
	\item \textit{Conjugating matrices for the hexagon flow:} Verifies Theorem \ref{thm : conjugating matrices for hexagon}.
\end{enumerate}
The Python code is used to make plots and find numerical solutions to differential equations, as well as for Remark \ref{rmk : periods are not the same}. It is not used in any of the proofs.

\subsection{Organization of the article} In Section \ref{sec : Prelim Periodic} we recall some basics in Poisson geometry, cross ratios, and framed convex projective structures. In Section \ref{sec : Big FG coords} we recall the Fock--Goncharov coordinates and give the matrices of boundary curves of the pair of pants in coordinates. In Section \ref{sec : Poisson structure and symplectic leaves} we compute the Poisson structure of $\widehat\DefSpace(P)$, find the Casimir functions, parameterize the symplectic leaves, and give a formula for the symplectic form. In Section \ref{sec : trace of figure eight curve} we prove Theorem \ref{thm Intro : main theorem} and Corollary \ref{cor Intro : web periodicity}. In Section \ref{sec : unipotent locus} we focus on the unipotent locus and prove Theorem \ref{thm : intro commutator and k intersections}, as well as Theorems \ref{thm : conjugating matrices for eruption} and \ref{thm : conjugating matrices for hexagon}.

\subsection*{Acknowledgements} 
I would like to thank my advisors Anna Wienhard and James Farre for suggesting the problem, suggesting to explore it computationally and for helpful discussions. I would especially like to thank Marit Bobb for explaining and helping me compute the symplectic form on symplectic leaves from the Poisson structure in Section \ref{sec : Poisson structure computation}. I  would also like to thank Bill Goldman for suggesting to consider the symmetric trace, Zachary Greenberg for discussions on Fock--Goncharov coordinates, and Tengren Zhang for his remarks on framed representations. Finally, I thank Arnaud Maret for his helpful comments on the introduction and especially his remarks on symplectic structures on relative character varieties and useful references.
\tableofcontents 
\section{Preliminaries}\label{sec : Prelim Periodic}

\subsection{Some Poisson geometry}\label{sec : prelim poisson geometry}
Here we follow \cite{PoissonGeom} and \cite{Weinstein_PoissonManifolds}. We begin with a definition.
\begin{definition}
	A \emph{Poisson manifold} $M$ is a smooth manifold endowed with a \emph{Poisson bracket} $\{\cdot,\cdot\}$ on $C^\infty(M)$.
	
	%, that is, a bracket
	%\[
	%\left\{\cdot,\cdot\right\}\colon C^\infty(M)\times C^\infty(M)\to C^\infty(M)
	%\]
	%which is skew--symmetric, bilinear and satisfies the following relations for every $f,g,h\in C^\infty(M)$:	\begin{enumerate}
	%	\item $\{f,g\cdot h\} = g\cdot\{f,h\}+h\cdot\{f,g\}$ (Leibniz rule),
	%	\item $\{f,\{g,h\}\}+\{g,\{h,f\}\} + \{h,\{f,g\}\} = 0$ (Jacobi identity).
	%\end{enumerate}
\end{definition}

Given a function $f\in C^\infty(M)$, its \emph{Hamiltonian vector field} is the vector field $\Hm f\in\vf{M}$ satisfying
\begin{equation}\label{eq : definition of Hamiltonian vector field}
dg(\Hm f)  = \{g,f\} 
\end{equation}
for every $g\in C^\infty(M)$. The flow of the Hamiltonian vector field of a function $f\in C^\infty(M)$ at time $t\in\R$ is denoted by $\Phi^t_f\colon M\to M$.

\begin{lemma}\label{lemma : Hamiltonian flows commute if Poisson bracket is constant}
	Let $f,g\in C^\infty(M)$ and assume that there is a constant $c\in\R$ such that $\{f,g\}\equiv c$. Then the Hamiltonian vector fields $\Hm f$ and $\Hm g$ commute.
\end{lemma}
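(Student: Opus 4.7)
The plan is to show that the Lie bracket of vector fields $[\Hm f, \Hm g]$ vanishes, since flows of vector fields commute if and only if their Lie bracket is zero (this is standard, e.g.\ Lee's smooth manifolds). To compute $[\Hm f, \Hm g]$, I would test it against an arbitrary $h \in C^\infty(M)$ and unpack using the defining relation \eqref{eq : definition of Hamiltonian vector field}.

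Concretely, for any $h \in C^\infty(M)$,
\begin{align*}
[\Hm f, \Hm g](h) &= \Hm f\bigl(\Hm g(h)\bigr) - \Hm g\bigl(\Hm f(h)\bigr) \\
&= \Hm f\bigl(\{h,g\}\bigr) - \Hm g\bigl(\{h,f\}\bigr) \\
&= \{\{h,g\},f\} - \{\{h,f\},g\}.
\end{align*}
The Jacobi identity for the Poisson bracket rearranges this to $\{\{f,g\},h\}$. Since $\{f,g\} \equiv c$ is constant, I need the auxiliary fact that the Poisson bracket with a constant vanishes; this follows from the Leibniz rule applied to $\{1 \cdot 1, h\} = 2\{1,h\}$, giving $\{1,h\} = 0$, and hence $\{c,h\} = 0$ for any constant $c$. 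Therefore $\{\{f,g\},h\} = 0$ for every $h$, which forces $[\Hm f, \Hm g] = 0$.

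From $[\Hm f, \Hm g] = 0$, I conclude that the flows $\Phi^t_f$ and $\Phi^s_g$ commute. Each of these steps is quite short and routine; the only real content is the Jacobi identity plus the Leibniz-rule observation that $\Hm c = 0$ for constant $c$. There is no genuine obstacle here, but I would be careful with sign conventions: depending on whether one defines $\Hm f$ by $dg(\Hm f) = \{g,f\}$ (as done in the paper) or by $\{f,g\}$, the identity becomes either $[\Hm f, \Hm g] = \Hm{\{f,g\}}$ or its negative. Either way, since $\Hm{\mathrm{constant}} = 0$, the conclusion is the same. Alternatively, one could phrase the whole argument abstractly by invoking the Lie algebra homomorphism $f \mapsto \Hm f$ from $(C^\infty(M), \{\cdot,\cdot\})$ to $(\vf{M}, [\cdot,\cdot])$, but writing it out directly as above is more self-contained and fits the preliminaries section.
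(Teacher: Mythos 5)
Your proof is correct and follows essentially the same route as the paper: both reduce the statement to the identity $[\Hm_f,\Hm_g] = \Hm_{\{f,g\}}$ together with the observation that the Hamiltonian vector field of a constant vanishes. The only difference is that you derive the bracket identity directly from the Jacobi and Leibniz rules, whereas the paper simply cites it (da Silva, Section 18.3); your sign-convention caveat is also handled correctly.
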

\begin{proof}
	Since $\{f,g\}\equiv c$, it follows that $\Hm {\{f,g\}}\equiv 0$. Then by the fact that $\Hm {\{f,g\}} = [\Hm_f,\Hm_g]$ (see for example \cite[Section 18.3]{SymplecticGeom_daSilva}), the lemma follows.
\end{proof}

The Poisson bracket defines a \emph{cosymplectic structure}, which is a map $\omega^\vee\colon \Omega^1(M)\times\Omega^1(M)\to C^\infty(M)$ given locally by
\[
\omega^\vee(dx,dy) = \{x,y\},
\]
for local coordinates $x,y$ on $M$. The \emph{radical} of the cosymplectic structure is the subspace
\[
\Rad(\omega^\vee)\coloneqq \{\alpha\in\Omega^1(M)\st \omega^\vee (\alpha,\cdot) \equiv 0\}.
\]
%which encodes the degeneracy of the cosymplectic form. 
The dimension of the fibers of $\Omega^1(M)/\Rad(\omega^\vee)$ is called the \emph{rank} of the Poisson structure. %Related to the radical of the cosymplectic structure are the following types of functions.

\begin{definition}
	A \emph{Casimir function} is a function $f\in C^\infty(M)$ such that
	\[
	\{f,g\} = 0
	\]
	for every $g\in C^\infty(M)$. The set of Casimir functions forms a ring, denoted by $\ringCasimir(M)$.
\end{definition}
%By the Leibniz rule, the set of Casimir functions forms a ring in $C^\infty(M)$, which we write as $\ringCasimir(M)$. Moreover, we immediately see that the Hamiltonian vector field associated to any Casimir function is identically zero.\\

From the cosymplectic form, we obtain a map
\[
[\omega^\vee]\colon\Omega^1(M)/\Rad(\omega^\vee)\to\vf M
\]
defined by
\[
\beta([\omega^\vee]([\alpha])) = \omega^\vee(\alpha,\beta)
\]
for any $\alpha,\beta\in\Omega^1(M)$.\\ %Note that by the definition of the radical, $[\omega^\vee]$ is well defined.

%, i.e. for any $\alpha'\in\Rad(\omega^\vee)$, and $\alpha,\beta\in\Omega^1(M)$,
%\[
%\beta([\omega^\vee]([\alpha+\alpha'])) = \omega^\vee(\alpha+\alpha',\beta) = \omega^\vee(\alpha,\beta) = \beta([\omega^\vee]([\alpha])).
%\]
%The map $[\omega^\vee]$ is injective since the space of differential $1$--forms was quotiented out by the forms that give a constant zero vector field. In particular, we have that the image $[\omega^\vee](\Omega^1(M)/\Rad(\omega^\vee))$ at a point has dimension equal to the rank of the Poisson structure.\\

%Since the map $[\omega^\vee]$ is a linear map over each fiber, it follows by the duality between $\T^*M$ and $\T M$, that t
The image of the map $[\omega^\vee]$ is given by 
\begin{equation}\label{eq : image of cosymplectic form}
	\{X\in\vf M\st \beta(X) \equiv 0\quad\textnormal{for every }\beta\in\Rad(\omega^\vee)\},
\end{equation}
and vector fields in the image of $[\omega^\vee]$ are generated by Hamiltonian vector fields.\\

An \emph{orbit} in a Poisson manifold is an equivalence class $\SymLeaf\subseteq M$ given by the following relation. Two points $p,q\in M$ are equivalent if there exist functions $f_1,\dots,f_k\in C^\infty(M)$ such that
	\[
	\Phi_{f_1}^1\circ\cdots\circ\Phi_{f_k}^1(p) = q.
	\]
%In words, an orbit is the set of points that can be reached by combinations of Hamiltonian flows from a base point. 
It is a classical result that orbits in Poisson manifolds are symplectic submanifolds, whose symplectic structure is inherited by the Poisson structure, see for example \cite[Theorem 4.1]{PoissonGeom}.
%\begin{theorem}\label{thm : symplectic leaves Poisson manifolds}\cite[Theorem 4.1]{PoissonGeom}
%	An orbit $\SymLeaf$ of a Poisson manifold $M$ has a unique smooth structure for which the inclusion is an immersion. The tangent space at a point $p\in \SymLeaf$ is spanned by the image of the vector fields $[\omega^\vee]([\alpha])_p$ as $[\alpha]$ varies in $\Omega^1(M)/\Rad(\omega^\vee)$. Moreover, $\SymLeaf$ has an induced symplectic structure $\omega_\SymLeaf$ given by
%		\[
%	\omega_\SymLeaf(X,Y)\coloneqq \omega^\vee([\omega^\vee]^{-1}(X),[\omega^\vee]^{-1}(Y))
%	\]
%	where $X,Y\in \vf \SymLeaf$.
%\end{theorem}

\begin{definition}
	A \emph{symplectic leaf} of a Poisson manifold is a pair $(\SymLeaf,\omega_{\SymLeaf})$, where $\SymLeaf$ is an orbit, and $\omega_\SymLeaf$ is the induced symplectic structure. The \emph{symplectic foliation} of $M$ is the collection of symplectic leaves
	\[
	\SympFoliation = \{(\SymLeaf,\omega_\SymLeaf)\st\SymLeaf\textnormal{ is a symplectic leaf}\}.
	\]
\end{definition}

\begin{remark}\label{rmk : level sets of casimirs is symplectic leaf}
	In the special case when the rank of the Poisson structure is constant, a symplectic leaf is the common level set of the Casimir functions. This result is due to Weinstein in \cite[pp. 529]{Weinstein_PoissonManifolds}.
\end{remark}

Since the symplectic leaves are orbits, we give the following standard definition, in which we abuse nomenclature.
\begin{definition}\label{def : Hamiltonians generating the symplectic leaves}
	A collection of functions $\{f_1,\dots,f_k\}\subset C^\infty(M)$ whose Hamiltonian flows generate every symplectic leaf in $\SympFoliation$ are called the \emph{Hamiltonians} of the Poisson structure.
\end{definition}

\subsection{The full flag variety and invariants of flags}\label{sec : invariants of configurations of flags}
Here we describe the full flag variety of $\R^3$ and, following \cite{FockGoncharov,FockGoncharov_ConvexProjective}, give definitions of cross ratios and triple ratios. These are later used in Section \ref{sec : Fock Goncharov coordinates} to describe Fock and Goncharov's parameterization of framed positive representations.\\

The \emph{(full) flag variety of $\R^3$}, denoted by $\flags$, is the space of tuples $(p,\ell)\in\RPTwo\times(\RPTwo)^*$ such that $\ell(p)=0$. We say that two of flags $(p_1,\ell_1)$ and $(p_2,\ell_2)$ are \emph{transverse} or \emph{in generic position} if $\ell_1(p_2)\neq 0\neq \ell_2(p_1)$. With this, define $\flags_n$ to be the set of ordered $n$--tuples of flags that are pairwise transverse.\\

The kernel of a projective class of a linear functional defines a projective line in $\RPTwo$. If $(p,\ell)\in\flags$, then this means that $p$ is contained in the projective line defined by $\ell$. Throughout, we will not make a distinction between projective classes of linear functionals, which we write as row vectors, and projective lines.\\

Given any four pairwise distinct projective lines $\ell_1,\ell_2,\ell_3,\ell_4$ in $\RPTwo$ that go through a point $p\in\RPTwo$, we can define their \emph{cross ratio}. Namely, we take another projective line $h$ which intersects $\ell_1,\dots,\ell_4$ at distinct points $p_1,\dots,p_4$ respectively, and that are not equal to $p$. Then take any identification of $h$ with $\mathbb{RP}^1\cong\R\cup\{\infty\}$ sending $p_i$ to $x_i$. The cross ratio of $(\ell_1,\dots,\ell_4)$ is
\[
	\Cr(\ell_1,\dots,\ell_4)\coloneqq \frac{(x_1-x_2)(x_3-x_4)}{(x_1-x_4)(x_2-x_3)}.
\]
This convention for the cross ratio is the one such that $\Cr(\infty,-1,0,x_4) = x_4$. The cross ratio is a projective invariant, meaning that if $g\in\PSL 3$, then $\Cr(g\cdot\ell_1, g\cdot\ell_2,g\cdot\ell_3,g\cdot\ell_4) = \Cr(\ell_1,\dots,\ell_4)$.\\

The cross ratio on projective lines is used to define projective invariants of generic $4$--tuples of flags. For two distinct points $p,q\in\RPTwo$, let $\overline{pq}$ be the projective line containing $p$ and $q$.

\begin{definition}
	Let $(F_1,F_2,F_3,F_4) = ((p_1,\ell_1),(p_2,\ell_2),(p_3,\ell_3),(p_4,\ell_4))\in\flags_4$. Let
	\begin{align}
		\Cr_1(F_1,F_2,F_3,F_4)&\coloneqq -\frac{\ell_1(p_2)(\overline{p_1p_3})(p_4)}{\ell_1(p_4)(\overline{p_1p_3})(p_2)},\\
		\Cr_2(F_1,F_2,F_3,F_4)&\coloneqq-\frac{\ell_3(p_4)(\overline{p_1p_3})(p_2)}{\ell_3(p_2)(\overline{p_1p_3})(p_4)}.
	\end{align}
\end{definition}
These cross ratios can also be defined geometrically as follows.

\begin{lemma}\label{lemma : expression for FG cross ratios}
	Let $(F_1,F_2,F_3,F_4) = ((p_1,\ell_1),(p_2,\ell_2),(p_3,\ell_3),(p_4,\ell_4))\in\flags_4$. Then
	\begin{align*}
		\Cr(\ell_1,\overline{p_1p_2},\overline{p_1p_3},\overline{p_1p_4}) &= \Cr_1(F_1,F_2,F_3,F_4),\\
		\Cr(\ell_3,\overline{p_3p_4},\overline{p_3p_1},\overline{p_3p_2}) &= \Cr_2(F_1,F_2,F_3,F_4).
	\end{align*}
\end{lemma}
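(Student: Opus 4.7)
The plan is to exploit the $\PSL{3}$-invariance of both sides of each identity to normalize the flags, and then verify each identity by a direct coordinate computation. The geometric cross ratio of four concurrent projective lines is manifestly $\PSL{3}$-invariant, and $\Cr_1, \Cr_2$ are built from $\PSL{3}$-equivariant evaluations of functionals on points (and are moreover invariant under rescaling any individual representative $p_i$ or $\ell_i$, since the relevant factors cancel in numerator and denominator). Consequently it suffices to prove the identity for a single representative of the orbit of a generic quadruple $(F_1, F_2, F_3, F_4) \in \flags_4$.

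For the first identity, I would place $p_1 = [1:0:0]$ and take $\ell_1 = [0,0,1]$, so the projective line it defines is $\{z = 0\}$. The transversality hypothesis $\ell_1(p_i) \ne 0$ for $i = 2, 3, 4$ then permits the normalization $p_i = [a_i : b_i : 1]$. A direct computation yields $\overline{p_1 p_i} = p_1 \times p_i = [0, 1, -b_i]$ up to scale. Any line not passing through $p_1$ is transverse to all four lines through $p_1$, so I would use $\{x = 0\}$, identified with $\mathbb{RP}^1$ via the chart $[0:y:z] \mapsto y/z$. Then $\ell_1 \cap \{x = 0\} = [0:1:0]$ corresponds to $\infty$ and $\overline{p_1 p_i} \cap \{x = 0\} = [0:b_i:1]$ corresponds to $b_i$. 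Using the convention $\Cr(\infty, -1, 0, x) = x$ fixed in the paper, this gives
\[
\Cr(\ell_1, \overline{p_1p_2}, \overline{p_1p_3}, \overline{p_1p_4}) = \Cr(\infty, b_2, b_3, b_4) = \frac{b_3 - b_4}{b_2 - b_3}.
\]
On the other hand, $\ell_1(p_j) = 1$ and $(\overline{p_1 p_3})(p_j) = b_3 - b_j$, so the defining formula for $\Cr_1$ evaluates to the same expression.

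The second identity follows from the same argument after placing $p_3 = [1:0:0]$ and $\ell_3 = [0,0,1]$ and writing $p_i = [a_i : b_i : 1]$ for $i = 1, 2, 4$: the geometric cross ratio reduces to $\Cr(\infty, b_4, b_1, b_2) = (b_1 - b_2)/(b_4 - b_1)$, which also matches the formula for $\Cr_2$. The main obstacle I expect is not conceptual but bookkeeping: carefully reconciling the sign introduced by the cross product in $\overline{p_i p_j} = p_i \times p_j$, the orientation of the chart used on $\mathbb{RP}^1$, the $(- \infty)$--limits in the cross ratio convention, and the explicit minus signs built into the definitions of $\Cr_1, \Cr_2$. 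Once these conventions are reconciled on one representative, invariance does the rest.
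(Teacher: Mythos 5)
Your proposal is correct and follows essentially the same route as the paper's proof: use projective invariance to put the flags in a normalized position, then verify both identities by computing the line intersections on an auxiliary projective line identified with $\R\cup\{\infty\}$ and comparing with the defining formulas for $\Cr_1,\Cr_2$. The only differences are cosmetic (you adapt the normalization separately to each identity and use the auxiliary line $\{x=0\}$, while the paper fixes one four-parameter normalization and uses $h=\overline{p_1p_4}$), and your sign bookkeeping indeed works out.
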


Note the lines $\ell_1,\overline{p_1p_2},\overline{p_1p_3},\overline{p_1p_4}$ all pass through $p_1$ and hence it makes sense to compute their cross ratio. The expressions on the left hand--side of the lemma are the invariants used by Fock and Goncharov in \cite{FockGoncharov_ConvexProjective}, and the lemma is included to give the formula for the cross ratios without having to pick identifications with $\R\mathbb P^1$ (similarly to the invariants defined in \cite{BonahonDreyer_GeneralLaminations}). As we will not use this lemma in the article, its proof is delayed to Appendix \ref{app : proof of cross ratios lemma}. Up to a sign, these are the cross ratios used in \cite{WienhardZhang_Deforming} to parameterize the space of convex projective structures on a surface.\\

Another invariant of flags is the triple ratio.
\begin{definition}
	Let $(F_1,F_2,F_3) = ((p_1,\ell_1),(p_2,\ell_2),(p_3,\ell_3))\in\flags_3$. The \emph{triple ratio} of these flags is given by
	\[
	\triple(F_1,F_2,F_3) = \frac{\ell_1(p_2)\ell_2(p_3)\ell_3(p_1)}{\ell_1(p_3)\ell_2(p_1)\ell_3(p_2)}.
	\]
\end{definition}

The triple ratio is also invariant under the $\PSL 3$ action, that is, for every $g\in\PSL 3$ and $(F_1,F_2,F_3)\in\flags_3$,
\[
\triple(g\cdot F_1,g\cdot F_2, g\cdot F_3) = \triple(F_1,F_2,F_3).
\]

\subsection{Convex projective structures on surfaces and framings}\label{sec : conv proj surfaces Prelim}
In this section, we recall the notion of $\RPTwo$ surfaces and their corresponding deformation spaces from \cite{ConvexRealProj_Goldman} and also following \cite{LoftinZhang_CoordinatesAugmented}. Throughout, $S$ is a closed surface with $n\geq 1$ boundary components. Denote the boundary loops by $c_1,\dots,c_n\in\pi_1(S)$. Such loops are called \emph{peripheral loops}.

\begin{definition}
	An \emph{$\RPTwo$ surface} $\Sigma$ is a smooth surface with boundary with a maximal collection of charts $\{\psi_\alpha\colon U_\alpha\to\RPTwo\}_\alpha$ such that
	\begin{itemize}
		\item Each $U_\alpha\subset \Sigma$ is a connected, simply connected open subset of the interior of $\Sigma$.
		\item For any $\psi_\alpha,\psi_\beta$ with $U_\alpha \cap U_\beta\neq\emptyset$, the map $\psi_\alpha\circ\psi_\beta^{-1}\colon U_\alpha \cap U_\beta\to \psi_\alpha(U_\alpha \cap U_\beta)$ is the restriction of a projective transformation of $\RPTwo$.
	\end{itemize}
\end{definition}

Let $\Sigma$ and $\Sigma'$ be two $\RPTwo$ surfaces with atlases $\{\psi_\alpha\}_\alpha$ and $\{\psi_\beta'\}_\beta$ respectively. A diffeomorphism  $f\colon \Sigma\to\Sigma'$ is called a \emph{projective isomorphism} if for any $U_\alpha$ and $U'_\beta$ such that $f(U_\alpha)\cap U_\beta\neq\emptyset$, the following map
\[
\psi'_\beta\circ f\circ\psi_\alpha^{-1}\colon\psi_\alpha(U_\alpha\cap f^{-1}(U'_\beta))\to\psi'_\beta(f(U_\alpha)\cap U_\beta)
\]
is the restriction of a projective map on $\RPTwo$ on each connected component.\\

The universal cover $\widetilde\Sigma$ of $\Sigma$ is also an $\RPTwo$ surface. And hence, by Theorem 2.2 in \cite{ConvexRealProj_Goldman}, there exists a smooth map $\dev_\Sigma\colon \widetilde\Sigma\to\RPTwo$, and a representation $\rho_\Sigma\colon\pi_1(\Sigma)\to \PSL 3$ such that the following diagram

\begin{figure}[h]
	\centering
	\begin{tikzcd}
\widetilde\Sigma \arrow[r,
"\dev_\Sigma"] \arrow[d,"\gamma" left]
& \RPTwo \arrow[d,
"\rho_\Sigma(\gamma)"] \\
\widetilde\Sigma \arrow[r,
"\dev_\Sigma"]
& \RPTwo
\end{tikzcd}
\end{figure}

commutes for every $\gamma\in\pi_1(\Sigma)$. The map $\dev_\Sigma$ is called a \emph{developing map}, and $\rho_\Sigma$ the \emph{holonomy}; together, the pair $(\dev_\Sigma,\rho_\Sigma)$ is called a \emph{developing pair} for $\Sigma$. Moreover, if $(\dev'_\Sigma,\rho_\Sigma')$ is another developing pair for $\Sigma$, there exists an element $g\in\PSL 3$ such that
\[
(\dev'_\Sigma,\rho'_\Sigma) = (g\cdot \dev_\Sigma,g\cdot \rho_\Sigma \cdot g^{-1}).
\]

We now focus on a particular class of $\RPTwo$ surfaces.

\begin{definition}
\begin{itemize}
	\item 	A \emph{properly convex domain} $\Omega\subset \RPTwo$ is an open subset whose closure does not contain any projective lines, and for any distinct $p,q\in\Omega$, there is a projective line segment connecting $p$ and $q$ and that is completely contained in $\Omega$.
	\item A connected $\RPTwo$ surface is \emph{convex} if some developing map of $\Sigma$ is a diffeomorphism onto a properly convex domain in $\RPTwo$ and it extends to the boundary. Moreover, we require that the development map sends boundary components to either points or line segments. If a boundary component is sent to a point, it is said to \emph{cuspidal}.
	%\item \fc{Change convention here. The ones with geodesic boundary are the only ones we consider, so maybe drop the name and add this as part of the definition. }A convex $\RPTwo$ surface is said to have \emph{geodesic boundary} if the development map sends boundaries to either points or line segments. A boundary component is said to be \emph{cuspidal} if its image under the developing map is a point.
\end{itemize}

\end{definition}
\begin{definition}\label{def : deformation space of convex projective structures}
	The \emph{deformation space of convex projective structures on $S$} is
	\begin{equation}
	\DefSpace(S)\coloneqq \begin{Bmatrix}
		(f,\Sigma)\st \Sigma\textnormal{ is a convex } \RPTwo\textnormal{ surface }\\
		\textnormal{ and }f\colon S\to\Sigma\textnormal{ is a diffeomorphism}
	\end{Bmatrix}/\sim
	\end{equation}

	where $(f_1,\Sigma_1)\sim (f_2,\Sigma_2)$ if $f_1\circ f_2^{-1}\colon\Sigma_2\to\Sigma_1$ is homotopic to a projective isomorphism from $\Sigma_2$ to $\Sigma_1$. An equivalence class $[f,\Sigma]\in\DefSpace(S)$ is then called a \emph{(marked) convex projective structure on $S$}.
\end{definition}

The holonomy of an $\RPTwo$ surface provides a map
\[
\hol\colon\DefSpace(S)\to \CharVar_3(S)\coloneqq \Hom^{\mathsf{red}}(\pi_1(S),\PSL3)/\PSL 3
\]
as follows. Given a pair $(f,\Sigma)\in\DefSpace(S)$, we obtain, up to conjugation, a representation $\rho_\Sigma\colon\pi_1(\Sigma)\to\PSL 3$ and hence a representation $f^*\rho_\Sigma\colon\pi_1(S)\to\PSL 3$. Moreover, if two pairs $(f, \Sigma)$ and $(f',\Sigma')$ are equivalent, then the corresponding representations also differ by a conjugation. We denote the image of $\hol$ by $\RepsConv$.\\

In order to describe Fock--Goncharov coordinates, we need to have additional data on a convex projective structure, which is a framing. Recall that the boundary curves of $S$ are denoted by $c_i\in\pi_1(S)$ for $i = 1,\dots,n$.

\begin{definition}
	A \emph{framed representation} is a tuple $(\rho,F_1,\dots,F_n)$, where $\rho\colon\pi_1(S)\to\PSL 3$ is a representation, and $F_1,\dots,F_n\in\flags$ are flags such that $\rho(c_i)\cdot F_i = F_i$ for $i = 1,\dots,n$. The quotient of the space of framed representations by $\PSL 3$ will be written as $\framedChar$.
	\end{definition}

\begin{definition}
	A \emph{framed convex projective structure on $S$} is a triple $[f,\Sigma,\nu]$, where $[f,\Sigma]$ is convex projective structure on $S$ and $\nu$ is a framing of the holonomy representation. The space of framed convex $\RPTwo$ structures on $S$ is denoted by $\widehat\DefSpace(S)$. 
\end{definition}

	%We will now describe the map from the space of framed convex projective structures to the space of framed representations constructed for the proof of Theorem 2.5 in \cite{FockGoncharov_ConvexProjective}.
	
	Fock and Goncharov provide a map 
		\[
	\widehat\hol\colon \widehat\DefSpace(S)\to\widehat\CharVar_3(S).
	\]
	in \cite[Theorem 2.5]{FockGoncharov_ConvexProjective} in which they assign a framing to the holonomy representations of a convex projective structure. We do not describe the map here, and simply use its existence.

\begin{definition}\cite{FockGoncharov_ConvexProjective}
	The image $\widehat\hol\left(\widehat\DefSpace(S)\right)\subset \framedChar$ will be denoted by $\framedRepsConv$, and is known as the space of \emph{positive framed representations}.
\end{definition}

There is a natural map 
\begin{align*}
\mu\colon\framedRepsConv&\to\CharVar_3^+(S)\\
[(\rho,F_1,\dots,F_n)]&\mapsto [\rho]
\end{align*}
which forgets the framing. This map is a ramified $2^n\colon 1$ cover \cite{FockGoncharov_ConvexProjective,FockGoncharov} (see also \cite{IntroPositive_Palesi}). The map ramifies over those representations where at least one of the $\rho(c_i)$ fixes fewer than $6$ flags, namely when $\rho(c_i)$ does not have $3$ distinct positive eigenvalues.\\

The space of positive representations is foliated itself by the following subsets. Recall that $c_1,\dots,c_n\in\pi_1(S)$ are the peripheral elements.
\begin{definition}\label{def : relative character variety}
	Let $\conjclass = (C_1,\dots,C_n)$ be a tuple of conjugacy classes in $\PSL 3$. The \emph{relative character variety} associated to $\conjclass$ is the subspace of the space of positive representations given by
	\[
	\CharVar_{3,\conjclass}^+(S)\coloneqq\{[\rho]\in\CharVar_3^+(S)\st\rho(c_i)\in C_i\textnormal{ for all }i=1,\dots,n\}.
	\]
\end{definition}

\begin{remark}\label{rmk : symplectic structure on relative character varieties}
	The relative character varieties are naturally equipped with a symplectic structure \cite{SymplecticNature_Goldman,ParabolicCohomology_GHJW}. We do not describe the symplectic structure here, as, up to a constant, the symplectic structure coincides with the symplectic structure from the symplectic leaves of $\widehat\DefSpace(S)$ through the map $\mu\circ\widehat\hol$ \cite[Section 5]{FockGoncharov_ConvexProjective},\cite{SwapAlg_Sun}. Using the Fock--Goncharov coordinates which we recall in Section \ref{sec : Fock Goncharov coordinates}, we describe the symplectic structure on the symplectic leaves explicitly in the case when $S$ is a pair of pants.
\end{remark}

\section{Fock--Goncharov coordinates}\label{sec : Big FG coords}
 Let $S$ be a surface of genus $g$ with $n\geq 1$ boundary components so that $2g-2+n>0$. Here we recall the Fock--Goncharov coordinates for framed convex projective structures. In the construction of Fock and Goncharov, a surface with punctures is required. Their construction is combinatorial and we therefore interpret the boundary of our surfaces as punctures by shrinking them to points. \textbf{We stress that whenever we refer to (framed) convex projective structures, $S$ has boundary components. Whenever we work with Fock--Goncharov coordinates, we interpret the boundaries as punctures. This is the same convention used in \cite{FockGoncharov_ConvexProjective}.}\\

We will describe the coordinates, explain how to reconstruct a representation from the coordinates, the Poisson structure, and give explicit matrices for the generators of the fundamental group of the pair of pants. The content of this section recalls constructions and results from \cite{FockGoncharov_ConvexProjective} (and more generally \cite{FockGoncharov}) and includes more explicit computations that allow us to derive the results of this article. To see another overview of the Fock--Goncharov results, see \cite{ModuliSpacesConvex_CTT,IntroPositive_Palesi}.

\subsection{The coordinates}\label{sec : Fock Goncharov coordinates}
In the first step, we interpret the boundary components of $S$ as punctures. Begin by taking an \emph{ideal triangulation} $\triangulation$ of $S$, meaning a triangulation of $S$ with vertices at the punctures. The triangulation consists of $2\abs{\chi(S)}$ ideal triangles and $6g+3n-6$ edges.\\

Fock and Goncharov define an isomorphism
\[
	\varphi_\triangulation\colon\widehat\DefSpace(S)\to \R_{>0}^{16g-16+8n}
\]
in \cite[Theorem 2.5]{FockGoncharov_ConvexProjective} giving coordinates on the space of framed convex projective structures, and in particular giving it the structure of a smooth manifold. The map is defined as follows. First lift the triangulation $\triangulation$ to a triangulation $\widetilde\triangulation$ of the universal cover $\widetilde S$. Let $[f,\Sigma,\nu]\in\widehat{\DefSpace}(S)$ and let $\dev_\Sigma$ be a developing map for $(f,\Sigma)$. We describe the two types of invariants associated to a framed convex projective structure.\\

The first is a cross ratio associated to edges of the triangulation. Let $\edge$ be an edge of the triangulation $\triangulation$ with vertices $p_{\edge,1}$ and $p_{\edge,2}$. The edge $\edge$ has two triangles $\triangle_1$ and $\triangle_2$ on either side with vertices $\{p_{\edge,1},p_{\edge,2},p_{\triangle,1}\}$ and $\{p_{\edge,1},p_{\edge,2},p_{\triangle,2}\}$ respectively. Take lifts $\{\widetilde p_{\edge,1},\widetilde p_{\edge,2},\widetilde p_{\triangle,1},\widetilde p_{\triangle,2}\}$ of the four points $\{p_{\edge,1},p_{\edge,2},p_{\triangle,1},p_{\triangle,2}\}$ to the universal cover so that $\{\widetilde p_{\edge,1},\widetilde p_{\edge,2},\widetilde p_{\triangle,1}\}$ are endpoints of a single lift $\widetilde\triangle_1$ of $\triangle_1$, $\{\widetilde p_{\edge,1},\widetilde p_{\edge,2},\widetilde p_{\triangle,2}\}$ are endpoints of a single lift $\widetilde\triangle_2$ of $\triangle_2$, and so that these lifts of the ideal triangles share the edge $\widetilde \edge$ with lifts of the endpoints $\{\widetilde p_{\edge,1},\widetilde p_{\edge,2}\}$. See Figure \ref{fig : invariants setup} for this setup. Up to renaming the points, we may choose an ordering of the lifts $\{\widetilde p_{\edge,1},\widetilde p_{\edge,2},\widetilde p_{\triangle,1},\widetilde p_{\triangle,2}\}$ such that
\[
\widetilde p_{\edge,1}\prec\widetilde p_{\triangle,1}\prec \widetilde p_{\edge,2}\prec\widetilde p_{\triangle,2}\prec \widetilde p_{\edge,1},
\]
where $\prec$ denotes the counterclockwise ordering on the circle.\\

\begin{figure}[htb]
	\centering
	\begin{tikzpicture}
		\node[minimum size=5cm, regular polygon,
  regular polygon sides=3,rotate = 90] (T1) at (0,0) {};
  \draw[very thick,Emerald] (T1.corner 3) -- (T1.corner 2);
   \draw[very thick, black] (T1.corner 1) -- (T1.corner 2);
    \draw[very thick, black] (T1.corner 3) -- (T1.corner 1);
    \node[label = left :$\widetilde p_{\triangle,1}$] at (T1.corner 1) {};
    \node[label = left :$\widetilde\triangle_1$] at (T1) {};
    \node[circle,inner sep=0pt,minimum size=6,label=left:$\widetilde\edge$] (7) at ($1/2*(T1.corner 2)+1/2*(T1.corner 3)$) {};
    \node[label = above :$\widetilde p_{\edge,1}$] at (T1.corner 3) {};
    \node[label = below :$\widetilde p_{\edge,2}$] at (T1.corner 2) {};

\node[minimum size=5cm, regular polygon,
  regular polygon sides=3,rotate = -90] (T2) at (2.5,0) {};
     \draw[very thick, black] (T2.corner 1) -- (T2.corner 2);
    \draw[very thick, black] (T2.corner 3) -- (T2.corner 1);
\node[label = right :$\widetilde p_{\triangle,2}$] at (T2.corner 1) {};
    \node[label = right:$\widetilde\triangle_2$] at (T2) {};
	\end{tikzpicture}
	\caption{Computing edge invariants.}
	\label{fig : invariants setup}
\end{figure}
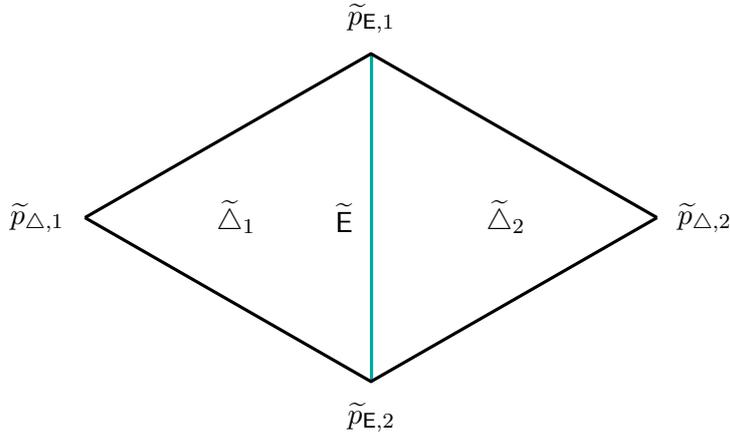

Through the developing map $\dev_\Sigma$ and the assignment of a framed representation, we obtain for each of the vertices, flags $F_{\edge,1},F_{\triangle,1},F_{\edge,2},F_{\triangle,2}\in\flags$ respectively. Now we can define the coordinates
\begin{align*}
\sigma_\edge^1([f,\Sigma,\nu])&\coloneqq \Cr_1(F_{\edge,1},F_{\triangle,1},F_{\edge,2},F_{\triangle,2})\\
\sigma_\edge^2([f,\Sigma,\nu])&\coloneqq \Cr_2(F_{\edge,1},F_{\triangle,1},F_{\edge,2},F_{\triangle,2})
\end{align*}

The second type of coordinate is the triple ratio associated to ideal triangles. Let $\triangle$ be an ideal triangle in $\triangulation$ with vertices $p_1,p_2,p_3$. Take lifts $\{\widetilde p_1,\widetilde p_2,\widetilde p_3\}$ of the three vertices to the universal cover such that $\{\widetilde p_1,\widetilde p_2,\widetilde p_3\}$ are vertices of a lift of $\triangle$. Up to renaming, assume that the vertices are ordered such that $\widetilde p_1\prec\widetilde p_2\prec\widetilde p_3\prec\widetilde p_1$. Similarly as above, the developing map $\dev_\Sigma$ together with the associated framing, we obtain three flags $F_1,F_2,F_3\in\flags$. Then let
\[
\tau_\triangle([f,\Sigma,\nu])\coloneqq \triple(F_1,F_2,F_3).
\]

The map $\varphi_\triangulation$ is then defined as
\[
[f,\Sigma]_\nu\mapsto \left(\left(\sigma^1_\edge([f,\Sigma,\nu]),\sigma^2_\edge([f,\Sigma,\nu])\right)_{\edge},(\tau_\triangle([f,\Sigma,\nu]))_\triangle\right)
\]
as the edges $\edge$ and triangles $\triangle$ vary in the triangulation $\triangulation$. The fact that all of these coordinates are positive is the content of Lemma 2.3 and Lemma 2.4 in \cite{FockGoncharov_ConvexProjective}.
\begin{remark}\label{rmk : Bonahon Dreyer coordinates are different than FG coords}
	We stress here that these coordinates, although closely related to the Bonahon--Dreyer coordinates in \cite{BonahonDreyer_GeneralLaminations} for closed surfaces, are not exactly the same coordinates. The difference lies in that the cross ratios and triple ratios are taken with respect to slightly different tuples of flags. This becomes evident in the computation of the holonomies, where we observe that the Bonahon--Dreyer closed leaf inequalities have a slightly different form.
\end{remark}

\subsection{Reconstructing the representation from coordinates}\label{sec : reconstructing the represenatation}
Recall that there is a natural map
\[
\framedRepsConv\coloneqq \widehat\hol\left(\widehat\DefSpace(S)\right)\to \RepsConv
\] 
which simply forgets the framing. In particular, there is a map
\[
\rechol\colon\widehat\DefSpace(S)\to\RepsConv
\]
which factors through the holonomy map $\widehat\hol$. In this section, we describe how to obtain a representation (up to conjugation) from coordinates, as described in \cite[Section 5]{FockGoncharov_ConvexProjective}. Fix an ideal triangulation $\triangulation$ of $S$ (after shrinking the boundaries to punctures).\\

We begin by constructing an embedded quiver $\quiver_\triangulation$ on $S$ as follows:
\begin{itemize}
	\item On the interior of each edge $\edge$ of the triangulation, place two distinct vertices $\vertex_{\edge,1}$ and $\vertex_{\edge,2}$.
	\item On the interior of each ideal triangle $\triangle$, place an additional vertex $\vertex_\triangle$.
	\item On an ideal triangle $\triangle$, place three $3$--cycles as in Figure \ref{fig : quiver}.
\end{itemize}

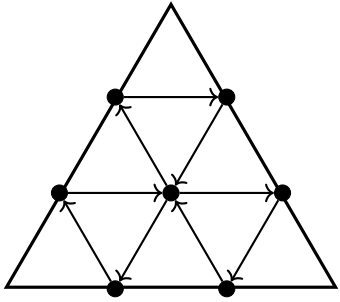
\begin{figure}[htb]
	\centering
	\begin{tikzpicture}
		\node[regular polygon,draw,regular polygon sides=3, minimum size=5cm, very thick,] (p) at (0,0) {};
        \node[circle,draw=black,fill=black,inner sep=0pt,minimum size=6] (2) at ($2/3*(p.corner 1)+1/3*(p.corner 2)$) {};
        \node[circle,draw=black,fill=black,inner sep=0pt,minimum size=6] (1) at ($2/3*(p.corner 2)+1/3*(p.corner 1)$) {};
        \node[circle,draw=black,fill=black,inner sep=0pt,minimum size=6] (6) at ($2/3*(p.corner 2)+1/3*(p.corner 3)$) {};
        \node[circle,draw=black,fill=black,inner sep=0pt,minimum size=6] (5) at ($2/3*(p.corner 3)+1/3*(p.corner 2)$) {};
        \node[circle,draw=black,fill=black,inner sep=0pt,minimum size=6] (3) at ($2/3*(p.corner 1)+1/3*(p.corner 3)$) {};
        \node[circle,draw=black,fill=black,inner sep=0pt,minimum size=6] (4) at ($2/3*(p.corner 3)+1/3*(p.corner 1)$) {};
        \node[circle,draw=black,fill=black,inner sep=0pt,minimum size=6] (7) at ($1/2*(1)+1/2*(4)$) {};
        \draw[thick,->] (2) to (3);
        \draw[thick,->] (3) to (7);
        \draw[thick,->] (7) to (2);
        
        \draw[thick,->] (1) to (7);
        \draw[thick,->] (7) to (6);
        \draw[thick,->] (6) to (1);
        
        \draw[thick,->] (7) to (4);
         \draw[thick,->] (4) to (5);
          \draw[thick,->] (5) to (7);
	\end{tikzpicture}
	\caption{Quiver embedded in an ideal triangle.}
	\label{fig : quiver}
\end{figure}

Each vertex in the quiver $\quiver_\triangulation$ is itself a coordinate function as follows. For each vertex $\vertex_\triangle$, we assign the coordinate function $\tau_\triangle$. For an edge $\edge$ with vertices $p_{\edge,1},p_{\edge,2}$ (the vertices here being the punctures of the surface), let $\{\widetilde p_{\edge,1},\widetilde p_{\edge,2},\widetilde p_{\triangle,1},\widetilde p_{\triangle,2}\}$ be the vertices of the lifts of the endpoints of the two triangles sharing the edge $\edge$ as in Section \ref{sec : Fock Goncharov coordinates}. Up to renaming, we may assume that the vertex $\vertex_{\edge,1}$ is the one closest to $p_{\edge,1}$. Then assign to the vertex $\vertex_{\edge,i}$ the coordinate function $\sigma^i_\edge$ whenever $\widetilde p_{\edge,1}\prec\widetilde p_{\triangle,1}\prec \widetilde p_{\edge,2}\prec\widetilde p_{\triangle,2}\prec \widetilde p_{\edge,1}$.\\

With the quiver $\quiver_\triangulation$ and the coordinate functions on the vertices,  construct a new oriented graph $\Gamma = (V,E)$ on the surface as follows:
\begin{itemize}
	\item Place three vertices in the interior of each triangle $\triangle$ of the triangulation $\triangulation$ and create a counterclockwise $3$--cycle connecting the vertices.
	\item For a given edge $\edge$ of the triangulation, place an edge (with an arbitrary orientation), connecting the vertices of $\Gamma$ closest to the edge $\edge$.
\end{itemize}
The graph $\Gamma$ is shown in Figure \ref{fig : reconstructing} for two adjacent ideal triangles.

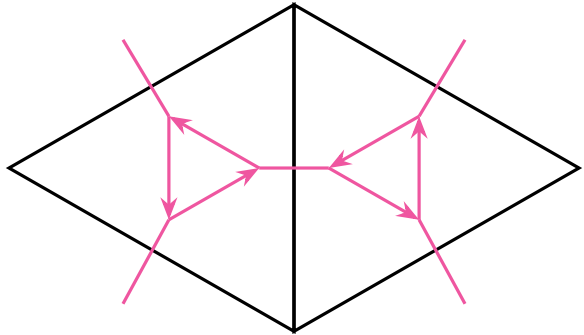
\begin{figure}[h!]
	\centering
	\begin{tikzpicture}
		\node[draw,very thick,minimum size=5cm, regular polygon,
  regular polygon sides=3,rotate = 90] (T1) at (0,0) {};
  \node[very thick,minimum size=1.5cm, regular polygon,
  regular polygon sides=3,rotate = -90] (IntT1) at (0,0) {};
  \draw[very thick,-Stealth,VioletRed] (IntT1.corner 1) -- (IntT1.corner 2);
  \draw[very thick,-Stealth,VioletRed] (IntT1.corner 2) -- (IntT1.corner 3);
  \draw[very thick,-Stealth,VioletRed] (IntT1.corner 3) -- (IntT1.corner 1);
\node[draw,very thick,minimum size=5cm, regular polygon,
  regular polygon sides=3,rotate = -90] (T2) at (2.5,0) {};
    \node[very thick,minimum size=1.5cm, regular polygon,
  regular polygon sides=3,rotate = 90] (IntT2) at (2.5,0) {};
    \draw[very thick,-Stealth,VioletRed] (IntT2.corner 1) -- (IntT2.corner 2);
  \draw[very thick,-Stealth,VioletRed] (IntT2.corner 2) -- (IntT2.corner 3);
  \draw[very thick,-Stealth,VioletRed] (IntT2.corner 3) -- (IntT2.corner 1);
  
    \draw[very thick,VioletRed] (IntT1.corner 1) -- (IntT2.corner 1);
    \draw[very thick,VioletRed] (IntT1.corner 2) -- (-1,1.7);
    \draw[very thick,VioletRed] (IntT1.corner 3) -- (-1,-1.8);
    \draw[very thick,VioletRed] (IntT2.corner 2) -- (3.5,-1.8);
    \draw[very thick,VioletRed] (IntT2.corner 3) -- (3.5,1.7);

	\end{tikzpicture}
	\caption{A portion of the graph $\Gamma$ used to reconstruct the holonomy from the Fock--Goncharov coordinates.}
	\label{fig : reconstructing}
\end{figure}

For $x,z,w>0$, define the following matrices in $\PSL3$:
\[
T(x)\coloneqq \frac{1}{x^{1/3}}\begin{bmatrix}
	0 & 0 & 1\\
	0 & -1 & -1\\
	x & 1+x &1
\end{bmatrix},\quad E(z,w)\coloneqq \frac{z^{1/3}}{w^{1/3}}\begin{bmatrix}
	0&0&\frac{1}{z}\\
	0&-1&0\\
	w&0&0
\end{bmatrix}.
\]
Now we are ready to describe the map $\rechol$. Let $[f,\Sigma,\nu]\in\widehat\DefSpace(S)$ with its corresponding coordinates given by $\varphi_\triangulation$. Consider the lift $\widetilde \Gamma$ of $\Gamma$ to the universal cover $\widetilde S$ and fix a vertex $p$ of $\widetilde \Gamma$. For a curve $\alpha\in\pi_1(S)$, we describe the holonomy $\rechol([f,\Sigma,\nu])(\alpha)$ (we abuse notation here picking a representative in the equivalence class of representations). Lift $\alpha$ to a curve $\widetilde\alpha\subset\widetilde S$ starting at the point $p$ and homotope it (relative endpoints) so that it lies on the graph $\widetilde \Gamma$\footnote{Here we have chosen a covering $\widetilde S\to S$ whose associated group of deck transformations we identify with the fundamental group $\pi_1(S,p)$.}. The endpoint of this path is $\alpha\cdot p$ (interpreting $\alpha$ as a deck transformation). The holonomy $\rechol([f,\Sigma,\nu])(\alpha)$ is the following product in the matrices $T(x)$ and $E(z,w)$. Every time the curve $\widetilde\alpha$ goes through an edge in $\widetilde\Gamma$ internal to a triangle $\triangle$, multiply with the matrix $T(\tau_\triangle([f,\Sigma,\nu]))^\eps$, where $\eps\in\{\pm 1\}$ depending on whether the edge is crossed according to the orientation of $\widetilde\Gamma$ or not. Whenever the path $\widetilde\alpha$ goes through an edge crossing an edge $\edge$ of the triangulation, multiply with the matrix $E(\sigma^1_\edge([f,\Sigma,\nu]),\sigma^2_\edge([f,\Sigma,\nu]))$ where we assume that the vertex $\vertex_{\edge,1}$ of the quiver $\quiver_\triangulation$ lies to the right of the segment of $\widetilde\alpha$ crossing $\edge$. The matrices are multiplied from left to right as $\widetilde\alpha$ traverses edges of $\widetilde\Gamma$.

\begin{remark}
	Since the coordinates given by $\varphi_\triangulation$ are projective invariants, the matrices $T(x)$ and $E(z,w)$ do not depend on the lifts chosen for the triangles or edges.
\end{remark}

\subsection{The Poisson structure}\label{sec : general Poisson structure}
Given a triangulation $\triangulation$ and its associated quiver $\quiver_\triangulation$ as in Section \ref{sec : Fock Goncharov coordinates}, the Poisson structure on $\widehat \DefSpace(S)\cong\R^{16g-16+8n}_{>0}$ is given as follows. For coordinate functions $X_i,X_j\colon \widehat \DefSpace(S)\to\R_{>0}$, their Poisson bracket is given by the function
\begin{equation}\label{eq : Poisson structure from FG}
\{X_i,X_j\} = 2\eps_{ij}X_iX_j,
\end{equation}
where
\[
\eps_{ij} = \#\{\textnormal{arrows from }i \textnormal{ to }j\} - \#\{\textnormal{arrows from }j \textnormal{ to }i\}
\]
in the quiver $\quiver_\triangulation$ where the coordinates are thought of as vertices \cite[Section 5.1]{FockGoncharov_ConvexProjective}.

\subsection{Describing the holonomies for convex projective structures on a pair of pants}\label{sec : holonomies pair of pants description}
In Section \ref{sec : reconstructing the represenatation}, we described how Fock and Goncharov reconstruct a representation given their coordinates for a general surface. In this section, we focus on the case when $S=\pants$ is a pair of pants. Shrinking the boundaries to punctures, we name the punctures $p_\alpha,p_\beta$ and $p_\gamma$ and choose generators $\alpha,\beta,\gamma$ for the fundamental group $\pi_1(\pants)$ satisfying the relation $\alpha\beta\gamma = 1$, and each going around the respective puncture. We then pick the ideal triangulation $\triangulation$ of $\pants$ given by the two triangles $\triangle_1,\triangle_2$ with vertices $p_\alpha,p_\beta,p_\gamma$ each. The associated quiver $\quiver_\triangulation$ is shown in Figure \ref{fig : quiver pair of pants}. For the pair of pants, we have the Fock--Goncharov coordinates
\begin{align*}
	\varphi_\triangulation\colon\widehat\DefSpace(\pants)&\to\R_{>0}^8\\
	[f,\Sigma,\nu]&\mapsto (\sigma_1,\dots,\sigma_6,\tau_1,\tau_2)
\end{align*}
where we renamed the coordinates as in Figure \ref{fig : quiver pair of pants} dropping the dependence on the edge and triangle in the notation. Following the construction of the map $\rechol\colon\widehat\DefSpace(\pants)\to\CharVar_3^+(S)$ in Section \ref{sec : reconstructing the represenatation}, and following Figure \ref{fig : holonomies for abc}, we obtain that an equivalence class $[\rho]$ in the image of $\rechol$ with coming from the coordinates $(\sigma_1,\dots,\sigma_6,\tau_1,\tau_2)$ is given by
\begin{align*}
	\rho(\alpha) &= E(\sigma_2,\sigma_1)T(\tau_2)E(\sigma_3,\sigma_4)T(\tau_1)\\
	&=\left[
\begin{array}{ccc}
 \sqrt[3]{\frac{\tau _1^2 \tau _2^2}{\sigma _1 \sigma_2^2 \sigma _3^2
   \sigma _4}} & \frac{\sqrt[3]{\frac{\sigma _2 \sigma _3}{\sigma _1 \sigma
   _4 \tau _1 \tau _2}} \left(\sigma _3 \tau _2+\sigma _3+\tau _1 \tau
   _2+\tau _2\right)}{\sigma _2 \sigma _3} & \frac{\sqrt[3]{\frac{\sigma _2
   \sigma _3}{\sigma _1 \sigma _4 \tau _1 \tau _2}} \left(\sigma _3
   \left(\sigma _4+\tau _2+1\right)+\tau _2\right)}{\sigma _2 \sigma _3} \\
 0 & \sqrt[3]{\frac{\sigma _2 \sigma _3}{\sigma _1 \sigma _4 \tau _1 \tau
   _2}} & \left(\sigma _4+1\right) \sqrt[3]{\frac{\sigma _2 \sigma
   _3}{\sigma _1 \sigma _4 \tau _1 \tau _2}} \\
 0 & 0 & \sqrt[3]{\frac{\sigma _1^2 \sigma _2 \sigma _3 \sigma _4^2}{\tau _1
   \tau _2}} \\
\end{array}
\right],\\
\rho(\gamma)&=T(\tau_1)E(\sigma_6,\sigma_5)T(\tau_2)E(\sigma_1,\sigma_2)\\
&= \left[
\begin{array}{ccc}
 \sqrt[3]{\frac{\sigma _1 \sigma _2^2 \sigma _5^2 \sigma _6}{\tau _1 \tau
   _2}} & 0 & 0 \\
 -\sigma _2 \left(\sigma _5+1\right) \sqrt[3]{\frac{\sigma _1 \sigma
   _6}{\sigma _2 \sigma _5 \tau _1 \tau _2}} & \sqrt[3]{\frac{\sigma _1
   \sigma _6}{\sigma _2 \sigma _5 \tau _1 \tau _2}} & 0 \\
 \frac{\sigma _2 \left(\sigma _6 \left(\sigma _5+\tau _1+1\right)+\tau
   _1\right) \sqrt[3]{\frac{\sigma _1 \sigma _6}{\sigma _2 \sigma _5 \tau _1
   \tau _2}}}{\sigma _6} & -\frac{\sqrt[3]{\frac{\sigma _1 \sigma _6}{\sigma
   _2 \sigma _5 \tau _1 \tau _2}} \left(\sigma _6 \left(\tau
   _1+1\right)+\tau _1 \left(\tau _2+1\right)\right)}{\sigma _6} &
   \sqrt[3]{\frac{\tau _1^2 \tau _2^2}{\sigma _1^2 \sigma _2 \sigma _5
   \sigma _6^2}} \\
\end{array}
\right]\\
\rho(\beta) &= T(\tau_1)^{-1}E(\sigma_4,\sigma_3)T(\tau_2)E(\sigma_5,\sigma_6)T(\tau_1)^{-1}.
\end{align*}
The expression for $\rho(\beta)$ is too complicated to fit in one line, so we give it here in terms of its entries:

\begin{gather*}
	\rho(\beta)_{11} = \frac{\sqrt[3]{\frac{\sigma _4 \sigma _5}{\sigma _3 \sigma _6 \tau _1 \tau
   _2}} \left(\sigma _4 \sigma _5 \left(\sigma _6 \left(\sigma _3+\tau
   _1+1\right)+\tau _1+1\right)+\tau _1 \left(\sigma _5 \left(\sigma _6+\tau
   _2+1\right)+\tau _2\right)\right)}{\sigma _4 \sigma _5},\\
   \rho(\beta)_{12} = \frac{\sqrt[3]{\frac{\sigma _4 \sigma _5}{\sigma _3 \sigma _6 \tau _1^4 \tau
   _2}} \left(\sigma _4 \left(\tau _1+1\right) \left(\sigma _6 \left(\sigma
   _3+\tau _1+1\right)+\tau _1\right)+\tau _1 \left(\sigma _6 \left(\tau
   _1+1\right)+\tau _1 \left(\tau _2+1\right)\right)\right)}{\sigma _4},\\
   \rho(\beta)_{13} = \frac{\sigma _6 \left(\sigma _4 \left(\sigma _3+\tau _1+1\right)+\tau
   _1\right) \sqrt[3]{\frac{\sigma _4 \sigma _5}{\sigma _3 \sigma _6 \tau
   _1^4 \tau _2}}}{\sigma _4},\\
   \rho(\beta)_{21} = -\frac{\tau _1^{2/3} \left(\sigma _5 \left(\sigma _6+\sigma _4 \left(\sigma
   _6+1\right)+\tau _2+1\right)+\tau _2\right)}{\sqrt[3]{\sigma _3 \sigma
   _4^2 \sigma _5^2 \sigma _6 \tau _2}},\\
   \rho(\beta)_{22} = -\frac{\sqrt[3]{\frac{\sigma _4 \sigma _5}{\sigma _3 \sigma _6 \tau _1 \tau
   _2}} \left(\left(\sigma _4+1\right) \sigma _6 \left(\tau _1+1\right)+\tau
   _1 \left(\sigma _4+\tau _2+1\right)\right)}{\sigma _4},\\
   \rho(\beta)_{23} = -\frac{\left(\sigma _4+1\right) \sigma _6 \sqrt[3]{\frac{\sigma _4 \sigma
   _5}{\sigma _3 \sigma _6 \tau _1 \tau _2}}}{\sigma _4},\\
   \rho(\beta)_{31} = \frac{\tau _1^{2/3} \left(\sigma _5 \left(\sigma _6+\tau _2+1\right)+\tau
   _2\right)}{\sqrt[3]{\sigma _3 \sigma _4^2 \sigma _5^2 \sigma _6 \tau _2}},\\
   \rho(\beta)_{32} = \frac{\sqrt[3]{\frac{\sigma _4 \sigma _5}{\sigma _3 \sigma _6 \tau _1 \tau
   _2}} \left(\sigma _6 \left(\tau _1+1\right)+\tau _1 \left(\tau
   _2+1\right)\right)}{\sigma _4},\\
   \rho(\beta)_{33} = \sqrt[3]{\frac{\sigma _5 \sigma _6^2}{\sigma _3 \sigma _4^2 \tau _1 \tau
   _2}}.
\end{gather*} 

The computations are found in Section 1 of the Mathematica code. From a direct computation found in Section 2 of the Mathematica code, we obtain the following.
\begin{lemma}\label{lemma : eigenvalue ratios}
	Let $(\sigma_1,\dots,\sigma_6,\tau_1,\tau_2)\in\widehat\DefSpace(\pants)$. Then the ratios of pairs of eigenvalues of $\rechol(\sigma_1,\dots,\sigma_6,\tau_1,\tau_2)(\alpha)$ are given by
	\[
	\sigma_1\sigma_4 \textnormal{ and }\frac{\tau_1\tau_2}{\sigma_2\sigma_3}.
	\]
	The ratios of pairs of eigenvalues of $\rechol(\sigma_1,\dots,\sigma_6,\tau_1,\tau_2)(\beta)$ are given by
	\[
	\sigma_3\sigma_6\textnormal{ and }\frac{\tau_1\tau_2}{\sigma_4\sigma_5}.
	\]
	The ratios of pairs of eigenvalues of $\rechol(\sigma_1,\dots,\sigma_6,\tau_1,\tau_2)(\gamma)$ are given by
	\[
		\sigma_2\sigma_5\textnormal{ and }\frac{\tau_1\tau_2}{\sigma_1\sigma_6}.
	\]
\end{lemma}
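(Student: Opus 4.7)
My plan is to exploit the triangular form of $\rho(\alpha)$ and $\rho(\gamma)$ for those two cases, and then either compute directly or leverage the relation $\alpha\beta\gamma = 1$ for $\rho(\beta)$. Concretely, I would first observe that the explicit matrix for $\rho(\alpha)$ displayed above is upper triangular, so its eigenvalues are the three diagonal entries
\[
\lambda_1^\alpha = \sqrt[3]{\tfrac{\tau_1^2\tau_2^2}{\sigma_1\sigma_2^2\sigma_3^2\sigma_4}},\quad \lambda_2^\alpha = \sqrt[3]{\tfrac{\sigma_2\sigma_3}{\sigma_1\sigma_4\tau_1\tau_2}},\quad \lambda_3^\alpha = \sqrt[3]{\tfrac{\sigma_1^2\sigma_2\sigma_3\sigma_4^2}{\tau_1\tau_2}}.
\]
Taking the ratios $\lambda_2^\alpha/\lambda_1^\alpha$ and $\lambda_3^\alpha/\lambda_2^\alpha$ and simplifying the cube roots gives $\sigma_2\sigma_3/(\tau_1\tau_2)$ and $\sigma_1\sigma_4$, which (up to taking reciprocals among pairs) are exactly the two ratios in the statement. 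The same strategy works verbatim for $\rho(\gamma)$, which is lower triangular: read off the three diagonal entries and compute the two relevant ratios.

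For $\rho(\beta)$ the matrix is not triangular, so a direct diagonal read-off is not available. Here I would exploit the relation $\alpha\beta\gamma = 1$ in $\pi_1(\pants)$, which gives $\rho(\beta) = \rho(\alpha)^{-1}\rho(\gamma)^{-1}$. Since the eigenvalue ratios are $\mathsf{PSL}(3,\mathbb R)$-conjugation invariants, and since $\det\rho(\beta) = 1$ (as all three matrices have determinant one, which one verifies by multiplying the displayed diagonal entries), the eigenvalues are determined by $\trace\rho(\beta)$ and $\trace\rho(\beta)^{-1}$ via the characteristic polynomial $\lambda^3 - \trace(\rho(\beta))\lambda^2 + \trace(\rho(\beta)^{-1})\lambda - 1$. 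One then computes these two traces from the explicit matrix for $\rho(\beta)$ (whose entries are already given), factors the cubic, and reads off the ratios. Alternatively, since the Mathematica code alluded to at the end of the section is set up for exactly such computations, one can simply instruct it to compute the eigenvalues of $\rho(\beta)$ symbolically and verify the two ratios $\sigma_3\sigma_6$ and $\tau_1\tau_2/(\sigma_4\sigma_5)$ appear.

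The main obstacle is not conceptual but computational: the entries of $\rho(\beta)$ are rational expressions in $\sigma_3,\sigma_4,\sigma_5,\sigma_6,\tau_1,\tau_2$ with cube roots in the prefactors, and its characteristic polynomial is visibly unwieldy. The cleanest route is to notice that every entry of $\rho(\beta)$ factors through a common cube-root scalar of the form $\sqrt[3]{\sigma_4\sigma_5/(\sigma_3\sigma_6\tau_1\tau_2)}$ (on the first and third rows) and $\tau_1^{2/3}/\sqrt[3]{\sigma_3\sigma_4^2\sigma_5^2\sigma_6\tau_2}$ (on the second row), so after pulling out these scalars the remaining computation of elementary symmetric polynomials of the eigenvalues reduces to polynomial identities in positive real variables. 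One then checks that the two candidate ratios $\sigma_3\sigma_6$ and $\tau_1\tau_2/(\sigma_4\sigma_5)$ are compatible with the traces, confirming them as eigenvalue ratios (the third ratio is forced by $\det = 1$). A sanity check of the whole lemma comes from observing the cyclic symmetry among $\alpha,\beta,\gamma$ induced by the evident $\mathbb Z/3$ symmetry of the triangulation and the assignment of coordinates in Figure \ref{fig : quiver pair of pants}, which is consistent with the cyclic pattern in the three displayed pairs of ratios.
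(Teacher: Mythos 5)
Your read-off of the diagonal entries of $\rho(\alpha)$ (upper triangular) and $\rho(\gamma)$ (lower triangular) is correct and genuinely more elementary than the paper's treatment, which proves the whole lemma in one stroke by deferring to a Mathematica computation in Section~2 of the code. For $\rho(\beta)$, however, your route lands exactly where the paper does: build the characteristic polynomial from $\trace\rho(\beta)$, $\trace\rho(\beta)^{-1}$ and $\det\rho(\beta)=1$ and factor it, or just have a CAS return the eigenvalues symbolically — so there you are reproducing rather than improving the paper's approach, which is fine. One small imprecision worth fixing: you justify $\det\rho(\beta)=1$ ``by multiplying the displayed diagonal entries,'' but that argument only applies to the two triangular matrices $\rho(\alpha)$ and $\rho(\gamma)$. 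For $\rho(\beta)$ the right reason is either that the generating matrices $T(x)$ and $E(z,w)$ each have determinant one (a two-line cofactor expansion), so any word in them does, or that the relation $\alpha\beta\gamma=1$ forces $\det\rho(\beta)=\bigl(\det\rho(\alpha)\det\rho(\gamma)\bigr)^{-1}=1$. Your appeal to the $\mathbb{Z}/3$ symmetry of Figure~\ref{fig : quiver pair of pants} is the right instinct for why the pattern in the three ratio pairs \emph{should} hold, and is harmless as stated since it carries no proof weight, though the induced permutation of the indices of $\sigma_1,\dots,\sigma_6$ is a bit more intricate than a single cyclic rotation.
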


\begin{figure}[htb]
	\centering
	\begin{tikzpicture}
		\node[regular polygon,draw,regular polygon sides=3, minimum size=5cm, very thick,] (p) at (0,0) {};
        \node[circle,draw=black,fill=black,inner sep=0pt,minimum size=6, label = left:$\sigma_2$] (2) at ($2/3*(p.corner 1)+1/3*(p.corner 2)$) {};
        \node[circle,draw=black,fill=black,inner sep=0pt,minimum size=6,label = left:$\sigma_1$] (1) at ($2/3*(p.corner 2)+1/3*(p.corner 1)$) {};
        \node[circle,draw=black,fill=black,inner sep=0pt,minimum size=6,label={[label distance=-1cm]:$\sigma_6$}] (6) at ($2/3*(p.corner 2)+1/3*(p.corner 3)$) {};
        \node[circle,draw=black,fill=black,inner sep=0pt,minimum size=6,label ={[label distance=-1cm]:$\sigma_5$}] (5) at ($2/3*(p.corner 3)+1/3*(p.corner 2)$) {};
        \node[circle,draw=black,fill=black,inner sep=0pt,minimum size=6,label = right:$\sigma_3$] (3) at ($2/3*(p.corner 1)+1/3*(p.corner 3)$) {};
        \node[circle,draw=black,fill=black,inner sep=0pt,minimum size=6,label = right:$\sigma_4$] (4) at ($2/3*(p.corner 3)+1/3*(p.corner 1)$) {};
        \node[circle,draw=black,fill=black,inner sep=0pt,minimum size=6,label={[label distance=-1cm]:$\tau_1$}] (7) at ($1/2*(1)+1/2*(4)$) {};
        \draw[thick,->] (2) to (3);
        \draw[thick,->] (3) to (7);
        \draw[thick,->] (7) to (2);
        
        \draw[thick,->] (1) to (7);
        \draw[thick,->] (7) to (6);
        \draw[thick,->] (6) to (1);
        
        \draw[thick,->] (7) to (4);
         \draw[thick,->] (4) to (5);
          \draw[thick,->] (5) to (7);
          
       		\node[regular polygon,draw,regular polygon sides=3, minimum size=5cm, very thick,rotate=180] (T2) at (0,-2.5) {};
        \node[circle,draw=black,fill=black,inner sep=0pt,minimum size=6, label = right:$\sigma_3$] (T23) at ($2/3*(T2.corner 1)+1/3*(T2.corner 2)$) {};
        \node[circle,draw=black,fill=black,inner sep=0pt,minimum size=6,label = right:$\sigma_4$] (T24) at ($2/3*(T2.corner 2)+1/3*(T2.corner 1)$) {};
        %\node[circle,draw=black,fill=black,inner sep=0pt,minimum size=6,label = below:$\sigma_6$] (6) at ($2/3*(T2.corner 2)+1/3*(T2.corner 3)$) {};
        %\node[circle,draw=black,fill=black,inner sep=0pt,minimum size=6,label = below:$\sigma_5$] (5) at ($2/3*(T2.corner 3)+1/3*(T2.corner 2)$) {};
        \node[circle,draw=black,fill=black,inner sep=0pt,minimum size=6,label = left:$\sigma_2$] (T22) at ($2/3*(T2.corner 1)+1/3*(T2.corner 3)$) {};
        \node[circle,draw=black,fill=black,inner sep=0pt,minimum size=6,label = left:$\sigma_1$] (T21) at ($2/3*(T2.corner 3)+1/3*(T2.corner 1)$) {};
        \node[circle,draw=black,fill=black,inner sep=0pt,minimum size=6,label={[label distance=-1cm]:$\tau_2$}] (T2t) at ($1/2*(T21)+1/2*(T24)$) {};

                  \draw[thick,->] (6) to (T2t);
        \draw[thick,->] (T21) -> (6);
        \draw[thick,->] (T2t) to (T21);
        
        \draw[thick,->] (T2t) to (5);
        \draw[thick,->] (5) to (T24);
        \draw[thick,->] (T24) to (T2t);
        
        \draw[thick,->] (T2t) to (T23);
         \draw[thick,->] (T23) to (T22);
          \draw[thick,->] (T22) to (T2t);
          %Vertices
          \node[circle,draw=black,fill=pink,inner sep=0pt,minimum size=6,label=above:$p_\alpha$] (pa) at (p.corner 1) {};
          \node[circle,draw=black,fill=pink,inner sep=0pt,minimum size=6,label=right:$p_\beta$] (pb) at (p.corner 3) {};
          \node[circle,draw=black,fill=pink,inner sep=0pt,minimum size=6,label=left:$p_\gamma$] (pc) at (p.corner 2) {};
          \node[circle,draw=black,fill=pink,inner sep=0pt,minimum size=6,label=below:$p_\alpha$] (pap) at (T2.corner 1) {};
          
	\end{tikzpicture}
	\caption{Quiver on ideal triangulation of a pair of pants. The top is triangle $\triangle_1$ and the bottom is triangle $\triangle_2$. The gluing pattern can be seen from the identification of the edge invariants. The punctures $p_\alpha,p_\beta$ and $p_\gamma$ are drawn in pink.}
	\label{fig : quiver pair of pants}
\end{figure}
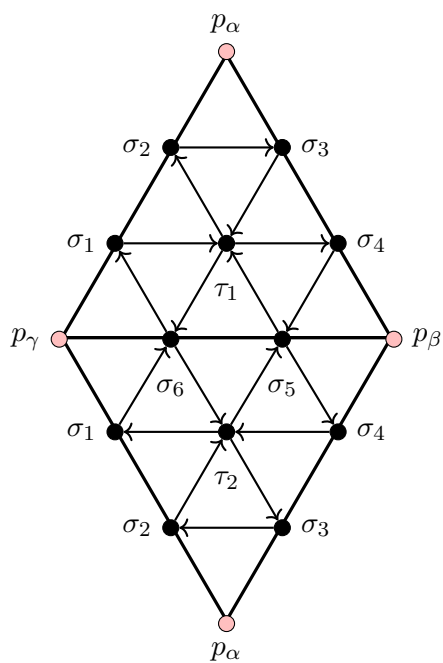

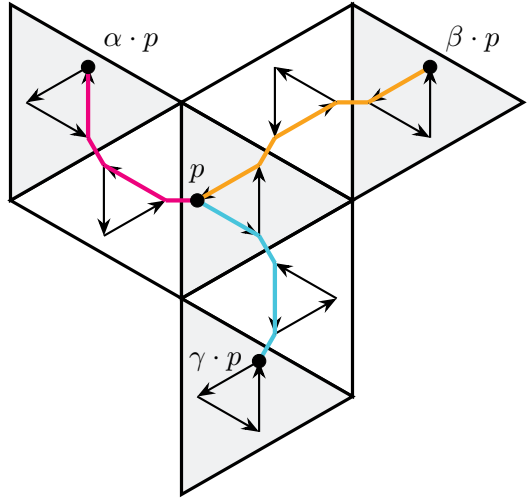
\begin{figure}[h!]
	\centering
	\begin{tikzpicture}
		\node[draw,very thick,minimum size=3cm, regular polygon,
  regular polygon sides=3,rotate = 90] (T1) at (0,0) {};
  \node[very thick,minimum size=1cm, regular polygon,
  regular polygon sides=3,rotate = -90] (IntT1) at (T1) {};
  \draw[thick,-Stealth,] (IntT1.corner 1) -- (IntT1.corner 2);
  \draw[thick,-Stealth,] (IntT1.corner 2) -- (IntT1.corner 3);
  \draw[thick,-Stealth,] (IntT1.corner 3) -- (IntT1.corner 1);

\node[draw,very thick,minimum size=3cm, regular polygon,
  regular polygon sides=3,rotate = -90,fill=Gray!12] (T2) at (1.5,0) {};
    \node[very thick,minimum size=1cm, regular polygon,
  regular polygon sides=3,rotate = 90] (IntT2) at (T2) {};
    \draw[thick,-Stealth,] (IntT2.corner 1) -- (IntT2.corner 2);
  \draw[thick,-Stealth,] (IntT2.corner 2) -- (IntT2.corner 3);
  \draw[thick,-Stealth,] (IntT2.corner 3) -- (IntT2.corner 1);
  
  \node[draw,very thick,minimum size=3cm, regular polygon,
  regular polygon sides=3,rotate = 90] (T3) at (2.25,1.3) {};
    \node[very thick,minimum size=1cm, regular polygon,
  regular polygon sides=3,rotate = -90] (IntT3) at (T3) {};
    \draw[thick,-Stealth,] (IntT3.corner 1) -- (IntT3.corner 2);
  \draw[thick,-Stealth,] (IntT3.corner 2) -- (IntT3.corner 3);
  \draw[thick,-Stealth,] (IntT3.corner 3) -- (IntT3.corner 1);
  
    \node[draw,very thick,minimum size=3cm, regular polygon,
  regular polygon sides=3,rotate = -90,fill=Gray!12] (T4) at (3.75,1.3) {};
    \node[very thick,minimum size=1cm, regular polygon,
  regular polygon sides=3,rotate = 90] (IntT4) at (T4) {};
    \draw[thick,-Stealth,] (IntT4.corner 1) -- (IntT4.corner 2);
  \draw[thick,-Stealth,] (IntT4.corner 2) -- (IntT4.corner 3);
  \draw[thick,-Stealth,] (IntT4.corner 3) -- (IntT4.corner 1);

    \node[draw,very thick,minimum size=3cm, regular polygon,
  regular polygon sides=3,rotate = -90,fill = Gray!12] (T5) at (-0.75,1.3) {};
    \node[very thick,minimum size=1cm, regular polygon,
  regular polygon sides=3,rotate = 90] (IntT5) at (T5) {};
    \draw[thick,-Stealth,] (IntT5.corner 1) -- (IntT5.corner 2);
  \draw[thick,-Stealth,] (IntT5.corner 2) -- (IntT5.corner 3);
  \draw[thick,-Stealth,] (IntT5.corner 3) -- (IntT5.corner 1);
  
    \node[draw,very thick,minimum size=3cm, regular polygon,
  regular polygon sides=3,rotate = 90] (T6) at (2.25,-1.3) {};
    \node[very thick,minimum size=1cm, regular polygon,
  regular polygon sides=3,rotate = -90] (IntT6) at (T6) {};
    \draw[thick,-Stealth,] (IntT6.corner 1) -- (IntT6.corner 2);
  \draw[thick,-Stealth,] (IntT6.corner 2) -- (IntT6.corner 3);
  \draw[thick,-Stealth,] (IntT6.corner 3) -- (IntT6.corner 1);
  
      \node[draw,very thick,minimum size=3cm, regular polygon,
  regular polygon sides=3,rotate = -90,fill = Gray!12] (T7) at (1.5,-2.6) {};
    \node[very thick,minimum size=1cm, regular polygon,
  regular polygon sides=3,rotate = 90] (IntT7) at (T7) {};
    \draw[thick,-Stealth,] (IntT7.corner 1) -- (IntT7.corner 2);
  \draw[thick,-Stealth,] (IntT7.corner 2) -- (IntT7.corner 3);
  \draw[thick,-Stealth,] (IntT7.corner 3) -- (IntT7.corner 1);
  %%%Connectin triangles
      \draw[thick,VioletRed] (IntT1.corner 1) -- (IntT2.corner 1);
    \draw[thick,] (IntT2.corner 3) -- (IntT3.corner 3);
    \draw[thick,] (IntT3.corner 1) -- (IntT4.corner 1);
    \draw[thick,] (IntT2.corner 2) -- (IntT6.corner 2);
    \draw[thick,] (IntT6.corner 3) -- (IntT7.corner 3);
    \draw[thick,] (IntT5.corner 2) -- (IntT1.corner 2);
      
    %%%Path for a
    \draw[ultra thick, RubineRed] (IntT2.corner 1) -- (IntT1.corner 1);
    \draw[ultra thick, RubineRed] (IntT1.corner 1) -- (IntT1.corner 2);
    \draw[ultra thick, RubineRed] (IntT1.corner 2) -- (IntT5.corner 2);
    \draw[ultra thick, RubineRed] (IntT5.corner 2) -- (IntT5.corner 3);
    
    %%%% Path for b
    \draw[ultra thick, YellowOrange] (IntT2.corner 1) -- (IntT2.corner 3);
    \draw[ultra thick, YellowOrange] (IntT2.corner 3) -- (IntT3.corner 3);
    \draw[ultra thick, YellowOrange] (IntT3.corner 3) -- (IntT3.corner 1);
    \draw[ultra thick, YellowOrange] (IntT3.corner 1) -- (IntT4.corner 1);
    \draw[ultra thick, YellowOrange] (IntT4.corner 1) -- (IntT4.corner 3);
    
    %%%%%% Path for c
    \draw[ultra thick, SkyBlue] (IntT2.corner 1) -- (IntT2.corner 2);
    \draw[ultra thick, SkyBlue] (IntT2.corner 2) -- (IntT6.corner 2);
    \draw[ultra thick, SkyBlue] (IntT6.corner 2) -- (IntT6.corner 3);
    \draw[ultra thick, SkyBlue] (IntT6.corner 3) -- (IntT7.corner 3);

%%Starting point
 \node[circle,draw=black,fill=black,inner sep=0pt,minimum size=5,label=$p$] (point) at (IntT2.corner 1){};
 
 %% alpha.p
  \node[circle,draw=black,fill=black,inner sep=0pt,minimum size=5,label=above right:$\alpha\cdot p$] (point) at (IntT5.corner 3){};
   %% beta.p
  \node[circle,draw=black,fill=black,inner sep=0pt,minimum size=5,label=above right:$\beta\cdot p$] (point) at (IntT4.corner 3){};
   %% gamma.p
  \node[circle,draw=black,fill=black,inner sep=0pt,minimum size=5,label=left:$\gamma\cdot p$] (point) at (IntT7.corner 3){};
 
	\end{tikzpicture}
	\caption{Computing the holonomies of the peripheral curves. To avoid a crowded figure, we do not include the labels for the coordinates functions. This figure shows part of the universal cover $\widetilde \pants$. The filled grey triangles correspond to lifts of $\triangle_1$ (whose triangle invariant is $\tau_1$), and the non--filled triangles are lifts of $\triangle_2$. To compute the holonomy, we pick an arbitrary point $p$ in the embedded graph $\Gamma$ from Figure \ref{fig : reconstructing}. The pink curve corresponds to a lift of $\alpha$, the orange curve corresponds to a lift of $\beta$, and the teal curve corresponds to a lift of $\gamma$. The respective endpoints of the lifts are labeled in the figure.}
	\label{fig : holonomies for abc}
\end{figure}

\section{The Poisson structure and symplectic leaves for the pair of pants}\label{sec : Poisson structure and symplectic leaves}

Here we describe explicitly the Poisson structure, symplectic leaves, Casimir functions, Hamiltonian functions, and flows for the pair of pants $\pants$. We also find an explicit parameterization of the symplectic leaves, allowing us to provide a closed--form formula for the symplectic structure.\\

\subsection{Poisson structure}\label{sec : Poisson structure computation}
Fix an ideal triangulation $\triangulation = \{\triangle_1,\triangle_2\}$ of $\pants$ with vertices $p_\alpha,p_\beta,p_\gamma$ as in Section \ref{sec : holonomies pair of pants description}. Let $\tau_i$ denote the triple ratio coordinate for the triangle $\triangle_i$. Then let $\sigma_1,\sigma_2,\dots,\sigma_6$ be the coordinates on the edges as shown in Figure \ref{fig : quiver pair of pants}. Using the basis $X_i = \sigma_i$ for $i = 1,\dots,6$ and $X_7 = \tau_1, X_8 = \tau_2$, the matrix $\eps_{ij}$ from \eqref{eq : Poisson structure from FG} is
\begin{equation}\label{eq : eps ij matrix for Poisson structure}
(\eps_{ij})_{i,j = 1,\dots,8}=\begin{bmatrix}
	0 &0&0&0&0&0&1&-1\\
	0 &0&0&0&0&0&-1&1\\
	0 &0&0&0&0&0&1&-1\\
	0 &0&0&0&0&0&-1&1\\
	0 &0&0&0&0&0&1&-1\\
	0 &0&0&0&0&0&-1&1\\
	-1&1&-1&1&-1&1&0&0\\
	1&-1&1&-1&1&-1&0&0
\end{bmatrix}.
\end{equation}
We compute that the radical of the associated cosymplectic structure is given by
\begin{align*}
	\Rad(\omega^\vee) = \langle\tau_1\cdot d\tau_2+\tau_2\cdot d\tau_1,\sigma_1\cdot d\sigma_2+\sigma_2\cdot d\sigma_1,\sigma_2\cdot d\sigma_3+\sigma_3\cdot d\sigma_2,\\
	\sigma_3\cdot d\sigma_4+\sigma_4\cdot d\sigma_3, \sigma_5\cdot d\sigma_6 + \sigma_6\cdot d\sigma_5 \rangle.
\end{align*}
Since the coordinates are positive, $\Rad(\omega^\vee)$ has constant dimension along $\widehat\DefSpace(\pants)$. In particular by Remark \ref{rmk : level sets of casimirs is symplectic leaf}, the symplectic leaves of $\widehat\DefSpace(\pants)$ are determined by the common level sets of the Casimir functions.

\subsection{Casimir functions and symplectic leaves}\label{sec : Casimir functions and symplectic leaves}
Here we give expressions for the Casimir functions and parameterize the symplectic leaves in terms of their common level sets.
\begin{lemma}\label{lemma : casimir functions pair of pants}
	The ring $\ringCasimir\subset C^\infty(\widehat\DefSpace(\pants))$ of Casimir functions of $(\widehat\DefSpace(\pants),\{\cdot,\cdot\})$ is generated by the functions
	\begin{align}
\casimir_{\alpha,1}\coloneqq \sigma_1\sigma_4, \qquad &\casimir_{\alpha,2}\coloneqq\frac{\tau_1\tau_2}{\sigma_2\sigma_3},\\
\casimir_{\beta,1}\coloneqq \sigma_3\sigma_6, \qquad & \casimir_{\beta,2} \coloneqq \frac{\tau_1\tau_2}{\sigma_4\sigma_5},\\
\casimir_{\gamma,1}\coloneqq \sigma_2\sigma_5,\qquad &\casimir_{\gamma,2}\coloneqq \frac{\tau_1\tau_2}{\sigma_1\sigma_6}.
\end{align}	
\end{lemma}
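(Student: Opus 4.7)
My approach would be to reduce everything to linear algebra on exponent vectors. In logarithmic coordinates $y_i = \log X_i$, the Poisson bracket \eqref{eq : Poisson structure from FG} becomes the constant $\{y_i,y_j\} = 2\eps_{ij}$, and the Leibniz rule applied to a Laurent monomial $X^{\mathbf{a}} = \prod_i X_i^{a_i}$ in the positive coordinates gives
\[
\{X^{\mathbf{a}}, X_k\} \;=\; 2\Big(\sum_{i} a_i \eps_{ik}\Big)\, X^{\mathbf{a}} X_k,
\]
so $X^{\mathbf{a}}$ is a Casimir exactly when $\mathbf{a}\in\ker(\eps_{ij})$. The first step is to read off the six exponent vectors associated to $\casimir_{\alpha,1},\dots,\casimir_{\gamma,2}$ and check by hand that each is annihilated by every column of the matrix \eqref{eq : eps ij matrix for Poisson structure}, which confirms that each of the six listed functions is indeed a Casimir.

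The main step is to show that these six functions generate the entire Casimir ring. The Radical computation in Section \ref{sec : Poisson structure computation} shows that the Poisson structure has constant rank $2$, so by Remark \ref{rmk : level sets of casimirs is symplectic leaf} the symplectic leaves are $2$--dimensional and coincide with the common level sets of all Casimirs. In particular $\ringCasimir$ has functional dimension $6$, and it suffices to verify that $\casimir_{\alpha,1},\dots,\casimir_{\gamma,2}$ are functionally independent. In the $y$--coordinates the differentials of these Casimirs are precisely their exponent vectors, so functional independence reduces to the $6\times 8$ rank computation carried out in Section 2 of the Mathematica code. Letting $\Phi\colon\widehat\DefSpace(\pants)\to\R_{>0}^6$ be the map whose components are the six Casimirs, this independence makes $\Phi$ a submersion, and its fibers are affine $2$--planes in $y$--coordinates, hence connected and $2$--dimensional.

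From here the conclusion is standard. The fibers of $\Phi$ are connected, $2$--dimensional, and contained in the $2$--dimensional symplectic leaves, so they coincide with the symplectic leaves. Any Casimir is constant along leaves, therefore along fibers of $\Phi$, and therefore factors smoothly through $\Phi$; this is exactly the statement that the six listed functions generate $\ringCasimir$. The only nontrivial obstacle I anticipate is the linear independence of the six exponent vectors, but this is a finite $6\times 8$ determinant check that can be read off directly from the monomial form of $\casimir_{\alpha,1},\dots,\casimir_{\gamma,2}$.
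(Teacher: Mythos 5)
Your proposal is correct and follows essentially the same route as the paper: the Leibniz rule together with the explicit form of the matrix $(\eps_{ij})$ shows the six monomials are Casimirs, and generation is reduced (via the constant rank of the Poisson structure and Remark \ref{rmk : level sets of casimirs is symplectic leaf}) to the everywhere-rank-$6$ Jacobian check, exactly as in the paper's Mathematica computation. One small slip: a priori the symplectic leaves are contained in the fibers of $\Phi$ (not the reverse, as you wrote), but your conclusion still stands since each $2$--dimensional leaf is open in the connected $2$--dimensional fiber, forcing leaf and fiber to coincide.
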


\begin{remark}\label{rmk : Casimirs are eigenvalue ratios}
	Note that from Lemma \ref{lemma : eigenvalue ratios}, the Casimir functions are exactly the eigenvalue ratios of the holonomies of the curves $\alpha,\beta$ and $\gamma$ respectively. 
\end{remark}

\begin{proof}
	Since the Poisson bracket is bilinear, it is enough to prove that the above functions are Casimir on a basis. By the definition of the Hamiltonian vector field in \eqref{eq : definition of Hamiltonian vector field}, we have that for any coordinate function $X_i$
	\begin{gather*}
	\{\casimir_{\alpha,1},X_i\}=\{\sigma_1\sigma_4,X_i\} = \sigma_1\{\sigma_4,X_i\}+\sigma_4\{\sigma_1,X_i\},\\
	\{\casimir_{\alpha,2},X_i\} = \left\{\frac{\tau_1\tau_2}{\sigma_2\sigma_3},X_i\right\} = \frac{\tau_2}{\sigma_2\sigma_3}\{\tau_1,X_i\}+\frac{\tau_1}{\sigma_2\sigma_3}\{\tau_2,X_i\}-\frac{\tau_1\tau_2}{\sigma_2^2\sigma_3}\{\sigma_2,X_i\}-\frac{\tau_1\tau_2}{\sigma_2\sigma_3^2}\{\sigma_3,X_i\},\\
	\{\casimir_{\beta,1},X_i\} = \{\sigma_3\sigma_6,X_i\} = \sigma_3\{\sigma_6,X_i\}+\sigma_6\{\sigma_3,X_i\},\\
	\{\casimir_{\beta,2},X_i\} = \left\{\frac{\tau_1\tau_2}{\sigma_4\sigma_5},X_i\right\} = \frac{\tau_2}{\sigma_4\sigma_5}\{\tau_1,X_i\}+\frac{\tau_1}{\sigma_4\sigma_5}\{\tau_2,X_i\}-\frac{\tau_1\tau_2}{\sigma_4^2\sigma_5}\{\sigma_4,X_i\}-\frac{\tau_1\tau_2}{\sigma_4\sigma_5^2}\{\sigma_5,X_i\},\\
	\{\casimir_{\gamma,1},X_i\} = \{\sigma_2\sigma_5,X_i\} = \sigma_2\{\sigma_5,X_i\}+\sigma_5\{\sigma_2,X_i\},\\
	\{\casimir_{\gamma,2},X_i\} = \left\{\frac{\tau_1\tau_2}{\sigma_1\sigma_6},X_i\right\} = \frac{\tau_2}{\sigma_1\sigma_6}\{\tau_1,X_i\}+\frac{\tau_1}{\sigma_1\sigma_6}\{\tau_2,X_i\}-\frac{\tau_1\tau_2}{\sigma_1^2\sigma_6}\{\sigma_1,X_i\}-\frac{\tau_1\tau_2}{\sigma_1\sigma_6^2}\{\sigma_6,X_i\}.
	\end{gather*}
	The fact that the functions in the proposition are Casimir then immediately follows from the form of the matrix in \eqref{eq : eps ij matrix for Poisson structure} defining the Poisson structure.\\
	
	To see that the functions generate the ring of Casimirs, we observe that the (fibers) of the dimension of the radical $\Rad(\omega^\vee)$ is $6$. Hence, we need to show that the Jacobian matrix of the family of functions $\{\casimir_{\alpha,1},\casimir_{\alpha,2},\casimir_{\beta,1},\casimir_{\beta,2},\casimir_{\gamma,1},\casimir_{\gamma,2}\}$ always has rank $6$. Indeed, we compute in Section 2 of the Mathematica code that the kernel of the Jacobian matrix 
	\begin{align*}
\left(\frac{\partial f}{\partial X_i}\right)_{\substack{f\in\{\casimir_{\alpha,1},\casimir_{\alpha,2},\casimir_{\beta,1},\casimir_{\beta,2},\casimir_{\gamma,1},\casimir_{\gamma,2}\},\\X_i\in\{\sigma_1,\dots,\sigma_6,\tau_1,\tau_2\}}}
\end{align*}
is given by 
\begin{equation}\label{eq : kernel of Jacobian matrix}
\left\langle \tau_1\del{\tau_1}-\tau_2\del{\tau_2},\sum_{i = 1}^6(-1)^{i+1}\sigma_i\del{\sigma_i} \right\rangle.
\end{equation}
Since the coordinates are always positive, the rank of the Jacobian matrix is always $6$, as desired.
\end{proof}

\begin{remark}
	These equations correspond to the \emph{(weak) closed leaf inequalities} of Bonahon--Dreyer in \cite{BonahonDreyer_FiniteLaminations} (for the pair of pants case, see \cite[pp. 27]{LoftinZhang_CoordinatesAugmented}). One can observe however that the equations themselves look slightly different. This is due to the fact that flags used in the Bonahon--Dreyer coordinates differ from the flags used by Fock--Goncharov (see Remark \ref{rmk : Bonahon Dreyer coordinates are different than FG coords}).
\end{remark}

\begin{lemma}\label{cor : parametrization of symplectic leaf as level sets}
	Let $L = (\ell_{\alpha,1},\ell_{\alpha,2},\ell_{\beta,1},\ell_{\beta,2}.\ell_{\gamma,1},\ell_{\gamma,2})\in\R^6_{>0}$. Let
	\begin{align*}
	\sigma_{2,L}(\sigma_1,\tau_1)&\coloneqq \frac{(\ell_{\alpha,1}\ell_{\beta,2}\ell_{\gamma,1})^{2/3}}{\sigma_1(\ell_{\alpha,2}\ell_{\beta,1}\ell_{\gamma,2})^{1/3}}	,\\
	\sigma_{3,L}(\sigma_1,\tau_1)&\coloneqq \frac{(\ell_{\beta,1}\ell_{\gamma,2})^{2/3}\sigma_1}{(\ell_{\alpha,1}\ell_{\alpha,2}\ell_{\beta,2}\ell_{\gamma,1})^{1/3}},\\
	\sigma_{4,L}(\sigma_1,\tau_1)&\coloneqq\frac{\ell_{\alpha,1}}{\sigma_1},\\
	\sigma_{5,L}(\sigma_1,\tau_1)&\coloneqq\frac{(\ell_{\alpha,2}\ell_{\beta,1}\ell_{\gamma,1}\ell_{\gamma,2})^{1/3}\sigma_1}{(\ell_{\alpha,1}\ell_{\beta,2})^{2/3}},\\
	\sigma_{6,L}(\sigma_1,\tau_1)&\coloneqq\frac{(\ell_{\alpha,1}\ell_{\alpha,2}\ell_{\beta,1}\ell_{\beta,2}\ell_{\gamma,1})^{1/3}}{\ell_{\gamma,2}^{2/3}\sigma_1},\\
	\tau_{2,L}(\sigma_1,\tau_1)&\coloneqq\frac{(\ell_{\alpha,1}\ell_{\alpha,2}\ell_{\beta,1}\ell_{\beta,2}\ell_{\gamma,1}\ell_{\gamma,2})^{1/3}}{\tau_1}.
	\end{align*}
	
	The symplectic leaf corresponding to $L$, denoted by $\SymLeaf_L$ is given by
	\begin{align*}
		\SymLeaf_L = \{(\sigma_1,\sigma_{2,L},\sigma_{3,L},\sigma_{4,L},\sigma_{5,L},\sigma_{6,L},\tau_1,\tau_{2,L})\in\widehat\DefSpace(\pants)\st \sigma_1,\tau_1>0\}.
	\end{align*}
	In particular, any symplectic leaf is two--dimensional. In the description of the set, we dropped the dependence of the functions $\sigma_{i,L}$ and $\tau_{2,L}$ for readability.
\end{lemma}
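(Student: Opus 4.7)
The plan is to combine two results already in hand. First, the computation of $\Rad(\omega^\vee)$ in Section \ref{sec : Poisson structure computation} shows that the Poisson rank is constant on $\widehat{\DefSpace}(\pants)$, so by Remark \ref{rmk : level sets of casimirs is symplectic leaf} each symplectic leaf coincides with a common level set of the Casimir functions. Second, by Lemma \ref{lemma : casimir functions pair of pants}, the ring $\ringCasimir$ is generated by the six explicit functions $\casimir_{\alpha,1},\casimir_{\alpha,2},\casimir_{\beta,1},\casimir_{\beta,2},\casimir_{\gamma,1},\casimir_{\gamma,2}$. Therefore $\SymLeaf_L$ is precisely the subset of $\widehat{\DefSpace}(\pants) \cong \R^8_{>0}$ cut out by the six equations $\casimir_{\star,i} = \ell_{\star,i}$, and the task reduces to solving this algebraic system.

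I will solve the system by designating $\sigma_1$ and $\tau_1$ as free variables and expressing the remaining six coordinates in terms of them. The equation $\casimir_{\alpha,1} = \ell_{\alpha,1}$ immediately yields $\sigma_4 = \ell_{\alpha,1}/\sigma_1$, matching $\sigma_{4,L}$. Substituting this into $\casimir_{\beta,2} = \ell_{\beta,2}$ gives $\sigma_5 = \tau_1\tau_2\sigma_1/(\ell_{\alpha,1}\ell_{\beta,2})$, and $\casimir_{\gamma,2} = \ell_{\gamma,2}$ gives $\sigma_6 = \tau_1\tau_2/(\sigma_1\ell_{\gamma,2})$. From $\casimir_{\gamma,1} = \ell_{\gamma,1}$ one then recovers $\sigma_2 = \ell_{\gamma,1}/\sigma_5$ and from $\casimir_{\beta,1} = \ell_{\beta,1}$ one recovers $\sigma_3 = \ell_{\beta,1}/\sigma_6$, each now expressed in terms of $\sigma_1$ and the product $\tau_1\tau_2$.

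The remaining equation $\casimir_{\alpha,2} = \ell_{\alpha,2}$ plays the role of a compatibility condition. Computing the product $\sigma_2\sigma_3$ from the expressions above gives
\[
\sigma_2\sigma_3 \;=\; \frac{\ell_{\alpha,1}\ell_{\beta,1}\ell_{\beta,2}\ell_{\gamma,1}\ell_{\gamma,2}}{(\tau_1\tau_2)^2},
\]
and setting this equal to $\tau_1\tau_2/\ell_{\alpha,2}$ yields
\[
(\tau_1\tau_2)^3 \;=\; \ell_{\alpha,1}\ell_{\alpha,2}\ell_{\beta,1}\ell_{\beta,2}\ell_{\gamma,1}\ell_{\gamma,2},
\]
so $\tau_2 = \tau_{2,L}(\sigma_1,\tau_1)$ as stated. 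Substituting this value of $\tau_1\tau_2$ back into the expressions for $\sigma_2,\sigma_3,\sigma_5,\sigma_6$ produces exactly $\sigma_{2,L},\sigma_{3,L},\sigma_{5,L},\sigma_{6,L}$. This shows the reverse inclusion; the forward inclusion is a direct verification that the listed tuples satisfy all six Casimir equations. Finally, since the solutions are parameterized bijectively by $(\sigma_1,\tau_1) \in \R^2_{>0}$, the leaf is two-dimensional.

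There is no real obstacle: the algebra is a straightforward back-substitution, and the only content is verifying that the system is consistent, which is captured by the cubic relation on $\tau_1\tau_2$. The form of the exponents $2/3$ and $1/3$ in the statement is dictated by this cubic compatibility, which makes the otherwise symmetric-looking expressions mildly asymmetric after one picks which variable (here $\sigma_4$) to eliminate first.
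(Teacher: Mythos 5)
Your argument is correct and follows the same logical path as the paper: invoke the constancy of the Poisson rank to identify $\SymLeaf_L$ with the common level set of the Casimirs from Lemma~\ref{lemma : casimir functions pair of pants}, then solve the resulting algebraic system in the eight coordinates. The only difference is that you carry out the back-substitution by hand (and nicely isolate the cubic relation $(\tau_1\tau_2)^3=\ell_{\alpha,1}\ell_{\alpha,2}\ell_{\beta,1}\ell_{\beta,2}\ell_{\gamma,1}\ell_{\gamma,2}$ as the step that pins down $\tau_2$), whereas the paper delegates exactly this computation to Section~3 of the Mathematica code.
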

\begin{proof}
	The proof is a computation solving for $\sigma_2,\dots,\sigma_6$ and $\tau_2$ in terms of $\sigma_1,\tau_1$ and the vector $L$, found in Section 3 of the Mathematica code.
\end{proof} 

\begin{definition}
	We call a vector $L\in\R^6_{>0}$ that defines a symplectic leaf a \emph{length vector}.
\end{definition}

The symplectic leaves in $\widehat\DefSpace(\pants)$ correspond precisely to the relative character varieties in $\CharVar_3^+(\pants)$ via the map $\rechol$.

\begin{lemma}\label{lemma : symplectic leaves are exactly relative character varieties}
	For any length vector $L\in\R^6_{>0}$, there is a tuple of conjugacy classes $\conjclass(L)$ in $\PSL 3$ such that the map
	\[
	\rechol\big|_{\SymLeaf_L}\colon\SymLeaf_L\to\CharVar_{3,\conjclass(L)}^+(\pants)
	\]
	is an isomorphism.
\end{lemma}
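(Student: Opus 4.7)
The central idea is that the six Casimirs $(\ell_{\alpha,1},\ell_{\alpha,2},\ell_{\beta,1},\ell_{\beta,2},\ell_{\gamma,1},\ell_{\gamma,2})$ encode exactly the conjugacy classes of the peripheral holonomies, and that the framing carried by a point of $\widehat\DefSpace(\pants)$ becomes \emph{uniquely determined} by the representation once we fix those Casimirs. First I would invoke Lemma \ref{lemma : eigenvalue ratios}, which identifies $\casimir_{\alpha,i}, \casimir_{\beta,i}, \casimir_{\gamma,i}$ with the pairs of eigenvalue ratios of $\rho(\alpha),\rho(\beta),\rho(\gamma)$. Since the holonomies lie in $\PSL 3$ (hence have unit determinant representatives in $\SL 3$), the two ratios together with the determinant pin down the unordered triple of eigenvalues of each peripheral element. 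Combined with the fact that positive framed representations have peripheral holonomies with positive real eigenvalues (loxodromic, or unipotent at $\casimir=1$), this determines a triple of conjugacy classes $\conjclass=\conjclass(L)$, and hence $\rechol(\SymLeaf_L)\subseteq\CharVar_{3,\conjclass}^+(\pants)$.

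The heart of the proof is the bijection between points of $\SymLeaf_L$ and points of $\CharVar_{3,\conjclass}^+(\pants)$. To prove injectivity, I would observe that $\widehat\hol$ is injective on all of $\widehat\DefSpace(\pants)$, because the Fock–Goncharov coordinate map $\varphi_\triangulation$ is an isomorphism and the reconstruction procedure of Section \ref{sec : reconstructing the represenatation} produces a canonical representative (up to conjugation) from the coordinates. Hence two distinct points of $\SymLeaf_L$ yield distinct framed representations; it remains to check that they cannot become identified after forgetting the framing, i.e., that no conjugation can match the representations while changing the framings. The key point is that on a fixed symplectic leaf the framing is rigid: the six flags fixed by a regular semisimple element of $\PSL 3$ correspond to the six orderings of its eigenlines, each ordering realizes a distinct pair of Casimir values via the formulas of Lemma \ref{lemma : eigenvalue ratios}, and so the prescribed $L$ selects exactly one flag at each peripheral fixed point. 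Surjectivity follows by the same mechanism in reverse: given $[\rho]\in\CharVar_{3,\conjclass}^+(\pants)$, choose at each boundary the unique flag fixed by $\rho(c_i)$ whose associated Casimir pair matches the component of $L$ at that boundary, and verify that the resulting framed representation is positive (it corresponds to a point of $\R^8_{>0}$ through $\varphi_\triangulation^{-1}$) and lies in $\SymLeaf_L$ by construction.

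To upgrade this bijection to an isomorphism, I would use that $\SymLeaf_L$ is parameterized smoothly by $(\sigma_1,\tau_1)\in\R^2_{>0}$ via Lemma \ref{cor : parametrization of symplectic leaf as level sets}, while the reconstruction formulas of Section \ref{sec : holonomies pair of pants description} are algebraic in those coordinates; the inverse map reads off the Fock–Goncharov cross ratios and triple ratios from matrix entries of $\rho$ (after canonically choosing the framing dictated by $L$), which is again algebraic. Both directions are smooth, so the map is a diffeomorphism, and indeed an isomorphism of real algebraic varieties.

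The main obstacle I anticipate is the case of non-regular peripheral holonomies, in particular the unipotent locus $\unipotentLocus$ where some Casimirs equal $1$. In these cases a unipotent element of $\PSL 3$ fixes only a one-parameter family of flags rather than six, so the combinatorial counting argument for the uniqueness of the framing degenerates. This should still be fine because when $\ell_{\bullet,i}=1$ the single fixed flag of the unipotent element is forced, and the parameterization of $\SymLeaf_L$ by $(\sigma_1,\tau_1)$ of Lemma \ref{cor : parametrization of symplectic leaf as level sets} remains valid in this degenerate stratum; however, verifying continuity of the inverse at these boundary strata of the parameter space of leaves requires a little care.
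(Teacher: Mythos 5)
Your overall skeleton matches the paper's: the Casimirs are the eigenvalue ratios of the peripheral holonomies and hence should determine a tuple $\conjclass(L)$ with $\rechol(\SymLeaf_L)\subset\CharVar^+_{3,\conjclass(L)}(\pants)$, and injectivity follows because changing the framing changes the Casimirs, so different framings of the same representation lie on different leaves while the framing is the only ambiguity of $\rechol$. However, the step ``the two ratios determine the conjugacy class'' is not justified as you state it, and this is a genuine gap. In $\PSL 3$ the eigenvalues do not determine a conjugacy class once they are repeated, and your dichotomy ``loxodromic, or unipotent at $\casimir=1$'' is false: peripheral holonomies of positive representations can have eigenvalues $\lambda_1,\lambda_2,\lambda_2$ with a nontrivial Jordan block (this is forced on every leaf where exactly one of the two ratios at a boundary equals $1$), and even at a unipotent boundary one must know that the class is the \emph{regular} unipotent one rather than the identity or the minimal unipotent class. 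The paper supplies precisely this missing input through Marquis's classification of the possible peripheral conjugacy classes (Theorem \ref{thm : Marquis}); with it the eigenvalue ratios do pin down $\conjclass(L)$, without it your step (and the later assertion that the unipotent fixed flag ``is forced'') is unsupported. Incidentally, a regular unipotent element fixes a single full flag, not a one-parameter family, so that sentence of your last paragraph contradicts the next one.

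Your surjectivity argument also has a gap that the paper's proof avoids. Forgetting the framing is a ramified $2^n\colon 1$ cover, not $6^n\colon 1$: for a fixed positive $[\rho]$ only a proper subset of the choices of fixed flags at the boundaries yields a \emph{positive} framed representation. Hence ``choose at each boundary the unique fixed flag whose Casimir pair matches $L$'' may select a framing that is not positive, and your verification that it is positive (``it corresponds to a point of $\R^8_{>0}$ through $\varphi_\triangulation^{-1}$'') is circular, since $\varphi_\triangulation^{-1}$ is only defined on positive coordinate tuples, i.e.\ exactly on what you are trying to establish. The paper instead deduces surjectivity from the surjectivity of $\rechol$ on all of $\widehat\CharVar_3^+(\pants)$ together with the foliation of the source by the leaves $\SymLeaf_L$, and then only proves the containment $\rechol(\SymLeaf_L)\subset\CharVar^+_{3,\conjclass(L)}(\pants)$ and injectivity leaf by leaf; if you want to keep your constructive route, you must actually argue that among the positive framings of $[\rho]$ one realizes the given vector $L$, which is the content your proposal is missing.
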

To prove this lemma, we need the following result due to Marquis.
\begin{theorem}\label{thm : Marquis}\cite{ConvexProj_Marquis}
	Let $[\rho]\in\CharVar_3^+(S)$. If $\gamma$ is a peripheral element, then the conjugacy class of $\rho(\gamma)$ has to contain 
	\[
	\begin{pmatrix}
		1 &1&0\\0&1&1\\0&0&1
	\end{pmatrix}, \begin{pmatrix}
		\lambda_1&0&0\\0&\lambda_2 &1\\0&0&\lambda_2
	\end{pmatrix} \textnormal{ or } \begin{pmatrix}
		\lambda_1&0&0\\0&\lambda_2&0\\0&0&\lambda_3
	\end{pmatrix}
	\]
	for some pairwise distinct and positive $\lambda_1,\lambda_2,\lambda_3$. 
\end{theorem}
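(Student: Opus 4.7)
The plan is to exploit the geometric interpretation of $\CharVar_3^+(S)$: any $[\rho]$ in this space is the holonomy class of a framed convex $\RPTwo$--structure on $S$, so there is a $\rho$--equivariant developing map $\dev\colon \widetilde S\to\Omega$ onto a properly convex open domain $\Omega\subset\RPTwo$. Fix a peripheral $\gamma$ and set $A\coloneqq\rho(\gamma)$. A lift of the corresponding boundary component of $S$ develops to an arc $\widetilde c\subset\overline\Omega$ invariant under $A$, and since $A$ preserves $\Omega$ and fixes the two endpoints of $\widetilde c$ on $\partial\Omega$, the framing flag $(p,\ell)$ is realised with $p\in\overline\Omega$ and $\ell$ a supporting line.

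The first step is to show that, after passing to a lift in $\SL{3}$, all eigenvalues of $A$ are positive real. Complex (non-real) eigenvalues would force $A$ to act by rotation on an invariant real two-plane, producing orbits that lie on complete projective lines; this is incompatible with preservation of a properly convex domain, which contains no complete projective line through a fixed point. Once all eigenvalues are real, the preservation of $\overline\Omega$ together with the framing data forces the signs to be positive: each eigendirection that meets $\overline\Omega$ must be preserved pointwise rather than swapped, since otherwise $A$ would exchange the two components of $\overline\Omega\setminus\ell$ near the framing point, contradicting invariance of $\Omega$.

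The main step is then to enumerate the real Jordan types in $\SL{3}$ with positive eigenvalues and to rule out all but three of them by dynamical considerations on $\overline\Omega$. The a priori possibilities are: three distinct eigenvalues with $A$ diagonalisable; a double eigenvalue with a non-trivial $2\times 2$ block and a third distinct eigenvalue; a double eigenvalue with a two-dimensional eigenspace (semisimple); a triple eigenvalue with full $3\times 3$ block; a triple eigenvalue with a single $2\times 2$ block; and the identity. The identity is ruled out by the non-triviality of $\gamma$ combined with discreteness of the holonomy (the convex projective structure has negative Euler characteristic at each end). The semisimple double-eigenvalue case and the triple-eigenvalue-with-one-Jordan-block case both produce a whole projective line of fixed points of $A$; such a line must either cross $\Omega$, which is impossible since fixed points of $A$ on $\Omega$ would contradict the properly discontinuous action of $\langle\gamma\rangle$ on $\widetilde S$, or lie entirely in $\partial\Omega$, which contradicts proper convexity because $\partial\Omega$ cannot contain a complete projective line. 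What remains are exactly the three Jordan types in the statement: full unipotent (first matrix), non-trivial $2\times 2$ block plus distinct eigenvalue (second matrix), and three distinct eigenvalues (third matrix); a normalisation step then puts the representatives into $\SL{3}$ with positive entries as displayed.

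The main obstacle is precisely the dynamical exclusion of the two ``forbidden'' Jordan forms with a projective line of fixed points, and the verification that in the triple-eigenvalue case only the principal $3\times 3$ Jordan block survives. Both hinge on a careful analysis of the action of $A$ on $\partial\Omega$ near the two endpoints of $\widetilde c$: one has to show that these endpoints carry the attracting/repelling dynamics of $A$ and that any additional fixed point would destroy proper convexity of $\Omega$ or contradict the cocompact cyclic action of $\langle\gamma\rangle$ on a collar of the lifted boundary component. This dynamical content is the geometric heart of Marquis's argument in \cite{ConvexProj_Marquis}.
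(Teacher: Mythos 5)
First, a point of comparison: the paper does not prove this statement at all — it is imported verbatim from Marquis's work \cite{ConvexProj_Marquis} — so there is no internal argument to match; judged as a standalone proof, your sketch contains two genuine gaps at exactly the places where the real work lies. The first is the exclusion of complex eigenvalues. A projective rotation preserves a round disc, which is properly convex, so ``complex eigenvalues are incompatible with preserving a properly convex domain'' is simply false; moreover the invariant line of such an element (the projectivization of the invariant $2$--plane) need not meet $\overline\Omega$ at all, so no contradiction can be extracted from orbits on that line. The correct route is different: the lift of $A=\rho(\gamma)$ to $\SL 3$ preserves one of the two convex cones over $\Omega$, and Perron--Frobenius for proper convex cones forces the spectral radii of $A$ and of $A^{-1}$ to be positive real eigenvalues; the residual elliptic possibilities (complex eigenvalues of modulus equal to the real one) are then ruled out because $\langle A\rangle$ would have compact closure and hence a fixed point in $\Omega$, contradicting that the deck action of $\gamma$ on $\widetilde S\cong\Omega$ is free and properly discontinuous.

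The second and more serious gap is the claimed dichotomy that a projective line of fixed points ``must either cross $\Omega$ or lie entirely in $\partial\Omega$.'' A fixed line can also be disjoint from $\overline\Omega$, or meet $\partial\Omega$ only along a segment, and these are exactly the cases that occur: the planar element $\diag(\lambda,\lambda,\mu)$ preserves the open triangle $\{[x:y:z]\,:\,x,y,z>0\}$ (its fixed line contains a closed edge of the triangle) and acts on it freely and properly discontinuously with annulus quotient, and similarly a unipotent with a single $2\times 2$ Jordan block preserves properly convex domains. Consequently no argument that only uses the cyclic group $\langle\rho(\gamma)\rangle$ and an invariant properly convex domain can eliminate these two Jordan types; one must bring in the fact that $\gamma$ is peripheral in a surface with $\chi(S)<0$, i.e.\ the rest of the holonomy group, the structure of the end (the boundary develops to a principal segment or a cusp point), or the positivity of the framing. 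That exclusion is precisely the content of Marquis's theorem, and your proposal explicitly defers it (``the geometric heart of Marquis's argument'') instead of supplying it, so as written the sketch establishes the conclusion only in the easy cases and assumes it in the hard ones.
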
 

\begin{proof}[of Lemma \ref{lemma : symplectic leaves are exactly relative character varieties}]
	By definition, the map $\rechol\colon\widehat\CharVar_3^+(\pants)\to \CharVar_3^+(\pants)$ is surjective. Then since $\widehat\CharVar_3^+(\pants)$ is foliated by the symplectic leaves $\SymLeaf_L$ as $L$ ranges in $\R^6_{>0}$, we only have to show that for each $L\in\R^6_{>0}$ there is a tuple of conjugacy classes $\conjclass(L)$ such that $\rechol(\SymLeaf_L)\subset \CharVar_{3,\conjclass(L)}^+(\pants)$ and that restricted to a symplectic leaf, $\rechol$ is injective.\\
	
	The eigenvalue ratios of the peripheral holonomies are exactly the Casimir functions (see Remark \ref{rmk : Casimirs are eigenvalue ratios}). By Theorem \ref{thm : Marquis}, we see that the conjugacy class of the holonomy of a peripheral lement is completely determined by the eigenvalue ratios, and hence there is a map $L\mapsto \conjclass(L)$. In particular, $\rechol(\SymLeaf_L)\subset \CharVar_{3,\conjclass(L)}^+(\pants)$. To see that the restriction to a symplectic leaf is injective, note that changing the framing changes the invariants. Therefore the framed convex projective structures with different framings lie in different symplectic leaves. Since the only ambiguity in the map $\rechol$ comes from the framing, this implies that $\rechol$ is injective when restricted to the symplectic leaves.
\end{proof}
\begin{remark}
	The map $L\mapsto \conjclass(L)$ is in general not injective, meaning that two symplectic leaves may be mapped to the same relative character variety. Indeed, if $L$ contains a pair $\ell_{x,1}\neq \ell_{x,2}\neq 1$ for $x\in\{\alpha,\beta,\gamma\}$, then interchanging $\ell_{x,1}$ with $\ell_{x,2}$ will define the same conjugacy class. This is because the Weyl group action on $\PSL 3$ can interchange the order of the eigenvalues. However, in the case when $\conjclass$ consists of only unipotent conjugacy classes, there is only one vector $L\in\R^6_{>0}$ mapping to $\conjclass$, namely the vector $(1,1,1,1,1,1)$.
\end{remark}

In Section \ref{sec : unipotent locus} we will focus on a special symplectic leaf, corresponding to the case when the peripheral holonomies are \emph{unipotent}. An element in $\PSL 3$ is unipotent if all of its eigenvalues are equal to one.

\begin{definition}\label{def : unipotent locus}
	The \emph{unipotent locus} of $\widehat\DefSpace(\pants)$, denoted by $\unipotentLocus$, is the symplectic leaf where the Casimir functions are all equal to $1$. That is
	\[
	\unipotentLocus\coloneqq\SymLeaf_{(1,1,1,1,1,1)}
	\]
	in the notation of Lemma \ref{cor : parametrization of symplectic leaf as level sets}. Through the map $\rechol$, $\unipotentLocus$ is identified with the relative character variety where all the peripheral elements are unipotent.
\end{definition}

Putting the spaces of framed convex projective structures, symplectic leaves, and framed representations, together with their counterparts without the framing, we provide the diagram below describing the relationships between all of these spaces:

\begin{figure}[H]
	\centering
	\begin{tikzcd}
\widehat{\DefSpace}(\pants)\arrow[dr,"\rechol"] \arrow[r] \arrow[d,"\widehat\hol" left]
& \DefSpace(P) \arrow[d,
"\hol"] \\
\R^8_{>0}\cong \widehat\CharVar_3^+(\pants) \arrow[r,swap,"\mu"]
& \CharVar_3^+(\pants)\\
\R^2_{>0}\cong\SymLeaf_L\arrow[u, phantom, sloped, "\subseteq"]\arrow[r, "\cong"]& \CharVar_{3,\conjclass}^+(\pants)\arrow[u, phantom, sloped, "\subseteq"]
\end{tikzcd}
\end{figure}

On the left are spaces which include framed structures, and on the right are the non--framed structures. The map $\rechol$ is from Section \ref{sec : reconstructing the represenatation}, which forgets the framing of the holonomy representation, the map $\mu$ from Section \ref{sec : conv proj surfaces Prelim} forgets the framing. The isomorphism at the bottom of the diagram is the content of Lemma \ref{lemma : symplectic leaves are exactly relative character varieties}. 

\subsection{The Fuchsian locus}\label{sec : fuchsian locus}
A (framed) convex projective structure on $\pants$ is said to be \emph{hyperbolic} (or \emph{Fuchsian}) if it corresponds to a hyperbolic structure on $P$ (either with cusps or with geodesic boundary). In terms of its holonomy representation, this means that it factors through an irreducible representation $\PSL 2\to\PSL 3$. As we are dealing with a pair of pants, there is a unique hyperbolic structure once the lengths of the boundary data are fixed. In particular, a symplectic leaf contains at most one hyperbolic structure. Since all ideal triangles in $\Htwo\subset \R\mathbb P^2$ are equivalent, one can compute that the triple ratio of an ideal triangle is always equal to one. Moreover, there is a single cross ratio that can be associated to a quadruple of flags, and hence in the coordinates, we have that 
\begin{equation}\label{eq : sigmas for fuchsian locus}
\sigma_1 = \sigma_2, \quad \sigma_3 = \sigma_4\quad \textnormal{ and }\quad \sigma_5 = \sigma_6.
\end{equation}
Let $F = (\ell_\alpha,\ell_\beta,\ell_\gamma)\in\R^3_{>0}$. By Lemma \ref{cor : parametrization of symplectic leaf as level sets}, we observe that the only symplectic leaves that contain a hyperbolic structure are the ones corresponding to the length vectors
\begin{equation}\label{eq : Fuchsian locus lengths L(F)}
L(F) = \left(\ell_\alpha,\frac{1}{\ell_\alpha},\ell_\beta,\frac{1}{\ell_\beta},\ell_\gamma,\frac{1}{\ell_\gamma}\right).
\end{equation}
Moreover, by solving the Casimir equations from Lemma \ref{lemma : casimir functions pair of pants}, we have that for $F = (\ell_\alpha,\ell_\beta,\ell_\gamma)\in\R^3_{>0}$, the Fuchsian structure in $\SymLeaf_{L(F)}$ is given by the coordinates
\begin{equation}\label{eq : coordinates for fuchsian locus}
(\sigma_1,\dots,\sigma_6,\tau_1,\tau_2)=\left(\sqrt\frac{\ell_\alpha\ell_\gamma}{\ell_\beta},\sqrt{\frac{\ell_\beta}{\ell_\alpha\ell_\gamma}},\sqrt{\frac{\ell_\alpha\ell_\beta}{\ell_\gamma}},\sqrt{\frac{\ell_\gamma}{\ell_\alpha\ell_\beta}},\sqrt{\frac{\ell_\beta\ell_\gamma}{\ell_\alpha}},\sqrt{\frac{\ell_\alpha}{\ell_\beta\ell_\gamma}},1,1\right).
\end{equation}
\subsection{Hamiltonian vector fields and flows}\label{sec : Hamiltonian vector fields and flows}
Following the discussion surrounding Equation \eqref{eq : image of cosymplectic form} regarding the symplectic leaves, we compute that for a point $p = (\sigma_1,\dots,\sigma_6,\tau_1,\tau_2)\in\widehat\DefSpace(\pants)$ and the symplectic leaf $\SymLeaf$ going through $p$:
\begin{equation}\label{eq : tangent space to symplectic leaves}
\T_p\SymLeaf = \left\langle \tau_1\del{\tau_1}-\tau_2\del{\tau_2},\sum_{i = 1}^6(-1)^{i+1}\sigma_i\del{\sigma_i} \right\rangle.
\end{equation}
Equivalently, by the fact the rank of the Poisson structure is constant (Remark \ref{rmk : level sets of casimirs is symplectic leaf}), we see that $\T_p\SymLeaf$ is exactly the kernel of the Jacobian matrix associated to the Casimir functions in Proposition \ref{lemma : casimir functions pair of pants} (see Equation \eqref{eq : kernel of Jacobian matrix}).\\

Recall from Definition \ref{def : Hamiltonians generating the symplectic leaves}, that the Hamiltonian functions are a family of functions generating the symplectic leaves.
\begin{proposition}\label{prop : Hamiltonian functions and vector fields}
	The Hamiltonian functions generating the symplectic leaves of $\widehat\DefSpace(\pants)$ are given by
	\begin{align*}
			\eruption &= \frac{\log\tau_1-\log\tau_2}{4},\\
			\hexagon &= -\frac{1}{12}\sum_{i = 1}^6(-1)^{i+1}\log\sigma_i.
	\end{align*}
	Their Hamiltonian vector fields are given by
	\begin{align*}
		\Hm_\eruption &= \sum_{i = 1}^6(-1)^{i+1}\sigma_i\del{\sigma_i},\quad\textnormal{and}\\
		\Hm_\hexagon &= \tau_1\del{\tau_1}-\tau_2\del{\tau_2}.
	\end{align*}
Moreover, their Poisson bracket is
\begin{equation}\label{eq : Poisson bracket between eruption and hexagon vector fields}
	\{\eruption,\hexagon\} = \frac{1}{2}
\end{equation}
In particular, the Hamiltonian flows of $\eruption$ and $\hexagon$ commute.
\end{proposition}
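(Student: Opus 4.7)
The plan is to unwind everything from the explicit coordinate description of the Poisson bracket in Equation \eqref{eq : Poisson structure from FG} and the matrix $(\epsilon_{ij})$ in \eqref{eq : eps ij matrix for Poisson structure}. The key observation is that since $\eruption$ and $\hexagon$ are logarithmic expressions in the coordinates $X_i$, the Leibniz rule together with $\{X_k, X_i\} = 2\epsilon_{ki}X_kX_i$ yields the clean identity
\[
\{X_k, \log X_i\} = \tfrac{1}{X_i}\{X_k, X_i\} = 2\epsilon_{ki}X_k,
\]
so that $\{X_k, \eruption\}$ and $\{X_k, \hexagon\}$ are $X_k$ times a fixed scalar read off from the matrix $(\epsilon_{ij})$. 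The Hamiltonian vector fields can then be reconstructed from the defining identity $dX_k(\Hm_f) = \{X_k, f\}$.

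I would first compute $\Hm_\eruption$: expanding $\{X_k, \eruption\} = \tfrac{1}{4}(2\epsilon_{k,7}-2\epsilon_{k,8})X_k$ and reading off the seventh and eighth columns of $(\epsilon_{ij})$, this gives $\{\sigma_k, \eruption\} = (-1)^{k+1}\sigma_k$ for $k = 1,\dots,6$ and $\{\tau_j,\eruption\} = 0$ for $j = 1,2$, yielding the stated formula for $\Hm_\eruption$. Next, for $\Hm_\hexagon$, the analogous expansion gives $\{X_k, \hexagon\} = -\tfrac{1}{6}\left(\sum_{i = 1}^{6}(-1)^{i+1}\epsilon_{k,i}\right) X_k$; the seventh and eighth rows of $(\epsilon_{ij})$ give the alternating sums $-6$ and $+6$ respectively, while the first six rows give $0$ in those positions. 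This yields $\Hm_\hexagon = \tau_1\partial/\partial\tau_1 - \tau_2\partial/\partial\tau_2$.

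For the bracket, I would use $\{\eruption, \hexagon\} = d\eruption(\Hm_\hexagon)$, substitute $d\eruption = \tfrac{1}{4}(d\tau_1/\tau_1 - d\tau_2/\tau_2)$ and the already computed $\Hm_\hexagon$, giving $\tfrac{1}{4}(1-(-1)) = \tfrac{1}{2}$. Commutativity of the flows is then immediate from Lemma \ref{lemma : Hamiltonian flows commute if Poisson bracket is constant}. Finally, to justify calling $\{\eruption, \hexagon\}$ Hamiltonians in the sense of Definition \ref{def : Hamiltonians generating the symplectic leaves}, I would compare $\Hm_\eruption$ and $\Hm_\hexagon$ with the two generators of $\T_p\SymLeaf$ in Equation \eqref{eq : tangent space to symplectic leaves}: they coincide exactly. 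Since the two flows commute and each symplectic leaf is diffeomorphic to $\R^2_{>0}$ via $(\sigma_1, \tau_1)$ by Lemma \ref{cor : parametrization of symplectic leaf as level sets}, the composition $\Phi^s_\eruption \circ \Phi^t_\hexagon$ reaches every point of the leaf from any chosen basepoint, so these two functions generate the symplectic leaves.

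There is no real conceptual obstacle; the entire argument reduces to bookkeeping of signs in the matrix $(\epsilon_{ij})$, and the one step that deserves care is the column-versus-row asymmetry in computing $\{X_k, \eruption\}$ versus $\{X_k, \hexagon\}$, which accounts for why the normalizations $1/4$ and $-1/12$ appear and for the opposite signs in $\Hm_\eruption$ versus $\Hm_\hexagon$.
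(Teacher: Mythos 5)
Your proposal is correct and takes essentially the same route as the paper's proof: compute $\{X_k, \eruption\}$ and $\{X_k, \hexagon\}$ directly from the $\epsilon_{ij}$ matrix and Equation \eqref{eq : Poisson structure from FG}, match against the defining relation $dX_k(\Hm_f)=\{X_k,f\}$, evaluate $\{\eruption,\hexagon\}$ to get $\tfrac12$, and invoke Lemma \ref{lemma : Hamiltonian flows commute if Poisson bracket is constant}. The only (cosmetic) difference is that you shortcut via the identity $\{X_k,\log X_i\}=2\epsilon_{ki}X_k$, while the paper writes out the Leibniz/chain-rule expansion in a slightly longer form.
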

\begin{proof}
	To see that the functions $\eruption$ and $\hexagon$ generate the symplectic leaves, observe that according to Equation \eqref{eq : tangent space to symplectic leaves}, the vector fields $\Hm_\eruption$ and $\Hm_\hexagon$ generate the tangent space to the symplectic leaves. We begin by showing that the Hamiltonian vector fields $\Hm_\eruption$ and $\Hm_\hexagon$ are indeed the Hamiltonian vector fields of the functions $\eruption$ and $\hexagon$. To see that the Hamiltonian vector fields are those in the proposition, it is enough to check Equation \eqref{eq : definition of Hamiltonian vector field} on the coordinate functions. We compute that for any coordinate function $X_j$
	\begin{align*}
		\{X_j,\eruption\} &= \frac{1}{4}\left(-\{\log\tau_1,X_j\}+\{\log\tau_2,X_j\}\right)\\
		&=\frac{1}{4}\left(\frac{1}{\tau_2}d\tau_2(\Hm_{X_j})-\frac{1}{\tau_1}d\tau_1(\Hm_{X_j})\right)\\
		&=\frac{1}{4}\left(\frac{1}{\tau_2}\{\tau_2,X_j\}-\frac{1}{\tau_1}\{\tau_1,X_j\}\right).
		\end{align*}
		In particular, from the matrix \eqref{eq : eps ij matrix for Poisson structure} and the Poisson structure \eqref{eq : Poisson structure from FG}, we obtain that
		\[
		\{\sigma_i,\eruption\} = (-1)^{i+1}\sigma_i,\qquad \{\tau_i,\eruption\}=0.
		\]
		On the other hand,
		\[
		dX_j(\Hm_\eruption) = \sum_{i = 1}^6(-1)^{i+1}\sigma_i\,dX_j\left(\del{\sigma_i}\right)
		\]
		and we see from the above equations that indeed $\{X_j,\eruption\} = dX_j(\Hm_\eruption)$ as desired.\\
		
	Following a similar computation for the function $\hexagon$, we see that for any coordinate function $X_j$:
	\begin{align*}
		\{X_j,\hexagon\} = \frac{1}{12}\sum_{i = 1}^6\frac{(-1)^{i}}{\sigma_i} \{X_j,\sigma_i\}.
	\end{align*}	
		It follows that
		\[
		\{\sigma_i,\hexagon\} = 0,\qquad \{\tau_i,\hexagon\} = \tau_i.
		\]
		On the other hand,
		\[
		dX_j(\Hm_\hexagon) = \tau_1\,dX_j\left(\del{\tau_1}\right)-\tau_2\,dX_j\left(\del{\tau_2}\right)
		\]
		and we see from the above equations that indeed $\{X_j,\hexagon\} = dX_j(\Hm_\hexagon)$ for every coordinate function. These computations prove the first part of the proposition.\\
		
		To see that the flows commute, we compute that their Poisson bracket is given by
		\begin{equation*}
		\{\eruption,\hexagon\} = \frac{1}{48}\left(\sum_{i = 1}^6\frac{(-1)^i}{\tau_1\sigma_i}\{\tau_1,\sigma_i\}+\sum_{i = 1}^6\frac{(-1)^{i+1}}{\tau_2\sigma_i}\{\tau_2,\sigma_i\}\right) = \frac{1}{2}.
		\end{equation*}
		Since the Poisson bracket is constant, the corresponding Hamiltonian vector fields commute by Lemma \ref{lemma : Hamiltonian flows commute if Poisson bracket is constant}.
\end{proof}

The expression for the vector fields allows us to solve for the Hamiltonian flow itself.

\begin{corollary}
	The Hamiltonian flows of the Hamiltonian functions are given by
	\begin{align*}
	\HmFlow_\hexagon^t\colon (\sigma_1,\dots,\sigma_6,\tau_1,\tau_2) &\mapsto (\sigma_1,\dots,\sigma_6,e^t\tau_1,e^{-t}\tau_2)\\
	\HmFlow^t_\eruption\colon(\sigma_1,\dots,\sigma_6,\tau_1,\tau_2)&\mapsto (e^t\sigma_1,e^{-t}\sigma_2,e^t\sigma_3,e^{-t}\sigma_4,e^t\sigma_5,e^{-t}\sigma_6,\tau_1,\tau_2).
	\end{align*}
\end{corollary}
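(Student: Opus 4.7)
The plan is to simply integrate the two Hamiltonian vector fields computed in Proposition \ref{prop : Hamiltonian functions and vector fields}. Both $\Hm_\hexagon$ and $\Hm_\eruption$ have the key feature of being linear and diagonal in the Fock--Goncharov coordinates: each coordinate function either does not appear in the vector field, or appears as $\pm X_i \partial_{X_i}$. This means the associated systems of ODEs decouple completely into independent scalar equations of the form $\dot{X}_i = \pm X_i$, each of which integrates to an exponential.

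Concretely, for the flow of $\Hm_\hexagon = \tau_1 \del{\tau_1} - \tau_2\del{\tau_2}$, I would write down the ODE system
\begin{align*}
\dot{\sigma}_i(t) &= 0 \qquad (i=1,\dots,6), \\
\dot{\tau}_1(t) &= \tau_1(t), \\
\dot{\tau}_2(t) &= -\tau_2(t),
\end{align*}
with initial condition $(\sigma_1,\dots,\sigma_6,\tau_1,\tau_2)$ at $t=0$. Elementary integration gives $\sigma_i(t) = \sigma_i$, $\tau_1(t) = e^t \tau_1$, and $\tau_2(t) = e^{-t}\tau_2$, which matches the claimed formula for $\HmFlow_\hexagon^t$. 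For the flow of $\Hm_\eruption = \sum_{i=1}^{6}(-1)^{i+1}\sigma_i\del{\sigma_i}$, the system
\begin{align*}
\dot{\sigma}_i(t) &= (-1)^{i+1}\sigma_i(t) \qquad (i=1,\dots,6), \\
\dot{\tau}_j(t) &= 0 \qquad (j=1,2),
\end{align*}
integrates termwise to $\sigma_i(t) = e^{(-1)^{i+1}t}\sigma_i$ and $\tau_j(t)=\tau_j$, again matching the claimed expression for $\HmFlow_\eruption^t$.

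There is really no obstacle here; the proof is a one-line verification. The only mild care needed is to track the alternating signs in the exponents for $\HmFlow_\eruption^t$ so that the odd-indexed $\sigma_i$ are scaled by $e^t$ and the even-indexed ones by $e^{-t}$, consistent with the sign pattern $(-1)^{i+1}$ in $\Hm_\eruption$. One may also note in passing that both flows preserve the Casimir functions of Lemma \ref{lemma : casimir functions pair of pants} (the ratios $\sigma_1\sigma_4, \sigma_3\sigma_6, \sigma_2\sigma_5, \tau_1\tau_2/\sigma_i\sigma_j$), which provides an immediate sanity check, since the flows must remain in a single symplectic leaf.
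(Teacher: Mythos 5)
Your proposal is correct and is essentially the paper's own argument: the paper likewise observes that these flows are the solutions of the (decoupled, diagonal) ODE systems defined by the vector fields $\Hm_\eruption$ and $\Hm_\hexagon$ from Proposition \ref{prop : Hamiltonian functions and vector fields}. Your added remark that both flows preserve the Casimir functions of Lemma \ref{lemma : casimir functions pair of pants} is a harmless and valid sanity check, not a deviation in method.
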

\begin{proof}
	This corollary is an immediate consequence of the form of the Hamiltonian vector fields in Proposition \ref{prop : Hamiltonian functions and vector fields}. That is, the above flows solve the ordinary differential equations defined by the corresponding vector fields.
\end{proof}

Similar flows already appear in \cite{WienhardZhang_Deforming,FlowsPGLVHitchin_SWZ}. Given their similarity, we adopt the same names for the flows.

\begin{definition}\cite{WienhardZhang_Deforming,FlowsPGLVHitchin_SWZ}
	The Hamiltonian flow of the function $\hexagon$ is called the \emph{eruption flow}, and the Hamiltonian flow of the function $\eruption$ is called the \emph{hexagon flow}.
\end{definition}

\begin{remark}\label{rmk : eruption and hexagon flows same name but actually different flows}
	The difference between the flows in \cite{WienhardZhang_Deforming,FlowsPGLVHitchin_SWZ} and the flows in this article, is that the invariants used by Fock and Goncharov in \cite{FockGoncharov_ConvexProjective} are different from the ones used in the above papers.
\end{remark}

These two flows give a symplectic trivialization of the two--dimensional symplectic leaves. In particular, it means that for any pairs $q_1,q_2\in\widehat\DefSpace(\pants)$, there are unique numbers $s,t\in\R$ such that 
\begin{equation}\label{eq : q2 = flow eruption hexagon q1}
q_2 = \HmFlow^t_\eruption\circ\HmFlow^s_\hexagon(q_1) = \HmFlow^s_\hexagon\circ\HmFlow^t_\eruption(q_1).
\end{equation}

This fact motivates the following.

\begin{definition}\label{def : mixed flow}
	Let $a\in\R$. Any of the two flows
	\[
\HmFlow^t_\eruption\circ\HmFlow^{at}_\hexagon\quad \textnormal{or}\quad \HmFlow^{at}_\eruption\circ\HmFlow^{t}_\hexagon
	\]
	is called a \emph{mixed flow}. A mixed flow defined by $a\in\R$ will be written as $\Psi^t_a$.
\end{definition}
The following is a direct consequence of Equation \eqref{eq : q2 = flow eruption hexagon q1}, and is the required analogue of Fact \ref{fact 2 : earthquake theorem} in the introduction, allowing to connect any pair of points in a symplectic leaf via a mixed flow.
\begin{lemma}\label{lemma : any two points are connected by a mixed flow}
	Let $L\in\R^6_{>0}$ define a symplectic leaf $\SymLeaf_L\subset \widehat\DefSpace(\pants)$. For any pair $q_1,q_2\in \SymLeaf_L$, there is a constant $a\in\R$ defining a mixed flow and a time $t\in\R$ such that $q_2 = \Psi^t_a(q_1)$.
\end{lemma}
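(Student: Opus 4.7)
The plan is to read off $a$ and the time parameter directly from the coordinates $(s,t)$ given by the symplectic trivialization in Equation \eqref{eq : q2 = flow eruption hexagon q1}. Fix $q_1,q_2\in\SymLeaf_L$ and let $s,t\in\R$ be the unique real numbers such that
\[
q_2 = \HmFlow^t_\eruption\circ\HmFlow^s_\hexagon(q_1).
\]
This is available to us from \eqref{eq : q2 = flow eruption hexagon q1}, which itself is a consequence of Proposition \ref{prop : Hamiltonian functions and vector fields} (the flows commute, are linearly independent, and span the two-dimensional tangent space of $\SymLeaf_L$ at every point).

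Next I would split into three elementary cases. If $t\neq 0$, set $a\coloneqq s/t$ and consider the mixed flow $\Psi^\cdot_a = \HmFlow^{\,\cdot}_\eruption\circ\HmFlow^{a\,\cdot}_\hexagon$; then $\Psi^t_a(q_1) = \HmFlow^t_\eruption\circ\HmFlow^s_\hexagon(q_1) = q_2$. If $t = 0$ but $s\neq 0$, we instead use the second form of a mixed flow in Definition \ref{def : mixed flow}: set $a\coloneqq 0$ and $\Psi^\cdot_a = \HmFlow^{a\,\cdot}_\eruption\circ\HmFlow^{\,\cdot}_\hexagon$, so that $\Psi^s_0(q_1) = \HmFlow^s_\hexagon(q_1) = q_2$. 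Finally, if $s = t = 0$, then $q_1 = q_2$ and either mixed flow at time $0$ (with any $a$) does the job.

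The only mildly non-trivial point is the case $t = 0$, $s\neq 0$, which forces us to appeal to the two distinct conventions in Definition \ref{def : mixed flow}; without having both forms available one would need to allow $a = \infty$ in a single family. I do not anticipate any real obstacle: the argument is essentially a change of parameterization along two commuting complete flows that together span $\T\SymLeaf_L$, so everything reduces to elementary linear algebra on the tangent space of a two-dimensional leaf.
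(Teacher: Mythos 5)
Your proposal is correct and follows exactly the route the paper intends: Lemma \ref{lemma : any two points are connected by a mixed flow} is stated there as a direct consequence of Equation \eqref{eq : q2 = flow eruption hexagon q1}, and your argument simply makes the reparameterization $a = s/t$ explicit, including the careful handling of the $t=0$ case via the second form of a mixed flow in Definition \ref{def : mixed flow}. Nothing is missing.
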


\subsection{The symplectic form on a symplectic leaf} Here we compute the symplectic form on the leaves explicitly first by using the functions $\eruption$ and $\hexagon$. We then use the parametrization of the symplectic leaves in Lemma \ref{cor : parametrization of symplectic leaf as level sets} to give a more useful expression for the symplectic form.

\begin{lemma}\label{cor : symplectic form in coordinates}
	Let $\SymLeaf$ be a symplectic leaf of $\widehat\DefSpace(\pants)$. The symplectic form $\omega_\SymLeaf$ is given by
	\[
	\omega_\SymLeaf = 2\,d \eruption\wedge d\hexagon.
	\]
	In terms of the Fock--Goncharov coordinates, the symplectic form reads
	\begin{equation}\label{eq : symplectic form in terms of fock goncharov coordinates}
	\omega_\SymLeaf = \frac{1}{24}\sum_{i = 1}^6(-1)^{i}\left(\frac{1}{\tau_1\sigma_i}d\tau_1\wedge d\sigma_i-\frac{1}{\tau_2\sigma_i}d\tau_2\wedge d\sigma_i\right).
	\end{equation}
\end{lemma}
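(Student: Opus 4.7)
The plan is to exploit the fact, established in Lemma \ref{cor : parametrization of symplectic leaf as level sets}, that every symplectic leaf $\SymLeaf$ is two--dimensional. Hence $\Lambda^2(\T^*_p\SymLeaf)$ is one--dimensional at every point, so a 2-form on $\SymLeaf$ is completely determined by its value on any single basis of $\T_p\SymLeaf$. By Proposition \ref{prop : Hamiltonian functions and vector fields} the vector fields $\Hm_\eruption$ and $\Hm_\hexagon$ span $\T_p\SymLeaf$ everywhere (equivalently, compare with the description of $\T_p \SymLeaf$ in Equation \eqref{eq : tangent space to symplectic leaves}). So it suffices to verify the equality $\omega_\SymLeaf = 2\,d\eruption\wedge d\hexagon$ after evaluating both sides on the pair $(\Hm_\eruption,\Hm_\hexagon)$.

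For the left--hand side I would apply the defining relation $\omega_\SymLeaf(\cdot,\Hm f) = df(\cdot)$ from Equation \eqref{eq : definition of Hamiltonian vector field}: this gives
\[
\omega_\SymLeaf(\Hm_\eruption,\Hm_\hexagon) = d\hexagon(\Hm_\eruption) = \{\hexagon,\eruption\},
\]
whose value is read off from Equation \eqref{eq : Poisson bracket between eruption and hexagon vector fields}. For the right--hand side I would expand the wedge product as
\[
(d\eruption\wedge d\hexagon)(\Hm_\eruption,\Hm_\hexagon) = d\eruption(\Hm_\eruption)\,d\hexagon(\Hm_\hexagon) - d\eruption(\Hm_\hexagon)\,d\hexagon(\Hm_\eruption),
\]
and use that $d\eruption(\Hm_\eruption) = \{\eruption,\eruption\} = 0$, $d\hexagon(\Hm_\hexagon) = 0$, while the mixed terms $d\eruption(\Hm_\hexagon)$ and $d\hexagon(\Hm_\eruption)$ are again given by Equation \eqref{eq : Poisson bracket between eruption and hexagon vector fields}. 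Matching the two sides pins down the numerical factor $2$.

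For the explicit coordinate expression, I would simply substitute the formulas from Proposition \ref{prop : Hamiltonian functions and vector fields}:
\[
d\eruption = \frac{1}{4}\left(\frac{d\tau_1}{\tau_1}-\frac{d\tau_2}{\tau_2}\right),\qquad d\hexagon = \frac{1}{12}\sum_{i=1}^6(-1)^i\frac{d\sigma_i}{\sigma_i},
\]
into $2\,d\eruption\wedge d\hexagon$ and distribute the wedge. The terms $d\tau_j\wedge d\tau_j$ vanish, and the cross terms assemble directly into the displayed sum, with combined coefficient $2\cdot\frac{1}{4}\cdot\frac{1}{12} = \frac{1}{24}$.

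The main obstacle is purely bookkeeping: keeping the signs of the Poisson bracket, the Hamiltonian vector field convention in Equation \eqref{eq : definition of Hamiltonian vector field}, and the antisymmetry of the wedge product consistent. Once this is set up carefully, both claims reduce to the single Poisson bracket computation $\{\eruption,\hexagon\} = 1/2$ already carried out in Proposition \ref{prop : Hamiltonian functions and vector fields}.
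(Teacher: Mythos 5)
Your proposal is correct and takes essentially the same route as the paper: since the leaf is two--dimensional and $\Hm_\eruption,\Hm_\hexagon$ span its tangent space, both $\omega_\SymLeaf$ and $2\,d\eruption\wedge d\hexagon$ are compared on this single pair via the bracket $\{\eruption,\hexagon\}=\tfrac12$ from Proposition \ref{prop : Hamiltonian functions and vector fields}, and the coordinate formula follows by substituting $d\eruption$ and $d\hexagon$ exactly as you describe. The only point needing care is the sign convention relating $\omega_\SymLeaf$ to the Poisson bracket (the paper evaluates $\omega_\SymLeaf(\Hm_\eruption,\Hm_\hexagon)=\{\eruption,\hexagon\}$, whereas your pairing $\omega_\SymLeaf(\cdot,\Hm f)=df(\cdot)$ would give $\{\hexagon,\eruption\}$), which is precisely the bookkeeping issue you flag and does not affect the substance of the argument.
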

%\begin{remark}
%	Since the symplectic leaves are two--dimensional, the symplectic form is in particular a volume form.
%\end{remark}
\begin{remark}
	The form of the symplectic structure here is analogous to the Darboux system given by Sun and Zhang in \cite[Corollary 7.11]{DarbouxOnHitchin_SunZhang}. Recall from Remark \ref{rmk : eruption and hexagon flows same name but actually different flows} that the difference lies in the description of the invariants associated to flags.
\end{remark}
\begin{proof}
	By Proposition \ref{prop : Hamiltonian functions and vector fields}, we only need to compute $\omega_\SymLeaf(\Hm_\eruption,\Hm_\hexagon)$. By Equation \eqref{eq : Poisson bracket between eruption and hexagon vector fields} we have that
	\[
	\omega_\SymLeaf(\Hm_\eruption,\Hm_\hexagon) = \{\eruption,\hexagon\} = \frac{1}{2}.
	\]
	On the other hand, 
	\[
	2 d\eruption\wedge d\hexagon (\Hm_\eruption,\Hm_\hexagon) = 2 \{\eruption,\hexagon\}^2 = \frac{1}{2}
	\]
	as desired. Writing the symplectic form in terms of Fock--Goncharov coordinates is a computation using the expression of the functions in Proposition \ref{prop : Hamiltonian functions and vector fields}.
\end{proof}

This expression allows us to compute the Hamiltonian vector fields and Hamiltonian flows of all the coordinate functions.

\begin{corollary}
	The Hamiltonian vector field of the coordinate functions $\sigma_1,\dots,\sigma_6,\tau_1,\tau_2$ is given by
	\begin{align*}
		\Hm_{\sigma_i} = (-1)^{i+1}\sigma_i\Hm_{\hexagon},\quad \Hm_{\tau_1} = \tau_1\Hm_\eruption,\quad\textnormal{and }\Hm_{\tau_2} = -\tau_2\Hm_{\eruption}.
	\end{align*}
	In particular, their corresponding Hamiltonian flows are given by
	\begin{align*}
		\HmFlow^t_{\sigma_i}(\sigma_1,\dots,\sigma_6,\tau_1,\tau_2)&\mapsto \left(\sigma_1,\dots,\sigma_6,e^{(-1)^it\sigma_i}\tau_1,e^{(-1)^{i+1}t\sigma_i}\tau_2\right),\\
		\HmFlow^t_{\tau_1}(\sigma_1,\dots,\sigma_6,\tau_1,\tau_2)&\mapsto\left(e^{t\tau_1}\sigma_1,e^{-t\tau_1}\sigma_2,e^{t\tau_1}\sigma_3,e^{-t\tau_1}\sigma_4,e^{t\tau_1}\sigma_5,e^{-t\tau_1}\sigma_6,\tau_1,\tau_2\right),\\
		\HmFlow^t_{\tau_2}(\sigma_1,\dots,\sigma_6,\tau_1,\tau_2)&\mapsto\left(e^{-t\tau_2}\sigma_1,e^{t\tau_2}\sigma_2,e^{-t\tau_2}\sigma_3,e^{t\tau_2}\sigma_4,e^{-t\tau_2}\sigma_5,e^{t\tau_2}\sigma_6,\tau_1,\tau_2\right).
	\end{align*}
\end{corollary}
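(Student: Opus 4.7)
The plan is to exploit the fact that every symplectic leaf $\SymLeaf \subset \widehat\DefSpace(\pants)$ is two-dimensional with tangent space spanned by $\Hm_\eruption$ and $\Hm_\hexagon$ by Equation \eqref{eq : tangent space to symplectic leaves}. In particular, the Hamiltonian vector field of any function on $\widehat\DefSpace(\pants)$ must be a linear combination of $\Hm_\eruption$ and $\Hm_\hexagon$ along the leaf, so the task reduces to identifying two scalar coefficients. I plan to extract these directly from the Poisson matrix $(\eps_{ij})$ displayed in \eqref{eq : eps ij matrix for Poisson structure}, using the defining identity $\Hm_f(X_j) = \{X_j, f\}$.

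First I would compute $\Hm_{\sigma_i}$. The top-left $6 \times 6$ block of $(\eps_{ij})$ vanishes, so $\{\sigma_j, \sigma_i\} = 0$ for every $j$, which forces the $\partial_{\sigma_j}$-components of $\Hm_{\sigma_i}$ to be zero. Only the $\partial_{\tau_1}$ and $\partial_{\tau_2}$ components survive, and they are obtained by reading off the entries $\eps_{7,i}$ and $\eps_{8,i}$: the Poisson bracket produces $\Hm_{\sigma_i}(\tau_1)$ and $\Hm_{\sigma_i}(\tau_2)$ as scalar multiples of $\sigma_i\tau_1$ and $\sigma_i\tau_2$ with opposite signs alternating in $i$. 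Factoring out the common $\sigma_i$ and comparing with the formula $\Hm_\hexagon = \tau_1 \partial_{\tau_1} - \tau_2 \partial_{\tau_2}$ from Proposition \ref{prop : Hamiltonian functions and vector fields} yields the desired identity $\Hm_{\sigma_i} = (-1)^{i+1}\sigma_i \Hm_\hexagon$.

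The computation for $\Hm_{\tau_k}$ is perfectly symmetric. The bottom-right $2 \times 2$ block of $(\eps_{ij})$ vanishes, so $\{\tau_1,\tau_2\}=0$ and $\Hm_{\tau_k}$ has no $\partial_{\tau}$-components. The surviving $\partial_{\sigma_j}$-components, obtained from $\eps_{j, 6+k}$, are proportional to $\tau_k\sigma_j$ with signs alternating in $j$; factoring out $\tau_k$ and matching against $\Hm_\eruption = \sum_{i=1}^6 (-1)^{i+1}\sigma_i \partial_{\sigma_i}$ produces $\Hm_{\tau_1} = \tau_1 \Hm_\eruption$ and $\Hm_{\tau_2} = -\tau_2\Hm_\eruption$, as claimed.

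To derive the explicit flows, I would integrate the ODE associated with each vector field. Since $\Hm_{\sigma_i}$ annihilates every $\sigma_j$ (all its $\partial_{\sigma_j}$-components vanish), the coordinate $\sigma_i$ itself is conserved along $\HmFlow^t_{\sigma_i}$; the equations for $\tau_1$ and $\tau_2$ are therefore linear with constant coefficient $\pm\sigma_i$, integrating at once to the exponentials $e^{(-1)^i t \sigma_i}\tau_1$ and $e^{(-1)^{i+1}t \sigma_i}\tau_2$. The same observation applied to $\HmFlow^t_{\tau_k}$, with $\tau_k$ now preserved and the six $\sigma$-coordinates evolving by independent constant-rate exponentials, yields the remaining formulas. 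I do not anticipate any genuine obstacle here; the entire argument is matrix bookkeeping and the only delicate point is tracking the signs in $(\eps_{ij})$, which can be cross-checked against the computations of $\Hm_\eruption$ and $\Hm_\hexagon$ already carried out in the proof of Proposition \ref{prop : Hamiltonian functions and vector fields}.
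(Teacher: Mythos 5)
Your overall route is sound and is really the same computation as the paper's, only organized through the Poisson matrix instead of the leaf symplectic form: the paper's proof plugs the coordinate differentials into $\omega_\SymLeaf$ as written in Equation \eqref{eq : symplectic form in terms of fock goncharov coordinates} and then integrates the resulting ODEs, and your observation that $\sigma_i$ (resp.\ $\tau_k$) is conserved, so that the remaining equations are linear with constant coefficient and integrate to exponentials, is exactly the integration step used there.

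The gap is in the one place you defer to ``bookkeeping'': you never compute the scalar coefficients, and with the paper's stated conventions they do not come out as claimed. Using $\{X_i,X_j\}=2\eps_{ij}X_iX_j$ from \eqref{eq : Poisson structure from FG} together with $dg(\Hm_f)=\{g,f\}$ from \eqref{eq : definition of Hamiltonian vector field}, one gets $d\tau_1(\Hm_{\sigma_i})=\{\tau_1,\sigma_i\}=2(-1)^{i}\sigma_i\tau_1$ and $d\tau_2(\Hm_{\sigma_i})=2(-1)^{i+1}\sigma_i\tau_2$, hence $\Hm_{\sigma_i}=2(-1)^{i}\sigma_i\Hm_\hexagon$, and likewise $\Hm_{\tau_1}=2\tau_1\Hm_\eruption$ and $\Hm_{\tau_2}=-2\tau_2\Hm_\eruption$: an overall factor of $2$ relative to the statement, and, in the $\sigma_i$ case, the opposite sign to the displayed vector field (though the same sign as the displayed flow $\tau_1\mapsto e^{(-1)^i t\sigma_i}\tau_1$ --- note that the corollary's vector-field and flow displays are themselves not consistent in this sign, so the sign and normalization bookkeeping is the entire content here, not a detail to be cross-checked at the end). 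Your proposed sanity check against Proposition \ref{prop : Hamiltonian functions and vector fields} cannot detect this, because there the factors of $2$ are absorbed into the prefactors $1/4$ and $-1/12$ in the definitions of $\eruption$ and $\hexagon$. To close the argument you must either carry out the coefficient computation explicitly and reconcile the normalization of the bracket with that of the induced symplectic form on the leaf --- which is what the paper's proof does by working directly with \eqref{eq : symplectic form in terms of fock goncharov coordinates} --- or fix a convention and track the resulting factor of $2$ and signs; as written, ``factoring out the common $\sigma_i$ \dots yields the desired identity'' asserts exactly what has to be proved, and taken literally it produces different constants.
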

\begin{proof}
	The computation of the Hamiltonian vector fields is a direct application of the expression of $\omega_\SymLeaf$ in Fock--Goncharov coordinates in Equation \eqref{eq : symplectic form in terms of fock goncharov coordinates}. Similarly, the solutions to the ordinary differential equations arising from the vector fields are seen to be given by the flows in the corollary.
\end{proof}

Restricted to symplectic leaves determined by a length vector $L\in\R^6_{>0}$, we can use the coordinates $\sigma_1$ and $\tau_1$ as in Lemma \ref{cor : parametrization of symplectic leaf as level sets}. In the following corollary, we describe the symplectic form in terms of the coordinates $\sigma_1$ and $\tau_1$.

\begin{lemma}
	Let $L = (\ell_{\alpha,1},\ell_{\alpha,2},\ell_{\beta,1},\ell_{\beta,2},\ell_{\gamma,1},\ell_{\gamma,2})\in\R^6_{>0}$. The symplectic form of the symplectic leaf $\SymLeaf_L$ parameterized as in Lemma \ref{cor : parametrization of symplectic leaf as level sets} is given by
	\[
	\omega_{\SymLeaf_L} = \frac{1}{2\sigma_1\tau_1}d\sigma_1\wedge d\tau_1.
	\]
\end{lemma}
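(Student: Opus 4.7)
The plan is to pull back the symplectic form of Equation \eqref{eq : symplectic form in terms of fock goncharov coordinates} along the parametrization of $\SymLeaf_L$ given by Lemma \ref{cor : parametrization of symplectic leaf as level sets}, so that everything becomes expressed in terms of $d\sigma_1$ and $d\tau_1$. The computation reduces to a bookkeeping exercise once one notices two simple structural features of the parametrization: the functions $\sigma_{2,L},\sigma_{3,L},\sigma_{4,L},\sigma_{5,L},\sigma_{6,L}$ depend only on $\sigma_1$ (not on $\tau_1$), and $\tau_{2,L}$ depends only on $\tau_1$.

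First I would take the logarithmic differential of each of the formulas in Lemma \ref{cor : parametrization of symplectic leaf as level sets}. Since each $\sigma_{i,L}$ for $i=2,\dots,6$ is either a constant times $\sigma_1$ or a constant divided by $\sigma_1$, one obtains the uniform identity
\[
\frac{d\sigma_i}{\sigma_i} \;=\; \frac{(-1)^{i+1}}{\sigma_1}\,d\sigma_1 \qquad (i=1,\dots,6),
\]
and similarly, since $\tau_{2,L}\tau_1$ is constant on $\SymLeaf_L$,
\[
\frac{d\tau_2}{\tau_2} \;=\; -\frac{1}{\tau_1}\,d\tau_1.
\]

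Next I would substitute these two identities into \eqref{eq : symplectic form in terms of fock goncharov coordinates}. The first half of the sum contributes, term by term,
\[
\frac{1}{24}\sum_{i=1}^{6}(-1)^{i}\frac{1}{\tau_1\sigma_i}\,d\tau_1\wedge d\sigma_i \;=\; \frac{1}{24}\sum_{i=1}^{6}(-1)^{i}\cdot (-1)^{i+1}\,\frac{1}{\sigma_1\tau_1}\,d\tau_1\wedge d\sigma_1 \;=\; -\frac{1}{4\sigma_1\tau_1}\,d\tau_1\wedge d\sigma_1,
\]
because each of the six summands equals $-\frac{1}{\sigma_1\tau_1}d\tau_1\wedge d\sigma_1$. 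The second half of the sum, after applying both substitutions, produces an extra sign from $d\tau_2/\tau_2 = -d\tau_1/\tau_1$, and one checks that the six summands again coincide, giving the same contribution $-\tfrac{1}{4\sigma_1\tau_1}d\tau_1\wedge d\sigma_1$. Adding the two halves and flipping the order of the wedge yields
\[
\omega_{\SymLeaf_L} \;=\; -\frac{1}{2\sigma_1\tau_1}\,d\tau_1\wedge d\sigma_1 \;=\; \frac{1}{2\sigma_1\tau_1}\,d\sigma_1\wedge d\tau_1,
\]
as claimed. There is no real obstacle here; the only step that requires care is tracking the signs $(-1)^i$ coming from both the Poisson structure and from the alternating pattern in the parametrization, which is precisely what makes every one of the twelve terms contribute with the same sign.
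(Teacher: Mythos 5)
Your proof is correct and follows essentially the same route as the paper: substitute the parametrization of Lemma \ref{cor : parametrization of symplectic leaf as level sets} into the coordinate expression \eqref{eq : symplectic form in terms of fock goncharov coordinates} and track the signs. The only (cosmetic) difference is that you phrase the substitution via logarithmic differentials $d\sigma_i/\sigma_i=(-1)^{i+1}d\sigma_1/\sigma_1$ and $d\tau_2/\tau_2=-d\tau_1/\tau_1$, which avoids writing out the constants that the paper carries explicitly; the sign bookkeeping and the final cancellation are identical.
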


\begin{proof}
	To prove the formula, we compute that
\begin{align*}
	d\sigma_{2,L}&= -\frac{1}{\sigma_1^2}\frac{(\ell_{\alpha,1}\ell_{\beta,2}\ell_{\gamma,1})^{2/3}}{(\ell_{\alpha,2}\ell_{\beta,1}\ell_{\gamma,2})^{1/3}}	d\sigma_1,\\
	d\sigma_{3,L}&= \frac{(\ell_{\beta,1}\ell_{\gamma,2})^{2/3}}{(\ell_{\alpha,1}\ell_{\alpha,2}\ell_{\beta,2}\ell_{\gamma,1})^{1/3}}d\sigma_1,\\
	d\sigma_{4,L}&=-\frac{\ell_{\alpha,1}}{\sigma_1^2}d\sigma_1,\\
	d\sigma_{5,L}&=\frac{(\ell_{\alpha,2}\ell_{\beta,1}\ell_{\gamma,1}\ell_{\gamma,2})^{1/3}}{(\ell_{\alpha,1}\ell_{\beta,2})^{2/3}}d\sigma_1,\\
	d\sigma_{6,L}&=-\frac{1}{\sigma_1^2}\frac{(\ell_{\alpha,1}\ell_{\alpha,2}\ell_{\beta,1}\ell_{\beta,2}\ell_{\gamma,1})^{1/3}}{\ell_{\gamma,2}^{2/3}}d\sigma_1,\\
	d\tau_{2,L}&=-\frac{(\ell_{\alpha,1}\ell_{\alpha,2}\ell_{\beta,1}\ell_{\beta,2}\ell_{\gamma,1}\ell_{\gamma,2})^{1/3}}{\tau_1^2}d\tau_1.
	\end{align*}
	Replacing $\sigma_{i,L},\tau_{2,L}$ and their derivatives in Equation \eqref{eq : symplectic form in terms of fock goncharov coordinates}, we observe that
	\[
	\omega_{\SymLeaf_L} = \frac{1}{24}\sum_{i = 1}^6\frac{-2}{\tau_1\sigma_1}d\tau_1\wedge d\sigma_1 = \frac{1}{2\sigma_1\tau_1}d\sigma_1\wedge d\tau_1
	\]
	as claimed.
\end{proof}

This form of the symplectic structure gives us a straight forward computation of the Hamiltonian vector field of a given function. Namely, let $L\in\R^6_{>0}$ define a symplectic leaf $\SymLeaf_L$. Consider a function $\phi\colon \SymLeaf_L\cong\R^2_{>0}\to\R$ given in coordinates $(\sigma_1,\tau_1)$ using the parameterization of Lemma \ref{cor : parametrization of symplectic leaf as level sets}. Then a linear algebra computation shows that the Hamiltonian vector field of $\phi$ is given by
\begin{equation}\label{eq : Hamiltonian vector field in symplectic leaf in coordinates sigma1 tau1}
	\Hm_\phi = 2\sigma_1\tau_1\left(-\frac{\partial \phi}{\partial \tau_1}\cdot \del{\sigma_1}+\frac{\partial\phi}{\partial\sigma_1}\cdot\del{\tau_1}\right).
\end{equation}
Writing down the associated differential equation, we have that a path $(\sigma_1,\tau_1)\colon\R\to\SymLeaf_L$ is a flow line of the Hamiltonian vector field of $\phi$ if
\begin{equation}\label{eq : differential equation associated to the Hamiltonian vector field}
	\begin{dcases}\dot\sigma_1(t) &= -2\sigma_1(t)\tau_1(t)\frac{\partial\phi}{\partial\tau_1}(\sigma_1(t),\tau_1(t)),\\
	\dot\tau_1(t) &= 2\sigma_1(t)\tau_1(t)\frac{\partial\phi}{\partial\sigma_1}(\sigma_1(t),\tau_1(t)).
	\end{dcases}
\end{equation}
\section{Trace of the figure eight curve}\label{sec : trace of figure eight curve}
We are now ready to address Theorem \ref{thm Intro : main theorem}. Recall that any closed curve $c\in\pi_1(S)$ defines the function
\begin{align*}
\trace_c\colon \widehat\DefSpace(\pants)&\to\R \\
[f,\Sigma,\nu]&\mapsto \trace(\rechol([f,\Sigma,\nu])(c)).
\end{align*}

In this section, we focus on the figure eight curve, i.e. a curve with a single self--intersection. The identification $\widehat{\DefSpace}(\pants)\cong\R^8_{>0}$ together with the reconstruction of representations from coordinates (recall Section \ref{sec : reconstructing the represenatation}) allows us to write traces of curves as rational functions in the coordinates. \\

The main goal of this section is to provide a proof of Theorem \ref{thm Intro : main theorem}, which we restate here.
\begin{theorem}\label{thm : periodicity of figure 8 in general}
	Let $\figeight=\alpha\gamma^{-1}$ be the figure eight curve on $\pants$ and let $L\in\R^6_{>0}$ define a symplectic leaf $\SymLeaf_L\subset\widehat\DefSpace(\pants)$. Then the function $\trace_\figeight\big|_{\SymLeaf_L}\colon\SymLeaf_L\to\R$ attains a unique minimum. Moreover, every orbit of the Hamiltonian flow of $\trace_\figeight\big|_{\SymLeaf_L}$ is periodic and there is a unique fixed point.
\end{theorem}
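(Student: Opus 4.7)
The plan is to follow the same three-step template used to prove Theorem \ref{thm : periodicity for teichmueller}, now applied to the symplectic leaf $\SymLeaf_L \subset \widehat\DefSpace(\pants)$ in place of Teichm\"uller space. The four ingredients we need are: (i) the symplectic leaf is two-dimensional (given by Lemma \ref{cor : parametrization of symplectic leaf as level sets}); (ii) properness of $\trace_\figeight\big|_{\SymLeaf_L}$ (given by Proposition \ref{prop : Intro properness figure 8}); (iii) an analogue of the Earthquake Theorem, namely that any two points in $\SymLeaf_L$ can be joined by a mixed flow (given by Lemma \ref{lemma : any two points are connected by a mixed flow}); and (iv) strict convexity of $\trace_\figeight\big|_{\SymLeaf_L}$ along any eruption or hexagon flow (given by Theorem \ref{thm : Intro convexity}). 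With these four facts already in hand, essentially only bookkeeping remains.

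First I would establish existence and uniqueness of the minimum. Since $\trace_\figeight\big|_{\SymLeaf_L}$ is proper and bounded below by Proposition \ref{prop : Intro properness figure 8}, it attains some minimum $p_0 \in \SymLeaf_L$. To upgrade this to uniqueness: suppose $p_1$ were another critical point of $\trace_\figeight\big|_{\SymLeaf_L}$. By Lemma \ref{lemma : any two points are connected by a mixed flow} there exist $a\in\R$ and $t_1\in\R$ with $p_1 = \Psi^{t_1}_a(p_0)$, where $\Psi^t_a$ is a mixed flow. The function $t\mapsto \trace_\figeight(\Psi^t_a(p_0))$ is a sum/composition governed by the eruption and hexagon flows, and by Theorem \ref{thm : Intro convexity} together with the commutativity of the two flows (Proposition \ref{prop : Hamiltonian functions and vector fields}) it is strictly convex in $t$. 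Hence it admits at most one critical point on $\R$, forcing $t_1=0$ and $p_1 = p_0$. Therefore $\trace_\figeight\big|_{\SymLeaf_L}$ has a unique critical point, which is its unique minimum; equivalently, the Hamiltonian vector field $\Hm_{\trace_\figeight\big|_{\SymLeaf_L}}$ has a unique zero.

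Next I would show that every non-fixed orbit is periodic. Let $q \in \SymLeaf_L \setminus \{p_0\}$ and set $M \coloneqq (\trace_\figeight\big|_{\SymLeaf_L})^{-1}(\trace_\figeight(q))$. Since $p_0$ is the unique critical point and $\trace_\figeight(q) \neq \trace_\figeight(p_0)$, the value $\trace_\figeight(q)$ is a regular value, so $M$ is a smooth embedded codimension-one submanifold of $\SymLeaf_L$. Because $\SymLeaf_L$ is two-dimensional (Lemma \ref{cor : parametrization of symplectic leaf as level sets}), $M$ is a one-dimensional manifold, and properness of $\trace_\figeight\big|_{\SymLeaf_L}$ forces $M$ to be compact. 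Thus $M$ is a disjoint union of circles. The Hamiltonian flow of $\trace_\figeight\big|_{\SymLeaf_L}$ preserves its level sets, and on $M$ the Hamiltonian vector field is nowhere zero (since its only zero is $p_0 \notin M$). Therefore the flow line of $q$ traces out a full connected component of $M$, hence a circle, and is periodic.

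Finally I would verify that $q$ and $p_0$ lie in the same connected component, which here is automatic because any closed orbit through $q$ already fills one circle of $M$. The main obstacle in the overall argument is of course packed into the appeal to Theorem \ref{thm : Intro convexity}: convexity of $\trace_\figeight$ along the eruption and hexagon flows (and hence along any mixed flow) is the substantive analytic input, and the proof there reduces to positivity of the Fock--Goncharov coordinates via the explicit computation of $\trace_\figeight$ as a Laurent polynomial in $(\sigma_1,\dots,\sigma_6,\tau_1,\tau_2)$. Granting that, as well as the properness and connecting-flow lemmas, the argument above is the direct Poisson-geometric analogue of Kerckhoff's proof.
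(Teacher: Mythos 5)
Your proposal is correct and follows essentially the same route as the paper: establish the unique critical point via properness plus strict convexity along mixed flows (using the connecting lemma), then use two--dimensionality of the leaf and properness to show each regular level set is a compact one--manifold on which the nowhere--vanishing Hamiltonian vector field forces periodic orbits. Your remark that the orbit fills a connected component of the level set is in fact a slightly more careful phrasing of the paper's final step, but the argument is the same.
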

As explained in the introduction, we need to prove Proposition \ref{prop : Intro properness figure 8} (analogous to Fact \ref{fact 1 : properness}) on properness of the trace function, and Theorem \ref{thm : Intro convexity} (analogous to Fact \ref{fact : convexity}) on convexity. We begin by presenting the expression for the trace of the figure eight curve in a given symplectic leaf.

\begin{lemma}\label{lemma : trace figure eight in coordinates}
	Let $\figeight = \alpha\gamma^{-1}$ and $L = (\ell_{\alpha,1},\ell_{\alpha,2},\ell_{\beta,1},\ell_{\beta,2},\ell_{\gamma,1},\ell_{\gamma,2})\in\R^6_{>0}$ be a length vector defining a symplectic leaf $\SymLeaf_L$. Then
\begin{gather*}
\trace_\figeight\big|_{\SymLeaf_L}\colon \SymLeaf_L\to\R\\
(\sigma_1,\tau_1)\mapsto \frac{1}{\sigma _1 \tau _1 \ell _{\alpha ,1}^{4/3} \sqrt[3]{\ell _{\beta ,1}} \ell
   _{\beta ,2} \left(\ell _{\alpha ,2} \ell _{\gamma ,1} \ell _{\gamma
   ,2}\right){}^{2/3}}\cdot\\
\big(\sigma _1^3 \tau _1^2 \ell _{\gamma ,2} \sqrt[3]{\ell _{\alpha ,2} \ell
   _{\beta ,1}}+\ell _{\alpha ,1}^{5/3} \left(\ell _{\beta ,2} \ell _{\gamma
   ,1} \ell _{\gamma ,2}\right){}^{2/3} \left(\left(\sigma _1+\tau
   _1+1\right) \ell _{\alpha ,2} \ell _{\beta ,2}+\sigma _1 \tau _1^2\right)+\\
   +\sigma _1^2 \tau _1 \left(\ell _{\alpha ,2} \ell _{\beta ,1}\right){}^{2/3}
   \sqrt[3]{\ell _{\alpha ,1} \ell _{\beta ,2} \ell _{\gamma ,1} \ell
   _{\gamma ,2}} \left(2 \sigma _1 \ell _{\gamma ,2}+\ell _{\gamma ,2}+\tau
   _1\right)+\\
   +\sigma _1 \ell _{\alpha ,1}^{4/3} \left(\ell _{\alpha ,2} \ell _{\beta
   ,1}\right){}^{2/3} \sqrt[3]{\ell _{\beta ,2} \ell _{\gamma ,1} \ell
   _{\gamma ,2}} \left(\left(\sigma _1+1\right) \ell _{\beta ,2} \ell
   _{\gamma ,2}+\tau _1 \left(\sigma _1 \ell _{\gamma ,2}+\tau
   _1\right)\right)+\\
   +\tau _1 \ell _{\alpha ,1}^2 \ell _{\beta ,2} \sqrt[3]{\ell _{\alpha ,2} \ell
   _{\beta ,1}} \left(\ell _{\gamma ,1} \left(\sigma _1 \ell _{\gamma
   ,2}+\sigma _1+\tau _1+1\right)+\sigma _1 \ell _{\gamma ,2}\right)\\
   +\sigma _1^2 \left(\ell _{\alpha ,1} \ell _{\beta ,2} \ell _{\gamma ,1} \ell
   _{\gamma ,2}\right){}^{2/3} \left(\ell _{\alpha ,2} \left(\ell _{\beta
   ,1} \left(\sigma _1 \ell _{\gamma ,2}+\ell _{\gamma ,2}+\tau
   _1\right)+\tau _1\right)+\tau _1^2\right)\\
+\sigma _1 \tau _1 \ell _{\alpha ,1} \sqrt[3]{\ell _{\alpha ,2} \ell _{\beta
   ,1}} (\ell _{\beta ,2} \ell _{\gamma ,1} (\sigma _1 \ell
   _{\gamma ,2}+\ell _{\gamma ,2}+\tau _1+1)+(\sigma _1+1)
   \ell _{\beta ,2} \ell _{\gamma ,2}+\sigma _1 \tau _1 \ell _{\gamma
   ,2}+\\
   +\ell _{\alpha ,2} \ell _{\beta ,2} \left(\ell _{\gamma ,1}
   \left(\left(\sigma _1+\tau _1+1\right) \ell _{\gamma ,2}+\tau
   _1\right)+\tau _1 \ell _{\gamma ,2}\right))\big).
\end{gather*}
In the unipotent locus, i.e. when $L = (1,1,1,1,1,1)$, we have that
\begin{align*}
\trace_\figeight\colon\unipotentLocus&\to\R\\
(\sigma_1,\tau_1) &\mapsto \frac{\sigma _1^3 \left(\tau _1+1\right){}^2+3 \sigma _1^2 \left(\tau
   _1+1\right){}^2+3 \sigma _1 \left(\tau _1^2+3 \tau _1+1\right)+\left(\tau
   _1+1\right){}^2}{\sigma _1 \tau _1}
\end{align*}
\end{lemma}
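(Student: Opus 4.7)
The plan is to perform a direct computation using the explicit matrices for $\rho(\alpha)$ and $\rho(\gamma)$ given in Section \ref{sec : holonomies pair of pants description} together with the parameterization of the symplectic leaf $\SymLeaf_L$ from Lemma \ref{cor : parametrization of symplectic leaf as level sets}. First I would compute $\rho(\gamma)^{-1}$ from the explicit expression for $\rho(\gamma)$ in coordinates $(\sigma_1,\ldots,\sigma_6,\tau_1,\tau_2)$; note that $\rho(\gamma)$ is lower triangular so its inverse is also lower triangular and each entry can be written as a Laurent monomial times a polynomial in the coordinates. Then I would form the matrix product $\rho(\delta) = \rho(\alpha)\rho(\gamma)^{-1}$ and read off the three diagonal entries, whose sum gives $\trace_\delta$ as a rational function on $\widehat\DefSpace(\pants)\cong\R^8_{>0}$.

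Next I would restrict to the symplectic leaf $\SymLeaf_L$ by substituting the expressions
\[
\sigma_2=\sigma_{2,L}(\sigma_1,\tau_1),\quad \sigma_3=\sigma_{3,L}(\sigma_1,\tau_1),\quad \ldots,\quad \sigma_6=\sigma_{6,L}(\sigma_1,\tau_1),\quad \tau_2=\tau_{2,L}(\sigma_1,\tau_1)
\]
from Lemma \ref{cor : parametrization of symplectic leaf as level sets}. After this substitution the cube roots appearing in $T(x)$ and $E(z,w)$ combine cleanly: the products $\sigma_2\sigma_3$, $\sigma_4\sigma_5$, $\sigma_1\sigma_6$ and $\tau_1\tau_2$ that appear inside the cube roots become monomials in the $\ell_{\bullet,\bullet}$, so the cube roots simplify to explicit monomials in $L^{1/3}$ and in $\sigma_1,\tau_1$. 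Collecting common factors of $\sigma_1\tau_1\,\ell_{\alpha,1}^{4/3}\ell_{\beta,1}^{1/3}\ell_{\beta,2}(\ell_{\alpha,2}\ell_{\gamma,1}\ell_{\gamma,2})^{2/3}$ in the denominator yields the stated closed form. The specialization to the unipotent locus is then obtained by setting $L=(1,1,1,1,1,1)$, which makes all the $\ell_{\bullet,\bullet}$--factors equal to one and produces the displayed homogeneous rational function in $(\sigma_1,\tau_1)$.

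The only real obstacle is bookkeeping: the product of four $3\times 3$ matrices with cube-root prefactors followed by multiplication with the inverse of a second such product gives a very long symbolic expression, and showing by hand that it collapses into the compact form stated is impractical. This is precisely why the paper invokes Mathematica, and I would carry out the substitution and simplification in Section 1 and Section 4 of the accompanying code, verifying the unipotent specialization as a consistency check by a second independent substitution. No further ideas are needed: positivity of all coordinates guarantees that the cube roots are unambiguous and that no sign issues arise during the simplification.
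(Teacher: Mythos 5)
Your proposal is correct and follows essentially the same route as the paper: the paper's proof is precisely the symbolic computation of $\trace(\rho(\alpha)\rho(\gamma)^{-1})$ from the explicit holonomy matrices, restricted to $\SymLeaf_L$ via the parameterization of Lemma \ref{cor : parametrization of symplectic leaf as level sets}, carried out in Sections 1 and 4 of the Mathematica code (with the unipotent case obtained by setting $L=(1,\dots,1)$). Your observations about $\rho(\gamma)$ being lower triangular and the cube roots collapsing after substitution are accurate and consistent with what the code does.
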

\begin{proof}
	The proof of this fact is a computation, found in Section 4 of the Mathematica code. The function \texttt{traceFigure8} will give the above output with the length vectors as input. For the unipotent locus, replace the length vector with ones.
\end{proof}

\subsection{Properness of the trace function}
We can now prove Proposition \ref{prop : Intro properness figure 8}, which we restate here for convenience.
\begin{proposition}\label{prop : trace of figure 8 is proper}
	Let $\figeight = \alpha\gamma^{-1}$ and $L = (\ell_{\alpha,1},\ell_{\alpha,2},\ell_{\beta,1},\ell_{\beta,2},\ell_{\gamma,1},\ell_{\gamma,2})\in\R^6_{>0}$ be a length vector defining a symplectic leaf $\SymLeaf_L$. Then the function $\trace_\figeight\colon\SymLeaf_L\to\R$ is proper. In particular, it realizes a minimum in $\SymLeaf_L$.
\end{proposition}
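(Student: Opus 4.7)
The plan is to reduce properness to a Newton-polytope argument: show that $\trace_\figeight\big|_{\SymLeaf_L}$ is a finite Laurent polynomial in $\sigma_1,\tau_1$ with strictly positive coefficients whose exponents have convex hull containing the origin in its interior, and then conclude by a standard exponential-path argument.

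First, I would expand the expression of Lemma \ref{lemma : trace figure eight in coordinates}, distributing every factor in the numerator and dividing monomial-wise by $\sigma_1\tau_1$. Since each $\ell_{\star,\star}$ is a positive real number and every fractional power appearing is of a positive quantity, the result takes the shape
\[
\trace_\figeight\big|_{\SymLeaf_L}(\sigma_1,\tau_1) \;=\; \sum_{(a,b)\in E} c_{a,b}(L)\,\sigma_1^{a}\,\tau_1^{b},
\]
where the finite set $E\subset\Z^2$ is independent of $L$ and each $c_{a,b}(L)>0$. In particular, the sum is a sum of strictly positive quantities, so the function is strictly positive; combined with properness, this will automatically give a positive minimum.

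Next, I would pick out four specific monomials whose exponents lie in the four open quadrants of $\Z^2$ around the origin. Inspection of the numerator in Lemma \ref{lemma : trace figure eight in coordinates} yields the contributions of $\sigma_1^3\tau_1^2$ (first line), $\tau_1^2\ell_{\alpha,1}^2\ell_{\beta,2}\ell_{\gamma,1}\sqrt[3]{\ell_{\alpha,2}\ell_{\beta,1}}$ (from the $\tau_1\cdot\ell_{\gamma,1}\tau_1$ term on the fifth line), $\sigma_1^3\ell_{\alpha,2}\ell_{\beta,1}\ell_{\gamma,2}(\ell_{\alpha,1}\ell_{\beta,2}\ell_{\gamma,1}\ell_{\gamma,2})^{2/3}$ (from the sixth line), and the constant $\ell_{\alpha,1}^{5/3}(\ell_{\beta,2}\ell_{\gamma,1}\ell_{\gamma,2})^{2/3}\ell_{\alpha,2}\ell_{\beta,2}$ (from the $1$ in the $(\sigma_1+\tau_1+1)$ factor). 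After dividing by the prefactor containing $\sigma_1\tau_1$, these produce Laurent monomials with exponents $(2,1)$, $(-1,1)$, $(2,-1)$, and $(-1,-1)$, each with a strictly positive coefficient. The convex hull of these four points is the rectangle $[-1,2]\times[-1,1]$, which contains $(0,0)$ in its interior.

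Finally, I would conclude by the standard exponential-path argument. Suppose $(\sigma_1^{(n)},\tau_1^{(n)})\in\R^2_{>0}$ leaves every compact subset; passing to $(u_n,v_n)=(\log\sigma_1^{(n)},\log\tau_1^{(n)})$, a subsequence satisfies $\|(u_n,v_n)\|\to\infty$ and $(u_n,v_n)/\|(u_n,v_n)\|\to(p,q)$ for some unit vector $(p,q)$. Because $(0,0)$ lies in the interior of the Newton polytope, some extreme exponent $(a,b)$ among the four above satisfies $ap+bq>0$, whence $c_{a,b}(L)\sigma_1^{a}\tau_1^{b}\to+\infty$ along the subsequence; since every other monomial is positive, $\trace_\figeight\to+\infty$ as well. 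Positivity plus properness yields a minimum. The main obstacle is really the bookkeeping in the first two steps, i.e.\ confirming by direct inspection that the four required monomials appear with nonzero positive coefficients in the fully expanded sum; once this is in place, the Newton-polytope/limit argument is routine.
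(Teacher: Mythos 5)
Your proof is correct, and at its core it rests on the same observation as the paper's: after distributing, $\trace_\figeight\big|_{\SymLeaf_L}$ is a Laurent polynomial in $\sigma_1,\tau_1$ whose coefficients are strictly positive functions of $L$, so properness follows as soon as one exhibits, in every direction of escape, a single monomial that diverges. Where you differ is in the packaging. The paper performs an explicit case analysis on the limiting value of $(\sigma_{1,n},\tau_{1,n})$ (namely $(\infty,\infty)$, $(\infty,0)$, $(0,\infty)$, $(x,0)$, $(0,y)$), for each one pointing to a term that blows up. You instead locate four monomials whose exponent vectors $(2,1)$, $(-1,1)$, $(2,-1)$, $(-1,-1)$ have convex hull containing a neighbourhood of the origin, and then conclude via the standard logarithmic rescaling argument: after passing to a subsequence with $(\log\sigma_1^{(n)},\log\tau_1^{(n)})/\|\cdot\|\to(p,q)$ one of these exponents must have strictly positive pairing with $(p,q)$, hence that monomial diverges and positivity of the remaining terms does the rest. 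Both proofs are sound and of comparable length. The Newton-polytope phrasing buys you that the case enumeration is absorbed into a single compactness step, so there is no need to list the asymptotic regimes by hand (in particular, limits of the form $(\infty,y)$ with $y\in(0,\infty)$, which are not literally in the paper's list though the terms it selects do handle them, are covered automatically). The paper's version is more elementary in that it only requires evaluating one quotient of monomials per case. One small point to make explicit when writing this up: you must verify that every monomial in the fully expanded numerator genuinely has a positive sign (which the paper also relies on); a glance at Lemma \ref{lemma : trace figure eight in coordinates} confirms this, since no subtractions occur, but it is the crucial hypothesis that makes the single-dominant-term argument valid.
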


\begin{proof}
	By the parameterization in Lemma \ref{cor : parametrization of symplectic leaf as level sets}, the trace of $\figeight$ defines the rational function in Lemma \ref{lemma : trace figure eight in coordinates} as a map from $\R^2_{>0}$ to $\R$. Let $(\sigma_{1,n},\tau_{1,n})_{n\in\N}$ be a sequence such that as $n$ goes to $\infty$, $(\sigma_{1,n},\tau_{1,n})$ goes to a tuple in $\{(\infty,\infty), (0,\infty),(\infty,0),(x,0),(0,y)\st x,y\geq 0\}$, i.e. a sequence escaping every compact set in $\R^2_{>0}$. We need to show that in any of these cases, $\trace_\figeight(\sigma_{1,n},\tau_{1,n})\to\infty$ as $n\to\infty$. Notice that all the variables are positive and that all the signs on the monomials are positive as well. Hence, it is enough to find terms in the expression in Lemma \ref{lemma : trace figure eight in coordinates} that diverges along any of the above sequences.
	\begin{enumerate}
		\item Assume $(\sigma_{1,n},\tau_{1,n})\to(\infty,\infty)$, then the term 
		\[
		 \frac{\sigma _{1,n}^3 \tau _{1,n}^2 \ell _{\gamma ,2} \sqrt[3]{\ell _{\alpha ,2} \ell
   _{\beta ,1}}}{\sigma _{1,n} \tau _{1,n} \ell _{\alpha ,1}^{4/3} \sqrt[3]{\ell _{\beta ,1}} \ell
   _{\beta ,2} \left(\ell _{\alpha ,2} \ell _{\gamma ,1} \ell _{\gamma
   ,2}\right){}^{2/3}}\xrightarrow{n\to\infty}\infty.
		\]
		%\item Assume $(\sigma_{1,n},\tau_{1,n})\to(0,0)$, then the term 
		%\[
		%\frac{\ell_{\alpha,1}^{5/3}(\ell_{\beta,2}\ell_{\gamma,1}\ell_{\gamma,2})^{2/3}\ell_{\alpha,2}\ell_{\beta,2}}{\sigma _{1,n} \tau _{1,n} \ell _{\alpha ,1}^{4/3} \sqrt[3]{\ell _{\beta ,1}} \ell
  % _{\beta ,2} \left(\ell _{\alpha ,2} \ell _{\gamma ,1} \ell _{\gamma
   %,2}\right){}^{2/3}}\xrightarrow{n\to\infty}\infty.
		%\]
		\item Assume $(\sigma_{1,n},\tau_{1,n})\to(\infty,0)$, then the term 
		\[
		\frac{2\sigma_{1,n}^3\tau_{1,n}(\ell_{\alpha,2}\ell_{\beta,1})^{2/3}\sqrt[3]{\ell _{\alpha ,1} \ell _{\beta ,2} \ell _{\gamma ,1} \ell
   _{\gamma ,2}}\ell_{\gamma,2}}{\sigma _{1,n} \tau _{1,n} \ell _{\alpha ,1}^{4/3} \sqrt[3]{\ell _{\beta ,1}} \ell
   _{\beta ,2} \left(\ell _{\alpha ,2} \ell _{\gamma ,1} \ell _{\gamma
   ,2}\right){}^{2/3}}\xrightarrow{n\to\infty}\infty.
		\]
		\item Assume $(\sigma_{1,n},\tau_{1,n})\to(0,\infty)$, then the term 
		\[
		\frac{\ell _{\alpha ,1}^{5/3} \left(\ell _{\beta ,2} \ell _{\gamma
   ,1} \ell _{\gamma ,2}\right){}^{2/3}\sigma_{1,n}\tau_{1,n}^2}{\sigma _{1,n} \tau _{1,n} \ell _{\alpha ,1}^{4/3} \sqrt[3]{\ell _{\beta ,1}} \ell
   _{\beta ,2} \left(\ell _{\alpha ,2} \ell _{\gamma ,1} \ell _{\gamma
   ,2}\right){}^{2/3}}\xrightarrow{n\to\infty}\infty.
		\]
		\item Assume $(\sigma_{1,n},\tau_{1,n})\to(x,0)$ or $(0,y)$ for $x,y\geq 0$, then the term
		\[
		\frac{\ell_{\alpha,1}^{5/3}(\ell_{\beta,2}\ell_{\gamma,1}\ell_{\gamma,2})^{2/3}\ell_{\alpha,2}\ell_{\beta,2}}{\sigma _{1,n} \tau _{1,n} \ell _{\alpha ,1}^{4/3} \sqrt[3]{\ell _{\beta ,1}} \ell
   _{\beta ,2} \left(\ell _{\alpha ,2} \ell _{\gamma ,1} \ell _{\gamma
   ,2}\right){}^{2/3}}\xrightarrow{n\to\infty}\infty.
		\]
	\end{enumerate}
	
This finishes the proof of properness of the function $\trace_\figeight$ on the symplectic leaves. To see that the function realizes a minimum, note that the function is positive.
\end{proof}

\subsection{Convexity of $\trace_\figeight$}
The next step is to prove Theorem \ref{thm : Intro convexity}. Here we once again use more Mathematica for the computations. Since we have explicit expressions for the Hamiltonian flows associated to coordinate functions, as well as the mixed flows, we can use them to compute second derivatives. That is, given a function $\phi\colon\SymLeaf_L\to\R$ and a flow $\HmFlow^t\colon\SymLeaf_L\to\SymLeaf_L$, we compute
\[
t\mapsto \frac{\partial^2}{\partial t^2}\phi\left(\HmFlow^t(q)\right)
\]
for any $q\in\SymLeaf_L$ and check whether it is a strictly positive function.
\begin{theorem}\label{thm : convexity of the trace function along mixed flows}
	Let $L\in\R^6_{>0}$ and $\figeight = \alpha\gamma^{-1}$ be a figure eight curve. Then the trace function $\trace_\figeight\big|_{\SymLeaf_L}\colon\SymLeaf_L\to\R$ is strictly convex along any mixed flow.
\end{theorem}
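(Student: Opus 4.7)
The plan is to exploit the fact that, from Lemma \ref{lemma : trace figure eight in coordinates}, the restriction $\trace_\figeight\big|_{\SymLeaf_L}$ is a Laurent polynomial in $(\sigma_1,\tau_1)$ with strictly positive coefficients depending on $L$. After clearing the overall $\sigma_1\tau_1$ denominator in the numerator, one reads off a sum of monomials $c_{p,q}(L)\,\sigma_1^p\tau_1^q$ with $c_{p,q}(L)>0$ and $(p,q)$ ranging in a subset of $\{-1,0,1,2\}\times\{-1,0,1\}$; in particular the support contains the non-proportional pairs $(2,1)$ (from the leading $\sigma_1^3\tau_1^2$ term) and $(-1,-1)$ (from the constant-in-$\sigma_1,\tau_1$ term in the numerator).

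Next I would describe the mixed flows in the coordinates $(\sigma_1,\tau_1)$. Using the explicit formulas for the eruption and hexagon flows in Proposition \ref{prop : Hamiltonian functions and vector fields} together with the parameterization of $\SymLeaf_L$ from Lemma \ref{cor : parametrization of symplectic leaf as level sets}, one checks that $\HmFlow^t_\hexagon$ fixes $\sigma_1$ and sends $\tau_1\mapsto e^t\tau_1$, while $\HmFlow^t_\eruption$ fixes $\tau_1$ and sends $\sigma_1\mapsto e^t\sigma_1$ (the remaining coordinates $\sigma_{i,L}$ and $\tau_{2,L}$ rescale by the correct factors automatically, because of the form of the parameterization). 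Consequently, a mixed flow $\Psi^t_a$ starting at $(\sigma_1^0,\tau_1^0)$ acts as $t\mapsto(e^t\sigma_1^0,\,e^{at}\tau_1^0)$, and symmetrically in the other convention of Definition \ref{def : mixed flow}.

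Substituting into the Laurent polynomial, each monomial becomes $c_{p,q}(L)(\sigma_1^0)^p(\tau_1^0)^q e^{(p+aq)t}$, so
\begin{equation*}
\frac{d^2}{dt^2}\trace_\figeight\!\bigl(\Psi^t_a(\sigma_1^0,\tau_1^0)\bigr)=\sum_{(p,q)}c_{p,q}(L)(\sigma_1^0)^p(\tau_1^0)^q(p+aq)^2 e^{(p+aq)t}.
\end{equation*}
Every summand on the right is non-negative, because coefficients and base values are positive, and the sum is strictly positive provided some $(p,q)$ in the support satisfies $p+aq\neq 0$. Since $(2,1)$ and $(-1,-1)$ both appear and are $\mathbb{Z}$-linearly independent, no single $a\in\R$ can force $p+aq=0$ on both of them simultaneously, so strict positivity holds for every $t$ and every $a$; the other family of mixed flows is handled by the symmetric argument replacing $p+aq$ with $ap+q$. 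The main obstacle, which I would handle via Section 6 of the Mathematica code (or a careful hand expansion of Lemma \ref{lemma : trace figure eight in coordinates}), is simply verifying the positivity of all coefficients $c_{p,q}(L)$ and the presence of two non-proportional exponent pairs in the support; once this combinatorial fact is secured, strict convexity along any mixed flow follows immediately from the elementary computation above.
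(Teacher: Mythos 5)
Your argument is correct, and it reaches the conclusion by a genuinely different route than the paper. The paper feeds the composed function $t\mapsto\trace_\figeight(\Psi^t_a(q))$ into Mathematica, displays the full symbolic second derivative for each of the two families of mixed flows, and checks by inspection that every term is positive (the parameter $a$ appearing only inside squares or exponentials). You instead isolate two structural facts: (i) by Lemma \ref{lemma : trace figure eight in coordinates}, $\trace_\figeight\big|_{\SymLeaf_L}$ is a Laurent polynomial in $(\sigma_1,\tau_1)$ whose coefficients are manifestly positive functions of $L$ (no cancellations are possible, so in particular the support contains the non-proportional exponents $(2,1)$ and $(-1,-1)$ after dividing by $\sigma_1\tau_1$); and (ii) the eruption and hexagon flows act diagonally on the leaf coordinates, $\sigma_1\mapsto e^t\sigma_1$ and $\tau_1\mapsto e^t\tau_1$ respectively, which is indeed consistent with the parameterization of Lemma \ref{cor : parametrization of symplectic leaf as level sets} (each $\sigma_{i,L}$ is proportional to $\sigma_1^{\pm1}$ and $\tau_{2,L}$ to $\tau_1^{-1}$ with the signs matching the flows of the Corollary after Proposition \ref{prop : Hamiltonian functions and vector fields}). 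From these, the second derivative is the sum $\sum_{(p,q)}c_{p,q}(L)(\sigma_1^0)^p(\tau_1^0)^q(p+aq)^2e^{(p+aq)t}$ (respectively with $ap+q$), each summand nonnegative, and since $p+aq$ (resp.\ $ap+q$) cannot vanish simultaneously on two non-proportional support points, the sum is strictly positive for every $a$ and $t$. Note that strict positivity of the second derivative is exactly what the paper establishes, so the conclusions coincide. What your route buys: it replaces the two long machine-generated second-derivative expressions by a one-line computation, it only relies on the (already computed) trace formula rather than on further symbolic differentiation, and it generalizes immediately — the same argument gives strict convexity along mixed flows for any function on a leaf that is a positive-coefficient Laurent polynomial in $(\sigma_1,\tau_1)$ with at least two non-proportional exponents in its support, which covers the commutator and $\alpha^k\gamma^{-1}$ cases treated later in the paper. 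What the paper's route buys is that it needs no structural observation at all: positivity is read off directly from the output, which is the style used uniformly throughout the article.
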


\begin{proof}
	Let $L = (\ell_{\alpha,1},\ell_{\alpha,2},\ell_{\beta,1},\ell_{\beta,2},\ell_{\gamma,1},\ell_{\gamma,2})\in\R^6_{>0}$ be a length vector and let $q = (\sigma_1,\tau_1)\in\SymLeaf_L$. Let $a\in\R$. We need to check the two different types of mixed flows from Definition \ref{def : mixed flow}. We begin with the case when the mixed flow defined by $a$ is given by $\Psi^t_a = \HmFlow_\eruption^{at}\circ\HmFlow^t_\hexagon$. In Section 6 of the Mathematica code, set \texttt{testf} equal to \texttt{traceFig8}. Then inputting the function \texttt{testfMix1} to \texttt{SecondDerFlow}, we obtain that
	
	\begin{gather*}
		\frac{\partial^2}{\partial t^2}\trace_\figeight\left(\Psi_a^t(q)\right) = \frac{e^{-t(a+1)}}{\sigma _1 \tau _1 \ell _{\beta ,2} \sqrt[3]{\ell _{\alpha ,1}^4
   \ell _{\alpha ,2}^2 \ell _{\beta ,1} \ell _{\gamma ,1}^2 \ell _{\gamma
   ,2}^2}}\cdot\\
   \bigg( e^t \tau _1 \ell _{\alpha ,1}^2 \ell _{\beta ,2} \ell _{\gamma ,1} \left(e^t\tau_1(a-1)^2+ a^2\right)
   \sqrt[3]{\ell _{\alpha ,2} \ell _{\beta ,1}}+\sigma _1 \tau _1^2 e^{(a+2)
   t} \ell _{\alpha ,1}^{5/3} \left(\ell _{\beta ,2} \ell _{\gamma ,1} \ell
   _{\gamma ,2}\right){}^{2/3}+\\
   \sigma_1\cdot\bigg((2 a+1)^2 \sigma _1^2 \tau _1^2 e^{(3 a+2) t} \ell _{\gamma ,2}
   \sqrt[3]{\ell _{\alpha ,2} \ell _{\beta ,1}}+(a-1)^2 \sigma _1 e^{2 a t}
   \ell _{\alpha ,2} \ell _{\beta ,1} \ell _{\gamma ,2}^{5/3} \left(\ell
   _{\alpha ,1} \ell _{\beta ,2} \ell _{\gamma ,1}\right){}^{2/3}+\\
   (1-2 a)^2 \sigma _1^2 e^{3 a t} \ell _{\alpha ,2} \ell _{\beta ,1} \ell
   _{\gamma ,2}^{5/3} \left(\ell _{\alpha ,1} \ell _{\beta ,2} \ell _{\gamma
   ,1}\right){}^{2/3}+\tau _1^2 e^{(a+2) t} \sqrt[3]{\ell _{\alpha ,1}^4
   \ell _{\alpha ,2}^2 \ell _{\beta ,1}^2 \ell _{\beta ,2} \ell _{\gamma ,1}
   \ell _{\gamma ,2}} 
   \,+\\
8 a^2 \sigma _1^2 \tau _1 e^{3 a t+t} \sqrt[3]{\ell _{\alpha ,1} \ell
   _{\alpha ,2}^2 \ell _{\beta ,1}^2 \ell _{\beta ,2} \ell _{\gamma ,1} \ell
   _{\gamma ,2}^4}+\\
   (a+1)^2 \sigma _1 \tau _1^2 e^{2 (a+1) t} \sqrt[3]{\ell _{\alpha ,1} \ell
   _{\beta ,2} \ell _{\gamma ,1} \ell _{\gamma ,2}} \left(\sqrt[3]{\ell
   _{\alpha ,1} \ell _{\beta ,2} \ell _{\gamma ,1} \ell _{\gamma
   ,2}}+\left(\ell _{\alpha ,2} \ell _{\beta ,1}\right){}^{2/3}\right)+\\
   a^2 \sigma _1 \tau _1 e^{2 a t+t} \bigg((\ell _{\alpha ,2} \left(\ell _{\beta
   ,1}+1\right) \left(\ell _{\alpha ,1} \ell _{\beta ,2} \ell _{\gamma ,1}
   \ell _{\gamma ,2}\right){}^{2/3}+\\\sqrt[3]{\ell _{\alpha ,1} \ell _{\alpha
   ,2}^2 \ell _{\beta ,1}^2 \ell _{\beta ,2} \ell _{\gamma ,1} \ell _{\gamma
   ,2}^4}+\sqrt[3]{\ell _{\alpha ,1}^4 \ell _{\alpha ,2}^2 \ell _{\beta
   ,1}^2 \ell _{\beta ,2} \ell _{\gamma ,1} \ell _{\gamma ,2}^4}\bigg)\bigg)+\\
   \ell_{\alpha,1}\bigg(\ell _{\alpha ,2} \ell _{\beta ,2} \left(a^2 e^t \tau _1+\sigma _1 e^{a
   t}+(a+1)^2\right) \left(\ell _{\alpha ,1} \ell _{\beta ,2} \ell _{\gamma
   ,1} \ell _{\gamma ,2}\right){}^{2/3}+\\
   \sigma_1\bigg((a+1)^2 \sigma _1 \tau _1^2 e^{2 (a+1) t} \ell _{\gamma ,2} \sqrt[3]{\ell
   _{\alpha ,2} \ell _{\beta ,1}}+\tau _1^2 e^{(a+2) t} \ell _{\beta ,2}
   \ell _{\gamma ,1} \sqrt[3]{\ell _{\alpha ,2} \ell _{\beta ,1}}+\\
   a^2 \sigma _1 \tau _1 e^{2 a t+t} \ell _{\beta ,2} \left(\ell _{\gamma
   ,1}+1\right) \ell _{\gamma ,2} \sqrt[3]{\ell _{\alpha ,2} \ell _{\beta
   ,1}}+\\
   e^{a t} \ell _{\beta ,2} \ell _{\gamma ,2} \left(\ell _{\gamma ,1}
   \sqrt[3]{\ell _{\alpha ,2}^4 \ell _{\beta ,1}}+\sqrt[3]{\ell _{\alpha ,1}
   \ell _{\alpha ,2}^2 \ell _{\beta ,1}^2 \ell _{\beta ,2} \ell _{\gamma ,1}
   \ell _{\gamma ,2}}\right)+\\
   (a-1)^2 \sigma _1 e^{2 a t} \ell _{\beta ,2} \ell _{\gamma ,2} \left(\ell
   _{\gamma ,1} \sqrt[3]{\ell _{\alpha ,2}^4 \ell _{\beta ,1}}+\sqrt[3]{\ell
   _{\alpha ,1} \ell _{\alpha ,2}^2 \ell _{\beta ,1}^2 \ell _{\beta ,2} \ell
   _{\gamma ,1} \ell _{\gamma ,2}}\right)\bigg)\bigg)\bigg).
	\end{gather*}
	Examining the terms, we see that $a$ always appears either within a square, or in an exponential. Moreover, $t$ also always appears only in an exponential. All the lengths are positive, and so are the coordinates $\sigma_1$ and $\tau_1$. Thus, the second derivative is always strictly larger than zero, and hence the function $t\mapsto \trace_\delta\big|_{\SymLeaf_L}(\Psi^t_a(q))$ is strictly convex for any $q\in \SymLeaf_L$.\\
	
	The case for the other mixed flow is similar. Consider now the mixed flow $\Psi^t_a = \HmFlow^t_\eruption\circ\HmFlow^{at}_\hexagon$ and $q = (\sigma_1,\tau_1)\in \SymLeaf_L$. In Section 6 of the Mathematica code, set \texttt{testf} equal to \texttt{traceFig8}. Then inputting the function \texttt{testfMix2} to \texttt{SecondDerFlow}, we obtain that
	\begin{gather*}
		\frac{\partial^2}{\partial t^2}\trace_\figeight\left(\Psi_a^t(q)\right)=\frac{e^{-t(a+1)}}{\sigma _1 \tau _1 \ell _{\beta ,2} \sqrt[3]{\ell
   _{\alpha ,1}^4 \ell _{\alpha ,2}^2 \ell _{\beta ,1} \ell _{\gamma ,1}^2
   \ell _{\gamma ,2}^2}}\cdot\\
   \bigg(a^2 \sigma _1 \tau _1^2 e^{2 a t+t} \ell _{\alpha ,1}^{5/3} \left(\ell
   _{\beta ,2} \ell _{\gamma ,1} \ell _{\gamma ,2}\right){}^{2/3}+\tau _1
   e^{a t} \ell _{\alpha ,1}^2 \ell _{\beta ,2} \ell _{\gamma ,1}
   \left((a-1)^2 \tau _1 e^{a t}+1\right) \sqrt[3]{\ell _{\alpha ,2} \ell
   _{\beta ,1}}+\\
   \sigma_1\bigg((a+2)^2 \sigma _1^2 \tau _1^2 e^{(2 a+3) t} \ell _{\gamma ,2} \sqrt[3]{\ell
   _{\alpha ,2} \ell _{\beta ,1}}+(a-1)^2 \sigma _1 e^{2 t} \ell _{\alpha
   ,2} \ell _{\beta ,1} \ell _{\gamma ,2}^{5/3} \left(\ell _{\alpha ,1} \ell
   _{\beta ,2} \ell _{\gamma ,1}\right){}^{2/3}+\\
   a^2 \tau _1^2 e^{2 a t+t} \sqrt[3]{\ell _{\alpha ,1}^4 \ell _{\alpha ,2}^2
   \ell _{\beta ,1}^2 \ell _{\beta ,2} \ell _{\gamma ,1} \ell _{\gamma
   ,2}}+(a-2)^2 \sigma _1^2 e^{3 t} \ell _{\alpha ,2} \ell _{\beta ,1} \ell
   _{\gamma ,2}^{5/3} \left(\ell _{\alpha ,1} \ell _{\beta ,2} \ell _{\gamma
   ,1}\right){}^{2/3}+\\
   8 \sigma _1^2 \tau _1 e^{(3+a) t} \sqrt[3]{\ell _{\alpha ,1} \ell _{\alpha
   ,2}^2 \ell _{\beta ,1}^2 \ell _{\beta ,2} \ell _{\gamma ,1} \ell _{\gamma
   ,2}^4}+\\
   (a+1)^2 \sigma _1 \tau _1^2 e^{2 (a+1) t} \sqrt[3]{\ell _{\alpha ,1} \ell
   _{\beta ,2} \ell _{\gamma ,1} \ell _{\gamma ,2}} \left(\sqrt[3]{\ell
   _{\alpha ,1} \ell _{\beta ,2} \ell _{\gamma ,1} \ell _{\gamma
   ,2}}+\left(\ell _{\alpha ,2} \ell _{\beta ,1}\right){}^{2/3}\right)+\\
   \sigma _1 \tau _1 e^{(a+2) t} \left(\ell _{\alpha ,2} \left(\ell _{\beta
   ,1}+1\right) \bigg((\ell _{\alpha ,1} \ell _{\beta ,2} \ell _{\gamma ,1}
   \ell _{\gamma ,2}\right){}^{2/3}\\
   +\sqrt[3]{\ell _{\alpha ,1} \ell _{\alpha
   ,2}^2 \ell _{\beta ,1}^2 \ell _{\beta ,2} \ell _{\gamma ,1} \ell _{\gamma
   ,2}^4}+\sqrt[3]{\ell _{\alpha ,1}^4 \ell _{\alpha ,2}^2 \ell _{\beta
   ,1}^2 \ell _{\beta ,2} \ell _{\gamma ,1} \ell _{\gamma ,2}^4}\bigg)\bigg)+\\
   \ell_{\alpha,1}\bigg(\ell _{\alpha ,2} \ell _{\beta ,2} \left(a^2\sigma_1e^t+\tau_1e^{at}+(a+1)^2\right) \left(\ell _{\alpha ,1} \ell
   _{\beta ,2} \ell _{\gamma ,1} \ell _{\gamma ,2}\right){}^{2/3}+\\
   \sigma_1\bigg(a^2 \tau _1^2 e^{2 a t+t} \ell _{\beta ,2} \ell _{\gamma ,1} \sqrt[3]{\ell
   _{\alpha ,2} \ell _{\beta ,1}}+(a+1)^2 \sigma _1 \tau _1^2 e^{2 (a+1) t}
   \ell _{\gamma ,2} \sqrt[3]{\ell _{\alpha ,2} \ell _{\beta ,1}}+\\
   \sigma _1 \tau _1 e^{(a+2) t} \ell _{\beta ,2} \left(\ell _{\gamma
   ,1}+1\right) \ell _{\gamma ,2} \sqrt[3]{\ell _{\alpha ,2} \ell _{\beta
   ,1}}+\\
   a^2 e^t \ell _{\beta ,2} \ell _{\gamma ,2} \left(\ell _{\gamma ,1}
   \sqrt[3]{\ell _{\alpha ,2}^4 \ell _{\beta ,1}}+\sqrt[3]{\ell _{\alpha ,1}
   \ell _{\alpha ,2}^2 \ell _{\beta ,1}^2 \ell _{\beta ,2} \ell _{\gamma ,1}
   \ell _{\gamma ,2}}\right)+\\
   (a-1)^2 \sigma _1 e^{2 t} \ell _{\beta ,2} \ell _{\gamma ,2} \left(\ell
   _{\gamma ,1} \sqrt[3]{\ell _{\alpha ,2}^4 \ell _{\beta ,1}}+\sqrt[3]{\ell
   _{\alpha ,1} \ell _{\alpha ,2}^2 \ell _{\beta ,1}^2 \ell _{\beta ,2} \ell
   _{\gamma ,1} \ell _{\gamma ,2}}\right)\bigg)\bigg)\bigg).
	\end{gather*}
	Once again, $t$ appears only as an exponential, $a$ appears either as an exponential or within a square, and all of the other variables are positive. Hence, $t\mapsto \trace_\delta\big|_{\SymLeaf_L}(\Psi^t_a(q))$ is strictly convex for any $q\in \SymLeaf_L$.
\end{proof}

With this strict convexity, we immediately obtain the following.

\begin{corollary}\label{cor : unique fixed point}
Let $L\in\R^6_{>0}$ and $\figeight = \alpha\gamma^{-1}$ be a figure eight curve. Then the trace function $\trace_\figeight\big|_{\SymLeaf_L}\colon\SymLeaf_L\to\R$ has a unique critical point, corresponding to the unique minimum.
\end{corollary}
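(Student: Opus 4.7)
The plan is to combine the three main inputs already established in the excerpt, exactly mirroring the Kerckhoff-style argument given for Theorem \ref{thm : periodicity for teichmueller} in the introduction, but now working on a symplectic leaf $\SymLeaf_L$ in place of Teichmüller space and with mixed flows in place of earthquake paths.

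First, I would use Proposition \ref{prop : trace of figure 8 is proper}: since $\trace_\figeight\big|_{\SymLeaf_L}$ is proper and positive, it attains a minimum at some point $q_0\in\SymLeaf_L$. In particular $q_0$ is a critical point. To prove uniqueness of the critical point, I would argue by contradiction and suppose $q_1\in\SymLeaf_L$ is a second critical point with $q_1\neq q_0$. By Lemma \ref{lemma : any two points are connected by a mixed flow}, there exist $a\in\R$ and $t_1\in\R\setminus\{0\}$ such that $q_1=\Psi^{t_1}_a(q_0)$ for the associated mixed flow $\Psi^t_a$.

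Next, I would consider the one-variable function
\[
f(t)\coloneqq \trace_\figeight\bigl(\Psi^t_a(q_0)\bigr).
\]
Since $q_0$ is a critical point of $\trace_\figeight\big|_{\SymLeaf_L}$, we have $d\trace_\figeight|_{q_0}=0$, and hence $f'(0) = d\trace_\figeight|_{q_0}\bigl(\tfrac{d}{dt}\Psi^t_a(q_0)\big|_{t=0}\bigr)=0$. The same reasoning at $q_1$ gives $f'(t_1)=0$. But Theorem \ref{thm : convexity of the trace function along mixed flows} guarantees that $f$ is strictly convex on $\R$, so $f'$ is strictly increasing; in particular $f'$ can vanish at most once. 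This contradicts the existence of the two distinct zeros $0$ and $t_1$, and therefore the critical point is unique and coincides with the minimum $q_0$.

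There is essentially no obstacle remaining at this stage: all the substantive work (the explicit rational expression for $\trace_\figeight$ on $\SymLeaf_L$, the coercivity argument for properness, the positivity of the second derivative along both types of mixed flows, and the fact that mixed flows act transitively on each leaf) has been carried out in the preceding propositions and lemmas. The corollary is simply the clean packaging of these ingredients through a one-dimensional strict convexity argument, and no further computation is required.
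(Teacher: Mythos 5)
Your proposal is correct and follows essentially the same route as the paper: the paper states the corollary as an immediate consequence of properness (Proposition \ref{prop : trace of figure 8 is proper}), strict convexity along mixed flows (Theorem \ref{thm : convexity of the trace function along mixed flows}), and the connectivity of the leaf by mixed flows (Lemma \ref{lemma : any two points are connected by a mixed flow}), exactly mirroring the Kerckhoff-style argument of Theorem \ref{thm : periodicity for teichmueller}. Your one-variable argument ($f'$ strictly increasing along the connecting mixed flow, hence at most one zero) is just a careful spelling-out of that same reasoning, with no gap.
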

%\begin{proof}
%	Proposition \ref{prop : trace of figure 8 is proper} gives that $\trace_\delta$ is proper on symplectic leaves and hence attains a minimum. The convexity from Theorem \ref{thm : convexity of the trace function along mixed flows} ensures that the minimum is unique.
%\end{proof}
\subsection{Proof of Theorem \ref{thm : periodicity of figure 8 in general}}\label{sec : proof of main thm}
With the above sections, we can now prove Theorem \ref{thm : periodicity of figure 8 in general}, which is exactly the same proof as that of Theorem \ref{thm : periodicity for teichmueller}.\\

\begin{proof}[of Theorem \ref{thm : periodicity of figure 8 in general}]
As stated above in Corollary \ref{cor : unique fixed point}, strict convexity of the trace function $\trace_\figeight\big|_{\SymLeaf_L}$ along any mixed give that there is a unique critical point and corresponds to the unique minimum. Therefore, the Hamiltonian flow of $\trace_\figeight\big|_{\SymLeaf_L}$ has a unique fixed point.\\

	Now we show that every orbit is periodic. Let $M$ be a non--empty level set of $\trace_\figeight\big|_{\SymLeaf_L}$ that does not correspond to the fixed point. In particular, $M$ is a a regular level set and therefore a smooth co--dimension one submanifold of $\SymLeaf_L$. The Hamiltonian flow of $\trace_\figeight\big|_{\SymLeaf_L}$ preserves the level sets of $\trace_\figeight\big|_{\SymLeaf_L}$.  Since $\trace_\figeight\big|_{\SymLeaf_L}$ is a proper function by Proposition \ref{prop : trace of figure 8 is proper},  $M$ is compact. By the fact that $\SymLeaf_L$ is two--dimensional (\cite{ConvexRealProj_Goldman}, also see Lemma \ref{cor : parametrization of symplectic leaf as level sets}), $M$ is a compact one--dimensional manifold without boundary. Therefore $M$ is a topological circle. Once again, since $M$ does not contain any fixed points, the Hamiltonian vector field restricted to $M$ is bounded away from zero. This implies that the orbit is the whole level set $M$ and is therefore periodic.
\end{proof}	

\subsection{Numerical solutions to the Hamiltonian flow}\label{sec : numerical sol to Ham flow fig 8}
From Equation \eqref{eq : Hamiltonian vector field in symplectic leaf in coordinates sigma1 tau1}, we can explicitly write down the vector field associated to the trace function $\trace_\figeight$ in coordinates. Let $L\in\R^6_{>0}$ define a symplectic leaf $\SymLeaf_L$. By Equation \eqref{eq : differential equation associated to the Hamiltonian vector field}, a path $(\sigma_1,\tau_1)\colon\R\to\SymLeaf_L$ is a flow line of the Hamiltonian flow of $\trace_\figeight\big|_{\SymLeaf_L}$ if 
\begin{gather*}
		\dot\sigma_1 = \frac{2}{\tau_1 (\ell _{\alpha ,1}^4 \ell _{\alpha ,2}^2 \ell _{\beta ,1} \ell _{\beta ,2}^3
   \ell _{\gamma ,1}^2 \ell _{\gamma ,2}^2)^{1/3}}\cdot \\
   \bigg(\sigma _1^3 \left(\ell _{\alpha ,2} \ell _{\beta ,1} \ell _{\gamma ,2}^{5/3}
   \left(\ell _{\alpha ,1} \ell _{\beta ,2} \ell _{\gamma
   ,1}\right){}^{2/3}-\tau _1^2 \ell _{\gamma ,2} \sqrt[3]{\ell _{\alpha ,2}
   \ell _{\beta ,1}}\right)+\\
   \ell _{\alpha ,1} \ell _{\beta ,2} \left(\ell _{\alpha ,2} \left(\ell
   _{\alpha ,1} \ell _{\beta ,2} \ell _{\gamma ,1} \ell _{\gamma
   ,2}\right){}^{2/3}-\tau _1^2 \ell _{\alpha ,1} \ell _{\gamma ,1}
   \sqrt[3]{\ell _{\alpha ,2} \ell _{\beta ,1}}\right)+\\
   \sigma_1^2\bigg(\ell _{\alpha ,2} \ell _{\beta ,1} \ell _{\gamma ,2}^{5/3} \left(\ell
   _{\alpha ,1} \ell _{\beta ,2} \ell _{\gamma ,1}\right){}^{2/3}+\ell
   _{\alpha ,1} \ell _{\beta ,2} \ell _{\gamma ,1} \ell _{\gamma ,2}
   \sqrt[3]{\ell _{\alpha ,2}^4 \ell _{\beta ,1}}+\sqrt[3]{\ell _{\alpha
   ,1}^4 \ell _{\alpha ,2}^2 \ell _{\beta ,1}^2 \ell _{\beta ,2}^4 \ell
   _{\gamma ,1} \ell _{\gamma ,2}^4}-\\
   \tau _1^2 \left(\ell _{\alpha ,1} \ell _{\gamma ,2} \sqrt[3]{\ell _{\alpha
   ,2} \ell _{\beta ,1}}+\left(\ell _{\alpha ,1} \ell _{\beta ,2} \ell
   _{\gamma ,1} \ell _{\gamma ,2}\right){}^{2/3}+\sqrt[3]{\ell _{\alpha ,1}
   \ell _{\alpha ,2}^2 \ell _{\beta ,1}^2 \ell _{\beta ,2} \ell _{\gamma ,1}
   \ell _{\gamma ,2}}\right)\bigg)+\\
   \sigma_1\bigg(\ell _{\alpha ,2} \sqrt[3]{\ell _{\alpha ,1}^5 \ell _{\beta ,2}^5 \ell
   _{\gamma ,1}^2 \ell _{\gamma ,2}^2}+\ell _{\alpha ,1} \ell _{\beta ,2}
   \ell _{\gamma ,1} \ell _{\gamma ,2} \sqrt[3]{\ell _{\alpha ,2}^4 \ell
   _{\beta ,1}}+\sqrt[3]{\ell _{\alpha ,1}^4 \ell _{\alpha ,2}^2 \ell
   _{\beta ,1}^2 \ell _{\beta ,2}^4 \ell _{\gamma ,1} \ell _{\gamma ,2}^4}-\\
   \tau _1^2 \left(\ell _{\alpha ,1}^{5/3} \left(\ell _{\beta ,2} \ell _{\gamma
   ,1} \ell _{\gamma ,2}\right){}^{2/3}+\ell _{\alpha ,1} \ell _{\beta ,2}
   \ell _{\gamma ,1} \sqrt[3]{\ell _{\alpha ,2} \ell _{\beta
   ,1}}+\sqrt[3]{\ell _{\alpha ,1}^4 \ell _{\alpha ,2}^2 \ell _{\beta ,1}^2
   \ell _{\beta ,2} \ell _{\gamma ,1} \ell _{\gamma ,2}}\right)\bigg)\bigg)
\end{gather*}
and
\begin{gather*}
	\dot\tau_1 = \frac{2}{\sigma _1 \ell _{\beta ,2} \sqrt[3]{\ell _{\alpha ,1}^4 \ell _{\alpha ,2}^2
   \ell _{\beta ,1} \ell _{\gamma ,1}^2 \ell _{\gamma ,2}^2}}\cdot\\
   \bigg(2 \sigma _1^3 \ell _{\alpha ,2} \ell _{\beta ,1} \ell _{\gamma ,2}^{5/3}
   \left(\ell _{\alpha ,1} \ell _{\beta ,2} \ell _{\gamma
   ,1}\right){}^{2/3}-\ell _{\alpha ,2} \sqrt[3]{\ell _{\alpha ,1}^5 \ell
   _{\beta ,2}^5 \ell _{\gamma ,1}^2 \ell _{\gamma ,2}^2}+\\
   \sigma _1^2 \left(\ell _{\alpha ,2} \ell _{\beta ,1} \ell _{\gamma ,2}^{5/3}
   \left(\ell _{\alpha ,1} \ell _{\beta ,2} \ell _{\gamma
   ,1}\right){}^{2/3}+\ell _{\alpha ,1} \ell _{\beta ,2} \ell _{\gamma ,1}
   \ell _{\gamma ,2} \sqrt[3]{\ell _{\alpha ,2}^4 \ell _{\beta
   ,1}}+\sqrt[3]{\ell _{\alpha ,1}^4 \ell _{\alpha ,2}^2 \ell _{\beta ,1}^2
   \ell _{\beta ,2}^4 \ell _{\gamma ,1} \ell _{\gamma ,2}^4}\right)+\\
   \tau_1^2\bigg(\sigma _1^2 \ell _{\alpha ,1} \ell _{\gamma ,2} \sqrt[3]{\ell _{\alpha ,2}
   \ell _{\beta ,1}}-\ell _{\alpha ,1}^2 \ell _{\beta ,2} \ell _{\gamma ,1}
   \sqrt[3]{\ell _{\alpha ,2} \ell _{\beta ,1}}+\sigma _1^2 \ell _{\alpha ,1} \ell _{\gamma ,2} \sqrt[3]{\ell _{\alpha ,2}
   \ell _{\beta ,1}}+\\
   \sigma_1^2\bigg(2 \sigma _1 \ell _{\gamma ,2} \sqrt[3]{\ell _{\alpha ,2} \ell _{\beta
   ,1}}+\left(\ell _{\alpha ,1} \ell _{\beta ,2} \ell _{\gamma ,1} \ell
   _{\gamma ,2}\right){}^{2/3}+\sqrt[3]{\ell _{\alpha ,1} \ell _{\alpha
   ,2}^2 \ell _{\beta ,1}^2 \ell _{\beta ,2} \ell _{\gamma ,1} \ell _{\gamma
   ,2}}\bigg)\bigg)+\\
   \tau_1\bigg(\sigma _1^2 \ell _{\alpha ,1} \ell _{\beta ,2} \left(\ell _{\gamma
   ,1}+1\right) \ell _{\gamma ,2} \sqrt[3]{\ell _{\alpha ,2} \ell _{\beta
   ,1}}-\ell _{\alpha ,1}^2 \ell _{\beta ,2} \ell _{\gamma ,1} \sqrt[3]{\ell
   _{\alpha ,2} \ell _{\beta ,1}}-\ell _{\alpha ,2} \sqrt[3]{\ell _{\alpha
   ,1}^5 \ell _{\beta ,2}^5 \ell _{\gamma ,1}^2 \ell _{\gamma ,2}^2}+\\
   4 \sigma _1^3 \sqrt[3]{\ell _{\alpha ,1} \ell _{\alpha ,2}^2 \ell _{\beta
   ,1}^2 \ell _{\beta ,2} \ell _{\gamma ,1} \ell _{\gamma ,2}^4}+\sigma_1^2\bigg(\ell _{\alpha ,2} \left(\ell _{\beta ,1}+1\right) \left(\ell _{\alpha ,1}
   \ell _{\beta ,2} \ell _{\gamma ,1} \ell _{\gamma ,2}\right){}^{2/3}+\\
   \sqrt[3]{\ell _{\alpha ,1} \ell _{\alpha ,2}^2 \ell _{\beta ,1}^2 \ell
   _{\beta ,2} \ell _{\gamma ,1} \ell _{\gamma ,2}^4}+\sqrt[3]{\ell _{\alpha
   ,1}^4 \ell _{\alpha ,2}^2 \ell _{\beta ,1}^2 \ell _{\beta ,2} \ell
   _{\gamma ,1} \ell _{\gamma ,2}^4}\bigg)\bigg)\bigg).
\end{gather*}
We supressed the dependence of $\sigma_1$ and $\tau_1$ on $t$ to make the expressions (mildly) more readable. This equation is computed in Section 5 of the Mathematica code by giving the function \texttt{HamiltonianVF} the input function \texttt{traceFigure8}.\\

These are coupled ordinary differential equations, and after some attempts with Mathematica, we were not able to find a closed form solution. In Section \ref{sec : unipotent locus} we focus on the unipotent locus to have much more simple differential equations, but we were still unable to solve the system of equations symbolically.\\

However, it is possible to solve the system of differential equations numerically on given symplectic leaves and given initial conditions. We will pick symplectic leaves containing a Fuchsian structure. As in Section \ref{sec : fuchsian locus}, we let $F = (\ell_\alpha =3,\ell_\beta = 6, \ell_\gamma = 8)$, which defines a symplectic leaf $\SymLeaf_{L(F)}$, and whose Fuchsian representation, in coordinates $\sigma_1,\tau_1$ as in Lemma \ref{cor : parametrization of symplectic leaf as level sets} and from Equation \eqref{eq : coordinates for fuchsian locus}, is given by $(\sigma_1^F = 2,\tau_1^F = 1)$. The system of ordinary differential equations describing the Hamiltonian vector field of $\trace_\figeight\big|_{\SymLeaf_{L(F)}}$ reads (once again suppressing the dependence of the coordinates on $t$):
\begin{align}\label{eq : ODE system Fuchsian}
	\begin{dcases}\dot\sigma_1&=\frac{-6 \sigma _1^3 \left(\tau _1^2-1\right)+\sigma _1^2 \left(17-90 \tau
   _1^2\right)+\sigma _1 \left(15-408 \tau _1^2\right)-576 \tau _1^2+4}{12
   \tau _1}\\
   \dot\tau_1 &=\frac{\left(\tau _1+1\right) \left(12 \sigma _1^3 \left(\tau
   _1+1\right)+\sigma _1^2 \left(90 \tau _1+17\right)-576 \tau
   _1-4\right)}{12 \sigma _1} 
   \end{dcases}
\end{align}
This equation is computed in Section 5 of the Mathematica code by giving the function \texttt{HamiltonianVF} the input function \texttt{traceFigure8} with the length vector \texttt{3,1/3,6,1/6,8,1/8,1,1}. The numerical solution with initial condition at the Fuchsian locus $(2,1)$ is shown in Figure \ref{fig : num sol fuchsian}, where we see the periodicity of the flow. We can also see that the Fuchsian structure is not fixed. The level sets of the function $\trace_\figeight\big|_{\SymLeaf_{L(F)}}$ are shown in Figure \ref{fig : level sets for fuchsian locus}, where we see that the orbits are closed.
\begin{figure}[h]
	\centering
		\includegraphics[width = \textwidth]{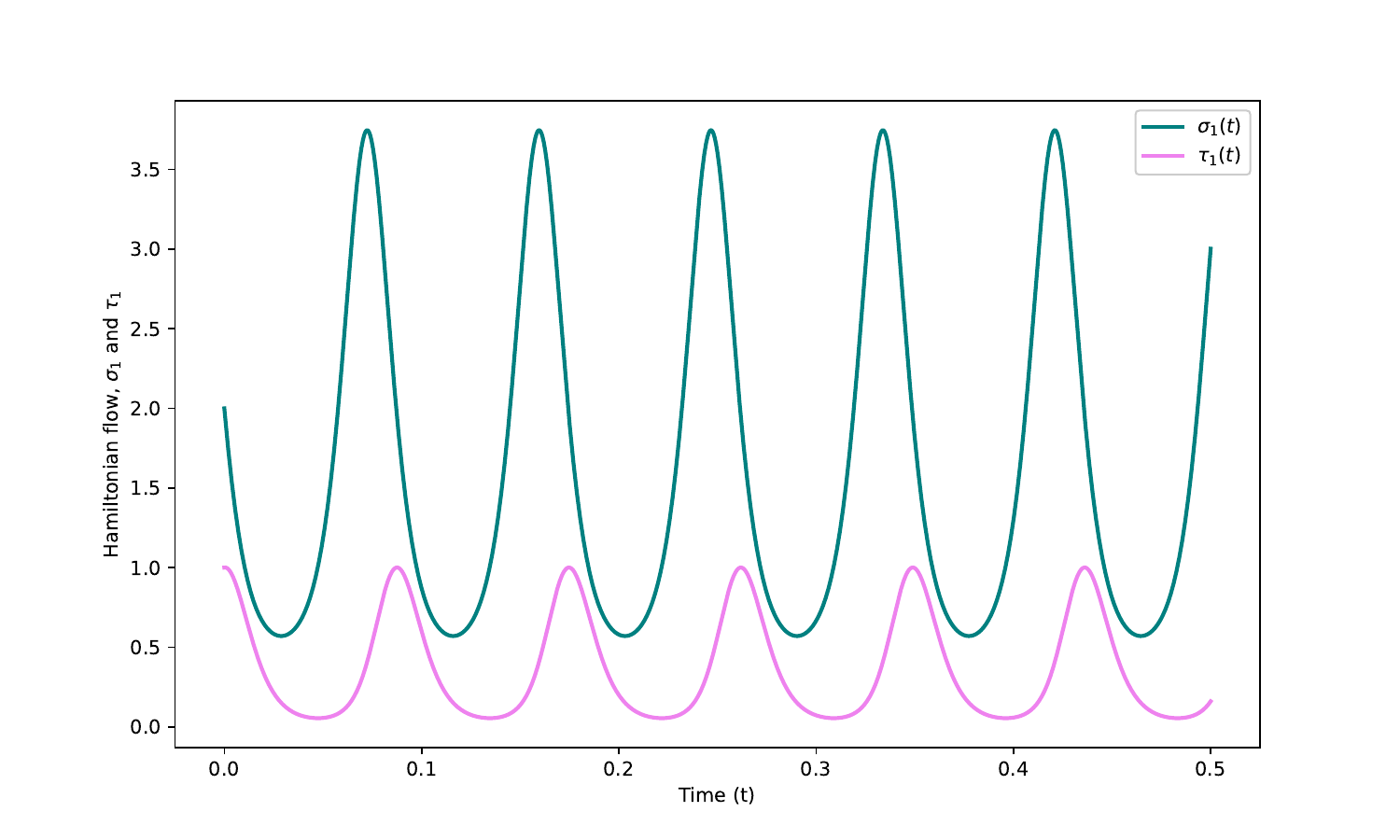}
		\caption{Numerical solution to the system of equations \eqref{eq : ODE system Fuchsian} with initial condition $(2,1)$ at the Fuchsian structure.}
		\label{fig : num sol fuchsian}
\end{figure}

\begin{figure}[h]
	\centering
	\includegraphics[scale = 0.7]{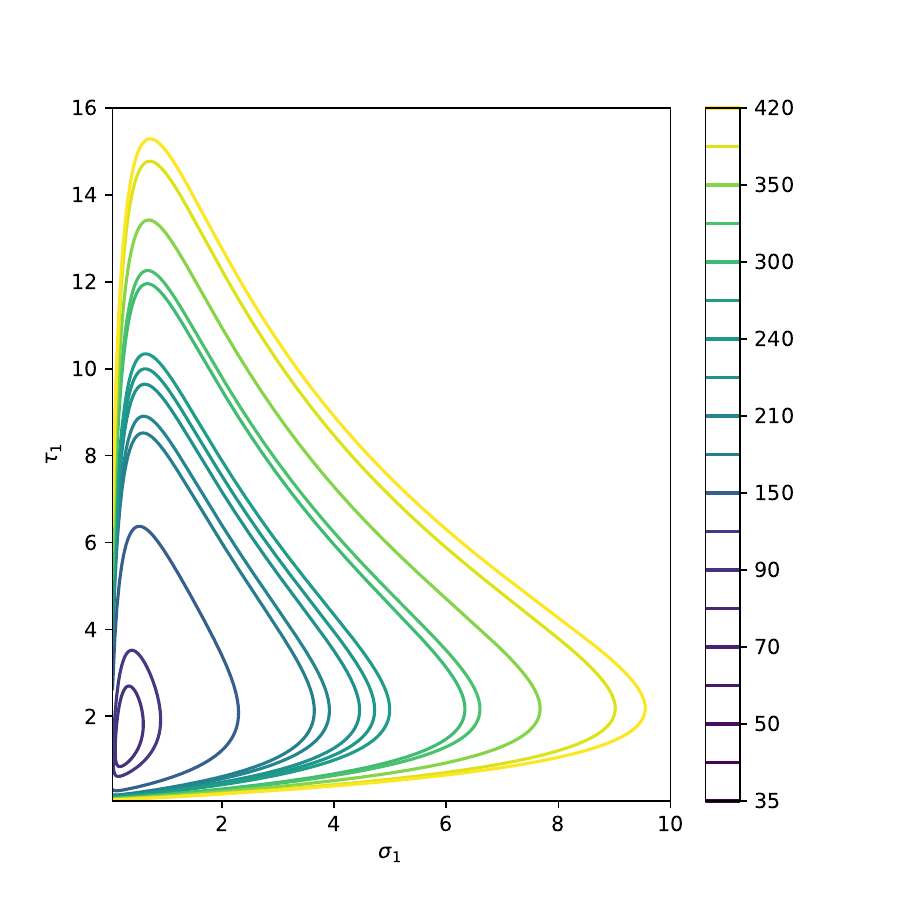}
	\caption{Level sets of the function $\trace_\figeight\big|_{\SymLeaf_{L(F)}}$ for $F = (3,6,8)$.}
	\label{fig : level sets for fuchsian locus}
\end{figure}

\begin{remark}\label{rmk : periods are not the same}
Since every orbit is periodic, one may wonder whether the orbits all have the same length and there is a circle action associated to the trace function. However, this is \emph{not} the case with the trace function, as we see in Figure \ref{fig : num sol for ic 43}.
\end{remark}

\begin{figure}[h!]
	\centering
	\includegraphics[width = \textwidth]{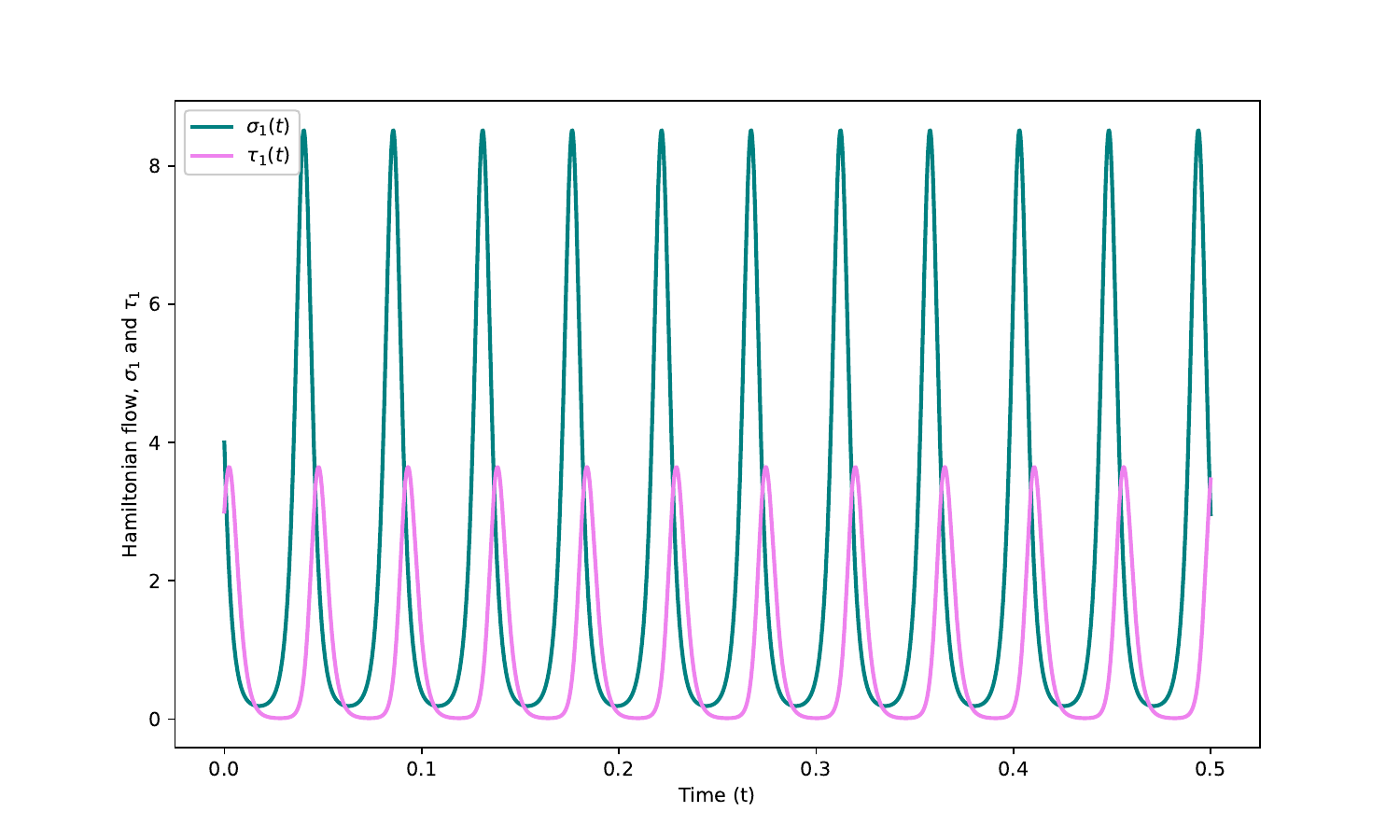}
	\caption{Numerical solution to the system of equations \eqref{eq : ODE system Fuchsian} with initial conditions $(4,3)$.}
	\label{fig : num sol for ic 43}
\end{figure}

\subsection{Trace of the $\Theta$--web}\label{sec : trace of the theta web}
We now prove Corollary \ref{cor Intro : web periodicity}. This is an application of Theorem \ref{thm : periodicity of figure 8 in general} where we show that an analogous version of the theorem holds when $\trace_\delta$ is replaced by the trace of the $\Theta$--web $\thweb$.\\

A \emph{web} is an embedded $3$--regular bipartite graph on the surface $\pants$\footnote{When considering representations from $\pi_1(\pants)\to\SL d$, \cite{DKS:webs} define $d$--webs. Since we are working in the $\PSL 3$ case, we refer to a $3$--web simply as a web.}. Given a representation $\rho\colon\pi_1(\pants)\to\PSL 3$ and a web $m$, \cite[Section 4]{Sikora:graphs} and \cite{DKS:webs} define the \emph{trace of the web $m$}, written as $\trace_m(\rho)$, which is invariant under conjugation. The definition of the trace of a web is in terms of tensor networks, and we do not recall the general definition here. Instead, we give the formula for a specific web.\\

Let $\thweb$ be the web with one black and one white vertex, with three edges between them each of multiplicity $1$, as in Figure \ref{fig : theta web}. The trace of $\thweb$ is computed in Section 6.2 of \cite{DKS:webs} to be
\begin{align*}
	\trace_{\thweb}\colon\CharVar_3(S)&\to\R\\
	[\rho]&\mapsto \trace(\rho(\alpha))\trace(\rho(\gamma))-\trace(\rho(\alpha\gamma^{-1})).
\end{align*} 
For any $L\in\R^6_{>0}$ there is an induced map
\begin{align*}
	\widehat\trace_{\thweb}\colon \SymLeaf_L&\to\R\\
	[f,\Sigma,\nu]&\mapsto \trace_{\thweb}(\rechol([f,\Sigma,\nu])).
\end{align*}
By Lemma \ref{lemma : symplectic leaves are exactly relative character varieties}, the quantity $\trace(\rho(\alpha))\trace(\rho(\gamma))$ is constant on the symplectic leaf $\SymLeaf_L$; denote this constant by $C_L$. The trace of the $\Theta$--web is thus given by $[\rho]\mapsto C_L - \trace(\rho(\delta))$, where $\delta = \alpha\gamma^{-1}$ is the figure eight curve from this section.  The following is then a direct corollary of Theorem \ref{thm : periodicity of figure 8 in general}.
\begin{corollary}\label{cor : trace of the theta web is periodic}
		Let $\thweb$ be the $\Theta$--web on a pair of pants $\pants$. Then for any $L\in\R^6_{>0}$, the trace function $\trace_{\thweb}\big|_{\SymLeaf_L}\colon\SymLeaf_L\to\R$ attains a unique maximum. Moreover, every orbit of the Hamiltonian flow of $\trace_{\thweb}\big|_{\SymLeaf_L}$ is periodic and there is a unique fixed point.
\end{corollary}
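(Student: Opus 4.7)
The plan is to reduce the statement directly to Theorem \ref{thm : periodicity of figure 8 in general} by observing that, on a fixed symplectic leaf, the trace function of the $\Theta$--web differs from the trace function of the figure eight curve only by an additive constant and a sign. First I would invoke the explicit formula $\trace_{\thweb}([\rho]) = \trace(\rho(\alpha))\trace(\rho(\gamma)) - \trace(\rho(\alpha\gamma^{-1}))$ recalled just before the corollary. By Lemma \ref{lemma : symplectic leaves are exactly relative character varieties}, the restriction map $\rechol\big|_{\SymLeaf_L}\colon\SymLeaf_L\to\CharVar_{3,\conjclass(L)}^+(\pants)$ lands inside a relative character variety on which both $\trace(\rho(\alpha))$ and $\trace(\rho(\gamma))$ are constant, since they depend only on the conjugacy class of the peripheral holonomies. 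Denoting their product by $C_L\in\R$, we obtain the identity
\[
\trace_{\thweb}\big|_{\SymLeaf_L} = C_L - \trace_{\figeight}\big|_{\SymLeaf_L},
\]
where $\figeight=\alpha\gamma^{-1}$.

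From this identity, everything follows. The function $\trace_{\thweb}\big|_{\SymLeaf_L}$ achieves its unique maximum precisely at the unique minimum of $\trace_{\figeight}\big|_{\SymLeaf_L}$, whose existence and uniqueness are guaranteed by Theorem \ref{thm : periodicity of figure 8 in general}. Moreover, since adding a constant does not affect the differential and multiplying the Hamiltonian by $-1$ only reverses the direction of its Hamiltonian flow, we have the vector field identity $\Hm_{\trace_{\thweb}}\big|_{\SymLeaf_L} = -\Hm_{\trace_{\figeight}}\big|_{\SymLeaf_L}$, so the two flows share the same orbits (with reversed parametrization) and the same fixed points. Periodicity of every orbit and uniqueness of the fixed point therefore transfer verbatim from Theorem \ref{thm : periodicity of figure 8 in general}.

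There is essentially no obstacle in this argument; the only point requiring care is ensuring that the constancy of $\trace(\rho(\alpha))$ and $\trace(\rho(\gamma))$ on $\SymLeaf_L$ is invoked with the correct interpretation. Strictly speaking, Lemma \ref{lemma : symplectic leaves are exactly relative character varieties} identifies $\SymLeaf_L$ with a relative character variety $\CharVar^+_{3,\conjclass(L)}(\pants)$, and the peripheral trace is a class function on $\PSL 3$, hence indeed constant on each conjugacy class $C_i\in\conjclass(L)$. This justifies treating $C_L$ as a genuine constant on the symplectic leaf and completes the deduction.
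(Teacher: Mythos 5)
Your proposal is correct and follows essentially the same route as the paper: the paper likewise uses the formula $\trace_{\thweb}=\trace_\alpha\trace_\gamma-\trace_{\alpha\gamma^{-1}}$, the constancy of $\trace_\alpha\trace_\gamma$ on a symplectic leaf via Lemma \ref{lemma : symplectic leaves are exactly relative character varieties}, and then deduces the corollary directly from Theorem \ref{thm : periodicity of figure 8 in general}. Your added remark that $\Hm_{\trace_{\thweb}}=-\Hm_{\trace_\figeight}$ on the leaf, so the flows share orbits and fixed points, makes explicit what the paper leaves implicit; no gap.
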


\subsection{The symmetrized trace}
In this section, we address Theorem \ref{thm intro : symmetrized trace}. The strategy for the proof is identical to that of Theorem \ref{thm : periodicity of figure 8 in general}. Since the computations are very similar to the ones done above, we delay the proof of Theorem \ref{thm intro : symmetrized trace}, Part \ref{thm intro : periodicity for symmetrized trace} to Appendix \ref{app : proof of periodicity for symmetrixed trace}. Here we prove Part \ref{thm intro : symmetrized trace fixed point} of Theorem \ref{thm intro : symmetrized trace}, which we restate now in more detail.

\begin{theorem}\label{thm : fixed points of symmetric trace}
	Let $\figeight = \alpha\gamma^{-1}$ be the figure eight curve. Let $F = (\ell_\alpha,\ell_\beta,\ell_\gamma)\in\R^3_{>0}$ define the length vector $L(F)\in\R^6_{>0}$ as in Equation \eqref{eq : Fuchsian locus lengths L(F)}. Let $\SymLeaf_{L(F)}$ be the symplectic leaf associated to $L(F)$. Then the unique fixed point of the Hamiltonian flow of the function $\trace_\figeight+\trace_{\figeight^{-1}}\big|_{\SymLeaf_F}$ is the unique hyperbolic structure in $\SymLeaf_{L(F)}$; which in coordinates is given by 
	\[
	\left(\sqrt\frac{\ell_\alpha\ell_\gamma}{\ell_\beta},\sqrt{\frac{\ell_\beta}{\ell_\alpha\ell_\gamma}},\sqrt{\frac{\ell_\alpha\ell_\beta}{\ell_\gamma}},\sqrt{\frac{\ell_\gamma}{\ell_\alpha\ell_\beta}},\sqrt{\frac{\ell_\beta\ell_\gamma}{\ell_\alpha}},\sqrt{\frac{\ell_\alpha}{\ell_\beta\ell_\gamma}},1,1\right).
	\] 
\end{theorem}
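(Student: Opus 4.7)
The plan is to combine the uniqueness of the critical point from Theorem \ref{thm : app symmetric trace} with the observation that the hyperbolic structure in $\SymLeaf_{L(F)}$ is itself a critical point of $\trace_\figeight+\trace_{\figeight^{-1}}|_{\SymLeaf_{L(F)}}$. Since the unique fixed point of the Hamiltonian flow equals the unique critical point of the Hamiltonian on the leaf, exhibiting any critical point identifies it.

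The proposed argument is by symmetry. The contragredient involution $g\mapsto (g^T)^{-1}$ on $\PSL{3}$ induces an involution $\iota$ on $\CharVar_3(\pants)$ satisfying $\iota^*\trace_c=\trace_{c^{-1}}$ for every $c\in\pi_1(\pants)$, so $\trace_\figeight+\trace_{\figeight^{-1}}$ is $\iota$-invariant. The involution reciprocates eigenvalues; since each peripheral pair of $L(F)=(\ell_\alpha,1/\ell_\alpha,\ell_\beta,1/\ell_\beta,\ell_\gamma,1/\ell_\gamma)$ is already closed under reciprocation, $\iota$ preserves the peripheral conjugacy classes and hence the relative character variety $\CharVar_{3,\conjclass(L(F))}^+(\pants)$. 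Its fixed representations are precisely those preserving a non-degenerate bilinear form on $\R^3$; in the positive setting this form must have signature $(2,1)$, since a definite form would force a compact image and contradict the discreteness and faithfulness of $\rho$. Thus $\rho$ factors through the irreducible embedding $\PSL{2}\to\PSL{3}$, making it hyperbolic, and by uniqueness of the hyperbolic structure with prescribed peripheral lengths the $\iota$-fixed locus is the single point $p$ recorded by Equation \eqref{eq : coordinates for fuchsian locus}. Transferring this to $\SymLeaf_{L(F)}$ via Lemma \ref{lemma : symplectic leaves are exactly relative character varieties}, the differential at $p$ of the resulting involution of the $2$-manifold $\SymLeaf_{L(F)}$ has no $+1$-eigenvalues, hence equals $-\mathrm{id}$. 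Differentiating $\iota$-invariance at $p$ then yields $d(\trace_\figeight+\trace_{\figeight^{-1}})|_p = -d(\trace_\figeight+\trace_{\figeight^{-1}})|_p$, forcing the gradient to vanish.

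The main obstacle is that $\iota$ is defined naturally on the unframed side; rigorously lifting it through the framing and verifying its effect on Fock--Goncharov coordinates requires analyzing how the contragredient exchanges a flag $(p,\ell)$ with its dual, and checking that the resulting involution of $\widehat{\DefSpace}(\pants)$ fixes the point with Fuchsian coordinates. A fallback in the paper's computational style is to extend Lemma \ref{lemma : trace figure eight in coordinates} to $\figeight^{-1}$ by inverting the matrix product of Section \ref{sec : holonomies pair of pants description}, specialize to $L(F)$ via Lemma \ref{cor : parametrization of symplectic leaf as level sets}, and check with Mathematica that both partials of $\trace_\figeight+\trace_{\figeight^{-1}}$ vanish at $(\sigma_1,\tau_1)=(\sqrt{\ell_\alpha\ell_\gamma/\ell_\beta},1)$.
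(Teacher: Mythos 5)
Your proposal is correct in outline, and its logical skeleton coincides with the paper's: by Theorem \ref{thm : app symmetric trace} the function $\trace_\figeight+\trace_{\figeight^{-1}}\big|_{\SymLeaf_{L(F)}}$ has a unique critical point, which (by nondegeneracy of the symplectic form, or directly from Equation \eqref{eq : Hamiltonian vector field in symplectic leaf in coordinates sigma1 tau1}) is the unique fixed point of its Hamiltonian flow, so it suffices to exhibit the hyperbolic structure of Equation \eqref{eq : coordinates for fuchsian locus} as a critical point. Your fallback is literally the paper's proof: it writes out the Hamiltonian vector field of the symmetrized trace on $\SymLeaf_{L(F)}$ in the coordinates $(\sigma_1,\tau_1)$ of Lemma \ref{cor : parametrization of symplectic leaf as level sets} and checks with Mathematica that both components vanish at $\bigl(\sqrt{\ell_\alpha\ell_\gamma/\ell_\beta},\,1\bigr)$. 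Your primary route via the contragredient involution $\iota$ is genuinely different and more conceptual: it explains \emph{why} the fixed point must be the Fuchsian point, and it would apply unchanged to any symmetrized trace $\trace_c+\trace_{c^{-1}}$ on a leaf whose boundary eigenvalue data is inversion-symmetric, whereas the paper's argument is a verification tied to this specific curve and does not by itself explain the coincidence.

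If the symmetry argument is to replace the computation, a few steps beyond the framing issue you flag need to be supplied. First, that $\iota$ preserves the positive locus (via duality of properly convex domains and of flags) and preserves the relative character variety $\CharVar^+_{3,\conjclass(L(F))}(\pants)$, using that on this leaf the eigenvalue data of each peripheral holonomy is inversion-symmetric. Second, your passage from a $\iota$-fixed class to an invariant nondegenerate bilinear form requires irreducibility of positive representations of $\pi_1(\pants)$; only then does $\rho\cong\rho^{-T}$ give a well-defined invariant pairing, after which skew forms are excluded in odd dimension and definite forms by discreteness and faithfulness, so the fixed locus is exactly the unique hyperbolic structure. Third, the step ``isolated fixed point implies $d\iota_p=-\identity$'' rests on the linearization fact that the fixed set of a smooth involution is a submanifold whose tangent space at $p$ is the $+1$--eigenspace of $d\iota_p$. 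All of these are fillable, but none is immediate, which is presumably why the paper opts for the direct coordinate computation.
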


\begin{proof}
	By Part \ref{thm intro : periodicity for symmetrized trace} of Theorem \ref{thm intro : symmetrized trace}, we know that the Hamiltonian flow of $\trace_\figeight+\trace_{\figeight^{-1}}\big|_{\SymLeaf_{L(F)}}$ has a unique fixed point. Hence, we only need to show that at the hyperbolic structure, the Hamiltonian vector field is zero. Indeed, using Section 5 of the Mathematica code, we compute that the Hamiltonian vector field is given by
	\begin{gather*}
		\dot\sigma_1 = -\frac{1}{\sigma _1 \tau _1 \ell _{\alpha } \ell _{\beta } \ell _{\gamma }}\cdot\\
		2\big(\sigma _1^2 \ell _{\beta } \left(\ell _{\gamma } \left(\sigma _1 \left(\tau _1^2 \ell
   _{\beta }-1\right)+\tau _1^2-1\right)+\sigma _1 \left(\left(\sigma _1 \left(\tau
   _1^2-1\right)-1\right) \ell _{\beta }+\tau _1^2\right)\right)+\\
   \ell _{\alpha }^2 \ell _{\gamma } \left(\ell _{\gamma } \left(\sigma _1 \left(\tau
   _1^2 \ell _{\beta }-1\right)+\tau _1^2-1\right)+\sigma _1 \left(\left(\sigma _1
   \left(\tau _1^2-1\right)-1\right) \ell _{\beta }+\tau _1^2\right)\right)+\\
   \sigma_1\ell_{\alpha}\big(\ell _{\gamma } \left(\sigma _1^2 \ell _{\beta } \left(\tau _1^2-\ell _{\beta
   }\right)+\sigma _1 \left(\tau _1^2-1\right) \left(\ell _{\beta }^2+1\right)+\tau _1^2
   \ell _{\beta }-1\right)+\\
   \ell _{\gamma }^2 \left(\tau _1^2+\left(\sigma _1 \left(\tau _1^2-1\right)-1\right) \ell
   _{\beta }\right)+\sigma _1 \ell _{\beta } \left(\tau _1^2+\sigma _1 \left(\tau _1^2
   \ell _{\beta }-1\right)-1\right)\big)\big),
	\end{gather*}
	and
	\begin{gather*}
		\dot\tau = -\frac{2 \left(\tau _1+1\right)}{\sigma _1^2 \ell _{\alpha } \ell _{\beta } \ell _{\gamma
   }}\big(\sigma _1^3 \left(-\ell _{\beta }\right) \left(\tau _1+\tau _1 \ell _{\beta } \ell
   _{\gamma }+2 \sigma _1 \left(\tau _1+1\right) \ell _{\beta }+\ell _{\beta }+\ell
   _{\gamma }\right)+\\
   \sigma _1 \ell _{\alpha } \left(\ell _{\gamma }-\sigma _1^2 \ell _{\beta }\right)
   \left(\ell _{\beta } \left(\tau _1+\ell _{\gamma }\right)+\tau _1 \ell _{\gamma
   }+1\right)+\\
   \ell _{\alpha }^2 \ell _{\gamma } \left(\ell _{\gamma } \left(\sigma _1+2 \tau _1+\sigma
   _1 \tau _1 \ell _{\beta }+2\right)+\sigma _1 \left(\tau _1+\ell _{\beta
   }\right)\right)\big).
	\end{gather*}
	A computation in Mathematica then shows that at the values
	\[
	(\sigma_1,\tau_1) = \left(\sqrt{\frac{\ell_\alpha\ell_\gamma}{\ell_\beta}},1\right)
	\]
	make the two above expressions zero.
\end{proof}

\section{The unipotent locus}\label{sec : unipotent locus}
In this section, we focus on the symplectic leaf of $\widehat\DefSpace(\pants)$ corresponding to the convex projective structures where all boundaries are cuspidal.\\

By Lemma \ref{cor : parametrization of symplectic leaf as level sets}, we see that the unipotent locus is parameterized as
\[
\unipotentLocus = \left\{\left(\sigma_1,\frac{1}{\sigma_1},\sigma_1,\frac{1}{\sigma_1},\sigma_1,\frac{1}{\sigma_1},\tau_1,\frac{1}{\tau_1}\right)\in\R^8_{>0}\st\sigma_1,\tau_1>0\right\}.
\]

The expressions for traces of curves simplify significantly in this case, and allow us to prove similar results to Theorem \ref{thm : periodicity of figure 8 in general} for a larger class of self--intersecting curves. Namely, we prove Theorem \ref{thm : intro commutator and k intersections}, which is a version of Theorem \ref{thm : periodicity of figure 8 in general} for the commutator $[\alpha,\gamma]$ and the curve $\alpha^k\gamma^{-1}$ for $k\geq 2$, when restricted to the unipotent locus. Moreover, we find the fixed points of the respective Hamiltonian flows.\\

As another application of the simplicity of the expressions, we find another way of writing the Hamiltonian flows $\HmFlow_\hexagon$ and $\HmFlow_\eruption$, which we do in Section \ref{sec : eruption and hexagon conjugating matrices}.

\subsection{The commutator}\label{sec : commutator unipotent locus} The first self--intersecting curve we consider apart from the figure eight curve, is the commutator shown in Figure \ref{fig : commutator}.\\

Following the same proof of Theorem \ref{thm Intro : main theorem} in Section \ref{sec : trace of figure eight curve}, we show the following.
\begin{theorem}\label{thm : commutator periodicity and unique fixed point}
	The trace function $\trace_{[\alpha,\gamma]}\big|_{\unipotentLocus}\colon\unipotentLocus\to\R$ attains a unique minimum. Moreover, every orbit of the  Hamiltonian flow of $\trace_{[\alpha,\gamma]}\big|_{\unipotentLocus}$ is periodic and there is a unique fixed point.
\end{theorem}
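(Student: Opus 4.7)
The plan is to reproduce, verbatim in strategy, the four–step argument used for Theorem \ref{thm : periodicity of figure 8 in general}, now with $\figeight$ replaced by the commutator $[\alpha,\gamma]$ and with the symplectic leaf specialized to the unipotent locus $\unipotentLocus$. Thus I would first record an explicit formula for $\trace_{[\alpha,\gamma]}\big|_{\unipotentLocus}$ as a rational function of the two free coordinates $(\sigma_1,\tau_1)$. Concretely, on $\unipotentLocus$ the matrices $\rho(\alpha)$ and $\rho(\gamma)$ from Section \ref{sec : holonomies pair of pants description} simplify drastically (all Casimirs equal to $1$), so their inverses and the product $\rho(\alpha)\rho(\gamma)\rho(\alpha)^{-1}\rho(\gamma)^{-1}$ become tractable in Mathematica (Sections 1 and 4 of the code, exactly as in Lemma \ref{lemma : trace figure eight in coordinates}). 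This gives a positive rational function $\Phi(\sigma_1,\tau_1)$ on $\R^2_{>0}\cong\unipotentLocus$.

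Second, I would establish properness of $\Phi$. The plan here is the one used for Proposition \ref{prop : trace of figure 8 is proper}: because every monomial in the numerator has positive coefficient (after expansion) and every coordinate is positive, it suffices to exhibit, for each of the five divergent regimes $(\sigma_1,\tau_1)\to(\infty,\infty),(\infty,0),(0,\infty),(x,0),(0,y)$, a single monomial in $\Phi$ that tends to $+\infty$. This is a finite check. Positivity of $\Phi$ then gives existence of a minimum.

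Third — and this is the step I expect to be the principal obstacle — I would prove strict convexity of $\Phi$ along any mixed flow $\Psi^t_a$ of Definition \ref{def : mixed flow}. Following the proof of Theorem \ref{thm : convexity of the trace function along mixed flows}, I would compute
\[
\frac{\partial^2}{\partial t^2}\,\Phi\bigl(\Psi^t_a(q)\bigr)
\]
symbolically, using Section 6 of the Mathematica code with \texttt{testf} set to $\trace_{[\alpha,\gamma]}$, and verify that every coefficient in the resulting expression is a sum of products in which $a$ appears only either as an exponent of $e$ or as a square, and $t$ only as an exponent of $e$. The delicate issue is that the commutator produces a genuinely more intricate expression than $\trace_\figeight$, so some terms that were trivially positive in the figure–eight case may now need to be grouped into manifestly positive blocks (for instance by pairing a $(a+c)^2$–type term against cross terms). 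This re‑bracketing is the only place where the argument is not purely mechanical; I would therefore allow myself to work leaf by leaf only on $\unipotentLocus$, where the drastic simplification of the holonomies is what makes the positivity of each block visible.

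Once strict convexity along mixed flows is in hand, the conclusion is automatic and identical to the argument in Section \ref{sec : proof of main thm}. Strict convexity plus Lemma \ref{lemma : any two points are connected by a mixed flow} forces $\Phi$ to have a unique critical point, which by properness and positivity is the global minimum and hence the unique fixed point of the Hamiltonian flow. Any other level set is a regular, compact (by properness) $1$–submanifold of the $2$–dimensional leaf $\unipotentLocus$, hence a topological circle; the Hamiltonian vector field is nowhere vanishing on it, so the orbit fills the circle and is periodic. This completes the proof modulo the computer–assisted verification of properness and convexity, both of which are encoded in the accompanying Mathematica code.
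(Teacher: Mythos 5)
Your proposal reproduces the paper's own proof: an explicit Mathematica formula for $\trace_{[\alpha,\gamma]}\big|_{\unipotentLocus}$, properness by isolating positive monomials that blow up in each divergent regime, strict convexity along the mixed flows of Definition \ref{def : mixed flow} via symbolic second derivatives, and then the identical topological argument used for Theorem \ref{thm : periodicity of figure 8 in general}. Your anticipation of a non-trivial regrouping in the convexity check is confirmed by the paper's Proposition \ref{prop : commutator strictly convex}, whose Mathematica output contains the block $\sigma_1 + (\sigma_1-1)^2$ --- a grouping of precisely the kind you predicted, though it involves $\sigma_1$ rather than the parameter $a$.
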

Following the same structure as that of Section \ref{sec : trace of figure eight curve}, we begin by using Mathematica to compute the expression for the trace of the commutator on the unipotent locus.\\

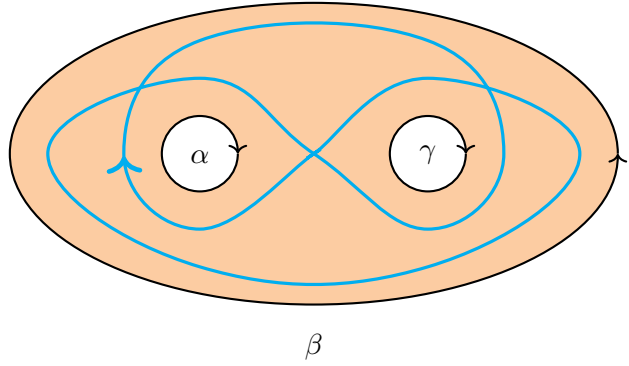
\begin{figure}[htb]
	\centering
	\begin{tikzpicture}
    \fill[Apricot!70] (0,0) ellipse (4cm and 2cm);
    
    % Draw two small circles at the foci of the ellipse
    \fill[white] (1.5,0) circle (0.5cm);
    \fill[white] (-1.5,0) circle (0.5cm);
    
    %%%Arrows around circles
    \draw[->,thick] (2,0) arc[start angle = 360, end angle = 0, radius = 0.5];
    \draw[->,thick] (-1,0) arc[start angle = 360, end angle = 0, radius = 0.5];
    \draw[->,thick] (4,0) arc [start angle = 0,end angle = 360, x radius = 4cm, y radius = 2cm];
    
    % Naming the loops
    \node[label = $\gamma$] at (1.5,-0.4) {};
    \node[label = $\alpha$] at (-1.5,-0.4) {};
    \node[label = $\beta$] at (0,-3) {};
    
    %% Figure 8 curve
    \draw[very thick,cyan,] plot [smooth cycle, tension = 0.75] coordinates {(0,0) (-1.5,-1) (-2.5,0) (-1.5,1.5) (1.5,1.5) (2.5,0) (1.5,-1) (0,0) (-1.5,1) (-3.5,0) (-1.5,-1.5) (1.5,-1.5) (3.5,0) (1.5,1) (0,0)}  [arrow inside={opt={scale=1.5}}{0.05}];
	\end{tikzpicture}
\caption{The commutator $[\alpha,\gamma]$.}
\label{fig : commutator}
\end{figure}

\begin{lemma}
	In coordinates, we have that
	\begin{gather*}
		\trace_{[\alpha,\gamma]}\big|_{\unipotentLocus}\colon\unipotentLocus\to\R\\
		(\sigma_1,\tau_1)\mapsto 
		\frac{1}{\sigma_1^3\tau_1}\big(
		\sigma _1^6 \left(\tau _1+1\right){}^3+3 \sigma _1^5 \left(\tau
   _1+1\right){}^2 \left(2 \tau _1+1\right)+3 \sigma _1^4 \left(\tau
   _1+1\right){}^2 \left(5 \tau _1+1\right)\\
   +\sigma _1^3 \left(20 \tau
   _1^3+42 \tau _1^2+27 \tau _1+2\right)+3 \sigma _1^2 \left(\tau
   _1+1\right){}^2 \left(5 \tau _1+1\right)\\+3 \sigma _1 \left(\tau
   _1+1\right){}^2 \left(2 \tau _1+1\right)+\left(\tau
   _1+1\right){}^3\big)
	\end{gather*}
\end{lemma}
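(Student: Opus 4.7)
The plan is to compute the trace of the commutator directly using the explicit matrix formulas for $\rechol([f,\Sigma,\nu])(\alpha)$ and $\rechol([f,\Sigma,\nu])(\gamma)$ recalled in Section \ref{sec : holonomies pair of pants description}. First I would specialize those general formulas to the unipotent locus. By Definition \ref{def : unipotent locus} and Lemma \ref{cor : parametrization of symplectic leaf as level sets}, a point of $\unipotentLocus$ corresponds to the length vector $L=(1,\dots,1)$, which forces the substitutions $\sigma_{2}=\sigma_{4}=\sigma_{6}=1/\sigma_{1}$, $\sigma_{3}=\sigma_{5}=\sigma_{1}$, and $\tau_{2}=1/\tau_{1}$. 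After plugging these into the formulas of Section \ref{sec : holonomies pair of pants description}, both $\rho(\alpha)$ and $\rho(\gamma)$ become matrices whose entries are simple Laurent polynomials in $\sigma_{1}$ and $\tau_{1}$ (in particular $\rho(\alpha)$ is upper triangular and $\rho(\gamma)$ is lower triangular).

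Next I would form the commutator $\rho([\alpha,\gamma]) = \rho(\alpha)\rho(\gamma)\rho(\alpha)^{-1}\rho(\gamma)^{-1}$. The inverses can be obtained via the adjugate formula, since the representative matrices chosen in Section \ref{sec : holonomies pair of pants description} lie in $\SL{3}$; the triangular shapes of $\rho(\alpha)$ and $\rho(\gamma)$ make these inverses straightforward to write down. Multiplying the four $3\times 3$ matrices out and summing the diagonal entries gives a rational function $\trace_{[\alpha,\gamma]}\big|_{\unipotentLocus}(\sigma_{1},\tau_{1})$ in $\sigma_{1},\tau_{1}$. To check it matches the statement, one multiplies through by $\sigma_{1}^{3}\tau_{1}$ and regroups the resulting Laurent polynomial as a polynomial of degree six in $\sigma_{1}$ with coefficients in $\mathbb{Z}[\tau_{1}]$, then factors each coefficient.

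The main obstacle is purely bookkeeping: even though the individual matrices simplify nicely on $\unipotentLocus$, the product of four $3\times 3$ matrices produces many terms whose hand--simplification is painful. This is precisely the computation that the author has delegated to Section 4 of the accompanying Mathematica code (where the trace function is defined as \texttt{traceCommutator} by evaluating \texttt{traceFunction} at the word $\alpha\gamma\alpha^{-1}\gamma^{-1}$ with the length vector set to $(1,1,1,1,1,1)$). As a sanity check on the final output one observes that the numerator is palindromic in $\sigma_{1}$: the coefficient of $\sigma_{1}^{6-k}$ equals the coefficient of $\sigma_{1}^{k}$ for $k=0,1,2$. Consequently the stated formula is invariant under the involution $\sigma_{1}\mapsto 1/\sigma_{1}$ (and $\tau_{1}$ fixed), which reflects the symmetry of the unipotent locus that swaps the Fock--Goncharov framings attached to the punctures $p_{\alpha}$ and $p_{\gamma}$ while preserving the conjugacy class of $[\alpha,\gamma]$.
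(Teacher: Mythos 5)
Your proposal is correct and follows essentially the same route as the paper: specialize the explicit holonomy matrices of Section \ref{sec : holonomies pair of pants description} to the unipotent locus via Lemma \ref{cor : parametrization of symplectic leaf as level sets}, form $\rho(\alpha)\rho(\gamma)\rho(\alpha)^{-1}\rho(\gamma)^{-1}$, and take the trace, with the algebraic bookkeeping delegated to Section 4 of the Mathematica code exactly as the paper does. The palindromicity check in $\sigma_1$ is a nice extra sanity test not present in the paper, though its interpretation as a framing-swapping symmetry is offered without justification and is not needed for the argument.
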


This is computed in Section 4 of the Mathematica code by displaying the function \texttt{traceCommutator} with length vector \texttt{1,1,1,1,1,1}. With this expression, we move on to show properness.
\begin{proposition}
	The function $\trace_{[\alpha,\gamma]}\big|_{\unipotentLocus}\colon\unipotentLocus\to\R$ is proper. In particular, it realizes a minimum in $\unipotentLocus$.
\end{proposition}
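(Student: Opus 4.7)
The plan is to mirror the proof of Proposition \ref{prop : trace of figure 8 is proper} exactly. Using the parameterization of $\unipotentLocus$ given in Lemma \ref{cor : parametrization of symplectic leaf as level sets}, the function $\trace_{[\alpha,\gamma]}\big|_{\unipotentLocus}$ is identified with the rational function $f\colon \R^2_{>0}\to\R$ written in the preceding lemma. Since every monomial appearing in the numerator has a positive coefficient and all variables are positive, it suffices to exhibit, for each class of sequence in $\R^2_{>0}$ escaping to the boundary, at least one summand of $f$ that diverges to $+\infty$.

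I would then enumerate the five boundary behaviors handled in the figure eight case: $(\sigma_{1,n},\tau_{1,n})$ tending to $(\infty,\infty)$, $(0,\infty)$, $(\infty,0)$, $(x,0)$, and $(0,y)$ with $x,y\geq 0$. Dividing through by $\sigma_1^3\tau_1$, good candidate monomials are respectively $\sigma_1^3\tau_1^2$ (from $\sigma_1^6\tau_1^3$), $\tau_1^2/\sigma_1^3$ (from $\tau_1^3$), $\sigma_1^3/\tau_1$ (from $\sigma_1^6$), again $\sigma_1^3/\tau_1$ (or the constant term $1/(\sigma_1^3\tau_1)$ which already blows up), and $1/(\sigma_1^3\tau_1)$. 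In every case the chosen summand tends to $+\infty$, establishing properness. Positivity of $\trace_{[\alpha,\gamma]}\big|_\unipotentLocus$ is clear from the formula (all coefficients and monomials are positive), so a proper positive continuous function on the connected manifold $\unipotentLocus$ attains a minimum.

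The only subtlety — really not an obstacle — is to be careful that the identified summand remains dominant with the correct sign; but because \emph{every} monomial in the rational expression has a positive coefficient and positivity of coordinates is preserved, no cancellation can occur, and the argument reduces to inspecting a single leading monomial per regime. I would therefore present the proof as a short verification consisting of five short lines, one per boundary regime, in direct analogy with the case of $\delta=\alpha\gamma^{-1}$.
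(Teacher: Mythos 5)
Your proposal is correct and follows essentially the same route as the paper: identify the explicit rational expression for $\trace_{[\alpha,\gamma]}\big|_{\unipotentLocus}$ in the coordinates $(\sigma_1,\tau_1)$, use positivity of all monomials to rule out cancellation, and exhibit one divergent summand for each of the five boundary regimes, with positivity then yielding the minimum. The only (immaterial) difference is the choice of witness term in some regimes, e.g.\ the paper uses $20\sigma_1^3\tau_1^3/(\sigma_1^3\tau_1)$ for $(0,\infty)$ where you use $(\tau_1+1)^3/(\sigma_1^3\tau_1)$, and it uses $1/(\sigma_1^3\tau_1)$ for the $(x,0)$, $(0,y)$ cases, which you also correctly fall back on.
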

\begin{proof}
	Similarly to the proof of Proposition \ref{prop : trace of figure 8 is proper}, let $(\sigma_{1,n},\tau_{1,n})_{n\in\N}$ be a sequence such that as $n$ goes to $\infty$, $(\sigma_{1,n},\tau_{1,n})$ goes to a tuple in $\{(\infty,\infty), (0,\infty),(\infty,0),(x,0),(0,y)\st x,y\geq 0\}$. We need to show that for any of these options, $\trace_{[\alpha,\gamma]}(\sigma_{1,n},\tau_{1.n})\to\infty$ as $n\to\infty$. Once again, since the function is positive and all signs in front of the monomials are positive, we only need to find single terms in the expression for the trace going to $\infty$:
	\begin{enumerate}
		\item Assume $(\sigma_{1,n},\tau_{1,n})\to(\infty,\infty)$, then the term 
		\[
		\frac{\sigma_{1,n}^6\tau_{1,n}^3}{\sigma^3_{1,n}\tau_{1,n}}\xrightarrow{n\to\infty}\infty.
		\]
		\item Assume $(\sigma_{1,n},\tau_{1,n})\to(0,\infty)$, then the term
		\[
		\frac{20\sigma_{1,n}^3\tau_{1,n}^3}{\sigma_{1,n}^3\tau_{1,n}}\xrightarrow{n\to\infty}\infty. 
		\]
		\item Assume $(\sigma_{1,n},\tau_{1,n})\to(\infty,0)$, then the term
		\[
		\frac{\sigma_{1,n}^6}{\sigma_{1,n}^3\tau_{1,n}}\xrightarrow{n\to\infty}\infty. 
		\]
		\item Assume $(\sigma_{1,n},\tau_{1,n})\to(x,0)$ or $(0,y)$ for $x,y\geq 0$, then the term
		\[
		\frac{1}{\sigma_{1,n}^3\tau_{1,n}}\xrightarrow{n\to\infty}\infty.
		\]
	\end{enumerate}
	This shows that the function $\trace_{[\alpha,\gamma]}\big|_{\unipotentLocus}\colon\unipotentLocus\to\R$ is proper. Since the function is also strictly positive, it attains a minimum.
\end{proof}

The next step in the proof of Theorem \ref{thm : commutator periodicity and unique fixed point} is to show convexity along any mixed flow.\\

\begin{proposition}\label{prop : commutator strictly convex}
	The function $\trace_{[\alpha,\gamma]}\big|_{\unipotentLocus}\colon\unipotentLocus\to\R$ is strictly convex along any mixed flow.
\end{proposition}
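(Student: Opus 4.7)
The plan is to follow the exact template used for $\trace_\figeight$ in Theorem \ref{thm : convexity of the trace function along mixed flows}, since the unipotent locus simplifies the trace formula considerably. Let $q = (\sigma_1,\tau_1) \in \unipotentLocus$ and fix $a\in\R$. By Definition \ref{def : mixed flow} there are two types of mixed flows to handle: $\Psi^t_a = \HmFlow^{at}_\eruption\circ\HmFlow^t_\hexagon$ and $\Psi^t_a = \HmFlow^t_\eruption\circ\HmFlow^{at}_\hexagon$. Using the explicit description of the eruption and hexagon flows on the coordinates $(\sigma_1,\tau_1)$ coming from Proposition \ref{prop : Hamiltonian functions and vector fields}, each mixed flow acts on $(\sigma_1,\tau_1)$ by multiplication by an exponential $e^{\lambda t}$ in each coordinate, where $\lambda$ depends linearly on $a$.

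The first step is to substitute this into the closed-form expression for $\trace_{[\alpha,\gamma]}\big|_\unipotentLocus$ from the preceding lemma, yielding a function of $t$ that is a Laurent polynomial in $e^t$ with coefficients that are polynomials in $\sigma_1$ and $\tau_1$ with positive integer coefficients. The second step is to differentiate this function twice in $t$. Because each monomial is of the form $c\,\sigma_1^p\tau_1^q e^{\mu t}$ with $c,\sigma_1,\tau_1 > 0$, its second derivative is $c\mu^2\sigma_1^p\tau_1^q e^{\mu t}$, which is non-negative; and the exponents $\mu$ appearing across the monomials are not all zero on $\unipotentLocus$, so strict positivity follows from the fact that at least one monomial has a nonzero $\mu$. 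This is exactly the mechanism exploited in the proof of Theorem \ref{thm : convexity of the trace function along mixed flows}: after expansion, $t$ appears only inside exponentials while the factor $a$ appears either inside an exponential or squared as $(a-c)^2$, so positivity is visible term by term.

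Concretely, the computation would be carried out in Section 6 of the Mathematica notebook by setting the test function to \texttt{traceCommutator} and evaluating \texttt{SecondDerFlow} on \texttt{testfMix1} and \texttt{testfMix2}. One then reads off that the output is a sum, each summand of which factors as a non-negative multiple of a positive quantity, with at least one strictly positive summand for every value of $a\in\R$ (both when $a=0$ and when $a\to\infty$, distinct terms dominate, so the total remains strictly positive). Strict convexity of $t\mapsto \trace_{[\alpha,\gamma]}\big|_\unipotentLocus(\Psi^t_a(q))$ at every $q$ and every $a$ then follows, completing the proof.

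The main obstacle is bookkeeping rather than mathematics: the word $[\alpha,\gamma] = \alpha\gamma\alpha^{-1}\gamma^{-1}$ is longer than $\delta = \alpha\gamma^{-1}$, so the trace polynomial has more monomials and the second derivative has more terms than in Theorem \ref{thm : convexity of the trace function along mixed flows}. The delicate point is to verify that no cancellations ruin positivity, i.e.\ that every monomial of $\trace_{[\alpha,\gamma]}$ in $(\sigma_1,\tau_1)$ appears with a positive coefficient (this is visible from the explicit formula in the preceding lemma), and that the exponents $\mu$ occurring in the second derivative never all vanish simultaneously. Both facts are clear from the symbolic output, so the argument closes exactly as in the figure eight case.
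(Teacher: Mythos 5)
Your proposal follows the same template as the paper's proof: compute, via Section 6 of the Mathematica notebook, the second derivative of $\trace_{[\alpha,\gamma]}\big|_\unipotentLocus$ along each mixed flow, and observe that the resulting expression is manifestly positive term by term. The extra scaffolding you supply — that $\trace_{[\alpha,\gamma]}\big|_\unipotentLocus$ is a Laurent polynomial in $(\sigma_1,\tau_1)$ with strictly positive coefficients, and that any mixed flow $\Psi^t_a$ acts by $(\sigma_1,\tau_1)\mapsto(e^{\lambda_1 t}\sigma_1,e^{\lambda_2 t}\tau_1)$ with $(\lambda_1,\lambda_2)\in\{(1,a),(a,1)\}$, so that each monomial $c\,\sigma_1^p\tau_1^q$ contributes $c\,(p\lambda_1+q\lambda_2)^2\sigma_1^p\tau_1^q e^{(p\lambda_1+q\lambda_2)t}\ge 0$ with no possible cancellation — is exactly the mechanism the paper reads off from the symbolic output, and making it explicit as an a priori argument is a genuine improvement: it explains \emph{why} the Mathematica output must look the way it does, instead of inspecting the output after the fact.

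One small point to tighten: the justification of strictness (``the exponents $\mu$ are not all zero'') should be made uniform in $a$. Rather than sampling $a=0$ and $a\to\infty$, observe directly that the exponent vectors $(p,q)$ occurring in $\trace_{[\alpha,\gamma]}\big|_\unipotentLocus$ include two that are not proportional, e.g.\ $(3,2)$ and $(3,1)$ (from the $\sigma_1^6(\tau_1+1)^3/(\sigma_1^3\tau_1)$ piece). Since $\mu_{p,q}=p\lambda_1+q\lambda_2$ is affine in $a$, the simultaneous vanishing of $\mu_{3,2}$ and $\mu_{3,1}$ would force $a$ to take two different values, which is impossible. Hence for every $a\in\R$ at least one summand of the second derivative is strictly positive, and since all summands are non-negative the total is strictly positive. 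With this observation your argument is complete and self-contained, independent of the particular simplified form that the Mathematica session happens to produce.
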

\begin{proof}
	Consider first the mixed flow $\Psi^t_a=\HmFlow^{at}_\hexagon\circ\HmFlow_\eruption^t$. In Section 6 of the Mathematica code, set \texttt{testf} equal to \texttt{traceCommutator} using the length vector \texttt{1,1,1,1,1,1}. Then inputting the function \texttt{testfMix1} to \texttt{SecondDerFlow}, we obtain that
	\begin{gather*}
		\frac{\partial^2}{\partial t^2}(\trace_{[\alpha,\gamma]}(\Psi^t_a(\sigma_1,\tau_1))) = \frac{e^{-at}(1+\sigma_1)^4a^2}{\sigma_1^3\tau_1}\cdot\\
		\bigg(3e^{2at}\tau_1^2+4e^{3at}\tau_1^3+3\sigma_1^2\tau_1^2e^{2at}+4\sigma_1^2\tau_1^3e^{3at} + 3\sigma_1\tau_1^2e^{2at}+8\sigma_1\tau_1^3e^{3at}+\sigma_1+(\sigma_1-1)^2\bigg).
	\end{gather*}
	We observe that for any $a\in\R$ and $t\in\R$, and any $\sigma_1,\tau_1$, the above expression is positive. Hence $t\mapsto \trace_{[\alpha,\gamma]}\big|_{\unipotentLocus}(\Psi^t_a(\sigma_1,\tau_1))$ is a strictly convex function.\\
	
	Now consider the other mixed flow $\Psi^t_a=\HmFlow^{at}_\eruption\circ\HmFlow_\hexagon^t$. In Section 6 of the Mathematica code, set \texttt{testf} equal to \texttt{traceCommutator} using the length vector \texttt{1,1,1,1,1,1}. Then inputting the function \texttt{testfMix2} to \texttt{SecondDerFlow}, we obtain that
	\begin{gather*}
		\frac{\partial^2}{\partial t^2}(\trace_{[\alpha,\gamma]}(\Psi^t_a(\sigma_1,\tau_1))) = 21 a^2 \sigma _1 e^{a t}+\frac{21 a^2 e^{-a t}}{\sigma _1}+\frac{48 a^2
   e^{-2 a t}}{\sigma _1^2}+\frac{27 a^2 e^{-3 a t}}{\sigma _1^3}\\
   +\frac{3 a^2 e^{-a t}}{\sigma _1 \tau _1}+\frac{12 a^2 e^{-2 a t}}{\sigma
   _1^2 \tau _1}+\frac{9 a^2 e^{-3 a t}}{\sigma _1^3 \tau _1}+27 a^2 \sigma
   _1^3 e^{3 a t}+48 a^2 \sigma _1^2 e^{2 a t}\\
   +\frac{9 a^2 \sigma _1^3 e^{3 a t}}{\tau _1}+\frac{12 a^2 \sigma _1^2 e^{2 a
   t}}{\tau _1}+\frac{3 a^2 \sigma _1 e^{a t}}{\tau _1}+\frac{27 a^2 \tau _1
   e^{-3 a t}}{\sigma _1^3}\\
   +60 a^2 \sigma _1^2 \tau _1 e^{2 a t}+33 a^2 \sigma _1 \tau _1 e^{a
   t}+\frac{33 a^2 \tau _1 e^{-a t}}{\sigma _1}+\frac{60 a^2 \tau _1 e^{-2 a
   t}}{\sigma _1^2}\\
   +27 a^2 \sigma _1^3 \tau _1 e^{3 a t}+\frac{15 a^2 \tau _1^2 e^{-a t}}{\sigma
   _1}+\frac{24 a^2 \tau _1^2 e^{-2 a t}}{\sigma _1^2}+\frac{9 a^2 \tau _1^2
   e^{-3 a t}}{\sigma _1^3}\\
   +9 a^2 \sigma _1^3 \tau _1^2 e^{3 a t}+24 a^2 \sigma _1^2 \tau _1^2 e^{2 a
   t}+15 a^2 \sigma _1 \tau _1^2 e^{a t}
		\end{gather*}
		We observe again that for any $a\in\R$ and $t\in\R$, and any $\sigma_1,\tau_1$, the above expression is positive. Hence $t\mapsto \trace_{[\alpha,\gamma]}\big|_{\unipotentLocus}(\Psi^t_a(\sigma_1,\tau_1))$ is a strictly convex function.
\end{proof}

The proof of Theorem \ref{thm : commutator periodicity and unique fixed point} then is identical to the proof of Theorem \ref{thm : periodicity of figure 8 in general} in Section \ref{sec : proof of main thm}.\\

The system of differential equations associated to the function $\trace_{[\alpha,\gamma]}\big|_{\unipotentLocus}$ is given by
\begin{equation}\label{eq : differential eq commutator unipotent}
\begin{dcases}
	\dot\sigma_1 = \frac{2 \left(\sigma _1+1\right){}^4 \left(\tau _1+1\right) \left(\sigma
   _1^2 \left(2 \tau _1^2+\tau _1-1\right)+\sigma _1 \left(4 \tau _1^2-\tau
   _1+1\right)+2 \tau _1^2+\tau _1-1\right)}{\sigma _1^2 \tau _1}\\
   \dot\tau_1 = \frac{6 \left(\sigma _1-1\right) \left(\sigma _1+1\right){}^3 \left(\tau
   _1+1\right){}^2 \left(\sigma _1^2 \left(\tau _1+1\right)+2 \sigma _1 \tau
   _1+\tau _1+1\right)}{\sigma _1^3}
\end{dcases}
\end{equation}
This is computed by taking as input to the function \texttt{HamiltonianVF} in Section 5 of the Mathematica code, the function \texttt{traceCommutator} with length vector \texttt{1,1,1,1,1,1}. Once again, a symbolic solution to the differential equation was not possible for us, and we show in Figure \ref{fig : num sol comm} a numerical solution. In Figure \ref{fig : level sets comm} we show some level sets for the trace function of the commutator on the unipotent locus.\\

\begin{figure}[h!]
	\centering
	\includegraphics[width = \textwidth]{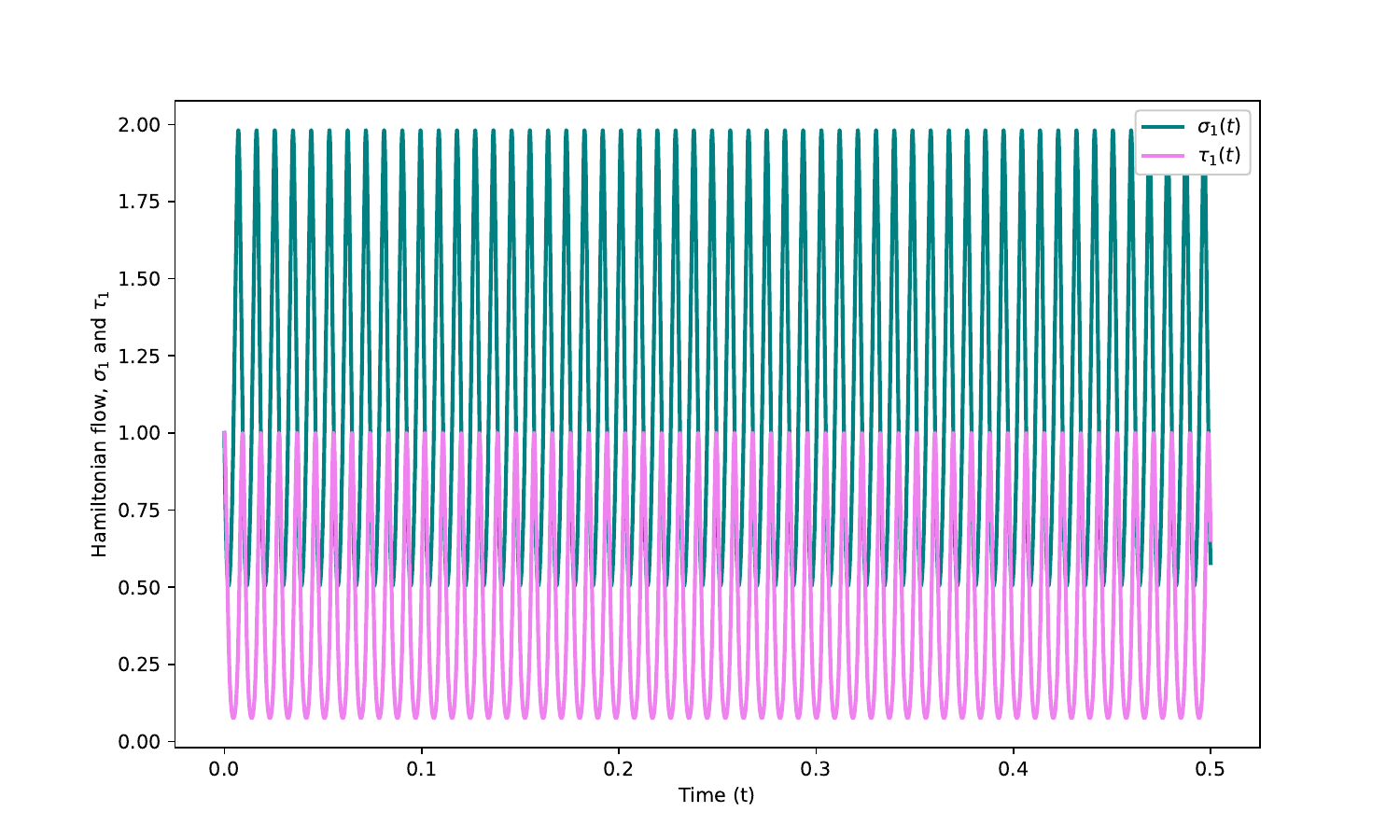}
	\caption{Numerical solution to the Hamiltonian flow of $\trace_{[\alpha,\gamma]}\big|_{\unipotentLocus}$ with initial conditions at the Fuchsian structure $(1,1)$.}
	\label{fig : num sol comm}
\end{figure}

\begin{figure}[h!]
	\centering
	\includegraphics[scale = 0.7]{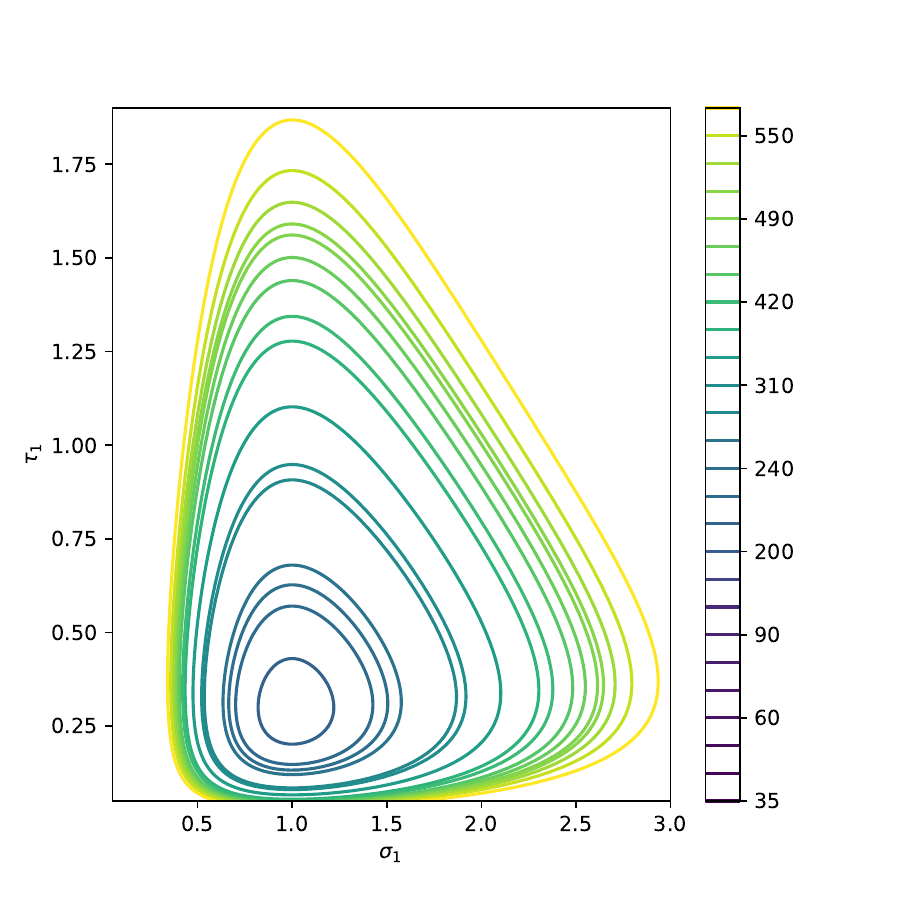}
	\caption{Level sets of the function $\trace_{[\alpha,\gamma]}\big|_{\unipotentLocus}$.}
	\label{fig : level sets comm}
\end{figure}

By Theorem \ref{thm : commutator periodicity and unique fixed point}, the function $\trace_{[\alpha,\gamma]}|_{\unipotentLocus}$ has a unique fixed point, and from the differential equation \eqref{eq : differential eq commutator unipotent} coming from the Hamiltonian vector field, we immediately obtain the following.

\begin{corollary}
	The fixed point of the Hamiltonian flow of the function $\trace_{[\alpha,\gamma]}\big|_{\unipotentLocus}$ is 
	\[
	(\sigma_1,\tau_1) = \left(1,\frac{\sqrt{33}-1}{16}\right).
	\]
	This value is also the minimum of the function.
\end{corollary}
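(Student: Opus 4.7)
The plan is to read the fixed point off directly from the system of ODEs \eqref{eq : differential eq commutator unipotent}, which expresses the Hamiltonian vector field of $\trace_{[\alpha,\gamma]}\big|_\unipotentLocus$ in the coordinates $(\sigma_1,\tau_1)$. A point is fixed by the Hamiltonian flow if and only if both $\dot\sigma_1$ and $\dot\tau_1$ vanish. By Theorem \ref{thm : commutator periodicity and unique fixed point} such a point exists and is unique, so it suffices to solve this algebraic system in $\R_{>0}^2$ and verify that exactly one positive solution arises.

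I would first extract the constraint from $\dot\tau_1=0$, since this equation factors very favorably. On $\unipotentLocus$ we have $\sigma_1,\tau_1>0$, so the numerator factors $(\sigma_1+1)^3$ and $(\tau_1+1)^2$ are strictly positive, and the remaining polynomial factor
\[
\sigma_1^2(\tau_1+1)+2\sigma_1\tau_1+\tau_1+1
\]
is a sum of strictly positive terms. Hence the only factor in the numerator of $\dot\tau_1$ that can vanish is $(\sigma_1-1)$, forcing $\sigma_1=1$.

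Next I would substitute $\sigma_1=1$ into the expression for $\dot\sigma_1$. The prefactor $2(\sigma_1+1)^4(\tau_1+1)/(\sigma_1^2\tau_1)$ is strictly positive, so the equation $\dot\sigma_1=0$ reduces to the vanishing of
\[
(2\tau_1^2+\tau_1-1)+(4\tau_1^2-\tau_1+1)+(2\tau_1^2+\tau_1-1)=8\tau_1^2+\tau_1-1.
\]
The quadratic formula gives $\tau_1=\tfrac{-1\pm\sqrt{33}}{16}$, and the positivity requirement selects the unique root $\tau_1=\tfrac{\sqrt{33}-1}{16}$.

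Combining these two steps yields the single candidate $(\sigma_1,\tau_1)=\bigl(1,\tfrac{\sqrt{33}-1}{16}\bigr)$, which by Theorem \ref{thm : commutator periodicity and unique fixed point} must be the unique fixed point of the Hamiltonian flow of $\trace_{[\alpha,\gamma]}\big|_\unipotentLocus$, and therefore the minimum of this function on $\unipotentLocus$. There is no real obstacle here: the work is entirely done by the factorization of the Hamiltonian vector field provided in \eqref{eq : differential eq commutator unipotent}, together with the positivity of Fock--Goncharov coordinates, which isolates the single relevant root in each equation.
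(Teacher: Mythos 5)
Your proposal is correct and follows exactly the route the paper intends: the corollary is stated as an immediate consequence of the formula \eqref{eq : differential eq commutator unipotent} together with the uniqueness from Theorem \ref{thm : commutator periodicity and unique fixed point}, and your factorization argument (positivity of the Fock--Goncharov coordinates forces $\sigma_1=1$ from $\dot\tau_1=0$, then $8\tau_1^2+\tau_1-1=0$ from $\dot\sigma_1=0$) is precisely the calculation being elided by the phrase ``we immediately obtain.''
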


\subsection{Curve with $k$--self intersections}\label{sec : alpha^kgamma^-1}
Similarly as above, we have the following result for a curve with $k$--self intersections as in Figure \ref{fig : k self intersections}.

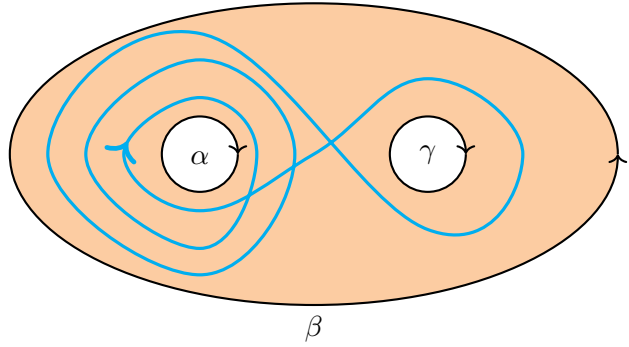
\begin{figure}[htb]
	\centering
	\begin{tikzpicture}
    \fill[Apricot!70] (0,0) ellipse (4cm and 2cm);
    
    % Draw two small circles at the foci of the ellipse
    \fill[white] (1.5,0) circle (0.5cm);
    \fill[white] (-1.5,0) circle (0.5cm);
    
    %%%Arrows around circles
    \draw[->,thick] (2,0) arc[start angle = 360, end angle = 0, radius = 0.5];
    \draw[->,thick] (-1,0) arc[start angle = 360, end angle = 0, radius = 0.5];
    \draw[->,thick] (4,0) arc [start angle = 0,end angle = 360, x radius = 4cm, y radius = 2cm];
    
    % Naming the loops
    \node[label = $\gamma$] at (1.5,-0.4) {};
    \node[label = $\alpha$] at (-1.5,-0.4) {};
    \node[label = $\beta$] at (0,-2.75) {};
    
    %% Figure 8 curve
    \draw[very thick,cyan,] plot [smooth cycle, tension = 0.75] coordinates {(0,0) (-1.5,-0.75) (-2.5,0) (-1.5,0.75) (-0.75,0) (-1.5,-1.25) (-3,0) (-1.5,1.25) (-0.25,0) (-1.5,-1.6) (-3.5,0) (-1.5,1.6)  (1.5,-1) (2.75,0) (1.5,1)}  [arrow inside={opt={scale=1.5}}{0.05}];
	\end{tikzpicture}
\caption{An example of the curve $\alpha^k\gamma^{-1}$ with $k = 3$.}
\label{fig : k self intersections}
\end{figure}
\begin{theorem}\label{thm : periodicity alpha^kgammainverse}
	Let $k\in\N_{>0}$. The function $\trace_{\alpha^k\gamma^{-1}}\big|_{\unipotentLocus}\colon\unipotentLocus\to\R$ attains a unique minimum. Moreover, every orbit of the  Hamiltonian flow of $\trace_{\alpha^k\gamma^{-1}}\big|_{\unipotentLocus}$ is periodic and there is a unique fixed point.
\end{theorem}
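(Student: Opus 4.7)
The plan is to mirror the proof of Theorem \ref{thm : periodicity of figure 8 in general} verbatim, replacing the figure eight curve $\figeight$ by $\alpha^k\gamma^{-1}$ and working only on the two-dimensional leaf $\unipotentLocus$. The three ingredients I need are: (i) an explicit formula for $\trace_{\alpha^k\gamma^{-1}}$ on $\unipotentLocus$ as a positive rational function of $(\sigma_1,\tau_1)$, (ii) properness of this function on $\unipotentLocus \cong \R^2_{>0}$, and (iii) strict convexity along both families of mixed flows $\HmFlow^t_\eruption\circ\HmFlow^{at}_\hexagon$ and $\HmFlow^{at}_\eruption\circ\HmFlow^t_\hexagon$. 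Once these are in place, Lemma \ref{lemma : any two points are connected by a mixed flow} together with Corollary \ref{cor : parametrization of symplectic leaf as level sets} give the unique minimum, and the same compactness/dimension argument from Section \ref{sec : proof of main thm} converts regular level sets into topological circles carrying the Hamiltonian orbit.

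For step (i), on $\unipotentLocus$ the matrices of Section \ref{sec : holonomies pair of pants description} specialize to unipotent generators (upper triangular with $1$s on the diagonal for $\rho(\alpha)$, lower triangular for $\rho(\gamma)$), so the powers $\rho(\alpha)^k$ are easy to compute: the $(1,2)$ and $(2,3)$ entries grow linearly in $k$ and the $(1,3)$ entry is a quadratic in $k$ with coefficients given by polynomials in $\sigma_1,\tau_1,\sigma_1^{-1},\tau_1^{-1}$ (the details are done in the \textit{Computations for $\alpha^k\gamma^{-1}$ in the unipotent locus} section of the Mathematica file). Multiplying by $\rho(\gamma)^{-1}$ and tracing produces a Laurent polynomial in $\sigma_1,\tau_1$ with all coefficients positive and depending polynomially on $k$, which is the analogue of Lemma \ref{lemma : trace figure eight in coordinates}.

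For step (ii), properness follows by exactly the strategy used in Proposition \ref{prop : trace of figure 8 is proper}: each of the five ways a sequence $(\sigma_{1,n},\tau_{1,n})$ can escape every compact in $\R^2_{>0}$ is dominated by an explicit monomial in the trace formula; positivity of coefficients guarantees no cancellation, and the $\sigma_1^{3}\tau_1$ denominator forces blow-up in the degenerate limits $(x,0)$ and $(0,y)$. For step (iii), I would compute $\partial_t^2\,\trace_{\alpha^k\gamma^{-1}}(\Psi_a^t(\sigma_1,\tau_1))$ using the closed form for $\HmFlow^t_\eruption$ and $\HmFlow^t_\hexagon$ (which on $\unipotentLocus$ act by $\sigma_1\mapsto e^{\pm t}\sigma_1$, $\tau_1\mapsto e^{\pm t}\tau_1$), and then check that the result is a sum of terms of the form (positive constant)$\cdot(\text{square of a linear combination of } a,k)\cdot$(positive monomial in $e^{ct},\sigma_1,\tau_1$). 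The structural reason this should work is the same as in Theorem \ref{thm : convexity of the trace function along mixed flows}: each occurrence of $a$ or $t$ in the second derivative enters either as an exponential or inside a quadratic coming from the chain rule $\partial_t^2 e^{ct}$.

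The main obstacle will be the last step: proving positivity of the second derivative \emph{uniformly in $k$}. For the figure eight case Mathematica treats $k=1$ as a numerical input; here one must carry $k$ as a symbolic parameter and verify that the finitely many coefficient squares that appear never vanish simultaneously for $k\ge 1$. I expect this reduces to checking that each monomial $\sigma_1^i\tau_1^j$ in $\partial_t^2\trace$ has a coefficient of the form $\sum_\ell c_\ell(k)^2 e^{b_\ell t}$ with $c_\ell(k)$ nonzero polynomials in $k$ and $a$, and I would handle any borderline term by grouping it with a neighbor to form a manifest square, as was done implicitly in the proof of Theorem \ref{thm : convexity of the trace function along mixed flows}. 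Once the convexity inequality is established for all $k\ge 1$, the proof concludes identically to Section \ref{sec : proof of main thm}.
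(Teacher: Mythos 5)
Your plan is essentially the paper's proof: compute $\trace_{\alpha^k\gamma^{-1}}$ on $\unipotentLocus$ in the coordinates $(\sigma_1,\tau_1)$, show properness by exhibiting a dominating positive monomial in each degenerate limit, show strict convexity along both mixed flows by an explicit second-derivative computation with $k$ symbolic, and then repeat the compactness/dimension argument of Section \ref{sec : proof of main thm}. Your anticipated obstacle (positivity uniformly in $k$) in fact evaporates, since on the unipotent locus the trace equals $3+\tfrac{k(k+1)}{2}\cdot\frac{(\sigma_1+1)^3(\tau_1+1)^2}{\sigma_1\tau_1}$, so $k$ enters only through the overall positive factor $k(k+1)$ and the convexity check is independent of $k$.
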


We begin with a lemma expressing the trace of $\alpha^k\gamma^{-1}$ in the unipotent locus in coordinates.

\begin{lemma}
	In coordinates, we have that
	\begin{gather*}
		\trace_{\alpha^k\gamma^{-1}}\big|_{\unipotentLocus}\colon\unipotentLocus\to\R\\
		(\sigma_1,\tau_1)\mapsto \frac{k^2 \left(\sigma _1+1\right){}^3 \left(\tau _1+1\right){}^2+k
   \left(\sigma _1+1\right){}^3 \left(\tau _1+1\right){}^2+6 \sigma _1 \tau
   _1}{2 \sigma _1 \tau _1}
	\end{gather*}
\end{lemma}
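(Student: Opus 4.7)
The plan is to exploit the fact that on the unipotent locus the holonomies $\rho(\alpha)$ and $\rho(\gamma)$ are unipotent matrices, which makes computing powers of $\rho(\alpha)$ completely elementary. First I would substitute the unipotent parameterization $\sigma_1=\sigma_3=\sigma_5$, $\sigma_2=\sigma_4=\sigma_6=1/\sigma_1$, $\tau_2=1/\tau_1$ into the explicit matrix formulas for $\rho(\alpha)$, $\rho(\gamma)$ from Section \ref{sec : holonomies pair of pants description}. A direct check (already implicit in the derivation of the Casimirs in Lemma \ref{lemma : casimir functions pair of pants}) shows that all cube roots collapse to $1$, $\rho(\alpha)$ becomes upper triangular unipotent with some entries $a_{12},a_{13},a_{23}$, and $\rho(\gamma)$ becomes lower triangular unipotent with entries $c_{21},c_{31},c_{32}$. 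I would write these out explicitly; in particular one finds
\[
a_{12}=\frac{(\sigma_1+1)(\tau_1+1)}{\tau_1},\qquad a_{23}=\frac{\sigma_1+1}{\sigma_1}.
\]

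Next I would use that for an upper triangular unipotent $3\times 3$ matrix, one has the classical formula
\[
\rho(\alpha)^k \;=\; I + k(\rho(\alpha)-I) + \binom{k}{2}(\rho(\alpha)-I)^2,
\]
so that $\rho(\alpha)^k$ is again upper triangular unipotent, with $(1,2)$-entry $k a_{12}$, $(2,3)$-entry $k a_{23}$, and $(1,3)$-entry $k a_{13} + \binom{k}{2} a_{12}a_{23}$. I would then invert $\rho(\gamma)$ using the corresponding lower triangular formula $\rho(\gamma)^{-1}=I-N+N^2$ with $N=\rho(\gamma)-I$, noting that the $(3,1)$-entry of $\rho(\gamma)^{-1}$ is $b_{31}=c_{21}c_{32}-c_{31}$. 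A short simplification yields $b_{31}=(\sigma_1+1)(\tau_1+1)$.

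Now compute $\trace(\rho(\alpha)^k \rho(\gamma)^{-1})$ by pairing each upper triangular entry of $\rho(\alpha)^k$ with the corresponding lower triangular entry of $\rho(\gamma)^{-1}$:
\[
\trace(\rho(\alpha)^k\rho(\gamma)^{-1}) \;=\; 3 + k\bigl(\trace_\delta|_{\unipotentLocus}-3\bigr) + \binom{k}{2}\,a_{12}a_{23}b_{31},
\]
where I identified the coefficient of $k$ with $\trace((\rho(\alpha)-I)\rho(\gamma)^{-1})=\trace_\delta|_{\unipotentLocus}-3$. Using Lemma \ref{lemma : trace figure eight in coordinates} in the unipotent case, $\trace_\delta|_{\unipotentLocus}-3 = (\sigma_1+1)^3(\tau_1+1)^2/(\sigma_1\tau_1)$. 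Plugging in the values of $a_{12},a_{23},b_{31}$ gives
\[
a_{12}a_{23}b_{31} \;=\; \frac{(\sigma_1+1)^3(\tau_1+1)^2}{\sigma_1\tau_1},
\]
which happens to equal $\trace_\delta|_{\unipotentLocus}-3$. Combining the $k$ and $\binom{k}{2}$ terms gives the factor $k+\binom{k}{2}=k(k+1)/2$, and one reads off
\[
\trace_{\alpha^k\gamma^{-1}}\big|_{\unipotentLocus} \;=\; 3 + \frac{k(k+1)(\sigma_1+1)^3(\tau_1+1)^2}{2\sigma_1\tau_1},
\]
which rearranges into the claimed formula. The only potential obstacle is bookkeeping in simplifying $b_{31}$ and verifying the coincidence $a_{12}a_{23}b_{31}=\trace_\delta|_{\unipotentLocus}-3$; both are short algebraic manipulations (and can be cross-checked against the Mathematica notebook used for the figure eight trace).
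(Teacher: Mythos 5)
Your proof is correct, and it takes a genuinely different route from the paper: the paper's own proof of this lemma is simply a delegation to Section 7 of the Mathematica code (a direct symbolic evaluation of $\trace(\rho(\alpha)^k\rho(\gamma)^{-1})$ after substituting the unipotent parameterization), whereas you give a structural hand argument. I checked the key ingredients: on $\unipotentLocus$ all the cube-root prefactors in the explicit matrices of Section \ref{sec : holonomies pair of pants description} are indeed $1$, $\rho(\alpha)$ is upper and $\rho(\gamma)$ lower triangular unipotent with $a_{12}=\frac{(\sigma_1+1)(\tau_1+1)}{\tau_1}$, $a_{23}=\frac{\sigma_1+1}{\sigma_1}$, and $(\rho(\gamma)^{-1})_{31}=c_{21}c_{32}-c_{31}=(\sigma_1+1)(\tau_1+1)$; the pairing formula $\trace(UL)=3+u_{12}l_{21}+u_{13}l_{31}+u_{23}l_{32}$ together with $\rho(\alpha)^k=I+kM+\binom{k}{2}M^2$ then gives exactly the decomposition $3+k(\trace_\delta|_{\unipotentLocus}-3)+\binom{k}{2}a_{12}a_{23}b_{31}$, and both $\trace_\delta|_{\unipotentLocus}-3$ and $a_{12}a_{23}b_{31}$ equal $\frac{(\sigma_1+1)^3(\tau_1+1)^2}{\sigma_1\tau_1}$, yielding $3+\frac{k(k+1)}{2}\cdot\frac{(\sigma_1+1)^3(\tau_1+1)^2}{\sigma_1\tau_1}$, which is the claimed formula. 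What your approach buys beyond the paper's computation is an explanation of the shape of the answer: it makes transparent why the $k$-dependence is exactly quadratic through $k(k+1)/2$, why the result is $3$ plus a multiple of $\trace_\delta|_{\unipotentLocus}-3$, and (via the binomial expansion of unipotent powers) why the same phenomenon would persist for other words of the form $\alpha^k\gamma^{-1}$ on the unipotent locus; the Mathematica computation gives none of this but requires no case-checking of matrix entries. The only caveat is that your argument is specific to the unipotent locus (the triangularity fails on general symplectic leaves), but so is the statement being proved.
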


This is computed in Section 7 of the Mathematica code. We can now show properness.

\begin{proposition}\label{prop : properness a^kgamma^-1}
	The function $\trace_{\alpha^k\gamma^{-1}}\big|_{\unipotentLocus}\colon\unipotentLocus\to\R$ is proper for any $k\in\N_{>0}$. In particular, it realizes a minimum in $\unipotentLocus$.
\end{proposition}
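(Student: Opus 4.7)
The plan is to follow the same strategy used in the proof of Proposition \ref{prop : trace of figure 8 is proper} and the analogous properness statement for the commutator in Section \ref{sec : commutator unipotent locus}. The key observation is that every coefficient in the numerator of the explicit formula for $\trace_{\alpha^k\gamma^{-1}}|_{\unipotentLocus}$ is positive, so it suffices to exhibit, for each direction in which a sequence can leave every compact subset of $\R^2_{>0}$, a single monomial in the expansion that diverges along that sequence.

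First I would rewrite the formula more transparently as
\[
\trace_{\alpha^k\gamma^{-1}}\big|_{\unipotentLocus}(\sigma_1,\tau_1)\;=\;\frac{(k^2+k)(\sigma_1+1)^3(\tau_1+1)^2}{2\sigma_1\tau_1}+3,
\]
which makes the dominant behaviour in each regime obvious. Let $(\sigma_{1,n},\tau_{1,n})_{n\in\N}\subset\R^2_{>0}$ be a sequence leaving every compact set, so that after passing to a subsequence it converges in $[0,\infty]^2\setminus\R^2_{>0}$ to one of the limits $(\infty,\infty)$, $(\infty,0)$, $(0,\infty)$, $(x,0)$, or $(0,y)$ for some $x,y\geq 0$. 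I would then pick off the following monomials of the numerator (each multiplied by the prefactor $(k^2+k)/(2\sigma_1\tau_1)$): the term $\sigma_1^3\tau_1^2$ handles the limit $(\infty,\infty)$ and $(\infty,0)$ after dividing through (yielding $\sigma_1^2\tau_1$ and $\sigma_1^2/\tau_1$ respectively, both divergent); the term $\tau_1^2$ handles $(0,\infty)$, giving $\tau_1/\sigma_1\to\infty$; and the constant term $1$ in $(\sigma_1+1)^3(\tau_1+1)^2$ gives a contribution $\sim 1/(\sigma_1\tau_1)$ which blows up as $(\sigma_1,\tau_1)\to(x,0)$ or $(0,y)$ for $x,y\geq 0$. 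In each case we have used positivity of all monomials in the numerator to discard the rest of the sum, and positivity of $\sigma_1,\tau_1$ in the denominator.

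This verifies properness of $\trace_{\alpha^k\gamma^{-1}}|_{\unipotentLocus}$ for any $k\in\N_{>0}$. Since the function is strictly positive and proper on $\unipotentLocus\cong\R^2_{>0}$, it attains a minimum. I do not anticipate any real obstacle here: the proof is entirely mechanical once the explicit formula is in hand, and the structure is identical to the corresponding proofs already carried out for $\trace_\figeight$ and $\trace_{[\alpha,\gamma]}$. The only thing to be careful about is checking that the required divergent monomial exists in \emph{every} limiting regime, which is why extracting the clean rewriting above at the start is useful.
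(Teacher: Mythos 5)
Your proposal is correct and follows essentially the same argument as the paper: extract the explicit positive rational expression, and for each way a sequence can leave every compact subset of $\R^2_{>0}$ exhibit a single divergent monomial, using positivity of all the remaining terms. The only slip is cosmetic: the divergent quotient $\sigma_1^2/\tau_1$ you cite for the limit $(\infty,0)$ comes from the monomial $\sigma_1^3$ in the expansion of $(\sigma_1+1)^3(\tau_1+1)^2$ (the term the paper itself uses), not from $\sigma_1^3\tau_1^2$, whose quotient $\sigma_1^2\tau_1$ need not diverge in that regime.
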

\begin{proof}
	Similarly to the proof of Proposition \ref{prop : trace of figure 8 is proper}, let $(\sigma_{1,n},\tau_{1,n})_{n\in\N}$ be a sequence such that as $n$ goes to $\infty$, $(\sigma_{1,n},\tau_{1,n})$ goes to a tuple in $\{(\infty,\infty), (0,\infty),(\infty,0),(x,0),(0,y)\st x,y\geq 0\}$. As above we show the following.
	\begin{enumerate}
		\item Assume $(\sigma_{1,n},\tau_{1,n})\to(\infty,\infty)$, then the term 
		\[
		\frac{k^2\sigma_{1,n}^3\tau_{1,n}^2}{2\sigma_{1,n}\tau_{1,n}}\xrightarrow{n\to\infty}\infty.
		\]
		\item Assume $(\sigma_{1,n},\tau_{1,n})\to(\infty,0)$, then the term 
		\[
		\frac{k^2\sigma_{1,n}^3}{2\sigma_{1,n}\tau_{1,n}}\xrightarrow{n\to\infty}\infty.
		\]
		\item Assume $(\sigma_{1,n},\tau_{1,n})\to(0,\infty)$, then the term 
		\[
			\frac{k^2\tau_{1,n}^2}{2\sigma_{1,n}\tau_{1,n}}\xrightarrow{n\to\infty}\infty.
		\]
		\item Assume $(\sigma_{1,n},\tau_{1,n})\to(x,0)$ or $(0,y)$ for $x,y\geq 0$, then the term
		\[
		\frac{k^2}{2\sigma_{1,n}\tau_{1,n}}\xrightarrow{n\to\infty}\infty.
		\]
	\end{enumerate}
	This shows that the function $\trace_{\alpha^k\gamma^{-1}}\big|_{\unipotentLocus}\colon\unipotentLocus\to\R$ is proper. Since the function is strictly positive, it attains a minimum.
\end{proof}

The next step in the proof of Theorem \ref{thm : periodicity alpha^kgammainverse} is to show convexity along any mixed flow.\\

\begin{proposition}\label{prop : a^kgamma^-1 convexity}
	For any $k\in\N_{>0}$, the function $\trace_{\alpha^k\gamma^{-1}}\big|_{\unipotentLocus}$ is strictly convex along any mixed flow.
\end{proposition}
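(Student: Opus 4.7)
The plan is to follow the same template as Theorem \ref{thm : convexity of the trace function along mixed flows} and Proposition \ref{prop : commutator strictly convex}: compute the second derivative of the trace function along each of the two mixed flows in Definition \ref{def : mixed flow}, and verify that every term is manifestly non-negative, with at least one strictly positive. The crucial simplification on the unipotent locus is that the trace takes the closed form
\[
\trace_{\alpha^k\gamma^{-1}}\big|_{\unipotentLocus}(\sigma_1,\tau_1) \;=\; \frac{k(k+1)}{2}\cdot\frac{(\sigma_1+1)^3(\tau_1+1)^2}{\sigma_1\tau_1} \;+\; 3,
\]
so the entire $k$--dependence is absorbed into the positive prefactor $k(k+1)/2$, and the problem reduces to proving strict convexity along mixed flows of the single $k$--independent function $F(\sigma_1,\tau_1) = (\sigma_1+1)^3(\tau_1+1)^2/(\sigma_1\tau_1)$.

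First I would expand $F$ as a Laurent polynomial. Using
\[
(\sigma_1+1)^3/\sigma_1 \;=\; \sigma_1^2+3\sigma_1+3+\sigma_1^{-1}, \qquad (\tau_1+1)^2/\tau_1 \;=\; \tau_1+2+\tau_1^{-1},
\]
the product $F$ becomes a sum of twelve monomials $c_{p,q}\sigma_1^p\tau_1^q$ with $c_{p,q}>0$ and $(p,q)\in\{-1,0,1,2\}\times\{-1,0,1\}$. Next, the Hamiltonian flows in Proposition \ref{prop : Hamiltonian functions and vector fields} scale $\sigma_1$ and $\tau_1$ by exponentials, so on the unipotent locus a mixed flow $\Psi_a^t$ acts by $(\sigma_1,\tau_1)\mapsto(e^{\lambda t}\sigma_1,e^{\mu t}\tau_1)$ with $(\lambda,\mu)=(1,a)$ in the first case and $(\lambda,\mu)=(a,1)$ in the second. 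Substituting, each monomial becomes $c_{p,q}\sigma_1^p\tau_1^q e^{(p\lambda+q\mu)t}$, whose second derivative in $t$ equals $c_{p,q}(p\lambda+q\mu)^2\sigma_1^p\tau_1^q e^{(p\lambda+q\mu)t}\geq 0$ since the coefficient and coordinates are positive.

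The last step is to observe that for each of the two flows, at least one monomial has a nonzero exponent $p\lambda+q\mu$ regardless of $a\in\R$: for $(\lambda,\mu)=(1,a)$ the term $2\sigma_1^2$ (with $(p,q)=(2,0)$) has exponent $2\ne 0$, and for $(\lambda,\mu)=(a,1)$ the term $3\tau_1$ (with $(p,q)=(0,1)$) has exponent $1\ne 0$. In each case this single term contributes strictly positively to $\tfrac{d^2}{dt^2}F(\Psi^t_a(\sigma_1,\tau_1))$, while every other contribution is non-negative, giving strict convexity.

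I do not expect a serious obstacle: the proof is a direct computation, mirroring Proposition \ref{prop : commutator strictly convex}, and can also be verified symbolically using Section 7 of the accompanying Mathematica code. The only point requiring care is the uniformity in $k$, which is ensured by the fact that $k$ enters only through the positive scalar prefactor.
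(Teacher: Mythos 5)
Your proof is correct, and it takes a genuinely different route from the paper. The paper's proof of Proposition~\ref{prop : a^kgamma^-1 convexity} simply calls the Mathematica routines of Section 7 of the accompanying code and inspects the resulting closed-form second derivatives, asserting positivity by eye. You instead first isolate the key algebraic simplification
\[
\trace_{\alpha^k\gamma^{-1}}\big|_{\unipotentLocus} \;=\; \frac{k(k+1)}{2}\cdot\frac{(\sigma_1+1)^3(\tau_1+1)^2}{\sigma_1\tau_1}\;+\;3,
\]
which absorbs all $k$-dependence into a positive scalar prefactor, then expand the $k$-independent part into the twelve Laurent monomials $c_{p,q}\sigma_1^p\tau_1^q$ with $c_{p,q}>0$. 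Under a mixed flow $(\sigma_1,\tau_1)\mapsto(e^{\lambda t}\sigma_1,e^{\mu t}\tau_1)$, each monomial contributes $c_{p,q}(p\lambda+q\mu)^2\sigma_1^p\tau_1^q e^{(p\lambda+q\mu)t}\ge 0$ to the second derivative, and in each of the two cases $(\lambda,\mu)\in\{(1,a),(a,1)\}$ you exhibit a monomial whose exponent is nonzero independently of $a$ (namely $(p,q)=(2,0)$ and $(p,q)=(0,1)$, respectively), which forces strict positivity. This monomial-by-monomial argument is verifiable entirely by hand, makes the underlying reason for convexity transparent, and as a bonus explains the $k$-uniformity of the whole picture: since the Hamiltonian vector field scales by the positive constant $k(k+1)/2$, the flow lines (and in particular the fixed point found later in the section) do not depend on $k$. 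The paper's approach buys immediate certainty that the symbolic manipulation was done correctly and matches the pattern of the other, less structured trace functions in the article; your approach buys an explanation and a cleaner exposition for this particular curve.
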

\begin{proof}
	Consider first the mixed flow $\Psi^t_a=\HmFlow^{at}_\hexagon\circ\HmFlow_\eruption^t$. In Section 7 of the Mathematica code, take as input to the function \texttt{SecondDerFlowK} the function \texttt{testfMix1K} to get that 
	\begin{gather*}
		\frac{\partial^2}{\partial t^2}(\trace_{\alpha^k\gamma^{-1}}(\Psi^t_a(\sigma_1,\tau_1))) = \frac{a^2 k (k+1) \left(\sigma _1+1\right){}^3 e^{-a t} \left(\tau _1^2 e^{2
   a t}+1\right)}{2 \sigma _1 \tau _1}.
		\end{gather*}
		For any $a\in\R$, and $t\in\R$ the second derivative is strictly positive, and hence the function $t\mapsto \trace_{\alpha^k\gamma^{-1}}\big|_{\unipotentLocus}(\Psi^t_a(\sigma_1,\tau_1))$ is strictly convex.\\
		
		Now consider the other mixed flow $\Psi^t_a=\HmFlow^{t}_\hexagon\circ\HmFlow_\eruption^{at}$. In Section 7 of the Mathematica code, take as input to the function \texttt{SecondDerFlowK} the function \texttt{testfMix2K} to get that
		\[
		\frac{\partial^2}{\partial t^2}(\trace_{\alpha^k\gamma^{-1}}(\Psi^t_a(\sigma_1,\tau_1)))=\frac{a^2 k (k+1) \left(\tau _1+1\right){}^2 e^{-a t} \left(4 \sigma _1^3
   e^{3 a t}+3 \sigma _1^2 e^{2 a t}+1\right)}{2 \sigma _1 \tau _1}.
		\]
		This quantity is positive and hence the function $t\mapsto \trace_{\alpha^k\gamma^{-1}}\big|_{\unipotentLocus}(\Psi^t_a(\sigma_1,\tau_1))$ is strictly convex.
\end{proof}

The proof of Theorem \ref{thm : periodicity alpha^kgammainverse} then is identical to the proof of Theorem \ref{thm : periodicity of figure 8 in general} in Section \ref{sec : proof of main thm}. The system of differential equations associated to the function $\trace_{\alpha^k\gamma^{-1}}\big|_\unipotentLocus$ is given by
\begin{equation}\label{eq : Hamiltonian vector field powers of a}
\begin{dcases}
\dot\sigma_1 &= -\frac{k (k+1) \left(\sigma _1+1\right){}^3 \left(\tau _1^2-1\right)}{\tau
   _1}\\
   \dot\tau_1&=	\frac{k (k+1) \left(\sigma _1+1\right){}^2 \left(2 \sigma _1-1\right)
   \left(\tau _1+1\right){}^2}{\sigma _1}
\end{dcases}
\end{equation}
This is computed at the end of Section 7 of the Mathematica code. Once again, a symbolic solution (even for the case $k = 1$) was not possible for us, and we show in Figure \ref{fig : num sol powers fuchsian} a numerical solution for the case $k = 3$. In Figure \ref{fig : level sets powers gamma} we show the level sets for the case $k = 3$, and Figure \ref{fig : Intro level sets} shows level sets for the case $k =1$, i.e. for the figure eight curve.\\

\begin{figure}[h]
	\centering
	\includegraphics[width = \textwidth]{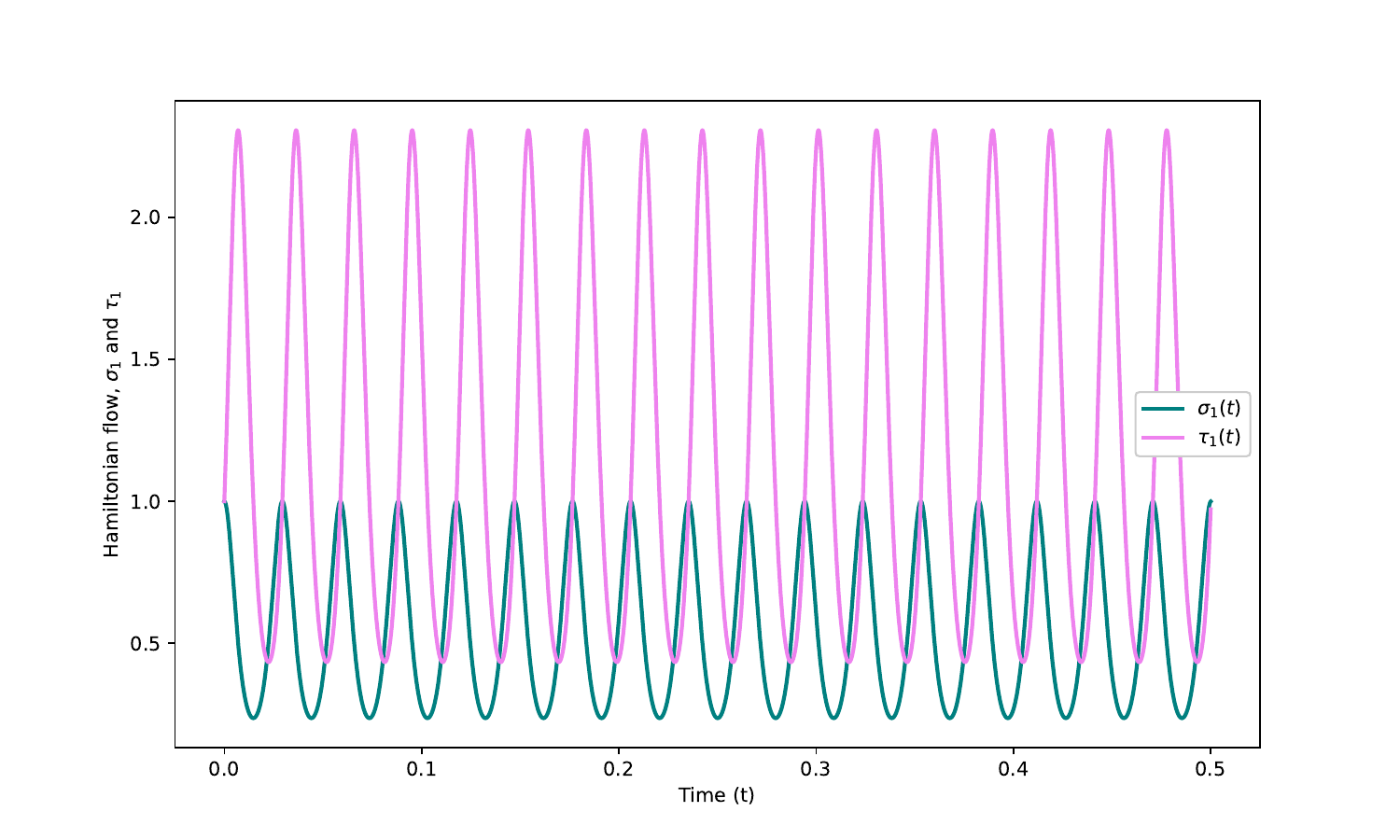}
	\caption{Numerical solution to $\trace_{\alpha^3\gamma^{-1}}\big|_\unipotentLocus$ with initial condition at the Fuchsian structure $(1,1)$.}
	\label{fig : num sol powers fuchsian}
\end{figure}

\begin{figure}[h]
	\centering
	\includegraphics[scale=0.7]{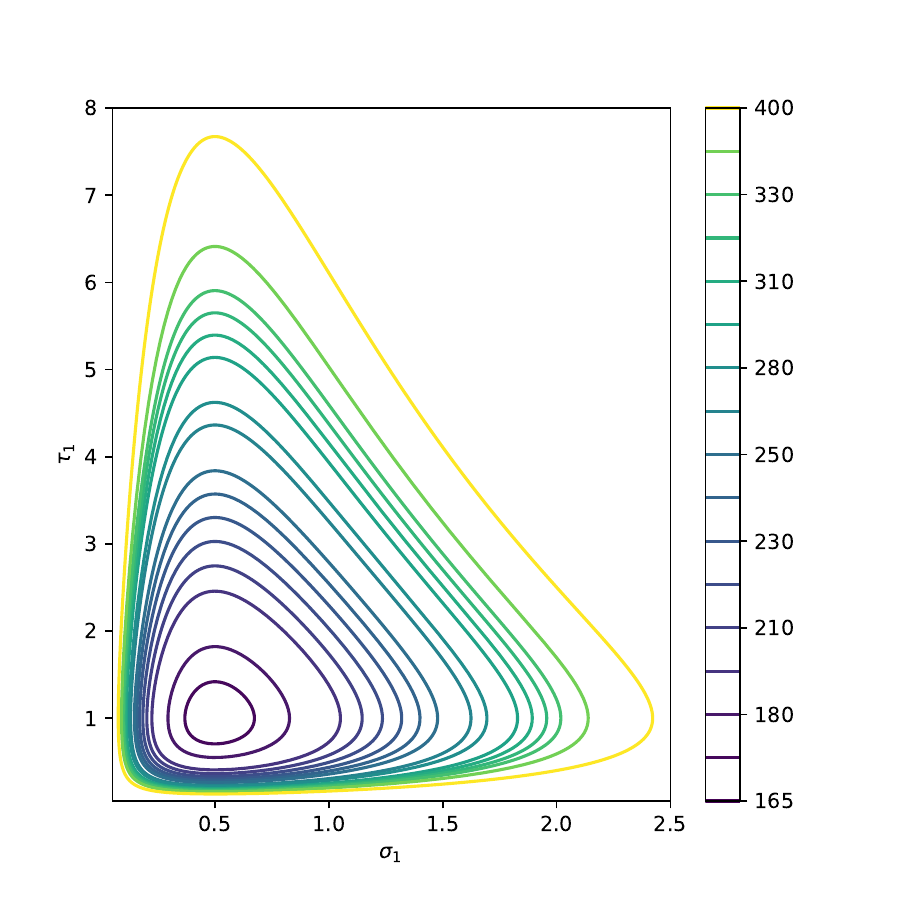}
	\caption{Level sets of the function $\trace_{\alpha^3\gamma^{-1}}\big|_\unipotentLocus$.}
	\label{fig : level sets powers gamma}
\end{figure}

By Theorem \ref{thm : periodicity alpha^kgammainverse}, the function $\trace_{\alpha^k\gamma^{-1}}|_{\unipotentLocus}$ has a unique fixed point, and from the differential equation \eqref{eq : Hamiltonian vector field powers of a} coming from the Hamiltonian vector field, we immediately obtain the following.

\begin{corollary}
	Let $k\in\N_{>0}$. The fixed point of the Hamiltonian flow of the function $\trace_{\alpha^k\gamma^{-1}}\big|_{\unipotentLocus}$ is 
	\[
	(\sigma_1,\tau_1) = \left(\frac{1}{2},1\right).
	\]
	This value is also the minimum of the function. In particular, it does not depend on $k$.
\end{corollary}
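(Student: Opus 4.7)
The plan is to apply Theorem \ref{thm : periodicity alpha^kgammainverse}, which already guarantees that the Hamiltonian flow of $\trace_{\alpha^k\gamma^{-1}}\big|_{\unipotentLocus}$ has a unique fixed point coinciding with the unique minimum of the function. Thus it suffices to exhibit one point at which the Hamiltonian vector field vanishes; by uniqueness, this must be the fixed point and also the minimum.

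To do this, I would use the explicit expression for the Hamiltonian vector field given in Equation \eqref{eq : Hamiltonian vector field powers of a}. A point $(\sigma_1,\tau_1)\in\unipotentLocus$ is a zero of the vector field precisely when both $\dot\sigma_1$ and $\dot\tau_1$ vanish. Since all Fock--Goncharov coordinates are strictly positive, the factors $(\sigma_1+1)^3$ and $(\tau_1+1)^2$ are nonzero, so the vanishing conditions reduce to $\tau_1^2-1=0$ and $2\sigma_1-1=0$. Positivity forces $\tau_1=1$, and the second equation gives $\sigma_1=1/2$.

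Therefore $(\sigma_1,\tau_1)=(1/2,1)$ is a zero of the Hamiltonian vector field, and by the uniqueness statement in Theorem \ref{thm : periodicity alpha^kgammainverse}, it is the unique fixed point of the Hamiltonian flow and realizes the minimum of $\trace_{\alpha^k\gamma^{-1}}\big|_{\unipotentLocus}$. Since the factor $k(k+1)$ only rescales the vector field without changing its zero locus, the fixed point is independent of $k$. There is no real obstacle here: the entire argument is a direct inspection of the formula for the Hamiltonian vector field derived earlier, combined with the abstract existence and uniqueness result already established.
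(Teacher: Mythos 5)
Your proof is correct and is essentially the same as the paper's: both read off the unique zero of the explicit Hamiltonian vector field in Equation \eqref{eq : Hamiltonian vector field powers of a}, using positivity of the coordinates to discard the extraneous factors, and then invoke the uniqueness of the fixed point from Theorem \ref{thm : periodicity alpha^kgammainverse}. Your remark that the overall factor $k(k+1)$ rescales the vector field without altering its zero locus is a clean way to explain the $k$-independence.
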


\subsection{Eruption and hexagon flows with conjugating matrices}\label{sec : eruption and hexagon conjugating matrices}
Denote by $\Phi_\eruption$ and respectively by $\Phi_\hexagon$ the flows $\rechol\circ\HmFlow_\eruption$ and respectively $\rechol\circ\HmFlow_\hexagon$ on $\CharVar^+_{3}(\pants)$. Let $\conjclass$ be tuple of conjugacy classes. Recall that a symplectic leaf $\SymLeaf_L$ is mapped through $\rechol$ to the relative character variety $\CharVar^+_{3,\conjclass}(\pants)$ by Lemma \ref{lemma : symplectic leaves are exactly relative character varieties}. Since the fundamental group of $\pants$ is generated by the three boundary curves, any flow $\Phi$ in  $\CharVar^+_{3,\conjclass}(\pants)$ can be written as follows. Let $[\rho]\in\CharVar^+_{3,\conjclass}(\pants)$. Then the flow $\Phi^t([\rho])$ is covered by a path of representations described by conjugations along the boundary. That is, there exist paths $g_t^\alpha,g_t^\beta,g_t^\gamma\in\PSL 3$ depending on $\rho$ such that the flow
\begin{equation}\label{eq : rhot is conjugation}
\rho_t = \begin{cases}
 	\alpha\mapsto g_t^\alpha\rho(\alpha)(g_t^\alpha)^{-1}\\
 	\beta\mapsto g_t^\beta\rho(\beta)(g_t^\beta)^{-1}\\
 	\gamma\mapsto g_t^\gamma\rho(\gamma)(g_t^\gamma)^{-1}
 \end{cases}
\end{equation}
covers the flow $\Phi$ on $\CharVar^+_{3,\conjclass}(\pants)$.\\

In this section, we find such conjugating matrices for the eruption and hexagon flows when we restrict to the unipotent locus.\\

The two following results are once again, computations using Mathematica. The conjugating matrices were found by solving matrix equations, and they can be easily checked, since the flows are explicit. The first result finds conjugating matrices for the eruption flow. The matrices were found by finding solutions to the expression in \eqref{eq : rhot is conjugation}. Instead of showing the solution, we simply verify in Sections 8 and 9 of the Mathematica code the following two theorems.

\begin{theorem}\label{thm : conjugating matrices for eruption}
	The flow $\Phi^t_\hexagon = \rechol\circ\HmFlow^t_\hexagon$ on $\unipotentLocus\subset \CharVar_3^+(\pants)$ is given by the following conjugations. For $(\sigma_1,\tau_1)\in\unipotentLocus$, let
	\[
	\zeta^\alpha_t = \begin{pmatrix}
		\frac{e^{-2t/3}(1+e^t\tau_1)^{2/3}}{(1+\tau_1)^{1/3}} & 0 & 0\\
		0 & \frac{e^{t/3}(1+\tau_1)^{1/3}}{(1+e^t\tau_1)^{1/3}} & 0\\
		0 & 0 &\frac{e^{t/3}(1+\tau_1)^{1/3}}{(1+e^t\tau_1)^{1/3}}
	\end{pmatrix},
	\]
	
	\begin{align*}
\zeta^\beta_t &= \frac{(1+\tau_1)^{1/3}}{e^{t/3}\tau_1(1+e^t\tau_1)^{1/3}}\cdot \\&\left(
\begin{array}{ccc}
 0 & -\left(\left(\sigma _1-1\right) \left(e^t \tau _1+1\right)\right) &
   -\frac{\left(\sigma _1 \tau _1+\sigma _1-1\right) \left(e^t \tau
   _1+1\right)}{\tau _1+1} \\
 0 & \sigma _1+\sigma _1 e^t \tau _1-e^t \tau _1 & \frac{\sigma _1
   \left(\tau _1+1\right) \left(e^t \tau _1+1\right)-e^t \tau _1+\tau
   _1}{\tau _1+1} \\
 -e^t \tau _1^2 & -\sigma _1 \left(e^t \tau _1+1\right)-e^t \tau _1
   \left(\tau _1+1\right) & -\sigma _1 \left(e^t \tau _1+1\right)-\frac{\tau
   _1 \left(e^t \left(2 \tau _1+1\right)+1\right)}{\tau _1+1} \\
\end{array}
\right)
	\end{align*}
	and
	\[
	\zeta^\gamma_t = \begin{pmatrix}
		\frac{(1+\tau_1)^{1/3}}{(1+e^t\tau_1)^{1/3}} & 0 & 0\\
		0 & \frac{(1+\tau_1)^{1/3}}{(1+e^t\tau_1)^{1/3}} & 0\\
		0 & 0 & \frac{(1+\tau_1)^{2/3}}{(1+e^t\tau_1)^{2/3}}.
	\end{pmatrix}.
	\]
	Then the flow 
	\[
\rho_t = \begin{cases}
 	\alpha\mapsto \zeta_t^\alpha\rho(\alpha)(\zeta_t^\alpha)^{-1}\\
 	\beta\mapsto \zeta_t^\beta\rho(\beta)(\zeta_t^\beta)^{-1}\\
 	\gamma\mapsto \zeta_t^\gamma\rho(\gamma)(\zeta_t^\gamma)^{-1}
 \end{cases}
\]
covers the flow $\Phi^t_\hexagon$.	
\end{theorem}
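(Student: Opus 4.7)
The plan is to verify the claim by direct matrix computation in Fock--Goncharov coordinates, exploiting the drastic simplification of the holonomy formulas on the unipotent locus $\unipotentLocus$. For a point $(\sigma_1,\tau_1)\in\unipotentLocus$, Lemma \ref{cor : parametrization of symplectic leaf as level sets} forces the full Fock--Goncharov coordinates to be $(\sigma_1, 1/\sigma_1, \sigma_1, 1/\sigma_1, \sigma_1, 1/\sigma_1, \tau_1, 1/\tau_1)$, so that the matrices $\rho(\alpha), \rho(\beta), \rho(\gamma)$ from Section \ref{sec : holonomies pair of pants description} become explicit $3\times 3$ matrices in the two variables $\sigma_1, \tau_1$. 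The first step is to identify the target: since the hexagon flow acts on coordinates by $(\sigma_i, \tau_1, \tau_2) \mapsto (\sigma_i, e^t\tau_1, e^{-t}\tau_2)$, it preserves the unipotent locus (as $e^t\tau_1 \cdot e^{-t}/\tau_1 = 1$), and the target $\rho_t := \rechol \circ \HmFlow^t_\hexagon(\sigma_1,\tau_1)$ is obtained simply by substituting $\tau_1 \leadsto e^t\tau_1$ in the holonomy formulas, producing concrete matrices $\rho_t(\alpha), \rho_t(\beta), \rho_t(\gamma)$.

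Next, for each peripheral element $c \in \{\alpha, \beta, \gamma\}$ I would check the identity $\zeta_t^c\, \rho(c)\, (\zeta_t^c)^{-1} = \rho_t(c)$ by carrying out the $3 \times 3$ matrix multiplication and simplifying. The cases $c = \alpha$ and $c = \gamma$ are transparent: $\zeta_t^\alpha$ and $\zeta_t^\gamma$ are diagonal, and conjugating $\rho(\alpha)$ (upper triangular) and $\rho(\gamma)$ (lower triangular) by them simply rescales the off-diagonal entries by the appropriate ratios of $(1+\tau_1)^{1/3}$ and $(1+e^t\tau_1)^{1/3}$, which can be read off directly against the target. The relation $\rho_t(\alpha)\rho_t(\beta)\rho_t(\gamma) = \identity$ then comes for free, since $\rho_t$ is a representation coming from the Fock--Goncharov reconstruction.

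The main obstacle is purely computational: the verification for $\zeta_t^\beta$ is the substantive step, as this matrix is genuinely non-diagonal and $\rho(\beta)$ has essentially every entry nonzero on $\unipotentLocus$, so the conjugation $\zeta_t^\beta \rho(\beta) (\zeta_t^\beta)^{-1}$ produces long rational expressions in $\sigma_1, \tau_1, e^t$ that must be algebraically manipulated to match $\rho_t(\beta)$. Because the simplifications are not illuminating by hand, I would carry them out symbolically in Mathematica---precisely the content of Section 8 of the accompanying code. As a forward-looking remark, the shape of the $\zeta^c_t$ strongly suggests that they lift a natural action of the hexagon flow on the flags fixed by the peripheral holonomies (rescaling the triple ratio on one of the ideal triangles by $e^t$), and a conceptual derivation along those lines would be attractive; however, for the purposes of this theorem the direct computational verification is sufficient.
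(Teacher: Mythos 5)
Your proposal is correct and is essentially the paper's own argument: the paper likewise treats this as a direct symbolic verification (Section 8 of the accompanying Mathematica code) that conjugating the explicit unipotent--locus holonomies by the $\zeta^c_t$ reproduces the Fock--Goncharov holonomies at the flowed coordinates $(\sigma_1,e^t\tau_1)$, the only conceptual input being that $\HmFlow^t_\hexagon$ acts by $(\tau_1,\tau_2)\mapsto(e^t\tau_1,e^{-t}\tau_2)$ and hence preserves $\unipotentLocus$. Checking exact per--generator equality with the reconstructed matrices at the flowed point is indeed the right criterion, since it makes $\rho_t$ literally the reconstructed representative there (and yields the relation $\rho_t(\alpha)\rho_t(\beta)\rho_t(\gamma)=\identity$ for free), which is precisely what the code verifies.
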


The next result is analogous to the above theorem, and concerns the hexagon flow.
\begin{theorem}\label{thm : conjugating matrices for hexagon}
	The flow $\Phi^t_\eruption = \rechol\circ\HmFlow^t_\eruption$ on $\unipotentLocus\subset \CharVar_3^+(\pants)$ is given by the following conjugations. For $(\sigma_1,\tau_1)\in\unipotentLocus$, let
	\[
	\eta^\alpha_t = \left(
\begin{array}{ccc}
 \frac{e^{-t/3} \left(\sigma _1 e^t+1\right)}{\sigma _1+1} & \frac{\sigma _1
   e^{-t/3} \left(e^t-1\right) \left(\tau _1+1\right)}{\left(\sigma
   _1+1\right) \tau _1} & 0 \\
 0 & e^{-t/3} & 0 \\
 0 & 0 & \frac{\left(\sigma _1+1\right) e^{2 t/3}}{\sigma _1 e^t+1} \\
\end{array}
\right)
	\]
	and
	\[
	\eta^\gamma_t = \left(
\begin{array}{ccc}
 \frac{\left(\sigma _1+1\right) e^{2 t/3}}{\sigma _1 e^t+1} & 0 & 0 \\
 0 & e^{-t/3} & 0 \\
 0 & \frac{\sigma _1 e^{-t/3} \left(e^t-1\right) \left(\tau
   _1+1\right)}{\sigma _1+1} & \frac{e^{-t/3} \left(\sigma _1
   e^t+1\right)}{\sigma _1+1} \\
\end{array}
\right).
	\]
	We define the matrix $\eta^\beta_t$ in terms of its entries below:
	\begin{gather*}
		(\eta^\beta_t)_{11} = 0,\\
		(\eta^\beta_t)_{12}= \frac{e^{-t/3} \left(\tau _1+1\right) \left(\sigma _1^2
   e^t-1\right)}{\left(\sigma _1+1\right) \tau _1}\\
		(\eta^\beta_t)_{13}=\frac{e^{-t/3} \left(\sigma _1^2 e^t \left(\tau _1+1\right)+\sigma _1 e^t
   \tau _1-1\right)}{\left(\sigma _1+1\right) \tau _1}\\
		(\eta^\beta_t)_{21}=0\\
		(\eta^\beta_t)_{22}=-\frac{e^{-t/3} \left(\sigma _1+\sigma _1^2 e^t \left(\tau _1+1\right)-\tau
   _1\right)}{\left(\sigma _1+1\right) \tau _1}\\
		(\eta^\beta_t)_{23}=-\frac{\sigma _1 e^{-t/3} \left(\sigma _1 e^t \left(\tau _1+1\right)+e^t
   \tau _1+1\right)}{\left(\sigma _1+1\right) \tau _1}\\
		(\eta^\beta_t)_{31}=\frac{\left(\sigma _1+1\right) e^{2 t/3} \tau _1}{\sigma _1 e^t+1}\\
		(\eta^\beta_t)_{32}=\frac{\left(\tau _1+1\right) \left(\sigma _1^2 e^t \left(\tau
   _1+2\right)+\sigma _1 \left(2 e^t \tau _1+1\right)+\sigma _1^3 e^{2
   t}+e^t \tau _1\right)}{\left(\sigma _1+1\right) e^{t/3} \tau _1 \left(\sigma _1
   e^t+1\right)}\\
		(\eta^\beta_t)_{33}=\frac{\sigma _1^3 e^{2 t} \left(\tau _1+1\right)+\sigma _1^2 e^t
   \left(\left(e^t+3\right) \tau _1+2\right)+\sigma _1 \left(\left(4
   e^t+1\right) \tau _1+1\right)+\left(e^t+1\right) \tau
   _1}{\left(\sigma _1+1\right) e^{t/3} \tau _1 \left(\sigma _1
   e^t+1\right)}.
	\end{gather*}
	Then the flow \[
\rho_t = \begin{cases}
 	\alpha\mapsto \eta_t^\alpha\rho(\alpha)(\eta_t^\alpha)^{-1}\\
 	\beta\mapsto \eta_t^\beta\rho(\beta)(\eta_t^\beta)^{-1}\\
 	\gamma\mapsto \eta_t^\gamma\rho(\gamma)(\eta_t^\gamma)^{-1}
 \end{cases}
\]
covers the flow $\Phi^t_\eruption$.
	\end{theorem}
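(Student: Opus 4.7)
The plan is to follow the same computational verification strategy used for Theorem \ref{thm : conjugating matrices for eruption}. The theorem is ultimately an identity-checking statement: for each peripheral generator $x \in \{\alpha,\beta,\gamma\}$, one must verify
\[
\eta^x_t\,\rho(x)\,(\eta^x_t)^{-1} \;=\; \rho_t(x),
\]
where $\rho_t$ is the holonomy corresponding to the coordinates obtained by applying $\HmFlow^t_\eruption$ to the original coordinates. Because the formulas are rational in $\sigma_1,\tau_1,e^t$ and the matrices are given explicitly, the verification reduces to a finite algebraic check.

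First I would specialize the holonomy formulas of Section \ref{sec : holonomies pair of pants description} to the unipotent locus. By Lemma \ref{cor : parametrization of symplectic leaf as level sets} applied to $L=(1,1,1,1,1,1)$, this means substituting $\sigma_2=\sigma_4=\sigma_6=1/\sigma_1$, $\sigma_3=\sigma_5=\sigma_1$, and $\tau_2=1/\tau_1$ into the expressions for $\rho(\alpha),\rho(\beta),\rho(\gamma)$. This yields three matrices depending only on the two free parameters $(\sigma_1,\tau_1)\in\unipotentLocus$. Next, I would apply the hexagon flow: by Proposition \ref{prop : Hamiltonian functions and vector fields} and its corollary, $\HmFlow^t_\eruption$ sends $\sigma_i\mapsto e^{(-1)^{i+1}t}\sigma_i$ while fixing $\tau_1,\tau_2$. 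One checks immediately from the Casimir formulas of Lemma \ref{lemma : casimir functions pair of pants} that this preserves $\unipotentLocus$, and on the unipotent locus it amounts to the substitution $\sigma_1\mapsto e^t\sigma_1$, $\tau_1\mapsto\tau_1$. Plugging this into the holonomy formulas gives $\rho_t(\alpha),\rho_t(\beta),\rho_t(\gamma)$ explicitly.

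The remaining step is to compute the three conjugations $\eta^x_t\,\rho(x)\,(\eta^x_t)^{-1}$ and compare with $\rho_t(x)$ entry by entry. Since $\eta^\alpha_t$ is upper triangular and $\eta^\gamma_t$ is lower triangular, the inverses are easy to write down by hand, and the conjugations for $\alpha$ and $\gamma$ reduce to manipulations of rational functions in $\sigma_1,\tau_1,e^t$; the verification for $\beta$ is the most involved because $\eta^\beta_t$ has no triangular structure, but it is still a finite computation. Section 9 of the accompanying Mathematica file carries out exactly this symbolic check. As a sanity step one should also verify that $\rho_t(\alpha)\rho_t(\beta)\rho_t(\gamma)=\identity$ in $\PSL 3$, but this is automatic once the three conjugation identities hold since $\rho(\alpha)\rho(\beta)\rho(\gamma)=\identity$ and the $\eta^x_t$ are not required to be equal (the point is only that $\rho_t$ define the same class in $\CharVar_3^+(\pants)$).

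The only genuinely difficult part of the argument is the \emph{discovery} of the matrices $\eta^\alpha_t,\eta^\beta_t,\eta^\gamma_t$, which, as noted in the discussion after Theorem \ref{thm : conjugating matrices for eruption}, is an instance of the Deligne--Simpson problem for the unipotent conjugacy classes. In practice one writes down ansatz matrices compatible with the flag structure (for $\alpha$ and $\gamma$ one imposes that they preserve the flags fixed by $\rho(\alpha)$ and $\rho(\gamma)$ respectively, which forces the triangular shapes seen in the statement), then uses the relation $\rho_t(\alpha)\rho_t(\beta)\rho_t(\gamma)=\identity$ together with the required trace/eigenvalue normalizations to solve for the remaining entries. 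Once the candidates are in hand, the proof itself is pure verification as outlined above.
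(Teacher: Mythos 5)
Your proposal matches the paper's approach: the paper proves Theorems \ref{thm : conjugating matrices for eruption} and \ref{thm : conjugating matrices for hexagon} purely by symbolic verification in Sections~8 and~9 of the Mathematica code, exactly as you describe --- specializing the holonomy formulas of Section \ref{sec : holonomies pair of pants description} to $\unipotentLocus$, applying $\HmFlow^t_\eruption$ (which on the leaf reads $\sigma_1\mapsto e^t\sigma_1$, $\tau_1\mapsto\tau_1$), and checking entry-by-entry that each $\eta^x_t\,\rho(x)\,(\eta^x_t)^{-1}$ equals $\rho_t(x)$. Your remarks on how the $\eta$'s were found (triangular ansatz for $\alpha,\gamma$, Deligne--Simpson for $\beta$) are consistent with the paper's brief aside that the matrices were obtained by solving matrix equations.
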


\appendix
\section{Proof of Theorem \ref{thm intro : symmetrized trace}, Part \ref{thm intro : periodicity for symmetrized trace}}\label{app : proof of periodicity for symmetrixed trace}
In this appendix, we give a proof of Theorem \ref{thm intro : symmetrized trace}, Part \ref{thm intro : periodicity for symmetrized trace}, which we restate here.
\begin{theorem}\label{thm : app symmetric trace}
	Let $\figeight = \alpha\gamma^{-1}$ be a figure eight curve on a pair of pants $P$ and let $\SymLeaf$ be a symplectic leaf in $\widehat\DefSpace(\pants)$. The restriction of the function $\trace_{\figeight}+\trace_{\figeight^{-1}}\big|_{\SymLeaf}\colon\SymLeaf\to\R$ attains a unique minimum. Moreover, every orbit of the Hamiltonian flow of  $\trace_{\figeight}+\trace_{\figeight^{-1}}\big|_{\SymLeaf}$ is periodic, and there is a unique fixed point.
\end{theorem}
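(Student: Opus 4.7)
The plan is to mirror exactly the three-step argument that established Theorem \ref{thm : periodicity of figure 8 in general}: prove properness of $\trace_\figeight+\trace_{\figeight^{-1}}$ restricted to any symplectic leaf $\SymLeaf_L$, prove strict convexity of this function along every mixed flow from Definition \ref{def : mixed flow}, and then combine these with the fact that $\SymLeaf_L$ is two--dimensional to get a unique minimum together with circle--shaped regular level sets. Since Lemma \ref{lemma : any two points are connected by a mixed flow} provides the analogue of the earthquake theorem and we already possess a template proof, the work is concentrated in verifying the two analytic ingredients.

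For properness, the argument is essentially free. The function $\trace_{\figeight^{-1}}\big|_{\SymLeaf_L}$ is non--negative (indeed strictly positive, since it is a trace of a matrix whose spectrum consists of positive reals coming from a positive framed representation), and $\trace_\figeight\big|_{\SymLeaf_L}$ is already proper by Proposition \ref{prop : trace of figure 8 is proper}. Hence if $(\trace_\figeight+\trace_{\figeight^{-1}})(q_n)\leq C$ along a sequence $q_n\in\SymLeaf_L$, then $\trace_\figeight(q_n)\leq C$, so $q_n$ remains in a compact subset. So the sum is proper, and being positive it attains a minimum.

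For convexity, I would compute $\trace_{\figeight^{-1}}$ in the coordinates $(\sigma_1,\tau_1)$ of $\SymLeaf_L$ via Lemma \ref{cor : parametrization of symplectic leaf as level sets} and the reconstruction procedure of Section \ref{sec : reconstructing the represenatation}, using the identity $\trace_{A^{-1}}=\frac12(\trace(A)^2-\trace(A^2))$ valid in $\SL 3$ to obtain a rational expression in the eight Fock--Goncharov coordinates. Then, as in the proof of Theorem \ref{thm : convexity of the trace function along mixed flows}, I would substitute the expressions for the eruption--hexagon mixed flows $\Psi^t_a = \HmFlow^{at}_\eruption\circ\HmFlow^t_\hexagon$ and $\Psi^t_a = \HmFlow^t_\eruption\circ\HmFlow^{at}_\hexagon$, differentiate twice in $t$ with Mathematica, and verify by inspection of the resulting expansion that every monomial carries a non--negative coefficient that is either a perfect square in $a$ or the constant $1$, multiplied by an exponential in $t$ and positive powers of the Fock--Goncharov coordinates and of the components of $L$. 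The main obstacle I anticipate is purely computational: the trace of $\figeight^{-1}$ produces an even more cumbersome polynomial than $\trace_\figeight$, and the second derivative along a general mixed flow will be large enough that extracting a manifestly positive sum of squares requires careful grouping of terms, possibly after simplification assuming $L$ lies in specific sub--families. If direct inspection fails, a fallback is to prove convexity separately for each of $\trace_\figeight$ (already established in Theorem \ref{thm : convexity of the trace function along mixed flows}) and $\trace_{\figeight^{-1}}$, the latter by the same structural argument since the only feature used was that all terms are positive monomials with $a$ appearing only inside squares or exponentials and $t$ only in exponentials. The sum is then strictly convex.

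Once properness and strict convexity along a family of flows connecting any two points of $\SymLeaf_L$ are established, the conclusion is immediate and identical to the proof in Section \ref{sec : proof of main thm}: strict convexity forces a unique critical point, which by properness is the unique minimum and hence the unique fixed point of the Hamiltonian flow; any regular level set is a non--empty, compact, one--dimensional submanifold of the two--dimensional leaf $\SymLeaf_L$ and therefore a disjoint union of topological circles along which the Hamiltonian vector field does not vanish, so each orbit coincides with its level set and is periodic.
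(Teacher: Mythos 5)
Your proposal is essentially correct and follows the same skeleton as the paper's proof: establish properness of the sum, establish strict convexity along every mixed flow, and then rerun the argument from Section \ref{sec : proof of main thm} verbatim. Two small remarks on where your write-up deviates from (and in one place streamlines) what the paper actually does.

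First, your properness argument is cleaner than the one in the appendix. You observe that since $\trace_{\figeight^{-1}}\big|_{\SymLeaf_L}\geq 0$ and $\trace_\figeight\big|_{\SymLeaf_L}$ is proper (Proposition \ref{prop : trace of figure 8 is proper}), the sublevel sets of the sum are closed subsets of the compact sublevel sets of $\trace_\figeight$, hence compact. The paper instead goes to the trouble of proving $\trace_{\figeight^{-1}}\big|_{\SymLeaf_L}$ is itself proper (by inspecting dominant monomials of the explicit rational expression of Lemma \ref{lemma : trace fig eight inverse formula}), which is more than is needed. However, be a little careful with how you justify positivity of $\trace_{\figeight^{-1}}$: your remark about the spectrum consisting of positive reals for a positive framed representation is a nontrivial claim that isn't quoted in the paper. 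The paper justifies positivity by observing that the explicit formula in the positive Fock--Goncharov coordinates is manifestly a sum of positive monomials. Either way works, but the second is what is actually on the page and needs no external reference.

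Second, for convexity your primary plan (compute the second derivative of the sum directly) is reasonable, but your stated fallback is exactly what the paper does: $\trace_\figeight$ is already strictly convex along any mixed flow by Theorem \ref{thm : convexity of the trace function along mixed flows}, so it suffices to show $\trace_{\figeight^{-1}}$ is convex (not even strictly) along any mixed flow, and then the sum is strictly convex. The appendix carries this out by expanding the second derivative of $\trace_{\figeight^{-1}}$ along both types of mixed flow and verifying that every monomial has a nonnegative coefficient of the form $a^2$, $(a\pm1)^2$, $(a\pm2)^2$, $(1-2a)^2$, $(2a+1)^2$, or a constant, with $t$ appearing only in exponentials and all remaining factors positive powers of coordinates or length parameters. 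The identity $\trace(A^{-1})=\tfrac12\bigl(\trace(A)^2-\trace(A^2)\bigr)$ for $A\in\SL 3$ that you mention is true and would give another route to the formula, but the paper computes $\rechol([f,\Sigma,\nu])(\delta^{-1})$ directly from the holonomy matrices. In short: the ideas match, the properness reduction is a genuine small improvement, and the rest is the same computational verification.
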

Here we prove that the symmetrized trace is a proper function on symplectic leaves, and that it is strictly convex along any mixed flow. The proof of the theorem is then identical to the proof of Theorem \ref{thm : periodicity of figure 8 in general} in Section \ref{sec : proof of main thm}, and we therefore leave it out.\\

 We begin by computing the trace function associated to $\figeight^{-1}$ in coordinates.

\begin{lemma}\label{lemma : trace fig eight inverse formula}
	Let $\figeight = \alpha\gamma^{-1}$ and $L = (\ell_{\alpha,1},\ell_{\alpha,2},\ell_{\beta,1},\ell_{\beta,2},\ell_{\gamma,1},\ell_{\gamma,2})\in\R^6_{>0}$ be a length vector defining a symplectic leaf $\SymLeaf_L$. Then
	\begin{gather*}
		\trace_{\figeight^{-1}}\big|_{\SymLeaf_L} = \frac{1}{\sigma _1^2 \tau _1 \ell _{\alpha ,1} \left(\ell _{\alpha ,2} \ell _{\beta ,1} \ell
   _{\beta ,2} \ell _{\gamma ,1} \ell _{\gamma ,2}\right){}^{2/3}}\cdot\\
   \bigg(\left(\tau _1+1\right) \left(\sigma _1+\tau _1+1\right) \ell _{\alpha ,1}^2 \ell _{\gamma
   ,1} \sqrt[3]{\ell _{\alpha ,2}^2 \ell _{\beta ,1} \ell _{\beta ,2}^4}+\sigma_1\ell_{\alpha,1}\bigg(\tau _1 \left(\tau _1+1\right) \ell _{\gamma ,1} \sqrt[3]{\ell _{\alpha ,2}^2 \ell
   _{\beta ,1} \ell _{\beta ,2}^4}+\\
   \sigma _1 \tau _1^2 \ell _{\gamma ,2} \sqrt[3]{\ell _{\alpha ,2}^2 \ell _{\beta ,1} \ell
   _{\beta ,2}}+\sigma _1 \left(\sigma _1+1\right) \ell _{\gamma ,1} \ell _{\gamma ,2}
   \sqrt[3]{\ell _{\alpha ,2}^2 \ell _{\beta ,1}^4 \ell _{\beta ,2}}\bigg)+\\\sigma_1\bigg(\sigma _1 \tau _1 \sqrt[3]{\ell _{\alpha ,2}} \left(\ell _{\alpha ,1} \ell _{\gamma ,1}
   \ell _{\gamma ,2}\right){}^{2/3} \left(\sigma _1 \ell _{\beta ,1}+\tau _1 \ell _{\beta
   ,2}\right)+\tau _1 \left(\tau _1+1\right) \ell _{\beta ,2} \sqrt[3]{\ell _{\alpha
   ,1}^5 \ell _{\alpha ,2} \ell _{\gamma ,1}^2 \ell _{\gamma ,2}^2}+\\
   \left(\sigma _1+\tau _1+1\right) \ell _{\beta ,1} \ell _{\beta ,2} \sqrt[3]{\ell _{\alpha
   ,1}^5 \ell _{\alpha ,2} \ell _{\gamma ,1}^2 \ell _{\gamma ,2}^2}+\sigma_1\tau_1\bigg(\ell _{\gamma ,2} \sqrt[3]{\ell _{\alpha ,2}^2 \ell _{\beta ,1} \ell _{\beta ,2}}+\\
   \sqrt[3]{\ell _{\alpha ,1}^4 \ell _{\beta ,1}^2 \ell _{\beta ,2}^2 \ell _{\gamma ,1} \ell
   _{\gamma ,2}}+\sqrt[3]{\ell _{\alpha ,1}^4 \ell _{\beta ,1}^2 \ell _{\beta ,2}^2 \ell
   _{\gamma ,1}^4 \ell _{\gamma ,2}}+\sqrt[3]{\ell _{\alpha ,1}^4 \ell _{\beta ,1}^2 \ell
   _{\beta ,2}^2 \ell _{\gamma ,1} \ell _{\gamma ,2}^4}\bigg)+\\
   \ell_{\alpha,2}\bigg(\sigma _1^2 \tau _1 \sqrt[3]{\ell _{\alpha ,1} \ell _{\beta ,1}^2 \ell _{\beta ,2}^2 \ell
   _{\gamma ,1} \ell _{\gamma ,2}^4}+(\tau _1+1)\bigg(\tau _1 \sqrt[3]{\ell _{\alpha ,1}^4 \ell _{\beta ,1}^2 \ell _{\beta ,2}^2 \ell _{\gamma
   ,1} \ell _{\gamma ,2}}+\sqrt[3]{\ell _{\alpha ,1}^4 \ell _{\beta ,1}^2 \ell _{\beta
   ,2}^2 \ell _{\gamma ,1}^4 \ell _{\gamma ,2}}\bigg)+\\
   \sigma_1\bigg(\tau _1^2 \sqrt[3]{\ell _{\alpha ,1} \ell _{\beta ,1}^2 \ell _{\beta ,2}^2 \ell _{\gamma
   ,1} \ell _{\gamma ,2}}+\sqrt[3]{\ell _{\alpha ,1}^4 \ell _{\beta ,1}^2 \ell _{\beta
   ,2}^2 \ell _{\gamma ,1}^4 \ell _{\gamma ,2}}+\tau_1\bigg(\sqrt[3]{\ell _{\alpha ,1} \ell _{\beta ,1}^2 \ell _{\beta ,2}^2 \ell _{\gamma ,1} \ell
   _{\gamma ,2}}+\\
   \sqrt[3]{\ell _{\alpha ,1}^4 \ell _{\beta ,1}^2 \ell _{\beta ,2}^2 \ell _{\gamma ,1} \ell
   _{\gamma ,2}}+\sqrt[3]{\ell _{\alpha ,1} \ell _{\beta ,1}^2 \ell _{\beta ,2}^2 \ell
   _{\gamma ,1}^4 \ell _{\gamma ,2}}+\sqrt[3]{\ell _{\alpha ,1}^4 \ell _{\beta ,1}^2 \ell
   _{\beta ,2}^2 \ell _{\gamma ,1}^4 \ell _{\gamma ,2}}+\\
   \sqrt[3]{\ell _{\alpha ,1} \ell _{\beta ,1}^2 \ell _{\beta ,2}^2 \ell _{\gamma ,1} \ell
   _{\gamma ,2}^4}+\sqrt[3]{\ell _{\alpha ,1}^4 \ell _{\beta ,1}^2 \ell _{\beta ,2}^2
   \ell _{\gamma ,1} \ell _{\gamma ,2}^4}\bigg)\bigg)\bigg)\bigg)\bigg).
	\end{gather*}
\end{lemma}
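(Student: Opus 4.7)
The proposal is to prove Lemma \ref{lemma : trace fig eight inverse formula} by a direct computation following the exact same template as the proof of Lemma \ref{lemma : trace figure eight in coordinates}. Since $\figeight = \alpha\gamma^{-1}$, we have $\figeight^{-1} = \gamma\alpha^{-1}$, so the quantity to compute is $\trace(\rechol([f,\Sigma,\nu])(\gamma)\cdot \rechol([f,\Sigma,\nu])(\alpha)^{-1})$, viewed as a function on $\widehat\DefSpace(\pants)$ and then restricted to a symplectic leaf.

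The plan is to proceed in three steps. First, use the explicit matrix expressions for $\rechol(\sigma_1,\dots,\sigma_6,\tau_1,\tau_2)(\alpha)$ and $\rechol(\sigma_1,\dots,\sigma_6,\tau_1,\tau_2)(\gamma)$ given in Section \ref{sec : holonomies pair of pants description} (produced by the reconstruction from Fock--Goncharov coordinates recalled in Section \ref{sec : reconstructing the represenatation}). Second, compute the inverse matrix $\rechol(\cdot)(\alpha)^{-1}$ symbolically, form the product $\rechol(\cdot)(\gamma)\cdot\rechol(\cdot)(\alpha)^{-1}$, and take its trace as a rational function of the eight coordinates $\sigma_1,\dots,\sigma_6,\tau_1,\tau_2$. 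Third, use the parameterization of the symplectic leaf $\SymLeaf_L$ provided by Lemma \ref{cor : parametrization of symplectic leaf as level sets}, substituting the closed--form expressions for $\sigma_{2,L},\sigma_{3,L},\sigma_{4,L},\sigma_{5,L},\sigma_{6,L},\tau_{2,L}$ in terms of $\sigma_1,\tau_1$ and the length vector $L$; simplification then produces the displayed formula.

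Concretely, the whole computation is performed with Mathematica, extending Section~4 of the accompanying code that already defines \texttt{traceFigure8}: one defines an analogous function \texttt{traceFigure8Inverse} by replacing the word \texttt{alpha} in the product of $\PSL 3$ matrices with \texttt{gamma.Inverse[alpha]} (using the same symbolic matrices produced in Sections~1--3 of the code), and then calls \texttt{Simplify} on the result after applying the substitution coming from Lemma \ref{cor : parametrization of symplectic leaf as level sets}. No conceptual input is required beyond the formulas already established in Sections \ref{sec : Big FG coords} and \ref{sec : Poisson structure and symplectic leaves}.

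The only potential obstacle is bookkeeping: the entries of $\rechol(\cdot)(\alpha)^{-1}$ involve cube roots and denominators that must cancel cleanly against those of $\rechol(\cdot)(\gamma)$ before the Casimir substitutions kick in, so the intermediate expression may be unwieldy. This is handled by working with \texttt{FullSimplify} combined with the positivity assumption $\sigma_i,\tau_j,\ell_{x,i}>0$, which lets Mathematica reduce fractional powers without ambiguity and yields exactly the form displayed in the statement. No new ideas beyond those of Lemma \ref{lemma : trace figure eight in coordinates} are needed.
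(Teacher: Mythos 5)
Your proposal is correct and matches the paper's own proof, which is likewise a direct symbolic computation (the paper simply cites Section 4 of the Mathematica code, where the function \texttt{InvtraceFigure8} computes $\trace(\rho(\figeight^{-1}))$ from the reconstructed holonomies of $\alpha$ and $\gamma$ and the leaf parameterization of Lemma \ref{cor : parametrization of symplectic leaf as level sets}). Your identification $\figeight^{-1}=\gamma\alpha^{-1}$ and the substitution--then--simplify pipeline are exactly what the code does, so no further argument is needed.
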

\begin{proof}
	The proof of this fact is a computation, found in Section 4 of the Mathematica code. The function \texttt{InvtraceFigure8} will give the above output with the length vectors as input.
\end{proof}

We now move on to prove properness of the symmetrized trace function.

\begin{proposition}
	Let $\figeight = \alpha\gamma^{-1}$ and $L = (\ell_{\alpha,1},\ell_{\alpha,2},\ell_{\beta,1},\ell_{\beta,2},\ell_{\gamma,1},\ell_{\gamma,2})\in\R^6_{>0}$ be a length vector defining a symplectic leaf $\SymLeaf_L$. Then the function $\trace_\figeight+\trace_{\figeight^{-1}}\big|_{\SymLeaf_L}\colon\SymLeaf_L\to\R$ is proper. In particular, it realizes a minimum in $\SymLeaf_L$.
\end{proposition}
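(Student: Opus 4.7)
The plan is to reduce the claim directly to Proposition \ref{prop : trace of figure 8 is proper} by exploiting the positivity that is manifest in the Fock--Goncharov coordinates. Inspecting the explicit expressions in Lemma \ref{lemma : trace figure eight in coordinates} and Lemma \ref{lemma : trace fig eight inverse formula}, every monomial in both $\trace_\figeight\big|_{\SymLeaf_L}$ and $\trace_{\figeight^{-1}}\big|_{\SymLeaf_L}$ appears with a positive coefficient, and every variable $\sigma_1, \tau_1, \ell_{x,i}$ is strictly positive. Hence both trace functions define strictly positive maps from $\SymLeaf_L \cong \R^2_{>0}$ to $\R$.

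Given this positivity, the sublevel sets of the sum are controlled by those of $\trace_\figeight$ alone. Explicitly, for every $M>0$,
\[
\left\{(\sigma_1,\tau_1)\in\SymLeaf_L \st (\trace_\figeight+\trace_{\figeight^{-1}})(\sigma_1,\tau_1)\leq M\right\}\subseteq \left\{(\sigma_1,\tau_1)\in\SymLeaf_L \st \trace_\figeight(\sigma_1,\tau_1)\leq M\right\}.
\]
The right-hand side is compact by Proposition \ref{prop : trace of figure 8 is proper}, and the left-hand side is closed inside it, hence compact. This proves properness of $\trace_\figeight+\trace_{\figeight^{-1}}\big|_{\SymLeaf_L}$. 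Since the function is positive and proper, it attains its infimum on $\SymLeaf_L$, giving the minimum.

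There is no genuine obstacle here: the whole argument is immediate once one observes that $\trace_{\figeight^{-1}}\big|_{\SymLeaf_L}\geq 0$, a fact visible by inspection of Lemma \ref{lemma : trace fig eight inverse formula}. A slightly longer alternative would be to mimic the case analysis used in the proof of Proposition \ref{prop : trace of figure 8 is proper}, exhibiting for each of the five escape scenarios $(\infty,\infty),(0,\infty),(\infty,0),(x,0),(0,y)$ a single monomial of $\trace_\figeight+\trace_{\figeight^{-1}}$ that diverges; but this is superfluous given the domination argument above.
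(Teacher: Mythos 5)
Your argument is correct, and it is genuinely lighter than the one in the paper. The paper's proof also starts from the positivity of both $\trace_\figeight$ and $\trace_{\figeight^{-1}}$ (read off from Lemma \ref{lemma : trace figure eight in coordinates} and Lemma \ref{lemma : trace fig eight inverse formula}), but then it proves that $\trace_{\figeight^{-1}}\big|_{\SymLeaf_L}$ is itself proper, by repeating the escape--sequence case analysis of Proposition \ref{prop : trace of figure 8 is proper} and exhibiting a divergent monomial in each of the five scenarios. You instead dominate: since $\trace_{\figeight^{-1}}\geq 0$, every sublevel set $\{\trace_\figeight+\trace_{\figeight^{-1}}\leq M\}$ sits inside $\{\trace_\figeight\leq M\}$, which is compact because it equals $\trace_\figeight^{-1}([0,M])$ (positivity again) and $\trace_\figeight\big|_{\SymLeaf_L}$ is proper by Proposition \ref{prop : trace of figure 8 is proper}; being closed by continuity, the smaller set is compact, so the sum is proper and, being positive, attains a minimum. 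This makes the term-by-term analysis of $\trace_{\figeight^{-1}}$ superfluous for the proposition as stated, exactly as you say. What the paper's longer route buys is the slightly stronger standalone fact that $\trace_{\figeight^{-1}}\big|_{\SymLeaf_L}$ is proper on its own (a statement symmetric to Proposition \ref{prop : trace of figure 8 is proper}, potentially of independent use), whereas your route only needs its nonnegativity, which is visible by inspection of the formula. Both are valid; yours is the minimal argument for the claim at hand.
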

\begin{proof}
	By Lemma \ref{lemma : trace fig eight inverse formula}, both $\trace_\figeight$ and $\trace_{\figeight^{-1}}$ are positive functions. In Proposition \ref{prop : trace of figure 8 is proper}, we proved that $\trace_\figeight$ is proper. Hence we only need to show that $\trace_{\figeight^{-1}}$ is proper.\\
	
	Let $(\sigma_{1,n},\tau_{1,n})_{n\in\N}$ be a sequence such that as $n$ goes to $\infty$, $(\sigma_{1,n},\tau_{1,n})$ goes to a tuple in $\{(\infty,\infty), (0,\infty),(\infty,0),(x,0),(0,y)\st x,y\geq 0\}$. We need to show that in any of these cases, $\trace_\figeight(\sigma_{1,n},\tau_{1,n})\to\infty$ as $n\to\infty$. Notice that all the variables are positive and that all the signs on the monomials are positive as well. Hence, it is enough to find terms in the expression in Lemma \ref{lemma : trace fig eight inverse formula} that diverges along any of the above sequences.
	
	\begin{enumerate}
		\item Assume $(\sigma_{1,n},\tau_{1,n})\to(\infty,\infty)$, then the term
	\[
	\frac{\sigma _{1,n}^2 \tau _{1,n} \sqrt[3]{\ell _{\alpha ,2}} \left(\ell _{\alpha ,1} \ell _{\gamma ,1}
   \ell _{\gamma ,2}\right){}^{2/3} \left(\sigma _{1,n} \ell _{\beta ,1}+\tau _{1,n} \ell _{\beta
   ,2}\right)}{\sigma _{1,n}^2 \tau _{1,n} \ell _{\alpha ,1} \left(\ell _{\alpha ,2} \ell _{\beta ,1} \ell
   _{\beta ,2} \ell _{\gamma ,1} \ell _{\gamma ,2}\right){}^{2/3}}\xrightarrow{n\to\infty}\infty.
	\]
	\item Assume $(\sigma_{1,n},\tau_{1,n})\to(0,\infty)$, then the term
	\[
	\frac{\left(\tau _{1,n}+1\right) \left(\sigma _{1,n}+\tau _{1,n}+1\right) \ell _{\alpha ,1}^2 \ell _{\gamma
   ,1} \sqrt[3]{\ell _{\alpha ,2}^2 \ell _{\beta ,1} \ell _{\beta ,2}^4}}{\sigma _{1,n}^2 \tau _{1,n} \ell _{\alpha ,1} \left(\ell _{\alpha ,2} \ell _{\beta ,1} \ell
   _{\beta ,2} \ell _{\gamma ,1} \ell _{\gamma ,2}\right){}^{2/3}}\xrightarrow{n\to\infty}\infty.
	\]
	\item Assume $(\sigma_{1,n},\tau_{1,n})\to(\infty,0)$, then the term
	\[
	\frac{\sigma _{1,n}^2 \tau _{1,n} \sqrt[3]{\ell _{\alpha ,2}} \left(\ell _{\alpha ,1} \ell _{\gamma ,1}
   \ell _{\gamma ,2}\right){}^{2/3} \left(\sigma _{1,n} \ell _{\beta ,1}+\tau _{1,n} \ell _{\beta
   ,2}\right)}{\sigma _{1,n}^2 \tau _{1,n} \ell _{\alpha ,1} \left(\ell _{\alpha ,2} \ell _{\beta ,1} \ell
   _{\beta ,2} \ell _{\gamma ,1} \ell _{\gamma ,2}\right){}^{2/3}}\xrightarrow{n\to\infty}\infty.
	\]
\item Assume $(\sigma_{1,n},\tau_{1,n})\to(x,0)$ or $(0,y)$ for $x,y\geq 0$, then the term
\[
\frac{\ell _{\alpha ,1}^2 \ell _{\gamma
   ,1} \sqrt[3]{\ell _{\alpha ,2}^2 \ell _{\beta ,1} \ell _{\beta ,2}^4}}{\sigma _{1,n}^2 \tau _{1,n} \ell _{\alpha ,1} \left(\ell _{\alpha ,2} \ell _{\beta ,1} \ell
   _{\beta ,2} \ell _{\gamma ,1} \ell _{\gamma ,2}\right){}^{2/3}}\xrightarrow{n\to\infty}\infty.
\]
	\end{enumerate}
	This finishes to proof.
\end{proof}

We finish by proving convexity.

\begin{proposition}
	Let $\figeight = \alpha\gamma^{-1}$ and $L = (\ell_{\alpha,1},\ell_{\alpha,2},\ell_{\beta,1},\ell_{\beta,2},\ell_{\gamma,1},\ell_{\gamma,2})\in\R^6_{>0}$ be a length vector defining a symplectic leaf $\SymLeaf_L$. The function $\trace_{\figeight}+\trace_{\figeight^{-1}}\big|_{\SymLeaf_L}\colon\SymLeaf_L\to\R$ is strictly convex along any mixed flow.
\end{proposition}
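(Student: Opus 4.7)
The plan is to mirror the strategy of Theorem \ref{thm : convexity of the trace function along mixed flows}. Since strict convexity is preserved under addition (and in fact the sum of a strictly convex function with a convex function is strictly convex), and since $\trace_\figeight\big|_{\SymLeaf_L}$ is already known to be strictly convex along every mixed flow by Theorem \ref{thm : convexity of the trace function along mixed flows}, it suffices to show that $\trace_{\figeight^{-1}}\big|_{\SymLeaf_L}$ is convex along every mixed flow. For this, I would use the explicit coordinate expression for $\trace_{\figeight^{-1}}\big|_{\SymLeaf_L}$ provided by Lemma \ref{lemma : trace fig eight inverse formula}.

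First, recall from Definition \ref{def : mixed flow} that there are two types of mixed flows, $\Psi^t_a = \HmFlow^{at}_\eruption\circ\HmFlow^t_\hexagon$ and $\Psi^t_a = \HmFlow^t_\eruption\circ\HmFlow^{at}_\hexagon$. For a fixed base point $q = (\sigma_1,\tau_1)\in\SymLeaf_L$, the action of these flows on the Fock--Goncharov coordinates $(\sigma_1,\sigma_{2,L},\dots,\sigma_{6,L},\tau_1,\tau_{2,L})$ of $\SymLeaf_L$ is given (via the parametrization of Lemma \ref{cor : parametrization of symplectic leaf as level sets} together with the description of $\HmFlow_\eruption$ and $\HmFlow_\hexagon$ from Proposition \ref{prop : Hamiltonian functions and vector fields}) by multiplying each $\sigma_i$ by a factor of the form $e^{\pm t}$ or $e^{\pm at}$ and each $\tau_i$ by a factor of the form $e^{\pm at}$ or $e^{\pm t}$, respectively. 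In particular, $t\mapsto \trace_{\figeight^{-1}}(\Psi^t_a(q))$ is a Laurent polynomial in the variables $e^{\pm t}, e^{\pm at}$ with coefficients that are positive monomials in the $\sigma_1,\tau_1$ and the entries of $L$.

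I would then compute the second derivative $\frac{\partial^2}{\partial t^2}\trace_{\figeight^{-1}}(\Psi^t_a(q))$ in Mathematica, following the template used in the proof of Theorem \ref{thm : convexity of the trace function along mixed flows}. The key observation to check is that, after differentiation, each monomial appearing acquires either a square factor of the form $(k_1 a + k_2)^2$ for integers $k_1,k_2$ (coming from the chain rule applied to the exponentials), or is manifestly positive because all its factors (coordinates, length parameters, exponentials) are positive. Combined with the already established strict convexity of $\trace_\figeight$ along both types of mixed flows, this will yield strict convexity of the symmetric sum $\trace_\figeight + \trace_{\figeight^{-1}}$ along any mixed flow.

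The main obstacle is that \emph{a priori} some monomials in $\frac{\partial^2}{\partial t^2}\trace_{\figeight^{-1}}(\Psi^t_a(q))$ could carry a non-square factor linear in $a$ (which would be negative for $a<0$ of large modulus), spoiling the sign argument. If this occurs, the approach is to not treat $\trace_{\figeight^{-1}}$ in isolation, but instead to add the two second-derivative expressions for $\trace_\figeight$ (reproduced from the proof of Theorem \ref{thm : convexity of the trace function along mixed flows}) and $\trace_{\figeight^{-1}}$ and rely on term-by-term cancellation or on grouping into complete squares of the form $(k_1 a + k_2)^2$, exploiting the symmetry $\figeight\leftrightarrow\figeight^{-1}$ that exchanges the roles of certain length parameters. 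In either case, the verification is a finite, explicit algebraic check carried out in Mathematica, whose output is then inspected to confirm that every monomial has a positive coefficient.
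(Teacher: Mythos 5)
Your proposal follows the same strategy as the paper's proof: reduce to showing convexity of $\trace_{\figeight^{-1}}$ alone (using that $\trace_\figeight$ is already strictly convex by Theorem \ref{thm : convexity of the trace function along mixed flows}), then compute $\frac{\partial^2}{\partial t^2}\trace_{\figeight^{-1}}(\Psi^t_a(q))$ in Mathematica for both types of mixed flows and verify positivity term-by-term via the $(k_1a+k_2)^2$ structure and positivity of the coordinates. The fallback you describe (grouping across $\trace_\figeight$ and $\trace_{\figeight^{-1}}$) turns out to be unnecessary: the paper's computation shows that the second derivative of $\trace_{\figeight^{-1}}$ alone is already manifestly positive.
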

\begin{proof}
	Since we proved in Theorem \ref{thm : convexity of the trace function along mixed flows} that $\trace_\figeight$ is strictly convex, we only have to show that the function $\trace_{\figeight^{-1}}$ is strictly convex along any mixed flow. Consider first the mixed flow $\Psi^t_a=\HmFlow^{at}_\hexagon\circ\HmFlow_\eruption^t$. By Section 6 of the Mathematica code, using the function \texttt{InvtraceFigure8} and \texttt{testfMix1} as input, we obtain that 
	\begin{gather*}
		\frac{\partial^2}{\partial t^2}(\trace_{\figeight^{-1}}(\Psi^t_a(\sigma_1,\tau_1))) = \frac{e^{-t(2a+1)}}{\sigma _1^2 \tau _1 \ell _{\alpha ,1} \left(\ell _{\alpha ,2} \ell _{\beta ,1} \ell
   _{\beta ,2} \ell _{\gamma ,1} \ell _{\gamma ,2}\right){}^{2/3}}\cdot\\
   \bigg(\ell _{\alpha ,1}^2 \ell _{\gamma ,1} \sqrt[3]{\ell _{\alpha ,2}^2 \ell _{\beta ,1} \ell
   _{\beta ,2}^4} \left(a^2 \sigma _1 \tau _1 e^{a t+t}+8 a^2 e^t \tau _1+(a+1)^2 \sigma
   _1 e^{a t}+(1-2 a)^2 e^{2 t} \tau _1^2+(2 a+1)^2\right)+\\
   \sigma _1 e^{a t} \ell _{\alpha ,1}\bigg(a^2 e^t \tau _1 \ell _{\gamma ,1} \sqrt[3]{\ell _{\alpha ,2}^2 \ell _{\beta ,1} \ell
   _{\beta ,2}^4}+(a-1)^2 e^{2 t} \tau _1^2 \ell _{\gamma ,1} \sqrt[3]{\ell _{\alpha
   ,2}^2 \ell _{\beta ,1} \ell _{\beta ,2}^4}+\\
   \sigma _1 \tau _1^2 e^{(a+2) t} \ell _{\gamma ,2} \sqrt[3]{\ell _{\alpha ,2}^2 \ell
   _{\beta ,1} \ell _{\beta ,2}}+(a-1)^2 \sigma _1^2 e^{2 a t} \ell _{\gamma ,1} \ell
   _{\gamma ,2} \sqrt[3]{\ell _{\alpha ,2}^2 \ell _{\beta ,1}^4 \ell _{\beta ,2}}+\sigma
   _1 e^{a t} \ell _{\gamma ,1} \ell _{\gamma ,2} \sqrt[3]{\ell _{\alpha ,2}^2 \ell
   _{\beta ,1}^4 \ell _{\beta ,2}}\bigg)+\\
   \sigma_1\bigg((a+1)^2 \sigma _1^2 \tau _1^2 e^{(3 a+2) t} \ell _{\gamma ,2} \sqrt[3]{\ell _{\alpha
   ,2}^2 \ell _{\beta ,1} \ell _{\beta ,2}}+\\
   \sigma _1 \tau _1^2 e^{2 (a+1) t} \left(\ell _{\alpha ,2} \sqrt[3]{\ell _{\alpha ,1} \ell
   _{\beta ,1}^2 \ell _{\beta ,2}^2 \ell _{\gamma ,1} \ell _{\gamma ,2}}+\ell _{\beta ,2}
   \sqrt[3]{\ell _{\alpha ,1}^2 \ell _{\alpha ,2} \ell _{\gamma ,1}^2 \ell _{\gamma
   ,2}^2}\right)+\\
   (a+1)^2 e^{a t} \left(\ell _{\alpha ,2} \sqrt[3]{\ell _{\alpha ,1}^4 \ell _{\beta ,1}^2
   \ell _{\beta ,2}^2 \ell _{\gamma ,1}^4 \ell _{\gamma ,2}}+\ell _{\beta ,1} \ell
   _{\beta ,2} \sqrt[3]{\ell _{\alpha ,1}^5 \ell _{\alpha ,2} \ell _{\gamma ,1}^2 \ell
   _{\gamma ,2}^2}\right)+\\
   \sigma _1 e^{2 a t} \left(\ell _{\alpha ,2} \sqrt[3]{\ell _{\alpha ,1}^4 \ell _{\beta
   ,1}^2 \ell _{\beta ,2}^2 \ell _{\gamma ,1}^4 \ell _{\gamma ,2}}+\ell _{\beta ,1} \ell
   _{\beta ,2} \sqrt[3]{\ell _{\alpha ,1}^5 \ell _{\alpha ,2} \ell _{\gamma ,1}^2 \ell
   _{\gamma ,2}^2}\right)+\\
   a^2 \sigma _1^2 \tau _1 e^{3 a t+t} \sqrt[3]{\ell _{\alpha ,2}} \left(\ell _{\beta ,1}
   \left(\ell _{\alpha ,1} \ell _{\gamma ,1} \ell _{\gamma
   ,2}\right){}^{2/3}+\sqrt[3]{\ell _{\alpha ,1} \ell _{\alpha ,2}^2 \ell _{\beta ,1}^2
   \ell _{\beta ,2}^2 \ell _{\gamma ,1} \ell _{\gamma ,2}^4}\right)+\\
   a^2 \tau _1 e^{a t+t} \left(\left(\ell _{\beta ,1}+1\right) \ell _{\beta ,2}
   \sqrt[3]{\ell _{\alpha ,1}^5 \ell _{\alpha ,2} \ell _{\gamma ,1}^2 \ell _{\gamma
   ,2}^2}+\ell _{\alpha ,2} \left(\sqrt[3]{\ell _{\alpha ,1}^4 \ell _{\beta ,1}^2 \ell
   _{\beta ,2}^2 \ell _{\gamma ,1} \ell _{\gamma ,2}}+\sqrt[3]{\ell _{\alpha ,1}^4 \ell
   _{\beta ,1}^2 \ell _{\beta ,2}^2 \ell _{\gamma ,1}^4 \ell _{\gamma ,2}}\right)\right)\bigg)\bigg)\bigg)
	\end{gather*}
	This quantity is positive for any $a\in\R$ and any coordinates.\\
	
	Now consider the mixed flow $\Psi^t_a=\HmFlow^{t}_\hexagon\circ\HmFlow_\eruption^{at}$. From Section 6 of the Mathematica code this time using \texttt{testfMix2} as input, we obtain that 
	\begin{gather*}
		\frac{\partial^2}{\partial t^2}(\trace_{\figeight^{-1}}(\Psi^t_a(\sigma_1,\tau_1))) = \frac{e^{-t(2+a)}}{\sigma _1^2 \tau _1 \ell _{\alpha ,1} \left(\ell _{\alpha ,2} \ell _{\beta ,1} \ell
   _{\beta ,2} \ell _{\gamma ,1} \ell _{\gamma ,2}\right){}^{2/3}}\cdot\\
   \bigg(\bigg(\ell _{\alpha ,1}^2 \ell _{\gamma ,1} \sqrt[3]{\ell _{\alpha ,2}^2 \ell _{\beta ,1} \ell
   _{\beta ,2}^4} \left(\sigma _1 \tau _1 e^{a t+t}+(a+1)^2 \sigma _1 e^t+(a-2)^2 \tau
   _1^2 e^{2 a t}+8 \tau _1 e^{a t}+(a+2)^2\right)+\\
   e^t\sigma_1\ell_{\alpha,1}\bigg((a-1)^2 \tau _1^2 e^{2 a t} \ell _{\gamma ,1} \sqrt[3]{\ell _{\alpha ,2}^2 \ell _{\beta
   ,1} \ell _{\beta ,2}^4}+\tau _1 e^{a t} \ell _{\gamma ,1} \sqrt[3]{\ell _{\alpha ,2}^2
   \ell _{\beta ,1} \ell _{\beta ,2}^4}+\\
   a^2 \sigma _1 \tau _1^2 e^{2 a t+t} \ell _{\gamma ,2} \sqrt[3]{\ell _{\alpha ,2}^2 \ell
   _{\beta ,1} \ell _{\beta ,2}}+a^2 \sigma _1 e^t \ell _{\gamma ,1} \ell _{\gamma ,2}
   \sqrt[3]{\ell _{\alpha ,2}^2 \ell _{\beta ,1}^4 \ell _{\beta ,2}}+\\
   (a-1)^2 \sigma _1^2 e^{2 t} \ell _{\gamma ,1} \ell _{\gamma ,2} \sqrt[3]{\ell _{\alpha
   ,2}^2 \ell _{\beta ,1}^4 \ell _{\beta ,2}}\bigg)+\sigma_1\bigg((a+1)^2 \sigma _1^2 \tau _1^2 e^{(2 a+3) t} \ell _{\gamma ,2} \sqrt[3]{\ell _{\alpha
   ,2}^2 \ell _{\beta ,1} \ell _{\beta ,2}}+\\
   a^2 \sigma _1 \tau _1^2 e^{2 (a+1) t} \left(\ell _{\alpha ,2} \sqrt[3]{\ell _{\alpha ,1}
   \ell _{\beta ,1}^2 \ell _{\beta ,2}^2 \ell _{\gamma ,1} \ell _{\gamma ,2}}+\ell
   _{\beta ,2} \sqrt[3]{\ell _{\alpha ,1}^2 \ell _{\alpha ,2} \ell _{\gamma ,1}^2 \ell
   _{\gamma ,2}^2}\right)+\\
   (a-1)^2 \tau _1^2 e^{2 a t+t} \left(\ell _{\alpha ,2} \sqrt[3]{\ell _{\alpha ,1}^4 \ell
   _{\beta ,1}^2 \ell _{\beta ,2}^2 \ell _{\gamma ,1} \ell _{\gamma ,2}}+\ell _{\beta ,2}
   \sqrt[3]{\ell _{\alpha ,1}^5 \ell _{\alpha ,2} \ell _{\gamma ,1}^2 \ell _{\gamma
   ,2}^2}\right)+\\
   (a+1)^2 e^t \left(\ell _{\alpha ,2} \sqrt[3]{\ell _{\alpha ,1}^4 \ell _{\beta ,1}^2 \ell
   _{\beta ,2}^2 \ell _{\gamma ,1}^4 \ell _{\gamma ,2}}+\ell _{\beta ,1} \ell _{\beta ,2}
   \sqrt[3]{\ell _{\alpha ,1}^5 \ell _{\alpha ,2} \ell _{\gamma ,1}^2 \ell _{\gamma
   ,2}^2}\right)+\\
   a^2 \sigma _1 e^{2 t} \left(\ell _{\alpha ,2} \sqrt[3]{\ell _{\alpha ,1}^4 \ell _{\beta
   ,1}^2 \ell _{\beta ,2}^2 \ell _{\gamma ,1}^4 \ell _{\gamma ,2}}+\ell _{\beta ,1} \ell
   _{\beta ,2} \sqrt[3]{\ell _{\alpha ,1}^5 \ell _{\alpha ,2} \ell _{\gamma ,1}^2 \ell
   _{\gamma ,2}^2}\right)+\\
   \sigma _1^2 \tau _1 e^{(a+3) t} \sqrt[3]{\ell _{\alpha ,2}} \left(\ell _{\beta ,1}
   \left(\ell _{\alpha ,1} \ell _{\gamma ,1} \ell _{\gamma
   ,2}\right){}^{2/3}+\sqrt[3]{\ell _{\alpha ,1} \ell _{\alpha ,2}^2 \ell _{\beta ,1}^2
   \ell _{\beta ,2}^2 \ell _{\gamma ,1} \ell _{\gamma ,2}^4}\right)+\\
   e^{t(1+a)}\tau_1\bigg(\left(\ell _{\beta ,1}+1\right) \ell _{\beta ,2} \sqrt[3]{\ell _{\alpha ,1}^5 \ell
   _{\alpha ,2} \ell _{\gamma ,1}^2 \ell _{\gamma ,2}^2}+\\
   \ell _{\alpha ,2} \left(\sqrt[3]{\ell _{\alpha ,1}^4 \ell _{\beta ,1}^2 \ell _{\beta
   ,2}^2 \ell _{\gamma ,1} \ell _{\gamma ,2}}+\sqrt[3]{\ell _{\alpha ,1}^4 \ell _{\beta
   ,1}^2 \ell _{\beta ,2}^2 \ell _{\gamma ,1}^4 \ell _{\gamma ,2}}\right)\bigg)\bigg)\bigg)\bigg).
		\end{gather*}
		This quantity is also positive for any $a\in\R$ and any coordinates. Hence, the function $t\mapsto \trace_\figeight(\Psi_a^t(\sigma_1,\tau_1))+\trace_{\figeight^{-1}}(\Psi_a^t(\sigma_1,\tau_1))$ is strictly convex for any $(\sigma_1,\tau_1)$.
\end{proof}

\section{Proof of Lemma \ref{lemma : expression for FG cross ratios}}\label{app : proof of cross ratios lemma}
\begin{proof}
	Since the cross ratio is a projective invariant, we can assume that the flags are in the following positions:
	\begin{align*}
		p_1 = \Vvector{1}{0}{0},\quad\ell_1 = \Lline{0}{1}{1}\\
		p_2 = \Vvector{0}{1}{0},\quad\ell_2 = \Lline{1}{0}{1}\\
		p_3 = \Vvector{0}{0}{1},\quad\ell_3 = \Lline{1}{x}{0}\\
		p_4 = \Vvector{1}{y}{z},\quad\ell_4 = \Lline{-wz-y}{1}{w}
	\end{align*}
	where $x,y,z,w\in\R\smallsetminus\{0\}$. The lines appearing in the first cross ratio in the lemma are given by
	 \begin{align*}
	 	\overline{p_1p_2}= \Lline{0}{0}{1},\quad \overline{p_1p_3} = \Lline{0}{1}{0},\quad\overline{p_1p_4} = \Lline{0}{z}{-y}.
	 \end{align*}
	 To take the cross ratio of the four lines, we choose the line $h = \overline{p_1p_4}$ to identify it with $\R\cup\{\infty\}$. We have that
	 \[
	 h = \Lline{-z}{0}{1}=\left\{\Vvector{1}{b}{z}\st b\in\R \right\}
	 \]
	 and we choose the isomorphism
	 \begin{align}\label{eq : identification with RP1 h}
	  h&\to\R\\
	  \Vvector{1}{b}{z}&\mapsto b.\nonumber
	 \end{align}
	 The intersections of the relevant lines with $h$ are
	 \[
	 \ell_1\cap h = \Vvector{1}{-z}{z},\, \overline{p_1p_2}\cap h = \Vvector{0}{1}{0}, \,\overline{p_1p_3}\cap h = \Vvector{1}{0}{z},\,  \overline{p_1p_4}\cap h =\Vvector{1}{y}{z}. 
	 \] 
Under the above identification of $h$ with $\R\cup\{\infty\}$, we have that
	 \[
	 \ell_1\cap h\mapsto -z, \quad \overline{p_1p_2}\cap h\mapsto \infty,\quad \overline{p_1p_3}\cap h\mapsto 0,\quad \overline{p_1p_4}\cap h\mapsto y.
	 \]
	 Their cross ratio is therefore
	 \[
	 	\Cr(\ell_1,\overline{p_1p_2},\overline{p_1p_3},\overline{p_1p_4}) = \frac{-y}{y+z}.
	 \]
	 On the other hand
	 \[
	 \Cr_1(F_1,F_2,F_3,F_4) = -\frac{\left(\Lline{0}{1}{1}\Vvector{0}{1}{0}\right)\left(\Lline{0}{1}{0}\Vvector{1}{y}{z}\right)}{\left(\Lline{0}{1}{1}\Vvector{1}{y}{z}\right)\left(\Lline{0}{1}{0}\Vvector{0}{1}{0}\right)}=\frac{-y}{y+z}
	 \]
	 as desired.\\
	 
	 For the second cross ratio, we do a similar computation. The lines appearing in the second cross ratio are
	 \[
	 \overline{p_3p_1} = \Lline{1}{0}{0},\quad \overline{p_1p_3} = \Lline{0}{1}{0},\quad \overline{p_1p_4} = \Lline{y}{-1}{0}.
	 \]
	 Using the same projective line $h$ as above and using the identification in \eqref{eq : identification with RP1 h}, we have that
	 \begin{gather*}
	 	\ell_3\cap h = \Vvector{1}{-1/x}{z}\mapsto \frac{-1}{x},\qquad \overline{p_3p_4}\cap h = \Vvector{1}{y}{z}\mapsto y\\
	 	\overline{p_3p_1}\cap h = \Vvector{1}{0}{z}\mapsto 0,\qquad \overline{p_3p_2}\cap h = \Vvector{0}{1}{0} \mapsto \infty.
	 \end{gather*}
	 Thus
	 \[
	 \Cr(\ell_3,\overline{p_3p_4},\overline{p_3p_1},\overline{p_3p_2}) = \frac{-1-xy}{xy}.
	 \]
	 On the other hand,
	 \[
	 \Cr_2(F_1,F_2,F_3,F_4) = -\frac{\left(\Lline{1}{x}{0}\Vvector{1}{y}{z}\right)\left(\Lline{0}{1}{0}\Vvector{0}{1}{0}\right)}{\left(\Lline{1}{x}{0}\Vvector{0}{1}{0}\right)\left(\Lline{0}{1}{0}\Vvector{1}{y}{z}\right)} = \frac{-1-xy}{xy}
	 \]
	 as desired.
\end{proof}
\bibliographystyle{amsalpha}
\bibliography{PeriodicHamFlows}

\providecommand{\bysame}{\leavevmode\hbox to3em{\hrulefill}\thinspace}
\providecommand{\MR}{\relax\ifhmode\unskip\space\fi MR }
% \MRhref is called by the amsart/book/proc definition of \MR.
\providecommand{\MRhref}[2]{%
  \href{http://www.ams.org/mathscinet-getitem?mr=#1}{#2}
}
\providecommand{\href}[2]{#2}
\begin{thebibliography}{CCFW24}

\bibitem[AB83]{AtiyahBott}
Michael~F. Atiyah and Raoul Bott, \emph{The {Yang}-{Mills} equations over
  {Riemann} surfaces}, Philos. Trans. R. Soc. Lond., Ser. A \textbf{308}
  (1983), 523--615.

\bibitem[AM95]{AlekseevMalkin_PoissonStructure}
Anton~Yu. Alekseev and Anton~Z. Malkin, \emph{Symplectic structure of the
  moduli space of flat connection on a {Riemann} surface}, Commun. Math. Phys.
  \textbf{169} (1995), no.~1, 99--119.

\bibitem[Aud97]{PoissonStructure_Audin}
Mich{\`e}le Audin, \emph{Lectures on gauge theory and integrable systems},
  Gauge theory and symplectic geometry. Proceedings of the NATO Advanced Study
  Institute and s\'eminaire de math\'ematiques sup\'erieures, Montr\'eal,
  Canada, July 3--14, 1995, Dordrecht: Kluwer Academic Publishers, 1997,
  pp.~1--48.

\bibitem[BD14]{BonahonDreyer_FiniteLaminations}
Francis Bonahon and Guillaume Dreyer, \emph{Parameterizing {Hitchin}
  components}, Duke Math. J. \textbf{163} (2014), no.~15, 2935--2975.

\bibitem[BD17]{BonahonDreyer_GeneralLaminations}
\bysame, \emph{Hitchin characters and geodesic laminations}, Acta Math.
  \textbf{218} (2017), no.~2, 201--295.

\bibitem[BJ21]{Poisson_BiswasJeffrey}
Indranil Biswas and Lisa~C. Jeffrey, \emph{Poisson structure on character
  varieties}, Ann. Math. Qu{\'e}. \textbf{45} (2021), no.~1, 213--219.

\bibitem[BL23]{GhostPolygons}
Martin~J. Bridgeman and Fran{\c{c}}ois Labourie, \emph{Ghost polygons,
  {Poisson} bracket and convexity}, Preprint, {arXiv}:2307.04380 [math.{GT}]
  (2023), 2023.

\bibitem[CCFW24]{InvMultiFunctions_CCFW}
Fernando Camacho-Cadena, James Farre, and Anna Wienhard, \emph{Invariant
  multi-functions and {Hamiltonian} flows for surface group representations},
  Preprint, {arXiv}:2410.05154 [math.{GT}] (2024), 2024.

\bibitem[CdS01]{SymplecticGeom_daSilva}
Ana Cannas~da Silva, \emph{Lectures on symplectic geometry}, Lect. Notes Math.,
  vol. 1764, Berlin: Springer, 2001.

\bibitem[CFM21]{PoissonGeom}
Marius Crainic, Rui~Loja Fernandes, and Ioan M{\u{a}}rcu{\c{t}}, \emph{Lectures
  on {Poisson} geometry}, Grad. Stud. Math., vol. 217, Providence, RI: American
  Mathematical Society (AMS), 2021.

\bibitem[CTT20]{ModuliSpacesConvex_CTT}
Alex Casella, Dominic Tate, and Stephan Tillmann, \emph{Moduli spaces of real
  projective structures on surfaces}, MSJ Mem., vol.~38, Tokyo: Mathematical
  Society of Japan, 2020.

\bibitem[DKS24]{DKS:webs}
Daniel~C. Douglas, Richard Kenyon, and Haolin Shi, \emph{Dimers, webs, and
  local systems}, Trans. Am. Math. Soc. \textbf{377} (2024), no.~2, 921--950
  (English).

\bibitem[FG06]{FockGoncharov}
Vladimir Fock and Alexander Goncharov, \emph{Moduli spaces of local systems and
  higher {Teichm{\"u}ller} theory}, Publ. Math., Inst. Hautes {\'E}tud. Sci.
  \textbf{103} (2006), 1--211.

\bibitem[FG07]{FockGoncharov_ConvexProjective}
\bysame, \emph{Moduli spaces of convex projective structures on surfaces}, Adv.
  Math. \textbf{208} (2007), no.~1, 249--273.

\bibitem[FR99]{PoissonStructure_FockRosly}
Vladimir Fock and A.~A. Rosly, \emph{Poisson structure on moduli of flat
  connections on {Riemann} surfaces and the {{\(r\)}}-matrix}, Moscow Seminar
  in mathematical physics, Providence, RI: American Mathematical Society, 1999,
  pp.~67--86.

\bibitem[GHJW97]{ParabolicCohomology_GHJW}
K.~Guruprasad, J.~Huebschmann, L.~Jeffrey, and A.~Weinstein, \emph{Group
  systems, groupoids, and moduli spaces of parabolic bundles}, Duke Math. J.
  \textbf{89} (1997), no.~2, 377--412.

\bibitem[Gol84]{SymplecticNature_Goldman}
William~M. Goldman, \emph{The symplectic nature of fundamental groups of
  surfaces}, Advances in Mathematics \textbf{54} (1984), no.~2, 200--225.

\bibitem[Gol86]{InvFct_Goldman}
\bysame, \emph{Invariant functions on {L}ie groups and {H}amiltonian flows of
  surface group representations}, Invent. Math. \textbf{85} (1986), no.~2,
  263--302. \MR{846929}

\bibitem[Gol90]{ConvexRealProj_Goldman}
\bysame, \emph{Convex real projective structures on compact surfaces}, J.
  Differ. Geom. \textbf{31} (1990), no.~3, 791--845.

\bibitem[GX11]{ErgodicityMCG_GoldmanXia}
William~M. Goldman and Eugene~Z. Xia, \emph{Ergodicity of mapping class group
  actions on {SU}(2)-character varieties}, Geometry, rigidity, and group
  actions. Selected papers based on the presentations at the conference in
  honor of the 60th birthday of Robert J. Zimmer, Chicago, IL, USA, September
  2007, Chicago, IL: University of Chicago Press, 2011, pp.~591--608.

\bibitem[Ker83]{Kerkhoff_NielsenRealization}
Steven~P. Kerckhoff, \emph{The {Nielsen} realization problem}, Ann. Math. (2)
  \textbf{117} (1983), 235--265.

\bibitem[KO24]{EvProds_KenyonOvenhouse}
Richard Kenyon and Nicholas Ovenhouse, \emph{Eigenvalues of matrix products},
  Preprint, {arXiv}:2407.10786 [math.{CO}] (2024), 2024.

\bibitem[Kos04]{Kostov_Deligne-SimpsonProblem}
Vladimir~Petrov Kostov, \emph{{The Deligne–Simpson problem—a survey}},
  Journal of Algebra \textbf{281} (2004), no.~1, 83--108.

\bibitem[LZ21]{LoftinZhang_CoordinatesAugmented}
John Loftin and Tengren Zhang, \emph{Coordinates on the augmented moduli space
  of convex {{\(\mathbb{RP}^2\)}} structures}, J. Lond. Math. Soc., II. Ser.
  \textbf{104} (2021), no.~4, 1930--1972.

\bibitem[Mar10]{ConvexProj_Marquis}
Ludovic Marquis, \emph{{Espace des modules marqu{\'e}s des surfaces projectives
  convexes de volume fini}}, Geometry \& Topology \textbf{14} (2010), no.~4,
  2103 -- 2149.

\bibitem[MS17]{IntroSymplTop_McDuffSalamon}
Dusa McDuff and Dietmar Salamon, \emph{Introduction to symplectic topology},
  third ed., Oxford Graduate Texts in Mathematics, Oxford University Press,
  Oxford, 2017. \MR{3674984}

\bibitem[Pal13]{IntroPositive_Palesi}
Frederic Palesi, \emph{{Introduction to positive representations and
  Fock--Goncharov coordinates}}, preprint, 2013.

\bibitem[Sik01]{Sikora:graphs}
Adam~S. Sikora, \emph{{${\rm SL}_n$}-character varieties as spaces of graphs},
  Trans. Amer. Math. Soc. \textbf{353} (2001), no.~7, 2773--2804. \MR{1828473}

\bibitem[Sun21]{SwapAlg_Sun}
Zhe Sun, \emph{Rank {{\(N\)}} swapping algebra for
  {{\({\operatorname{PGL}}_n\)}} {Fock}-{Goncharov} {{\(\mathcal{X}\)}} moduli
  space}, Math. Ann. \textbf{380} (2021), no.~3-4, 1311--1353.

\bibitem[SWZ20]{FlowsPGLVHitchin_SWZ}
Zhe Sun, Anna Wienhard, and Tengren Zhang, \emph{Flows on the
  {{\(\mathrm{PGL}(V)\)}}-{Hitchin} component}, Geom. Funct. Anal. \textbf{30}
  (2020), no.~2, 588--692.

\bibitem[SZ17]{DarbouxOnHitchin_SunZhang}
Zhe Sun and Tengren Zhang, \emph{The {Goldman} symplectic form on the
  {PGL}({V})-{Hitchin} component}, Preprint, {arXiv}:1709.03589 [math.{DG}]
  (2017), 2017.

\bibitem[Thu86]{ThurstonEarthquakes}
William~P. Thurston, \emph{Earthquakes in two-dimensional hyperbolic geometry},
  Low dimensional topology and {Kleinian} groups, {Symp}. {Warwick} and
  {Durham} 1984, {Lond}. {Math}. {Soc}. {Lect}. {Note} {Ser}. 112, 91-112
  (1986)., 1986.

\bibitem[Wei83]{Weinstein_PoissonManifolds}
Alan Weinstein, \emph{The local structure of {Poisson} manifolds}, J. Differ.
  Geom. \textbf{18} (1983), 523--557.

\bibitem[Wol82]{Wolpert_TheFNDef}
Scott Wolpert, \emph{The {Fenchel}-{Nielsen} deformation}, Ann. Math. (2)
  \textbf{115} (1982), 501--528.

\bibitem[Wol83]{Wolpert_Symplectic}
\bysame, \emph{On the symplectic geometry of deformations of a hyperbolic
  surface}, Ann. Math. (2) \textbf{117} (1983), 207--234.

\bibitem[WZ18]{WienhardZhang_Deforming}
Anna Wienhard and Tengren Zhang, \emph{Deforming convex real projective
  structures}, Geometriae Dedicata \textbf{192} (2018), 327–360.

\end{thebibliography}
\end{document}